\theoremstyle{plain}
\newtheorem{theorem}{Theorem}[subsection]
\newtheorem{corollary}[theorem]{Corollary}
\newtheorem{lemma}[theorem]{Lemma}
\newtheorem{question}{Question}[section]
\newtheorem{proposition}[theorem]{Proposition}
\theoremstyle{definition}
\newtheorem{remark}[theorem]{Remark}
\newtheorem{definition}[theorem]{Definition}
\newtheorem*{definition-no}{Definition}
\newtheorem{example}[theorem]{Example}
\newtheorem{problem}[question]{Problem}
\begin{document}
\title[Conjugacy and cocycle conjugacy are not Borel]{Conjugacy and cocycle
conjugacy of automorphisms of $\mathcal{O}_{2}$ are not Borel}
\author[Eusebio Gardella]{Eusebio Gardella}
\address{Eusebio Gardella\\
Department of Mathematics\\
Deady Hall, University of Oregon\\
Eugene OR 97403-1222, USA\\
and Fields Institute for Research in Mathematical Sciences\\
222 College Street\\
Toronto ON M5T 3J1, Canada.}
\email{gardella@uoregon.edu}
\urladdr{http://pages.uoregon.edu/gardella/}
\author{Martino Lupini}
\address{Martino Lupini\\
Department of Mathematics and Statistics\\
N520 Ross, 4700 Keele Street\\
Toronto Ontario M3J 1P3, Canada, and Fields Institute for Research in
Mathematical Sciences\\
222 College Street\\
Toronto ON M5T 3J1, Canada.}
\email{mlupini@mathstat.yorku.ca}
\urladdr{http://www.lupini.org/}
\thanks{Eusebio Gardella was supported by the US National Science Foundation
through his thesis advisor's Grant DMS-1101742. Martino Lupini was supported
by the York University Elia Scholars Program. This work was completed when
the authors were attending the Thematic Program on Abstract Harmonic
Analysis, Banach and Operator Algebras at the Fields Institute. The
hospitality of the Fields Institute is gratefully acknowledged.}
\dedicatory{}
\subjclass[2000]{Primary 46L40, 03E15; Secondary 46L55}
\keywords{Automorphism, C*-algebra, conjugacy, cocycle conjugacy, Borel
complexity, $\mathcal{O}_2$, Kirchberg algebra}

\begin{abstract}
The group of automorphisms of the Cuntz algebra $\mathcal{O}_{2}$ is a
Polish group with respect to the topology of pointwise convergence in norm.
Our main result is that the relations of conjugacy and cocycle conjugacy of
automorphisms of $\mathcal{O}_{2}$ are complete analytic sets and, in
particular, not Borel. Moreover, we show that from the point of view of
Borel complexity theory, classifying automorphisms of $\mathcal{O}_{2}$ up
to conjugacy or cocycle conjugacy is strictly more difficult than
classifying up to isomorphism any class of countable structures with Borel
isomorphism relation. In fact the same conclusions hold even if one only
considers automorphisms of $\mathcal{O}_{2}$ of a fixed finite order. In the
course of the proof we will show that the relation of isomorphism of
Kirchberg algebras (with trivial $K_{1}$-group and satisfying the Universal
Coefficient Theorem) is a complete analytic set. Moreover, it is strictly
more difficult to classify Kirchberg algebras (with trivial $K_{1}$-group
and satisfying the Universal Coefficient Theorem) than classifying up to
isomorphism any class of countable structures with Borel isomorphism
relation.
\end{abstract}

\maketitle
\tableofcontents

%\date{\today }

%\email{egardell@fields.utoronto.edu}

%\thanks{}

%\subjclass[2010]{Primary ; Secondary 54D35, 20M14, 20M18, 20M05.}
%\keywords{}

\section{Introduction}

The \emph{Cuntz algebra }$\mathcal{O}_{2}$ can be described as the universal
unital C*-algebra generated by two isometries $s_{1}$ and $s_{2}$ subject to
the relation 
\begin{equation*}
s_{1}s_{1}^{\ast }+s_{2}s_{2}^{\ast }=1.
\end{equation*}%
It was defined and studied by Cuntz in the groundbreaking paper \cite%
{cuntz_simple_1977}. Since then, a stream of results has made clear the key
role of $\mathcal{O}_{2}$ in the classification theory of C*-algebras; see 
\cite[Chapter 2]{rordam_classification_2002} for a complete account and more
references. This has served as motivation for an intensive study of the
structural properties of $\mathcal{O}_{2}$ and its automorphism group, as in 
\cite{matsumoto_outer_1993, tsui_weakly_1995, conti_endomorphisms_2010,
conti_automorphisms_2011, conti_labeled_2011, conti_weyl_2012,
conti_conjugacy_2013}. In particular, a lot of effort has been put into
trying to classify several important classes of automorphisms; see for
example \cite{izumi_finite_2004, izumi_finite_2004-1}.

If $A$ is a separable C*-algebra, then the group $\text{\textrm{\textrm{Aut}}%
}(A)$ of automorphisms of $A$ is a Polish group with respect to the topology
of pointwise convergence in norm. Two automorphisms $\alpha $ and $\beta $
of $A$ are said to be \emph{conjugate} if there exists an automorphism $%
\gamma $ of $A$ such that 
\begin{equation*}
\gamma \circ \alpha \circ \gamma ^{-1}=\beta .
\end{equation*}%
When $A$ is unital, every unitary $u$ in $A$ defines an automorphism via $%
a\mapsto uau^{\ast }$, and automorphisms of this form are called \emph{inner}%
. The set $\mathrm{\mathrm{Inn}}(A)$ of all inner automorphisms of $A$ is a
normal subgroup of $\mathrm{\mathrm{Aut}}(A)$, and two automorphisms $\alpha 
$ and $\beta $ of $A$ are said to be \emph{cocycle conjugate} if their
images in the quotient $\mathrm{\mathrm{Aut}}(A)/\mathrm{\mathrm{Inn}}(A)$
are conjugate.

Recall that a topological space is said to be \emph{Polish }if it is
separable and its topology is induced by a complete metric. A \emph{Polish
group} is a topological group whose topology is Polish. A \emph{standard
Borel space} is a set endowed with a $\sigma $-algebra which is the $\sigma $%
-algebra of Borel sets for some Polish topology on the space. It is not
difficult to verify that, under the assumption that $A$ is separable, its
automorphism group $\mathrm{\mathrm{Aut}}(A)$ is a Polish group with respect
to the topology of pointwise convergence in norm.

\begin{definition-no}
A subset $B$ of a standard Borel space $X$ is said to be \emph{analytic }if
it is the image of a standard Borel space under a Borel function. \newline
\indent If $B$ and $C$ are analytic subsets of the standard Borel spaces $X$
and $Y$, then $B$ is said to be \emph{Wadge reducible }to $C$ if there is a
Borel map $f\colon X \to Y$ such that $B$ is the inverse image of $C$ under $%
f$; see \cite[Section 2.E]{kechris_classical_1995}. An analytic set which is
moreover a maximal element in the class of analytic sets under Wadge
reducibility is called a \emph{complete analytic set}; more information can
be found in \cite[Section 26.C]{kechris_classical_1995}.
\end{definition-no}

It is a classical result of Souslin from the early beginnings of descriptive
set theory, that there are analytic sets which are not Borel \cite%
{souslin_definition_1917}. In particular --since set that is Wadge reducible
to a Borel set is Borel-- a complete analytic set is not Borel.

The main result of this paper asserts that the relations of conjugacy and
cocycle conjugacy of automorphisms of $\mathcal{O}_{2}$ are complete
analytic sets when regarded as subsets of $\mathrm{\mathrm{Aut}}(\mathcal{O}%
_{2})\times \mathrm{\mathrm{Aut}}(\mathcal{O}_{2})$, and in particular not
Borel.

Informally speaking, a set (or function) is Borel whenever it can be
computed by a countable protocol whose basic bit of information is
membership in open sets. The fact that a set $X$ is not Borel can be
interpreted as the assertion that the problem of membership in $X$ can not
be decided by such a countable protocol, and it is therefore highly
intractable. We can therefore reformulate the main result of this paper as
follows: There does not exist any countable protocol able to determine
whether a given pair of automorphisms of $\mathcal{O}_{2}$ are conjugate or
cocycle conjugate by only looking at any given stage of the computation at
the value of the given automorphisms at some arbitrarily large finite set of
elements of $\mathcal{O}_{2}$ up to some arbitrarily small strictly positive
error.

The fact that conjugacy and cocycle conjugacy of automorphisms of $\mathcal{O%
}_{2}$ are not Borel should be compared with the fact that for any separable
C*-algebra $A$, the relation of unitary equivalence of automorphisms of $A$
is Borel. This is because the relation of coset equivalence modulo the Borel
subgroup $\mathrm{Inn}(A)$ of $\mathrm{\mathrm{Aut}}(A)$. (This does not
necessarily mean that the problem of classifying the automorphisms of $A$ up
to unitary equivalence is more tractable: It is shown in \cite%
{lupini_unitary_2013} that whenever $A$ is simple --or just does not have
continuous trace-- then the automorphisms of $A$ cannot be classified up to
unitary equivalence using countable structures as invariants.) Similarly,
the spectral theorem for unitary operators on the Hilbert space shows that
the relation of conjugacy of unitary operators is Borel; more details can be
found in \cite[Example 55]{foreman_descriptive_2000}. On the other hand, the
main result of \cite{foreman_descriptive_2000} asserts that the relation of
conjugacy for ergodic measure-preserving transformations on the Lebesgue
space is also complete analytic.

We will moreover show that classifying automorphisms of $\mathcal{O}_{2}$ up
to either conjugacy or cocycle conjugacy is strictly more difficult than
classifying any class of countable structures with Borel isomorphism
relation. This statement can be made precise within the framework of
invariant complexity theory. In this context, classification problems are
regarded as equivalence relations on standard Borel spaces. Virtually any
concrete classification problem in mathematics can be regarded --possibly
after a suitable parametrization-- as the problem of classifying the
elements of some standard Borel space up to some equivalence relation. The
key notion of comparison between equivalence relations is the notion of
Borel reduction.

\begin{definition-no}
\label{definition: Borel reduction} Suppose that $E$ and $F$ are equivalence
relation on standard Borel spaces $X$ and $Y$. 
%\label{Definition: reduction}
A \emph{Borel reduction} from $E$ to $F$ is a Borel function $f\colon
X\rightarrow Y$ such that%
\begin{equation*}
xEx^{\prime }\text{\quad if and only if\quad }f(x)Ff\left( x^{\prime
}\right) \text{.}
\end{equation*}
\end{definition-no}

A Borel reduction from $E$ to $F$ can be regarded as a way to assign --in a
constructive way-- to the objects of $X$, equivalence classes of $F$ as
complete invariants for $E$.

\begin{definition-no}
The equivalence relation $E$ is said to be \emph{Borel reducible }to $F$, in
symbol $E\leq _{B}F$, if there is a Borel reduction from $E$ to $F$.
\end{definition-no}

In this case, the equivalence relation $F$ can be thought of as being more
complicated than $E$, since any Borel classification of the objects of $Y$
up to $F$ entails -by precomposing with a Borel reduction from $E$ to $F$- a
Borel classification of objects of $X$ up to $E$. It is immediate to check
that if $E$ is Borel reducible to $F$, then $E$ (as a subset of $X\times X$)
is Wadge reducible to $F$ (as a subset of $Y\times Y$). In particular, if $E$
is a complete analytic set and $E\leq _{B}F$, then $F$ is a complete
analytic set. Observe that if $F$ is an equivalence relation on $Y $, and $X$
is an $F $-invariant Borel subset of $Y$, then the restriction of $F$ to $X$
is Borel reducible to $F$.

Using this terminology, we can reformulate the assertion about the
complexity of the relations of conjugacy and cocycle conjugacy of
automorphisms of $\mathcal{O}_{2}$ as follows. If $\mathcal{C}$ is any class
of countable structures such that the corresponding isomorphism relation $%
\cong _{\mathcal{C}}$ is Borel, then $\cong _{\mathcal{C}}$ is Borel
reducible to both conjugacy and cocycle conjugacy of automorphisms of $%
\mathcal{O}_{2}$. Furthermore, if $E$ is any Borel equivalence relation,
then the relations of conjugacy and cocycle conjugacy of automorphisms of $%
\mathcal{O}_{2}$ are \emph{not }Borel reducible to $E$. In particular this
rules out any classification that uses as invariant Borel measures on a
Polish space (up to measure equivalence) or unitary operators on the Hilbert
space (up to conjugacy). In fact, as observed before, the relations of
measure equivalence and, by the spectral theorem, the relation of conjugacy
of unitary operators are Borel; see \cite[Example 55]%
{foreman_descriptive_2000}.

All the results mentioned so far about the complexity of the relation of
conjugacy and cocycle conjugacy of automorphisms of $\mathcal{O}_{2}$ will
be shown to hold even if one only considers automorphisms of a fixed finite
order. Moreover, it will follow from the argument that the same assertions
hold for the relation of isomorphism of Kirchberg algebras (with trivial $%
K_{0}$-group and satisfying the Universal Coefficient Theorem). It also
follows from our constructions and \cite[Theorem 1.11]%
{ellis_classification_2010} that, for every $n\in \mathbb{\mathbb{N}}$, the
relation of isomorphisms of unital AF-algebras with $K_{0}$-group of rank $%
n+1$ is strictly more complicate than the relation of isomorphism of unital
AF-algebras with $K_{0}$-groups of rank $n$.

It should be mentioned that it is a consequence of the main result of \cite%
{kerr_Borel_2014} that the automorphisms of $\mathcal{O}_{2}$ are not
classifiable up to conjugacy by countable structures. This means that there
is no explicit way to assign a countable structure to every automorphism of $%
\mathcal{O}_{2}$, in such a way that two automorphisms are conjugate if and
only if the corresponding structures are isomorphic. More precisely, for no
class $\mathcal{C}$ of countable structures, is the relation of conjugacy of
automorphisms of $\mathcal{O}_{2}$ Borel reducible to the relation of
isomorphisms of elements of $\mathcal{C}$. Moreover the same conclusions
hold for any set of automorphisms of $\mathcal{O}_{2}$ which is not meager
in the topology of pointwise convergence. Similar conclusions hold for
automorphisms of any separable C*-algebra absorbing the Jiang-Su algebra
tensorially.

The strategy of the proof of the main theorem is as follows. Using
techniques from \cite{hjorth_isomorphism_2002, downey_isomorphism_2008}, we
show that for every prime number $p$, the relation of isomorphism of
countable $p$-divisible torsion free abelian groups is a complete analytic
set, and it is strictly more complicated than the relation of isomorphism of
any class of countable structures with Borel isomorphism relation. We then
show that the relation of isomorphism of $p$-divisible abelian groups is
Borel reducible to the relations of conjugacy and cocycle conjugacy of
automorphisms of $\mathcal{O}_{2}$ of order $p$.

This is achieved by showing that there is a Borel way to assign to a
countable abelian group $G$ to assign to a countable abelian group a
Kirchberg algebra $A_{G}$ with trivial $K_{1}$-group, $K_{0}$-group
isomorphic to $G$, and with the class of the unit in $K_{0}$ being the zero
element. Adapting a construction of Izumi from \cite{izumi_finite_2004}, we
define an automorphism $\nu _{p}$ of $\mathcal{O}_{2}$ of order $p$ with the
following property: Tensoring the identity automorphism of $A_{G}$ by $\nu
_{p}$, and identifying $A_{G}\otimes \mathcal{O}_{2}$ with $\mathcal{O}_{2}$
by Kirchberg's absorption theorem, gives a reduction of isomorphism of
Kirchberg algebras with $p$-divisible $K_{0}$-group and with the class of
the unit being the trivial element in $K_{0}$, to conjugacy and cocycle
conjugacy of automorphisms of $\mathcal{O}_{2}$ of order $p$. The proof is
concluded by showing --using results from \cite{farah_turbulence_2014}--
that such reduction is implemented by a Borel map.\newline
\ \newline
\indent The present paper is organized as follows. Section \ref{Section:
parametrising} presents a functorial version of the notion of standard Borel
parametrization of a category as defined in \cite{farah_turbulence_2014}.
Several functorial parametrizations for the category are then presented and
shown to be equivalent. Finally, many standard constructions in C*-algebra
theory are shown to be computable by Borel maps in these parametrizations.
The main result of Section 3 asserts that the reduced crossed product of a
C*-algebra by an action of a countable discrete group can be computed in a
Borel way. The same conclusion holds for crossed products by a corner
endomorphism in the sense of \cite{boyd_faithful_1993}. Section \ref%
{Chapter: Borel selection of AF-algebras} provides a Borel version of the
correspondence between unital AF-algebras and dimension groups established
in \cite{elliott_classification_1976, effros_dimension_1980}. We show that
there is a Borel map that assigns to a dimension group $D$, a unital
AF-algebra $B_{D}$ such that $D$ is isomorphic to the $K_{0}$-group of $%
B_{D} $. 
%(The $K_{0}$-group of a C*-algebra can be computed in a Borel way by \cite[Section 3.3]{farah_descriptive_2012}.)
Moreover, given an endomorphism $\beta $ of $D$, one can select in a Borel
fashion an endomorphism $\rho _{D,\beta }$ of $B_{D}$ whose induced
endomorphism of $K_{0}(B_{D})$ is conjugate to $\beta $. Finally, Section %
\ref{Chapter: cocycle conjugacy of automorphisms} contains the proof of the
main results.\newline
\ \newline
\indent

In the following, all C*-algebras and Hilbert spaces are assumed to be \emph{%
separable}, and all discrete groups are assumed to be \emph{countable}. We
denote by $\omega $ the set of natural numbers \emph{including }$0$. An
element $n\in \omega $ will be identified with the set $\left\{ 0,1,\ldots
,n-1\right\} $ of its predecessors. (In particular $0$ is identified with
the empty set.) We will therefore write $i\in n$ to mean that $i$ is a
natural number and $i<n$.

For $n\geq 1$, we write $\mathbb{Z}_n$ for the cyclic group $\mathbb{Z}/ n%
\mathbb{Z}$.

If $X$ is a Polish space and $D$ is a countable set, we endow the set $X^{D}$
of $D$-indexed sequences of elements of $X$ with the product topology.
Likewise, if $X$ is a standard Borel space, then we give $X^{D}$ the product
Borel structure. In the particular case where $X=2=\left\{ 0,1\right\} $, we
identify $2^{D}$ with the set of subsets of $D$ with its Cantor set
topology, and the corresponding standard Borel structure. In the following
we will often make use --without explicit mention-- of the following basic
principle: Suppose that $X$ is a standard Borel space, $D$ is a countable
set, and $B$ is a Borel subset of $X\times D$ such that for every $x\in X$
there is $y\in D$ such that $\left( x,y\right) \in B$. Then there is a Borel
selector for $B$, this is,\ a function $f$ from $X$ to $D$ such that $\left(
x,f(x)\right) \in B$ for every $x\in X$. To see this one can just fix a well
order $<$ of $D$ and define $f(x)$ to be the $<$-minimum of the set of $y\in
D$ such that $\left( x,y\right) \in B$.

Moreover we will use throughout the paper the fact that a $G_{\delta }$
subspace of a Polish space is Polish in the subspace topology \cite[Theorem
3.11]{kechris_classical_1995}, and that a Borel subspace of a standard Borel
space is standard with the inherited Borel structure \cite[Proposition 12.1]%
{kechris_classical_1995}.

We have tried to make this paper accessible to operator algebraists who are
not familiar with descriptive set theory, as well as set theorists who are
not familiar with C*-algebras.

The authors would like to thank Samuel Coskey and Ilijas Farah for several
helpful conversations.

\section{Parametrizing the category of C*-algebras\label{Section:
parametrising}}

\subsection{Background on C*-algebras and notation}

For a Hilbert space $H$, we denote by $B(H)$ the algebra of bounded
operators on $H$, and by $\mathcal{K}(H)$ the algebra of compact operators
on $H$. The set of $T\in B(H)$ of operator norm at most $1$ is denoted by $%
B_{1}(H)$. The weak operator topology on $B(H)$ is the weakest topology
making the functions%
\begin{eqnarray*}
B(H) &\mapsto &\mathbb{C} \\
x &\mapsto &\left\langle x\xi ,\eta \right\rangle
\end{eqnarray*}%
for $\xi ,\eta \in H$ continuous. Recall that addition and scalar
multiplication are jointly weakly continuous on $B(H)$, while composition of
operators is only separately continuous in each variable; see \cite[I.3.2.1]%
{blackadar_operator_2006}. The unit ball $B_{1}(H)$ of $B(H)$ is compact
when endowed with the weak topology; see \cite[I.3.2.4]%
{blackadar_operator_2006}.

We denote by $U(H)$ the group of unitaries in $H$. It is easily checked that 
$U(H)$ is a $G_{\delta }$ subset of $B(H)$ with respect to the weak
topology. Therefore the weak topology makes $U(H)$ a Polish group by \cite[%
Corollary 9.5]{kechris_classical_1995}. It is well known that on $U(H)$ the
weak topology coincides with several other operator topologies, such as the
weak, strong, $\sigma $-weak, and $\sigma $-strong operator topology; see 
\cite[I.3.2.9]{blackadar_operator_2006}.\newline
\ \newline
\indent A\emph{\ C*-algebra} is a subalgebra of the algebra $B(H)$ of
bounded linear operators on a Hilbert space $H$ that is closed in the norm
topology and contains the adjoint of any of its elements. In particular $%
B(H) $ is itself a (nonseparable) C*-algebra, and $K(H)$ is a separable
C*-algebra.\newline
\indent Equivalently, C*-algebras can be abstractly characterized as those
Banach *-algebras $A$ whose norm satisfies the \emph{C*-identity}%
\begin{equation*}
\left\Vert a^{\ast }a\right\Vert =\left\Vert a\right\Vert ^{2}\text{{}}
\end{equation*}%
for all $a$ in $A$. A C*-algebra is \emph{unital }if it contains a
multiplicative identity (called \emph{unit}) usually denoted by $1$. An
ideal of a C*-algebra $A$ is an ideal of $A$ in the ring-theoretic sense. A
C*-algebra is \emph{simple }if it contains no nontrivial closed ideals \cite[%
II.5.4.1]{blackadar_operator_2006}. If $A$ and $B$ are C*-algebras, a \emph{%
*-homomorphism} from $A$ to $B$ is an algebra homomorphism $\varphi \colon
A\rightarrow B$ satisfying $\varphi (a^{\ast })=\varphi (a)^{\ast }$ for all 
$a$ in $A$. It is a classical result of the theory of C*-algebras \cite[%
II.1.6.6]{blackadar_operator_2006} that if $\varphi \colon A\rightarrow B$
is a *-homomorphism, then $\varphi $ is contractive, this is, $\left\Vert
\varphi (a)\right\Vert \leq \left\Vert a\right\Vert $ for every $a\in A$,
and moreover $\varphi $ is isometric if and only if it is injective. As a
consequence, the range of any *-homomorphism is automatically closed. A 
\emph{*-isomorphism} between $A$ and $B$ is a bijective *-homomorphism. Note
that *-isomorphisms are necessarily isometric. A \emph{representation} of a
C*-algebra $A$ on a Hilbert space $H$ is a *-homomorphism from $A$ to $B(H)$%
. A representation is \emph{faithful }if it is injective or --equivalently--
isometric.

An \emph{automorphism} of a C*-algebra $A$ is a *-isomorphism from $A$ to $A$%
. The set of all automorphisms of $A$, denoted by $\mathrm{\mathrm{Aut}}(A)$%
, is a Polish group under composition when endowed with the topology of
pointwise norm convergence. In this topology, a sequence $(\varphi
_{n})_{n\in \omega }$ in $\mathrm{\mathrm{Aut}}(A)$ converges to an
automorphism $\varphi $ if and only if%
\begin{equation*}
\lim_{n\rightarrow \infty }\left\Vert \varphi _{n}(a)-\varphi (a)\right\Vert
=0
\end{equation*}%
for every $a\in A$. Two automorphisms of $A$ are said to be \emph{conjugate }%
if they are conjugate elements of $\mathrm{\mathrm{Aut}}(A)$. \newline
\indent If $A$ is unital, an element $u$ of $A$ is said to be a \emph{%
unitary }if $uu^{\ast }=u^{\ast }u=1$. The unitary elements of $A$ form a
group under multiplication, denoted by $U(A)$. Any unitary element $u$ of $A$
defines an automorphism $\mathrm{Ad}(u)$ of $A$, which is given by%
\begin{equation*}
\mathrm{Ad}(u)(a)=uau^{\ast }\text{{}}
\end{equation*}%
for all $a$ in $A$. Automorphisms of this form are called \emph{inner}, and
form a normal subgroup $\mathrm{Inn}(A)$ of $\mathrm{\mathrm{Aut}}(A)$. 
%Two automorphisms $\alpha$ and $\beta $ of $A$ are \emph{cocyce conjugate }if
%there is a unitary element $u$ of $A$ such that $\mathrm{Ad}(u)
%\circ \alpha $ and $\beta $ are conjugate.

\begin{definition}
Let $G$ be a countable discrete group, and let $A$ be a C*-algebra. An \emph{%
action} $\alpha $ of $G$ on $A$ is a group homomorphism $g\mapsto \alpha
_{g} $ from $G$ to the group $\mathrm{\mathrm{Aut}}(A)$ of automorphisms of $%
A$.\newline
\indent Two actions $\alpha $ and $\beta $ of $G$ on $A$ are said to be 
\emph{conjugate} if there is $\gamma \in \mathrm{\mathrm{Aut}}(A)$ such that 
\begin{equation*}
\gamma \circ \alpha _{g}\circ \gamma ^{-1}=\beta _{g}
\end{equation*}%
for every $g\in G$.
\end{definition}

Let $A$ be a unital C*-algebra and let $\alpha $ be an action of $G$ on $A$.
An $\alpha $-\emph{cocycle }is a function $u\colon G\rightarrow U(A)$
satisfying 
\begin{equation*}
u_{gh}=u_{g}\alpha _{g}(u_{h})
\end{equation*}%
for every $g,h\in G$. \newline
\indent If $u$ is an $\alpha $-cocycle, we define the $u$-\emph{perturbation}
of $\alpha $, denoted $\alpha ^{u}\colon G\rightarrow \text{\textrm{\textrm{%
Aut}}}(A)$, by 
\begin{equation*}
\alpha _{g}^{u}=\mathrm{Ad}(u_{g})\circ \alpha _{g}
\end{equation*}%
for $g$ in $G$. \newline
\indent Two actions $\alpha $ and $\beta $ of $G$ on $A$ are said to be 
\emph{cocycle conjugate} if $\beta $ is conjugate to a perturbation of $%
\alpha $ by a cocycle.

It is not hard to check that the relation of cocycle conjugacy is an
equivalence relation for actions. In the case when $G$ is the group of
integers $\mathbb{Z}$, actions of $\mathbb{Z}$ on $A$ naturally correspond
to single automorphisms of $A$. Similarly, if $G$ is the group $\mathbb{Z}%
_{n}$, then actions of $\mathbb{Z}_n$ on $A$ correspond to automorphisms of $%
A$ whose order divides $n$. We show in Lemma \ref{Lemma: whaaat?} below that
the notions of conjugacy and cocycle conjugacy for actions and automorphisms
are respected by this correspondence when $A$ has trivial center. These
observations will be used to infer Corollary \ref{Corollary: not Borel} from
Corollary \ref{Corollary: reduction of iso of p-divisible}.

\begin{lemma}
\label{Lemma: whaaat?} Suppose that $\alpha$ and $\beta $ are automorphisms
of a unital C*-algebra $A$.

\begin{enumerate}
\item Then the following statements are equivalent:

\begin{enumerate}
\item The actions $n\mapsto \alpha ^{n}$ and $n\mapsto \beta ^{n}$ of $%
\mathbb{Z}$ on $A$ are cocycle conjugate;

\item There are an automorphism $\gamma $ of $A$ and a unitary $u$ of $A$
such that $\mathrm{Ad}(u)\circ \alpha =\gamma \circ \beta \circ \gamma ^{-1}$%
.
\end{enumerate}

\item Assume moreover that $\alpha $ and $\beta $ have order $k\geq 2$ and
that $A$ has trivial center (for example, if $A$ is simple). Then the
following statements are equivalent:

\begin{enumerate}
\item The actions $n\mapsto \alpha ^{n}$ and $n\mapsto \beta ^{n}$ of $%
\mathbb{Z}_k$ on $A$ are cocycle conjugate;

\item The actions $n\mapsto \alpha ^{n}$ and $n\mapsto \beta ^{n}$ of $%
\mathbb{Z}$ on $A$ are cocycle conjugate.
\end{enumerate}
\end{enumerate}
\end{lemma}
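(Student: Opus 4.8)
The plan is to first isolate the structure of cocycles for actions of the free group $\mathbb{Z}$, and then exploit it in both parts. The starting point is that the cocycle identity $u_{gh}=u_g\alpha_g(u_h)$ guarantees that $g\mapsto \alpha^u_g$ is a group homomorphism, so that for $G=\mathbb{Z}$ an $\alpha$-cocycle $u$ is completely and freely determined by the single unitary $v:=u_1$: one sets $u_n=v\alpha(v)\cdots\alpha^{n-1}(v)$ for $n\geq 1$ (and the analogous expression for $n<0$), and a short telescoping computation shows that this satisfies the cocycle identity and that $\alpha^u_n=(\mathrm{Ad}(v)\circ\alpha)^n$ for all $n$. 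Thus perturbing the $\mathbb{Z}$-action $\alpha$ by a cocycle amounts to replacing the generating automorphism $\alpha$ by $\mathrm{Ad}(v)\circ\alpha$ for an arbitrary unitary $v$; and since a homomorphism out of $\mathbb{Z}$ is determined by its value at $1$, a conjugacy of $\mathbb{Z}$-actions need only be verified on the generator.

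For part (1) I would simply unwind the definitions using this observation. Cocycle conjugacy of the two $\mathbb{Z}$-actions means there are $\gamma\in\mathrm{Aut}(A)$ and a unitary $v$ with $\gamma\circ(\mathrm{Ad}(v)\circ\alpha)\circ\gamma^{-1}=\beta$, the full equality of actions reducing to this single equation at the generator. Rearranging it as $\mathrm{Ad}(v)\circ\alpha=\gamma^{-1}\circ\beta\circ\gamma$ yields statement (b) with automorphism $\gamma^{-1}$ and unitary $v$, and the reverse rearrangement (together with the generator principle) recovers (a); so the two are manifestly equivalent.

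For part (2) the direction (a)$\Rightarrow$(b) is immediate and uses neither the order hypothesis nor triviality of the center: a $\mathbb{Z}_k$-cocycle conjugacy produces, at the generator $\bar 1$, exactly the equation $\gamma\circ(\mathrm{Ad}(v)\circ\alpha)\circ\gamma^{-1}=\beta$ with $v=u_{\bar 1}$, which is condition (b) of part (1) and hence, by part (1), equivalent to cocycle conjugacy of the $\mathbb{Z}$-actions. The substance is in (b)$\Rightarrow$(a). Here I would begin from a $\mathbb{Z}$-cocycle conjugacy, i.e.\ $\gamma$ and $v$ with $\gamma\circ\psi\circ\gamma^{-1}=\beta$ where $\psi:=\mathrm{Ad}(v)\circ\alpha$. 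The obstruction to $v$ defining a $\mathbb{Z}_k$-cocycle is that the product $z:=v\alpha(v)\cdots\alpha^{k-1}(v)$ need not equal $1$; and this product is exactly what controls whether $n\mapsto\psi^n$ descends to $\mathbb{Z}_k$, since $\psi^k=\mathrm{Ad}(z)\circ\alpha^k=\mathrm{Ad}(z)$ (using $\alpha^k=\mathrm{id}$).

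The key step, and the only place the standing hypotheses are used, is the following scalar-correction argument. From $\beta^k=\mathrm{id}$ and $\beta=\gamma\circ\psi\circ\gamma^{-1}$ one gets $\psi^k=\mathrm{id}$, hence $\mathrm{Ad}(z)=\mathrm{id}$, so $z$ is a central unitary; invoking the triviality of the center of $A$, $z$ is a scalar $\lambda$ of modulus $1$. I would then rescale, replacing $v$ by $v':=\mu v$ for a scalar $\mu$ of modulus $1$: this leaves $\mathrm{Ad}(v')=\mathrm{Ad}(v)$, hence $\psi$ and the entire conjugacy equation unchanged, while multiplying $z$ by $\mu^k$. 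Choosing $\mu$ to be a $k$-th root of $\lambda^{-1}$ makes $v'\alpha(v')\cdots\alpha^{k-1}(v')=1$, so $v'$ genuinely defines a $\mathbb{Z}_k$-cocycle $u'$ with $\alpha^{u'}_{\bar n}=\psi^n$, and $\gamma$ conjugates this $\mathbb{Z}_k$-action to $\beta$. This establishes (a). I expect the identification of the central obstruction $z$, its reduction to a scalar via the trivial-center hypothesis, and its absorption by the scalar rescaling to be the crux of the argument; everything else is the free-cocycle bookkeeping set up at the outset.
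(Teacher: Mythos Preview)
Your proof is correct and follows essentially the same approach as the paper's. In both parts the arguments coincide: part~(1) reduces to the free generation of $\mathbb{Z}$-cocycles by $u_1$, and in part~(2)(b)$\Rightarrow$(a) both you and the paper identify the central obstruction (you via $z=v\alpha(v)\cdots\alpha^{k-1}(v)$, the paper via the observation $\mathrm{Ad}(u_{km+r})=\mathrm{Ad}(u_r)$), reduce it to a scalar using the trivial-center hypothesis, and absorb it by a scalar rescaling---your version being somewhat more explicit about \emph{which} scalar correction is made (a $k$-th root at the generator $v$), whereas the paper simply asserts ``upon correcting by a scalar, we may assume $u_{n+mk}=u_n$.''
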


\begin{proof}
(1). To show that (a) implies (b), simply take the unitary $u=u_{1}$ coming
from the $\alpha $-cocycle $u\colon \mathbb{Z}\rightarrow U(A)$. \newline
\indent Conversely, if $u$ is a unitary in $A$ as in the statement, we
define an $\alpha $-cocycle as follows. Set $u_{0}=1$ and $u_{1}=u$, and for 
$n\geq 2$ define $u_{n}$ inductively by $u_{n}=u_{1}\alpha (u_{n-1})$. Set $%
u_{-1}=\alpha ^{-1}(u_{1}^{\ast })$, and for $n\leq -2$, define $u_{n}$
inductively by $u_{n}=u_{-1}\alpha ^{-1}(u_{n+1})$. It is straightforward to
check that $n\mapsto u_{n}$ is an $\alpha $-cocycle, and that the
automorphism $\gamma $ in the statement implements the conjugacy between $%
\alpha ^{u}$ and $\beta $.\newline
\indent(2). To show that (a) implies (b), it is enough to note that if $%
u\colon \mathbb{Z}_{k}\rightarrow U(A)$ is an $\alpha $-cocycle, when we
regard $\alpha $ as a $\mathbb{Z}_{k}$ action, then the sequence $%
(v_{m})_{m\in \mathbb{N}}$ of unitaries in $A$ given by $v_{m}=u_{n}$ if $%
m=n $ mod $k$, is an $\alpha $-cocycle, when we regard $\alpha $ as a $%
\mathbb{Z} $ action.\newline
\indent Assume that $\alpha $ and $\beta $ are cocycle conjugate as
automorphisms of $A$. Let $(u_{n})_{n\in \mathbb{N}}$ be an $\alpha $%
-cocycle and let $\gamma $ be an automorphism implementing the conjugacy.
Fix $n$ in $\mathbb{N}$, and write $n=km+r$ for uniquely determined $k\in 
\mathbb{Z}$ and $r\in k$. Since $\alpha $ and $\beta $ have order $k$, we
have 
\begin{align*}
\text{$\mathrm{Ad}$}(u_{km+r})\circ \alpha ^{r}& =\text{$\mathrm{Ad}$}%
(u_{km+r})\circ \alpha ^{km+r} \\
& =\gamma \circ \beta ^{km+r}\circ \gamma ^{-1} \\
& =\gamma \circ \beta ^{r}\circ \gamma ^{-1} \\
& =\text{$\mathrm{Ad}$}(u_{r})\circ \alpha ^{r}.
\end{align*}%
In particular, $\text{\textrm{Ad}}(u_{n+mk})=\text{\textrm{Ad}}(u_{n})$, so $%
u_{n+mk}$ and $u_{n}$ differ by a central unitary. Since the center of $A$
is trivial, upon correcting by a scalar, we may assume that $u_{n+mk}=u_{n}$.%
\newline
\indent Thus, the assignment $v\colon \mathbb{Z}_{k}\rightarrow U(A)$ given
by $n\mapsto u_{n}$ is an $\alpha $-cocycle, when we regard $\alpha $ as a $%
\mathbb{Z}_{k}$ action, and $\gamma $ implements an conjugacy between the $%
\mathbb{Z}_{k}$ actions $\alpha ^{v}$ and $\beta $. This finishes the proof.
\end{proof}

More generally, one can define actions of locally compact groups on
C*-algebras, as well as conjugacy and cocycle conjugacy for such actions.
More details can be found in \cite[Section II.1]{blackadar_operator_2006}.

If $A$ and $B$ are C*-algebras, the tensor product of $A$ and $B$ as complex
algebras with involution is denoted by $A\odot B$ and called \emph{algebraic
tensor product }of $A$ and $B$ \cite[II.9.1.1]{blackadar_operator_2006}. A
C*-algebra $A$ is \emph{nuclear} or \emph{amenable} if, for any other
C*-algebra $B$, the algebraic tensor product $A\odot B$ bears a unique
C*-norm, this is, a not necessarily complete norm satisfying%
\begin{equation*}
\left\Vert xy\right\Vert \leq \left\Vert x\right\Vert \left\Vert y\right\Vert
\end{equation*}%
and%
\begin{equation*}
\left\Vert x^{\ast }x\right\Vert =\left\Vert x\right\Vert ^{2}\text{.}
\end{equation*}%
The completion of $A\odot B$ with respect to such norm is called the\emph{\
(C*-algebra) tensor product }of $A$ and $B$ and denoted by $A\otimes B$. All
the C*-algebras considered in Section \ref{Chapter: cocycle conjugacy of
automorphisms} will be nuclear, so their tensor product $A\otimes B$ will be
well defined. It is hard to overestimate the importance of nuclearity in the
theory of C*-algebras. Nuclearity is the analog for C*-algebras of
amenability for groups and Banach algebras. Nuclear C*-algebras admit
several equivalent characterizations; see \cite[IV.3.1.5, IV.3.1.6, IV.3.1.12%
]{blackadar_operator_2006}. Moreover they constitute the main focus of
Elliott classification program \cite[Section 2.2]{rordam_classification_2002}%
.

\subsection{Functorial parametrization\label{Section: Functorial
parametrization}}

Recall that a \emph{(small) semigroupoid} is a quintuple $\left( X,\mathcal{C%
}_{X},s,r,\cdot \right) $, where $X$ and $\mathcal{C}_{X}$ are sets, $s,r$
are functions from $\mathcal{C}_{X}$ to $X$, and $\cdot $ is an associative
partially defined binary operation on $\mathcal{C}_{X}$ with domain%
\begin{equation*}
\left\{ (x,y)\in \mathcal{C}_{X}\times \mathcal{C}_{X}\colon
r(x)=s(y)\right\}
\end{equation*}%
such that $r(x\cdot y)=r(y)$ and $s(x\cdot y)=s(y)$ for all $x$ and $y$ in $%
X $. The elements of $X$ are called objects, the elements of $\mathcal{C}%
_{X} $ morphisms, the map $\cdot $ composition, and the maps $s$ and $r$
source and range map. In the following, a semigroupoid $\left( X,\mathcal{C}%
_{X},s,r,\cdot \right) $ will be denoted simply by $\mathcal{C}_{X}$. Note
that a (small) category is precisely a (small) semigroupoid, where moreover
the identity arrow $\mbox{id}_{x}\in \mathcal{C}_{X}$ is associated with the
element $x$ of $X$. A morphism between semigroupoids $\mathcal{C}_{X}$ and $%
\mathcal{C}_{X^{\prime }}$ is a pair $\left( f,F\right) $ of functions $%
f\colon X\rightarrow X^{\prime }$ and $F\colon \mathcal{C}_{X}\rightarrow 
\mathcal{C}_{X^{\prime }}$ such that

\begin{itemize}
\item $s_{X^{\prime }}\circ F=f\circ s_X$,

\item $r_{X^{\prime }}\circ F=f\circ r_X$, and

\item $F\left( a\cdot b\right) =F(a)\cdot F\left( b\right) $ for every $a$
and $b\in \mathcal{C}_{X}$.
\end{itemize}

In the case of categories, a morphism of semigroupoids is just a functor.

A \emph{standard Borel semigroupoid }is a semigroupoid $\mathcal{C}_{X}$
such that $X$ and $\mathcal{C}_{X}$ are endowed with standard Borel
structures making the source and range functions $s$ and $r$ Borel.

\begin{definition}
Let $\mathcal{D}$ be a category, let $\mathcal{C}_{X}$ be a standard Borel
semigroupoid, and let $\left( f,F\right) $ be a morphism from $\mathcal{C}%
_{X}$ to $\mathcal{D}$. We say that $\left( \mathcal{C}_{X},f,F\right) $ is
a \emph{good parametrization }of $\mathcal{D}$ if

\begin{itemize}
\item $\left( f,F\right) $ is \emph{essentially surjective}, this is, if
every object of $\mathcal{D}$ is isomorphic to an object in the range of $f$,

\item $\left( f,F\right) $ is \emph{full}, this is, if for every $x,y\in X$
the set $\mathrm{Hom}(f(x),f(y))$ is contained in the range of $F$, and

\item the set $\mathrm{Iso}_{X}$ of elements of $\mathcal{C}_{X}$ that are
mapped by $F$ to isomorphisms of $\mathcal{D}$, is Borel.
\end{itemize}
\end{definition}

Observe that if $\left( \mathcal{C}_{X},f,F\right) $ is a \emph{good
parametrization }of $\mathcal{D}$, then $\left( X,f\right) $ is a good
parametrization of $\mathcal{C}$ in the sense of \cite[Definition 2.1]%
{farah_turbulence_2014}.

\begin{definition}
\label{Definition: equivalent parametrizations} Let $\mathcal{D}$ be a
category and let $\left( \mathcal{C}_{X},f,F\right) $ and $\left( \mathcal{C}%
_{X^{\prime }},f^{\prime },F^{\prime }\right) $ be good parametrizations of $%
\mathcal{D}$. A \emph{morphism }from $\left( \mathcal{C}_{X},f,F\right) $ to 
$\left( \mathcal{C}_{X^{\prime }},f^{\prime },F^{\prime }\right) $ is a
triple $\left( g,G,\eta \right) $ of maps $g\colon X\rightarrow X^{\prime }$%
, $G\colon \mathcal{C}_{X}\rightarrow \mathcal{C}_{X^{\prime }}$, and $\eta
\colon X\rightarrow \mathcal{D}$, satisfying the following conditions:

\begin{enumerate}
\item The functions $g$ and $G$ are Borel;

\item $\eta (x)$ is an isomorphism from $f(x)$ to $\left( f^{\prime }\circ
g\right) (x)$ for every $x\in X$;

\item The pair $\left( f^{\prime }\circ g,F^{\prime }\circ G\right) $ is a
semigroupoid morphism $\mathcal{C}_X\to\mathcal{D}$;

\item We have $s_{X^{\prime }}\circ G=g\circ s_X$ and $r_{X^{\prime }}\circ
G=g\circ r_X$;

\item For every $x\in X$, the morphism $\eta (s(x))$ is an isomorphism from $%
F(x)$ to $\left( F^{\prime }\circ G\right) (x)$;

\item For every $a\in \mathcal{C}_{X}$%
\begin{equation*}
F^{\prime }\left( G(a)\right) \circ \eta \left( s(a)\right) =\eta \left(
r(a)\right) \circ F(a)\text{.}
\end{equation*}
\end{enumerate}
\end{definition}

Two good parametrizations $\left( \mathcal{C}_{X},f,F\right) $ and $\left( 
\mathcal{C}_{X^{\prime }},f^{\prime },F^{\prime }\right) $ of $\mathcal{C}$
are said to be \emph{equivalent }if there are isomorphisms from $\left( 
\mathcal{C}_{X},f,F\right) $ to $\left( \mathcal{C}_{X^{\prime }},f^{\prime
},F^{\prime }\right) $ and viceversa. It is not difficult to verify that if $%
\left( \mathcal{C}_{X},f,F\right) $ and $\left( \mathcal{C}_{X^{\prime
}},f^{\prime },F^{\prime }\right) $ are equivalent parametrizations of $%
\mathcal{D}$, then $\left( X,f\right) $ and $\left( X^{\prime },f^{\prime
}\right) $ are weakly equivalent parametrizations of $\mathcal{D}$ in the
sense of \cite[Definition 2.1]{farah_turbulence_2014}.

In the following, a good parametrization $\left( \mathcal{C}_{X},f,F\right) $
of $\mathcal{D}$ will be denoted by $\mathcal{C}_{X}$ for short.

\subsection{The space $\mathcal{C}_{\widehat{\Xi }}$\label{Subsection: XI
hat}}

We follow the notation in \cite[Section 2.2]{farah_turbulence_2014}, and
denote by $\mathbb{Q}(i)$ the field of complex rationals. A $\mathbb{Q}%
\left( i\right) $-$\ast $-algebra is an algebra over the field $\mathbb{Q}%
(i) $ endowed with an involution $x\mapsto x^{\ast }$. We define $\mathcal{U}
$ to be the $\mathbb{Q}(i)$-$\ast $-algebra of noncommutative $\ast $%
-polynomials with coefficients in $\mathbb{Q}(i)$ and without constant term
in the formal variables $X_{k}$ for $k\in \omega $. If $A$ is a C*-algebra, $%
\gamma =(\gamma _{n})_{n\in \omega }$ is a sequence of elements of $A$, and $%
p\in \mathcal{U}$, we define $p(\gamma )$ to be the element of $A$ obtained
by evaluating $p$ in $A$, where for every $k\in \omega $, the formal
variables $X_{k}$ and $X_{k}^{\ast }$ are replaced by the elements $\gamma
_{k}$ and $\gamma _{k}^{\ast }$ of $A$.

We denote by $\widehat{\Xi }$ the set of elements 
\begin{equation*}
A=\left( f,g,h,k,r\right) \in \omega ^{\omega \times \omega }\times \omega ^{%
\mathbb{Q}(i)\times \omega }\times \omega ^{\omega \times \omega }\times
\omega ^{\omega }\times \mathbb{R}^{\omega }
\end{equation*}%
that code on $\omega $ a structure of $\mathbb{Q}(i)$-$\ast $-algebra $A$
endowed with a norm satisfying the C*-identity. The completion $\widehat{A}$
of $\omega $ with respect to such norm is a C*-algebra (denoted by $B(A)$ in 
\cite[Subection 2.4]{farah_turbulence_2014}). It is not hard to check that $%
\widehat{\Xi }$ is a $G_{\delta }$ subspace of $\omega ^{\omega \times
\omega }\times \omega ^{\mathbb{Q}(i)\times \omega }\times \omega ^{\omega
\times \omega }\times \omega ^{\omega }\times \mathbb{R}^{\omega }$, and
hence Polish with the subspace topology. As observed in \cite[Subection 2.4]%
{farah_turbulence_2014}, $\widehat{\Xi }$ can be thought of as a natural
parametrization for \emph{abstract C*-algebras}. We use the notation of \cite%
[Subsection 2.4]{farah_turbulence_2014} to denote the operations on $\omega $
coded by an element $A=\left( f,g,h,k,r\right) $ of $\widehat{\Xi }$. We
denote by $d_{A}$ the metric on $\omega $ coded by $A$, which is given by 
\begin{equation*}
d_{A}\left( n,m\right) =\left\Vert n+_{f}(-1)\cdot _{g}m\right\Vert _{r}
\end{equation*}%
for $n,m\in \omega $. We will also write $n+_{A}m$ for $n+_{f}m$, and
similarly for $g,h,k,r$.

\begin{definition}
Suppose that $A=\left( f,g,h,k,r\right) $ and $A^{\prime }=\left( f^{\prime
},g^{\prime },h^{\prime },k^{\prime },r^{\prime }\right) $ are elements of $%
\widehat{\Xi }$, and that $\Phi =(\Phi_n) _{n\in \omega }\in \left( \omega
^{\omega }\right) ^{\omega }$ is a sequence of functions from $\omega $ to $%
\omega $. We say that $\Phi $ is a \emph{code for a *-homomorphism} from $%
\widehat{A}$ to $\widehat{A}^{\prime }$ if the following conditions hold:

\begin{enumerate}
\item The sequence $\left( \Phi _{n}(k) \right) _{n\in \omega }$ is Cauchy
uniformly in $k\in \omega $ with respect to the metric $d_A$, and in
particular converges to an element $\widehat{\Phi }(k) $ of $\widehat{A}$;

\item The map $k\mapsto \widehat{\Phi }(k)$ is a contractive *-homomorphism
of $\mathbb{Q}(i)$-$\ast $-algebras, and hence it induces a *-homomorphism $%
\widehat{\Phi }$ from $\widehat{A}$ to $\widehat{A}^{\prime }$.
\end{enumerate}

We say that $\Phi $ is a \emph{code for an isomorphism} from $\widehat{A}$
to $\widehat{A}^{\prime }$ if $\Phi $ is a code for a *-homomorphism from $%
\widehat{A}$ to $\widehat{A}^{\prime }$, and $\widehat{\Phi }$ is an
isomorphism. If $\Phi $ and $\Phi ^{\prime }$ are codes for *-homomorphisms
from $\widehat{A}$ to $\widehat{A}^{\prime }$ and from $\widehat{A}^{\prime
} $ to $\widehat{A}^{\prime \prime }$ respectively, we define their
composition $\Phi ^{\prime }\circ \Phi $, which will be a code for a
*-homomorphism from $\widehat{A}$ to $\widehat{A^{\prime \prime }}$, by $%
\left( \Phi ^{\prime }\circ \Phi \right) _{n}=\Phi _{n}^{\prime }\circ \Phi
_{n}$ for $n\in \omega $.
\end{definition}

\begin{remark}
It is easily checked that $\Phi ^{\prime }\circ \Phi \in \left( \omega
^{\omega }\right) ^{\omega }$ is a code for the *-homomorphism $\widehat{%
\Phi }^{\prime }\circ \widehat{\Phi }$ from $\widehat{A}$ to $\widehat{A}%
^{\prime \prime }$.
\end{remark}

One can verify that the set $\mathcal{C}_{\widehat{\Xi }}$ of triples $%
\left( A,A^{\prime },\Phi \right) \in \widehat{\Xi }\times \widehat{\Xi }%
\times \left( \omega ^{\omega }\right) ^{\omega }$ such that $\Phi $ is a
code for a *-homomorphism from $\widehat{A}$ to $\widehat{A}^{\prime }$, is
Borel. We can regard $\mathcal{C}_{\widehat{\Xi }}$ as a standard
semigroupoid having $\widehat{\Xi }$ as set of objects, where the
composition of $\left( A,A^{\prime },\Phi \right) $ and $\left( A^{\prime
},A^{\prime \prime },\Phi ^{\prime }\right) $ is $\left( A^{\prime
},A^{\prime \prime },\Phi ^{\prime }\circ \Phi \right) $, and the source and
range of $\left( A,A^{\prime },\Phi \right) $ are $A$ and $A^{\prime }$
respectively. The semigroupoid morphism $\left( A,A^{\prime },\Phi \right)
\mapsto \left( \widehat{A},\widehat{A}^{\prime },\widehat{\Phi }\right) $
defines a parametrization of the category of C*-algebras with
*-homomorphisms. It is easy to see that this is a good parametrization in
the sense of Definition \ref{Definition: equivalent parametrizations}. In
particular, the set $\mathrm{Iso}_{\widehat{\Xi }}$ of elements $\left(
A,A^{\prime },\Phi \right) $ of $\widehat{\Xi }\times \widehat{\Xi }\times
\left( \omega ^{\omega }\right) ^{\omega }$ such that $\Phi $ is a code for
an isomorphism from $\widehat{A}$ to $\widehat{A}^{\prime }$, is Borel.

\subsection{The space $\mathcal{C}_{\Xi }$\label{Subsection: XI}}

We denote by $\Xi $ the $G_{\delta }$ subset of $\mathbb{R}^{\mathcal{U}}$
consisting of the nonzero functions $\delta \colon \mathcal{U}\rightarrow 
\mathbb{R}$ such that there exists a C*-algebra $A$ and a dense subset $%
\gamma =\left( \gamma _{n}\right) _{n\in \omega }$ of $A$, such that 
\begin{equation*}
\delta (p)=\left\Vert p(\gamma )\right\Vert .
\end{equation*}%
It could be observed that, differently from \cite[Subsection 2.3]%
{farah_turbulence_2014}, we are not considering the function constantly
equal to zero as an element of $\Xi $; this choice is just for convenience
and will play no role in the rest of the discussion. Observe that any
element $\delta $ of $\Xi $ determines a seminorm on the $\mathbb{Q}(i)$-
*-algebra $\mathcal{U}$; therefore one can consider the corresponding
Hausdorff completion of $\mathcal{U}$. Denote by $I_{\delta }$ the ideal of $%
\mathcal{U}$ given by 
\begin{equation*}
I_{\delta }=\{p\in \mathcal{U}\colon \delta (p)=0\}.
\end{equation*}%
Then $\mathcal{U}/I_{\delta }$ is a normed $\mathbb{Q}(i)$-$\ast $-algebra.
Its completion is a C*-algebra, which we shall denote by $\widehat{\delta }$%
. (Notice that what we denote by $\widehat{\delta }$, is denoted by $%
B(\delta )$ in \cite[Subsection 2.3]{farah_turbulence_2014}.)

\begin{definition}
Let $\delta$ and $\delta ^{\prime }$ be elements in $\Xi $, and let $\Phi
=(\Phi_n) _{n\in \omega }\in \left( \mathcal{U}^{\mathcal{U}}\right)
^{\omega }$ be a sequence of functions from $\mathcal{U}$ to $\mathcal{U}$.
We say that $\Phi $ is a \emph{code for a *-homomorphism} from $\widehat{%
\delta }$ to $\widehat{\delta }^{\prime }$, if

\begin{enumerate}
\item for every $p\in \mathcal{U}$, the sequence $\left( \Phi _{n}(p)\right)
_{n\in \omega }$ is Cauchy uniformly in $p\in \mathcal{U}$, with respect to
the pseudometric $\left( q,q^{\prime }\right) \mapsto \delta \left(
q-q^{\prime }\right) $ on $\mathcal{U}$, and in particular converges in $%
\widehat{\delta }$ to an element $\widehat{\Phi }(p)$;

\item $p\mapsto \widehat{\Phi }(p)$ is a morphism of $\mathbb{Q}(i)$-$\ast $%
-algebras such that $\left\Vert \widehat{\Phi }(p)\right\Vert \leq \delta
(p) $, and hence induces a *-homomorphism from $\widehat{\delta }$ to $%
\widehat{\delta }^{\prime }$.
\end{enumerate}
\end{definition}

Writing down explicit formulas defining a code for a *-homomorphism makes it
clear that the set $\mathcal{C}_{\Xi }$ of triples $\left( \delta ,\delta
^{\prime },\Phi \right) \in \Xi \times \Xi \times \left( \mathcal{U}^{%
\mathcal{U}}\right) ^{\omega }$ such that $\Phi $ is a code for a
*-homomorphism from $\widehat{\delta }$ to $\widehat{\delta ^{\prime }}$ is
Borel. Suppose that $\Phi ,\Phi ^{\prime }$ are code for *-homomorphisms
from $\delta $ to $\delta ^{\prime }$ and from $\delta ^{\prime }$ to $%
\delta ^{\prime \prime }$. Similarly as in Subsection \ref{Subsection: XI
hat}, it is easy to check that defining%
\begin{equation*}
\left( \Phi ^{\prime }\circ \Phi \right) _{n}=\Phi _{n}^{\prime }\circ \Phi
_{n}
\end{equation*}%
for $n\in \omega $ gives a code for a *-homomorphism from $\delta $ to $%
\delta ^{\prime \prime }$. This defines a standard Borel semigroupoid
structure on $\mathcal{C}_{\Xi }$, such that the map $\left( \delta ,\delta
^{\prime },\Phi \right) \mapsto \left( \widehat{\delta },\widehat{\delta
^{\prime }},\widehat{\Phi }\right) $ is a good standard Borel
parametrization of the category of C*-algebras.

\subsection{The space $\mathcal{C}_{\Gamma (H)}$\label{Subsection: GAMMA}}

Denote by $B_{1}(H)$ the unit ball of $B(H)$ with respect to the operator
norm. Recall that $B_{1}(H)$ is a compact Hausdorff space when endowed with
the weak operator topology. The standard Borel structure generated by the
weak topology on $B_{1}(H)$ coincide with the Borel structure generated by
several other operator topologies on $B_{1}(H)$, such as the $\sigma $-weak,
strong, $\sigma $-strong, strong-*, and $\sigma $-strong-* operator
topology; see \cite[I.3.1.1]{blackadar_operator_2006}. Denote by $%
B_{1}(H)^{\omega }$ the product of countable many copies of $B_{1}(H)$,
endowed with the product topology, and define $\Gamma (H)$ to be the Polish
space obtained by removing from $B_{1}(H)^{\omega }$ the sequence constantly
equal to $0$. (The space $\Gamma (H)$ is defined similarly in \cite[%
Subection 2.1]{farah_turbulence_2014}; the only difference is that here the
sequence constantly equal to $0$ is excluded for convenience.) Given an
element $\gamma $ in $\Gamma (H)$, denote by $C^{\ast }(\gamma )$ the
C*-subalgebra of $B(H)$ generated by $\{\gamma _{n}\colon n\in \omega \}$.
As explained in \cite[Subsection 2.1 and Remark 2.3]{farah_turbulence_2014},
the space $\Gamma (H)$ can be thought of as a natural parametrization of 
\emph{concrete C*-algebras}.

\begin{definition}
Let $\gamma $ and $\gamma ^{\prime }$ be elements in $\Gamma (H)$, and let $%
\Phi =(\Phi _{n})_{n\in \omega }\in \left( \mathcal{U}^{\mathcal{U}}\right)
^{\omega }$ be a sequence of functions from $\mathcal{U}$ to $\mathcal{U}$.
We say that $\Phi $ is a \emph{code for a *-homomorphism }from $C^{\ast
}(\gamma )$ to $C^{\ast }\left( \gamma ^{\prime }\right) $, if

\begin{enumerate}
\item the sequence $\left( \Phi _{n}(p)(\gamma ^{\prime })\right) _{n\in
\omega }$ of elements of $C^{\ast }(\gamma ^{\prime })$ is Cauchy uniformly
in $p$, and hence converges to an element $\widehat{\Phi }\left( p(\gamma
)\right) $ of $C^{\ast }(\gamma ^{\prime })$;

\item the function $p(\gamma )\mapsto \widehat{\Phi }\left( p(\gamma
)\right) $ extends to a *-homomorphism from $C^{\ast }(\gamma )$ to $C^{\ast
}\left( \gamma ^{\prime }\right) $.
\end{enumerate}
\end{definition}

Again, it is easily checked that the set $\mathcal{C}_{\Gamma (H)}$ of
triples $\left( \gamma ,\gamma ^{\prime },\Phi \right) $ such that $\Phi $
is a code for a *-homomorphism from $C^{\ast }\left( \gamma \right) $ to $%
C^{\ast }\left( \gamma ^{\prime }\right) $, is Borel. Moreover, one can
define a standard Borel semigroupoid structure on $\mathcal{C}_{\Gamma (H)}$%
, in such a way that the map $\left( \gamma ,\gamma ^{\prime },\Phi \right)
\mapsto \left( C^{\ast }(\gamma ),C^{\ast }\left( \gamma ^{\prime }\right) ,%
\widehat{\Phi }\right) $ is a good parametrization of the category of
C*-algebras.

For future reference, we show in Lemma \ref{Lemma: Borel inverse} below that
in the parametrization $\mathcal{C}_{\Gamma (H)}$ one can compute a code for
the inverse of an isomorphism in a Borel way.

\begin{lemma}
\label{Lemma: Borel inverse} There is a Borel map from $\mathrm{Iso}_{\Gamma
(H)}$ to $\left( \mathcal{U}^{\mathcal{U}}\right) ^{\omega }$, assigning to
an element $\left( \gamma ,\gamma ^{\prime },\Phi \right) $ of $\mathrm{Iso}%
_{\Gamma (H)}$ a code $\mathrm{Inv}\left( \gamma ,\gamma ^{\prime },\Phi
\right) $ for an isomorphism from $C^{\ast }\left( \gamma ^{\prime }\right) $
to $C^{\ast }(\gamma )$ such that $\widehat{\mathrm{Inv}(\gamma ,\gamma
^{\prime },\Phi )}=\widehat{\Phi }^{-1}$.
\end{lemma}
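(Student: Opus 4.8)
The plan is to build a code $\Psi=(\Psi_n)_{n\in\omega}$ for $\widehat{\Phi}^{-1}$ by first approximating $\widehat{\Phi}^{-1}$ on the generators $\gamma'_k$ of $C^*(\gamma')$ by $*$-polynomials evaluated at $\gamma$, and then propagating these approximations through the algebraic structure by substitution. The starting observation is that, since $(\gamma,\gamma',\Phi)\in\mathrm{Iso}_{\Gamma(H)}$, the $*$-homomorphism $\widehat{\Phi}$ is an isomorphism and hence isometric; writing $a_k=\widehat{\Phi}^{-1}(\gamma'_k)\in C^*(\gamma)$, we therefore have, for every $q\in\mathcal{U}$,
\[
\|\widehat{\Phi}(q(\gamma))-\gamma'_k\|=\|q(\gamma)-a_k\|,
\]
so that a good polynomial approximation of $a_k$ inside $C^*(\gamma)$ is detected on the image side, where $\widehat{\Phi}$ is computable from the code $\Phi$.

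First I would carry out a Borel selection of approximating polynomials. For fixed $k,j\in\omega$, consider the set of those $\big((\gamma,\gamma',\Phi),q\big)\in\mathrm{Iso}_{\Gamma(H)}\times\mathcal{U}$ with $\|\widehat{\Phi}(q(\gamma))-\gamma'_k\|<2^{-j}$. Since $\widehat{\Phi}(q(\gamma))=\lim_m\Phi_m(q)(\gamma')$ in norm and $\gamma'_k=X_k(\gamma')$, the left-hand norm equals $\lim_m\|(\Phi_m(q)-X_k)(\gamma')\|$; as $\Phi_m(q)-X_k\in\mathcal{U}$ and the evaluation norm $(\gamma',r)\mapsto\|r(\gamma')\|$ is Borel in the parametrization of \cite{farah_turbulence_2014}, this set is Borel. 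By surjectivity of $\widehat{\Phi}$ we have $a_k\in C^*(\gamma)=\overline{\{q(\gamma):q\in\mathcal{U}\}}$, so for each $(\gamma,\gamma',\Phi)$ and each $j$ there is $q$ with $\|q(\gamma)-a_k\|<2^{-j}$; that is, every element of the standard Borel space $\mathrm{Iso}_{\Gamma(H)}$ admits a witness $q$ in the countable set $\mathcal{U}$. The Borel selector principle recorded in the introduction then yields a Borel map $(\gamma,\gamma',\Phi)\mapsto q_{k,j}\in\mathcal{U}$ with $\|q_{k,j}(\gamma)-a_k\|<2^{-j}$.

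Next I would assemble the code. Define $\Psi_n\colon\mathcal{U}\to\mathcal{U}$ to be the substitution map $\Psi_n(p)=p\big(q_{0,n},q_{1,n},\dots\big)$, i.e. the $\mathbb{Q}(i)$-$*$-algebra endomorphism of $\mathcal{U}$ sending each variable $X_k$ to $q_{k,n}$ (only finitely many variables occur in any $p$, so this is well defined). The assignment $(\gamma,\gamma',\Phi)\mapsto(\Psi_n)_n\in(\mathcal{U}^{\mathcal{U}})^{\omega}$ is Borel, being built from the Borel maps $q_{k,n}$ by the fixed combinatorial operation of substitution on the countable set $\mathcal{U}$. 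Since $q_{k,n}(\gamma)\to a_k$ and polynomial evaluation is continuous in its finitely many entries, for each fixed $p$ we obtain $\Psi_n(p)(\gamma)=p\big(q_{0,n}(\gamma),q_{1,n}(\gamma),\dots\big)\to p(a_0,a_1,\dots)=\widehat{\Phi}^{-1}(p(\gamma'))$, the last equality because $\widehat{\Phi}^{-1}$ is a $*$-homomorphism. This gives condition (1) in the definition of a code; and since each $\Psi_n$ and the evaluation $r\mapsto r(\gamma)$ are $*$-algebra homomorphisms, the limit map $\widehat{\Psi}$ is a $*$-homomorphism, giving condition (2). Finally $\widehat{\Psi}(\gamma'_k)=\lim_n q_{k,n}(\gamma)=a_k=\widehat{\Phi}^{-1}(\gamma'_k)$, so $\widehat{\Psi}$ and $\widehat{\Phi}^{-1}$ agree on the generators and hence $\widehat{\Psi}=\widehat{\Phi}^{-1}$. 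Setting $\mathrm{Inv}(\gamma,\gamma',\Phi)=\Psi$ then completes the construction.

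The main obstacle I expect is bookkeeping around the two nested limits and the uniformity demanded by condition (1), rather than any conceptual difficulty. One must keep the norm computation Borel while passing through the limit $m\to\infty$ defining $\widehat{\Phi}$, which is handled by writing the target norm as the pointwise limit of the Borel functions $\|(\Phi_m(q)-X_k)(\gamma')\|$; and one must control the convergence of $\Psi_n(p)(\gamma)$ well enough to meet the Cauchy requirement, which is handled by the geometric rate $2^{-n}$ built into the selection together with the isometry of $\widehat{\Phi}$, which transfers norm estimates between the $\gamma$- and $\gamma'$-sides. Verifying that substitution interacts correctly with these estimates is where the genuine work lies.
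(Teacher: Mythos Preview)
Your approach has a genuine gap at exactly the point you flag as the main obstacle. The paper's definition of a code in the parametrization $\mathcal{C}_{\Gamma(H)}$ requires that the sequence $(\Psi_n(p)(\gamma))_{n\in\omega}$ be Cauchy \emph{uniformly in $p\in\mathcal{U}$}, not merely for each fixed $p$. Your substitution construction $\Psi_n(p)=p(q_{0,n},q_{1,n},\ldots)$ gives only pointwise convergence: the rate at which $\Psi_n(p)(\gamma)\to\widehat{\Phi}^{-1}(p(\gamma'))$ depends on $p$. For instance, with $p=\lambda X_0$ and $\lambda\in\mathbb{Q}$ one has
\[
\bigl\|\Psi_n(p)(\gamma)-\widehat{\Phi}^{-1}(p(\gamma'))\bigr\|=|\lambda|\,\bigl\|q_{0,n}(\gamma)-a_0\bigr\|,
\]
which for fixed $n$ is unbounded as $|\lambda|\to\infty$. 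The geometric rate $2^{-n}$ on the generators therefore cannot propagate to a uniform rate over all of $\mathcal{U}$, and no bookkeeping will repair this while keeping each $\Psi_n$ a substitution homomorphism. This uniformity is not a cosmetic requirement: it is what makes the semigroupoid composition $(\Phi'\circ\Phi)_n=\Phi'_n\circ\Phi_n$ land back in the space of codes.

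The paper's proof sidesteps the issue by performing the Borel selection not just for the generators but for \emph{every} $q\in\mathcal{U}$: for each $q$ and each $n$ it picks $p_{(\xi,q,n)}\in\mathcal{U}$ with $\|q(\gamma')-\Phi_M(p_{(\xi,q,n)})(\gamma')\|<\tfrac{1}{n}$ for all large $M$ (the same image-side detection you use), and sets $\mathrm{Inv}(\xi)_n(q)=p_{(\xi,q,n)}$. Isometry of $\widehat{\Phi}$ then gives $\|\mathrm{Inv}(\xi)_n(q)(\gamma)-\widehat{\Phi}^{-1}(q(\gamma'))\|\leq\tfrac{1}{n}$ uniformly in $q$ by construction. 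The price is that $\mathrm{Inv}(\xi)_n$ is no longer a $\ast$-algebra map on $\mathcal{U}$, but the definition of a code never asks for that---only the limit map must be a $\ast$-homomorphism, and here the limit is $\widehat{\Phi}^{-1}$ on the nose. Your selection argument for the generators is correct and is exactly the mechanism the paper uses; you simply need to run it once per $q\in\mathcal{U}$ rather than once per variable.
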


\begin{proof}
Observe that the set $\mathcal{E}$ of tuples 
\begin{equation*}
\left( \left( \gamma ,\gamma ^{\prime },\Phi \right) ,p,n,q,N\right) \in 
\mathrm{Iso}_{\Gamma (H)}\times \mathcal{U}\times \omega \times \mathcal{U}%
\times \omega
\end{equation*}%
such that 
\begin{equation*}
\left\Vert q\left( \gamma ^{\prime }\right) -\Phi _{M}(p)\left( \gamma
^{\prime }\right) \right\Vert <\frac{1}{n},
\end{equation*}%
and 
\begin{equation*}
\left\Vert \Phi _{M^{\prime }}(p)\left( \gamma ^{\prime }\right) -\Phi
_{M}(p)\left( \gamma ^{\prime }\right) \right\Vert <\frac{1}{n},
\end{equation*}%
for every $M,M^{\prime }\geq N$ is Borel. Therefore one can find Borel
functions $\left( \xi ,q,n\right) \mapsto p_{\left( \xi ,p,n\right) }$ and $%
\left( \xi ,p,n\right) \mapsto N_{\left( \xi ,p,n\right) }$ from $\mathrm{Iso%
}_{\Gamma (H)}\times \mathcal{U}\times \omega $ to $\mathcal{U}$ and $\omega 
$ respectively such that%
\begin{equation*}
\left( \xi ,q,n,p_{\left( \xi ,q,n\right) },N_{\left( \xi ,q,n\right)
}\right) \in \mathcal{E}
\end{equation*}%
for every $\left( \xi ,q,n\right) \in \mathrm{Iso}_{\Gamma (H)}\times 
\mathcal{U}\times \omega $. Defining now $\mathrm{Inv}(\xi )_{n}(q)=p_{(\xi
,q,n)}$ for every $n\in \omega $ and $q\in \mathcal{U}$ one obtains a Borel
map $\xi \mapsto \mathrm{Inv}\left( \xi \right) $. Moreover,%
\begin{align*}
& \left\Vert \mathrm{Inv}(\xi )_{n}(q)(\gamma )-\widehat{\Phi }^{-1}(q\left(
\gamma ^{\prime }\right) )\right\Vert \\
& \ \ \ \ \ \ \ \ \ \ \ \ \leq \left\Vert p_{(\xi ,q,n)}\left( \gamma
\right) -\widehat{\Phi }^{-1}(\Phi _{N_{(\xi ,q,n)}}(p)\left( \gamma
^{\prime }\right) )\right\Vert +\frac{1}{n} \\
& \ \ \ \ \ \ \ \ \ \ \ \ =\left\Vert \widehat{\Phi }(p_{(\xi ,k,n)}\left(
\gamma \right) )-\Phi _{N_{(\xi ,q,n)}}(p)\left( \gamma ^{\prime }\right)
\right\Vert +\frac{1}{n} \\
& \ \ \ \ \ \ \ \ \ \ \ \leq \frac{1}{2n}\text{.}
\end{align*}%
This shows that $\mathrm{Inv}\left( \xi \right) $ is a code for the inverse
of $\widehat{\Phi }$.
\end{proof}

\subsection{Equivalence of $\mathcal{C}_{\widehat{\Xi }}$, $\mathcal{C}%
_{\Xi} $ and $\mathcal{C}_{\Gamma }$.}

Recall that given an element $\delta $ of $\Xi $, we denote by $I_{\delta }$
the ideal of $\mathcal{U}$ given by 
\begin{equation*}
I_{\delta }=\{p\in \mathcal{U}\colon \delta (p)=0\}.
\end{equation*}

\begin{theorem}
\label{theorem: equivalence of parametrizations of C-algebras} The good
parametrizations $\mathcal{C}_{\widehat{\Xi }}$, $\mathcal{C}_{\Xi }$, and $%
\mathcal{C}_{\Gamma }$, of the category of C*-algebras with *-homomorphisms,
are equivalent in the sense of Definition \ref{Definition: equivalent
parametrizations}.
\end{theorem}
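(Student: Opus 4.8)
The plan is to verify the definition of equivalence directly: for each ordered pair among $\mathcal{C}_{\widehat{\Xi}}$, $\mathcal{C}_{\Xi}$, and $\mathcal{C}_{\Gamma}$ I will exhibit a morphism in the sense of Definition~\ref{Definition: equivalent parametrizations}. Since it is routine to check that morphisms of good parametrizations compose (the Borel maps $g,G$ compose, and the natural isomorphisms $\eta$ paste along the intertwining relations (4) and (6)), it suffices to produce morphisms in both directions for the two adjacent pairs $(\mathcal{C}_{\widehat{\Xi}},\mathcal{C}_{\Xi})$ and $(\mathcal{C}_{\Xi},\mathcal{C}_{\Gamma})$, and then compose to handle $(\mathcal{C}_{\widehat{\Xi}},\mathcal{C}_{\Gamma})$. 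At the level of objects, the required Borel maps and their effect on the underlying C*-algebras are essentially those used in \cite[Section 2]{farah_turbulence_2014} to establish weak equivalence of $\widehat{\Xi}$, $\Xi$, and $\Gamma(H)$; the genuinely new content is to lift these to the semigroupoid level, that is, to transport \emph{codes for *-homomorphisms} in a Borel way and to supply the natural isomorphisms $\eta$ witnessing conditions (2), (5), and (6).

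For the pair $(\mathcal{C}_{\widehat{\Xi}},\mathcal{C}_{\Xi})$ the identifications are almost tautological. Fix once and for all a bijection $e\colon\omega\to\mathcal{U}$. On objects, to $A\in\widehat{\Xi}$ I associate the element $\delta_A\in\Xi$ given by $\delta_A(p)=\|p(0,1,2,\dots)\|_{r}$, evaluating $p$ in the coded algebra on $\omega$ (whose own elements form a dense sequence) and taking the coded norm; conversely, to $\delta\in\Xi$ I associate the structure on $\omega$ obtained by transporting the operations of $\mathcal{U}$ along $e$ and setting the coded norm of $n$ to be $\delta(e(n))$. Both maps are Borel, and each induces, via $p\mapsto p(0,1,2,\dots)$ respectively $n\mapsto[e(n)]$, a canonical isomorphism of the associated C*-algebras, which serves as $\eta$. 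A code $\Phi\in(\mathcal{U}^{\mathcal{U}})^{\omega}$ and a code $\Psi\in(\omega^{\omega})^{\omega}$ are interchanged by conjugating with $e$; the defining Cauchy and contractivity conditions match because the two norms agree under $\eta$, so $G$ is Borel and (6) holds on the nose.

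The substantive pair is $(\mathcal{C}_{\Xi},\mathcal{C}_{\Gamma})$. In the direction $\mathcal{C}_{\Gamma}\to\mathcal{C}_{\Xi}$ the object map sends $\gamma\in\Gamma(H)$ to $\delta_\gamma\in\Xi$, where $\delta_\gamma(p)=\|p(\gamma)\|$; this is Borel since polynomial evaluation and the operator norm are Borel on $\Gamma(H)$, and the map $p(\gamma)\mapsto[p]$ is the canonical isomorphism $C^{\ast}(\gamma)\to\widehat{\delta_\gamma}$ playing the role of $\eta$. Crucially, a code for a *-homomorphism in $\mathcal{C}_{\Gamma}$ and in $\mathcal{C}_{\Xi}$ is literally the same type of object, a sequence in $(\mathcal{U}^{\mathcal{U}})^{\omega}$, and under $\eta$ the two sets of defining conditions coincide; hence $G$ may be taken to be the identity on codes and (6) is immediate. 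For the reverse direction $\mathcal{C}_{\Xi}\to\mathcal{C}_{\Gamma}$ I need a Borel map $\Lambda\colon\Xi\to\Gamma(H)$ together with isomorphisms $C^{\ast}(\Lambda(\delta))\cong\widehat{\delta}$; this is exactly a Borel selection of a faithful representation of $\widehat{\delta}$ on the fixed separable Hilbert space $H$, which is provided by \cite[Section 2]{farah_turbulence_2014}. On morphisms I again transport codes, invoking Lemma~\ref{Lemma: Borel inverse} when an inverse code is needed to express the pushed-forward *-homomorphism in the generators of $C^{\ast}(\Lambda(\delta'))$.

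The main obstacle is precisely this reverse direction: producing in a Borel fashion a faithful representation of the abstract algebra $\widehat{\delta}$ on a single fixed $H$, and then checking that the resulting $\eta$ intertwines the two functors on morphisms, that is, that the naturality square (6) commutes and that conditions (4) and (5) relating source, range, and $G$ hold. Once the representation is selected Borel-measurably, the remaining verifications are bookkeeping: each condition reduces to the equality of two elements of the same C*-algebra computed along $\eta$, and the Cauchy and contractivity clauses transfer because $\eta$ is isometric. Assembling the two-way morphisms for the adjacent pairs and composing then yields the equivalence of all three parametrizations.
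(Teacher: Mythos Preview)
Your approach is essentially the paper's: handle the two adjacent pairs and lift the object-level maps from \cite[Section~2]{farah_turbulence_2014} to the semigroupoid level, observing for $(\mathcal{C}_{\Xi},\mathcal{C}_{\Gamma})$ that codes for *-homomorphisms are literally the same type of data on both sides so $G$ can be taken to be the identity on codes. Two small corrections: for $\Xi\to\widehat{\Xi}$ your fixed bijection $e\colon\omega\to\mathcal{U}$ only yields a \emph{seminorm} on $\omega$ (since $\delta$ may vanish on nonzero polynomials), so to land in $\widehat{\Xi}$ one must instead enumerate $\mathcal{U}/I_{\delta}$ via a $\delta$-dependent Borel selector, as the paper does; and the appeal to Lemma~\ref{Lemma: Borel inverse} in the $\Xi\to\Gamma$ direction is unnecessary, since the map $\gamma_{\delta}$ of \cite[Proposition~2.7]{farah_turbulence_2014} is constructed so that $(\gamma_{\delta})_k$ corresponds to $X_k$ under $\eta_{\delta}$, and hence the same code $\Psi\in(\mathcal{U}^{\mathcal{U}})^{\omega}$ works verbatim.
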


\begin{proof}
We will show first that $\mathcal{C}_{\widehat{\Xi }}$ and $\mathcal{C}_{\Xi
}$ are equivalent.\newline
\indent We start by constructing a morphism from $\mathcal{C}_{\Xi }$ to $%
\mathcal{C}_{\widehat{\Xi }}$ as in Definition \ref{Definition: equivalent
parametrizations} as follows. As in the proof of \cite[Proposition 2.6]%
{farah_turbulence_2014}, for every $n\in \omega $ define a Borel map $%
p_{n}\colon \Xi \rightarrow \mathcal{U}$, denoted $\delta \mapsto
p_{n}^{\delta }$ for $\delta $ in $\Xi $, such that%
\begin{equation*}
\left\{ p_{n}^{\delta }+I_{\delta }\colon n\in \omega \right\}
\end{equation*}%
is an enumeration of $\mathcal{U}/I_{\delta }$ for every $\delta \in \Xi $ .
For $\delta \in \Xi $, define a structure of C*-normed $\mathbb{Q}(i)$-$\ast 
$-algebra $A_{\delta }=\left( f_{\delta },g_{\delta ,}h_{\delta },k_{\delta
},r_{\delta }\right) $ on $\omega $ by:

\begin{itemize}
\item $m+_{f_{\delta }}n=t$ whenever $p_{m}^{\delta }+p_{n}^{\delta
}+I_{\delta }=p_{t}^{\delta }+I_{\delta }$;

\item $q\cdot _{g_{\delta }}m=t$ whenever $q\cdot p_{m}^{\delta }+I_{\delta
}=p_{t}^{\delta }+I_{\delta }$;

\item $m\cdot _{h_{\delta }}n=t$ whenever $q_{m}^{\delta }q_{n}^{\delta
}+I_{\delta }=q_{t}^{\delta }+I_{\delta }$;

\item $m^{\ast k_{\delta }}=t$ whenever $\left( q_{m}^{\delta }\right)
^{\ast }+I_{\delta }=q_{t}^{\delta }+I_{\delta }$;

\item $\left\Vert m\right\Vert _{r_{\delta }}=\delta \left( q_{m}^{\delta
}\right) $.
\end{itemize}

It is clear that the map $\delta \mapsto A_{\delta }$ is Borel. Moreover,
for fixed $\delta $ in $\Xi $, the map $n\mapsto p_{n}^{\delta }+I_{\delta }$
is an isomorphism of normed $\mathbb{Q}(i)$-$\ast $-algebras from $A_{\delta
}$ onto $\mathcal{U}/I_{\delta }$. We denote by $\eta _{\delta }\colon 
\widehat{A}_{\delta }\rightarrow \widehat{\delta }$ the induced isomorphism
of C*-algebras.

Now, if $\xi =\left( \delta ,\delta ^{\prime },\Phi \right) $ belongs to $%
\mathcal{C}_{\Xi }$, define $\Psi _{\xi }\in \left( \mathcal{U}^{\mathcal{U}%
}\right) ^{\omega }$ by%
\begin{equation*}
(\Psi _{\xi })_{n}(m)=k\text{ whenever }\Phi _{n}\left( p_{m}^{\delta
}\right) +I_{\delta }=p_{k}^{\delta }+I_{\delta }\text{,}
\end{equation*}%
for $n,m$ and $k$ in $\omega $. It is not difficult to check that $\Psi
_{\xi }$ is a code for a *-homomorphism from $A_{\delta }$ to $A_{\delta
^{\prime }}$, and that the assignment $\xi \mapsto \Psi _{\xi }$ is Borel.
Thus, the map from $\mathcal{C}_{\Xi }$ to $\mathcal{C}_{\widehat{\Xi }}$
that assigns to the element $\xi =\left( \delta ,\delta ^{\prime },\Phi
\right) $ in $\mathcal{C}_{\Xi }$, the element $\left( A_{\delta },A_{\delta
^{\prime }},\Psi _{\xi }\right) $ of $\mathcal{C}_{\widehat{\Xi }}$, is
Borel. Finally, it is easily verified that the map%
\begin{equation*}
\xi =\left( \delta ,\delta ^{\prime },\Phi \right) \mapsto \left( \widehat{A}%
_{\delta },\widehat{A}_{\delta ^{\prime }},\widehat{\Psi }_{\xi }\right)
\end{equation*}%
is a functor from $\mathcal{C}_{\Xi }$ to the category of C*-algebras.
Moreover, if $\xi =\left( \delta ,\delta ^{\prime },\Phi \right) \in \Xi $,
then it follows from the construction that%
\begin{equation*}
\widehat{\Phi }\circ \eta _{\delta }=\eta _{\delta ^{\prime }}\circ \widehat{%
\Psi }_{\xi }\text{.}
\end{equation*}%
\ \newline
\indent We now proceed to construct morphism from $\mathcal{C}_{\widehat{\Xi 
}}$ to $\mathcal{C}_{\Xi }$. This will conclude the proof that $\mathcal{C}_{%
\widehat{\Xi }}$ and $\mathcal{C}_{\Xi }$ are equivalent parametrizations
according to Definition \ref{Definition: equivalent parametrizations}. 
\newline
\indent For $A\in \widehat{\Xi }$ and $p\in \mathcal{U}$, denote by $p_{A}$
the evaluation of $p$ in the $\mathbb{Q}(i)$-$\ast $-algebra on $\omega $
coded by $A$, where the formal variable $X_{j}$ is replaced by $j$ for every 
$j\in \omega $. Write $A=\left( f,g,h,k,r\right) $, and define an element $%
\delta _{A}$ of $\Xi $ by%
\begin{equation*}
\delta _{A}(p)=\left\Vert p_{A}\right\Vert _{r}\text{,}
\end{equation*}%
for all $p$ in $\mathcal{U}$. It is easily checked that the map $A\mapsto
\delta _{A}$ is a Borel function from $\widehat{\Xi }$ to $\Xi $. For every $%
n\in \omega $, define a Borel map $p_{n}\colon \widehat{\Xi }\rightarrow 
\mathcal{U}$, denoted $A\mapsto p_{n}^{A}$ for $A$ in $\widehat{\Xi }$, such
that%
\begin{equation*}
\left\{ p_{n}^{A}+I_{\delta _{A}}\colon n\in \omega \right\}
\end{equation*}%
is an enumeration of $\mathcal{U}/I_{\delta _{A}}$. The function $n\mapsto
p_{n}^{A}+I_{\delta _{A}}$ induces an isomorphism of normed $\mathbb{Q}(i)$-$%
\ast $-algebras, from $\omega $ with the structure coded by $A$, and $%
\mathcal{U}I_{\delta _{A}}$. One checks that this isomorphism induces a
C*-algebra isomorphism between $\widehat{A}$ and $\widehat{\delta }_{A}$.

For $\xi =\left( A,A^{\prime },\Psi \right) \in \mathcal{C}_{\widehat{\Xi }}$%
, define $\Psi _{\xi }\in \left( \mathcal{U}^{\mathcal{U}}\right) ^{\omega }$
by%
\begin{equation*}
\left( \Psi _{\xi }\right) _{n}(p)=q_{m}^{A}\text{ whenever }p+I_{\delta
_{A}}=p_{k}^{A}+I_{\delta _{A}}\text{ and }\Psi _{n}(k)=m\text{.}
\end{equation*}%
It can easily be checked that

\begin{itemize}
\item $\Psi _{\xi }$ is a code for a *-homomorphism from $\widehat{\delta }%
^{A}$ to $\widehat{\delta }_{A^{\prime }}$,

\item the map $\xi \mapsto \Psi _{\xi }$ is Borel, and

\item $\widehat{\Psi }_{\xi }\circ \eta_A=\eta _{A^{\prime }}\circ \widehat{%
\Psi }_\xi$.
\end{itemize}

This concludes the proof that $\mathcal{C}_{\Xi }$ and $\mathcal{C}_{%
\widehat{\Xi }}$ are equivalent good parametrizations of the category of
C*-algebras.\newline
\ \newline
\indent We proceed to show that $\mathcal{C}_{\Xi }$ and $\mathcal{C}%
_{\Gamma }$ are equivalent parametrizations.\newline
\indent Denote by $\delta \colon \Gamma (H)\rightarrow \Xi $ and $\gamma
\colon \Xi \rightarrow \Gamma (H)$ the Borel maps defined in the proof of 
\cite[Proposition 2.7]{farah_turbulence_2014} witnessing the fact that $\Xi $
and $\Gamma (H)$ are weakly equivalent parametrizations in the sense of \cite%
[Definition 2.1]{farah_turbulence_2014}. It is straightforward to check that
the maps $\Delta \colon \mathcal{C}_{\Gamma (H)}\rightarrow \mathcal{C}_{\Xi
}$ and $\Gamma \colon \mathcal{C}_{\Xi }\rightarrow \mathcal{C}_{\Gamma (H)}$
given by 
\begin{equation*}
\Delta (\gamma ,\gamma ^{\prime },\Phi )=(\delta _{\gamma },\delta _{\gamma
^{\prime }},\Phi )\ \ \ \mbox{ and }\ \ \ \Gamma (\delta ,\delta ^{\prime
},\Psi )=(\gamma _{\delta },\gamma _{\delta ^{\prime }},\Psi )
\end{equation*}%
are morphisms of good parametrizations, witnessing the facts that $\mathcal{C%
}_{\Gamma (H)}$ and $\mathcal{C}_{\Xi }$ are equivalent.
\end{proof}

\subsection{Direct limits of C*-algebras\label{Subsection: inductive limits
of C*-algebras}}

An \emph{inductive system} in the category of C*-algebras is a sequence $%
\left( A_{n},\varphi_n\right) _{n\in\omega }$, where for every $n$ in $%
\omega $, $A_n$ is a C*-algebra, and $\varphi_n\colon A_n\to A_{n+1}$ is a
*-homomorphism. The \emph{inductive limit} of the inductive system $\left(
A_{n},\varphi _{n}\right) _{n\in \omega }$ is the C*-algebra $\varinjlim
\left( A_{n},\varphi _{n}\right) $ defined as in \cite[II.8.2]%
{blackadar_operator_2006}. It is verified in \cite[Subsection 3.2]%
{farah_turbulence_2014} that the inductive limit of an inductive system of
C*-algebras can be computed in a Borel way. We report here, for the sake of
completeness, a different proof.

We will work in the parametrization $\mathcal{C}_{\Xi }$ of the category of
C*-algebras. In view of the equivalence of the parametrizations $\mathcal{C}%
_{\Xi }$, $\mathcal{C}_{\widehat{\Xi }}$, and $\mathcal{C}_{\Gamma (H)}$,
the same result holds if one instead considers either one of the
parametrizations $\mathcal{C}_{\widehat{\Xi }}$ or $\mathcal{C}_{\Gamma (H)}$%
.

Denote by $R_{dir}\left( \Xi \right) $ the set of sequences $\left(
\delta_{n},\Phi _{n}\right) _{n\in \omega }\in \left( \Xi \times \left( 
\mathcal{U}^{\mathcal{U}}\right) ^{\omega }\right) ^{\omega }$ such that $%
\Phi _{n}\mbox{ is a code for a *-homomorphism }\widehat{\delta}%
_{n}\rightarrow \widehat{\delta}_{n+1}$ for every $n\in \omega $. We can
regard $R_{dir}\left( \Xi \right) $ as the standard Borel space
parametrizing inductive systems of C*-algebras. (Though the subscript in $%
R_{dir}$ stands for ``direct system'', we choose the term ``inductive
system'' since we only deal with sequences. Since the notation $R_{dir}$ was
already introduced in \cite{farah_descriptive_2012}, with the same meaning,
we keep it despite calling its elements differently.)

\begin{proposition}
\label{prop: inductive limits} There is a Borel map from $R_{dir}\left( \Xi
\right) $ to $\Xi $ that assigns to an element $\left( \delta _{n},\Phi
_{n}\right) _{n\in \omega }$ of $R_{dir}\left( \Xi \right) $ an element $%
\lambda _{\left( \delta _{n},\Phi _{n}\right) _{n\in \omega }}$ of $\Xi $
such that $\widehat{\lambda }_{_{\left( \delta _{n},\Phi _{n}\right) _{n\in
\omega }}}$ is isomorphic to the inductive limit of the inductive system $%
\left( \widehat{\delta }_{n},\widehat{\Phi }_{n}\right) _{n\in \omega }$.
Moreover, for every $k\in \omega $ there is a Borel map from $R_{dir}\left(
\Xi \right) $ to $\left( \mathcal{U}^{\mathcal{U}}\right) ^{\omega }$ that
assigns to $\left( \delta _{n},\Phi _{n}\right) _{n\in \omega }$ a code $%
I_{k}$ for the canonical *-homomorphism from $\widehat{\delta }_{k}$ to the
inductive limit $\widehat{\lambda }_{_{\left( \delta _{n},\Phi _{n}\right)
_{n\in \omega }}}$.
\end{proposition}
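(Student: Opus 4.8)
The plan is to work throughout in the parametrization $\mathcal{C}_{\Xi}$, as announced, and to produce the seminorm $\lambda=\lambda_{(\delta_n,\Phi_n)_n}$ on $\mathcal{U}$ from an explicit Borel computation of the inductive-limit norm. First I would assemble the connecting maps: for $n\le M$, using the Borel composition of codes $(\Phi'\circ\Phi)_m=\Phi'_m\circ\Phi_m$ from Subsection~\ref{Subsection: XI}, set $\Phi_{n,M}=\Phi_{M-1}\circ\cdots\circ\Phi_n$ (with $\Phi_{n,n}$ the identity code), so that $\widehat{\Phi}_{n,M}$ codes the connecting *-homomorphism $\varphi_{n,M}\colon\widehat{\delta}_n\to\widehat{\delta}_M$ and the assignment $(\delta_m,\Phi_m)_m\mapsto\Phi_{n,M}$ is Borel. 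I would then invoke the standard description of the inductive-limit norm: writing $A_\infty=\varinjlim(\widehat{\delta}_n,\widehat{\Phi}_n)$ and $\varphi_{N,\infty}\colon\widehat{\delta}_N\to A_\infty$ for the canonical maps, one has $\|\varphi_{N,\infty}(x)\|=\inf_{M\ge N}\|\varphi_{N,M}(x)\|$ for $x\in\widehat{\delta}_N$, a decreasing limit since *-homomorphisms are contractive. For $w\in\mathcal{U}$ this reads $\|\varphi_{N,\infty}([w]_{\delta_N})\|=\inf_{M\ge N}\lim_{\ell\to\infty}\delta_M\big((\Phi_{N,M})_\ell(w)\big)$, a countable infimum of countable limits of the coordinate evaluations $\delta_M(\,\cdot\,)$, hence Borel in the data for each fixed $w$.

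Next I would reindex the generators. Fix a Borel bijection $\omega\to\omega\times\omega$, $j\mapsto(n_j,k_j)$, and put $Y_j=\varphi_{n_j,\infty}([X_{k_j}]_{\delta_{n_j}})\in A_\infty$. Since $\bigcup_n\varphi_{n,\infty}(\widehat{\delta}_n)$ is dense in $A_\infty$ and each $\widehat{\delta}_n$ is generated by $\{[X_k]_{\delta_n}\colon k\in\omega\}$, the sequence $(Y_j)_j$ is a dense generating sequence of $A_\infty$. Defining $\lambda(p)=\|p(Y)\|_{A_\infty}$ for $p\in\mathcal{U}$ then gives $\lambda\in\Xi$ and $\widehat{\lambda}\cong A_\infty$ directly from the definition of $\Xi$ (here $A_\infty\neq 0$, so $\lambda$ is nonzero, in the relevant systems where the connecting maps are injective). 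To see $(\delta_n,\Phi_n)_n\mapsto\lambda$ is Borel it suffices, by the product Borel structure on $\mathbb{R}^{\mathcal{U}}$ and countability of $\mathcal{U}$, to check that $p\mapsto\lambda(p)$ is Borel in the data for each fixed $p$. Let $N$ be the largest $n_j$ among the finitely many variables $X_j$ occurring in $p$; each such $Y_j$ is the image of $\widehat{\Phi}_{n_j,N}([X_{k_j}]_{\delta_{n_j}})=\lim_\ell(\Phi_{n_j,N})_\ell(X_{k_j})\in\widehat{\delta}_N$. Substituting these $\mathcal{U}$-valued approximants into $p$ yields $w_\ell\in\mathcal{U}$ with $p(Y)=\varphi_{N,\infty}\big(\lim_\ell[w_\ell]_{\delta_N}\big)$, whence by the previous paragraph
\begin{equation*}
\lambda(p)=\lim_{\ell\to\infty}\;\inf_{M\ge N}\;\lim_{\ell'\to\infty}\delta_M\big((\Phi_{N,M})_{\ell'}(w_\ell)\big).
\end{equation*}
As polynomial substitution is a fixed Borel map $\mathcal{U}\times\mathcal{U}^{r}\to\mathcal{U}$, the composed codes are Borel, and the $\delta_M(\,\cdot\,)$ are coordinate evaluations, the right-hand side is a countable combination of Borel functions, hence Borel; all limits exist by the uniform-Cauchy clauses in the definition of a code together with contractivity.

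For the canonical inclusions I would exploit the same reindexing. Let $\rho_k\colon\mathcal{U}\to\mathcal{U}$ be the relabeling replacing each variable $X_m$ by $X_{j}$, where $j$ is the index with $(n_j,k_j)=(k,m)$; this is a fixed Borel, $n$-independent operation. Setting $I_k$ to be the constant sequence $(\rho_k)_{n\in\omega}$ gives a code from $\widehat{\delta}_k$ to $\widehat{\lambda}$: a constant sequence is trivially uniformly Cauchy, $p\mapsto[\rho_k(p)]_\lambda$ is a morphism of $\mathbb{Q}(i)$-$\ast$-algebras, and the contractivity $\lambda(\rho_k(p))\le\delta_k(p)$ is exactly the contractivity of $\varphi_{k,\infty}$. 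By construction $\widehat{I_k}=\varphi_{k,\infty}$, and $k\mapsto I_k$ is Borel.

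The step I expect to be the main obstacle is the Borelness verification in the middle: one must repackage the inductive-limit norm --- a priori an abstract completion norm --- as the explicit countable combination above (an $\inf$ over levels $M$ together with limits over the code indices $\ell,\ell'$) of the coordinate evaluations $\delta_M$, and confirm that the three interleaved limits genuinely exist and combine so that the displayed formula really equals $\|p(Y)\|_{A_\infty}$. The reindexing and substitution bookkeeping is routine; the delicate point is justifying the interchange of limits, which rests on contractivity of the connecting maps and on the uniform-Cauchy conditions built into the notion of a code.
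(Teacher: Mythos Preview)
Your proposal is correct and follows essentially the same route as the paper: both reindex the generators of $\mathcal{U}$ by pairs (level, original index) and compute the limit seminorm by pushing a polynomial to a common high level via the composed codes and taking the limit over levels. The only cosmetic difference is that the paper introduces an auxiliary algebra $\mathcal{U}_\infty$ with explicitly double-indexed variables and folds your two code-index limits $\ell,\ell'$ into a single one, yielding the slightly cleaner formula $\lambda(p)=\lim_{n'>N}\lim_{k}\delta_{n'}\big(\Phi_{n',N,k}(p)\big)$, but the substance is identical.
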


\begin{proof}
Denote for $n\in \omega $ by $\mathcal{U}_{n}$ the $\mathbb{Q}(i)$-$\ast $%
-algebra of $\ast $-polynomials in the pairwise distinct noncommutative
variables $\left( X_{i}^{\left( n\right) }\right) _{i\in \omega }$.
Similarly define $\mathcal{U}_{\infty }$ to be the $\mathbb{Q}(i)$-$\ast $%
-algebra of $\ast $-polynomials in the noncommutative variables $\left(
X_{i}^{\left( n\right) }\right) _{\left( i,n\right) \in \omega \times \omega
}$. We will naturally identify $\mathcal{U}_{n}$ as a $\mathbb{Q}(i)$-$\ast $%
-subalgebra of $\mathcal{U}_{\infty }$, and define $\mathcal{V}_{n}$ to be
the $\mathbb{Q}(i)$-$\ast $-subalgebra of $\mathcal{U}_{\infty }$ generated
by%
\begin{equation*}
\bigcup_{i\in n}\mathcal{U}_{i}
\end{equation*}%
inside $\mathcal{U}_{\infty }$. Fix an element $\left( \delta _{n},\Phi
_{n}\right) _{k\in \omega }$ of $R_{dir}\left( \Xi \right) $. To simplify
the notation we will assume that $\delta _{n}\colon \mathcal{U}%
_{n}\rightarrow \mathbb{R}$ for every $n\in \omega $, and $\Phi _{n}\in
\left( \mathcal{U}_{n+1}^{\mathcal{U}_{n}}\right) ^{\omega }$.
Correspondingly we will define a function $\lambda _{\left( \delta _{n},\Phi
_{n}\right) _{n\in \omega }}\colon \mathcal{U}_{\infty }\rightarrow \mathbb{R%
}$. Fix $n\in n^{\prime }\in \omega $ and $k\in \omega $. Define%
\begin{equation*}
\Phi _{n^{\prime },n,k}\colon \mathcal{V}_{n}\rightarrow \mathcal{U}%
_{n^{\prime }}
\end{equation*}%
to be the function obtained by freely extending the maps%
\begin{equation*}
\left( \Phi _{n^{\prime }-1}\circ \cdots \circ \Phi _{i}\right) _{k}\colon 
\mathcal{U}_{i}\rightarrow \mathcal{U}_{n^{\prime }}
\end{equation*}%
for $i\in n$. Finally define for every $N\in \omega $ and $p\in \mathcal{V}%
_{N}\subset \mathcal{U}_{\infty }$%
\begin{equation*}
\lambda _{_{\left( \delta _{n},\Phi _{n}\right) _{n\in \omega
}}}(p)=\lim_{n^{\prime }>N}\lim_{k\rightarrow \infty }\delta _{n^{\prime
}}\left( \Phi _{n^{\prime },N,k}(p)\right) \text{.}
\end{equation*}%
It is immediate to verify that the definition does not depend on $N$.
Moreover $\lambda _{_{\left( \delta _{n},\Phi _{n}\right) _{n\in \omega
}}}\rightarrow \mathbb{R}$ define a seminorm on $\mathcal{U}_{\infty }$ such
that $\widehat{\lambda }_{\left( \delta _{n},\Phi _{n}\right) _{n\in \omega
}}$ is isomorphic to the direct limit of the inductive system $\left( 
\widehat{\delta }_{n},\widehat{\Phi }_{n}\right) _{n\in \omega }$. If $N\in
\omega $ and $\iota _{N}\colon \mathcal{U}_{N}\rightarrow \mathcal{U}%
_{\infty }$ denotes the inclusion map, and $I_{N}\in \left( \mathcal{U}%
_{\infty }^{\mathcal{U}_{N}}\right) ^{\omega }$ denotes the sequence
constantly equal to $\iota _{N}$, then $I_{N}$ is a code for the canonical
*-homomorphism from $\widehat{\delta }_{k}$ to the direct limit $\widehat{%
\lambda }_{_{\left( \delta _{n},\Phi _{n}\right) _{n\in \omega }}}$.
\end{proof}

\subsection{One sided intertwinings}

\label{Subsection: one-sided intertwinings}

\begin{definition}
Let $\left( A_{n},\varphi _{n}\right) _{n\in \omega }$ and $\left( A^{\prime
}_{n},\varphi^{\prime }_{n}\right) _{n\in \omega }$ be inductive systems of
C*-algebras. A sequence $(\psi_n)_{n\in\omega}$ of homomorphisms $%
\psi_n\colon A_n\to A_{n}^{\prime }$ is said to be a \emph{one sided
intertwining} between $\left( A_{n},\varphi _{n}\right) _{n\in \omega }$ and 
$\left( A^{\prime }_{n},\varphi^{\prime }_{n}\right) _{n\in \omega }$, if
the diagram 
\begin{align*}
\xymatrix{ A_0\ar[r]^{\varphi_0}\ar[d]_{\psi_0} &
A_1\ar[r]^{\varphi_1}\ar[d]_{\psi_1}
&A_2\ar[r]^{\varphi_2}\ar[d]_{\psi_2}&\cdots \\ A'_0\ar[r]_{\varphi'_0} &
A'_1\ar[r]_{\varphi'_1} &A'_2\ar[r]_{\varphi'_2}&\cdots }
\end{align*}
is commutative.
\end{definition}

If $(\psi_n)_{n\in\omega}$ is a one sided intertwining between $\left(
A_{n},\varphi _{n}\right) _{n\in \omega }$ and $\left( A^{\prime
}_{n},\varphi^{\prime }_{n}\right) _{n\in \omega }$, then there is an
inductive limit homomorphism 
\begin{equation*}
\psi=\varinjlim \psi_n\colon \varinjlim (A_n,\varphi_n)\to \varinjlim
(A^{\prime }_n,\varphi^{\prime }_n)
\end{equation*}
that makes the diagram 
\begin{align*}
\xymatrix{ A_0\ar[r]^{\varphi_0}\ar[d]_{\psi_0} &
A_1\ar[r]^{\varphi_1}\ar[d]_{\psi_1}
&A_2\ar[r]^{\varphi_2}\ar[d]_{\psi_2}&\cdots\ar[r] &
\varinjlim(A_n,\varphi_n)\ar[d]^{\psi} \\ A'_0\ar[r]_{\varphi'_0} &
A'_1\ar[r]_{\varphi'_1} &A'_2\ar[r]_{\varphi'_2}&\cdots \ar[r] &
\varinjlim(A'_n,\varphi'_n) }
\end{align*}
commutative.

In this subsection, we verify that the inductive limit homomorphism $%
\varinjlim \psi _{n}$ can be computed in a Borel way. We will work in the
parametrization $\mathcal{C}_{\Xi }$ of C*-algebras. In view of the
equivalence between the parametrizations $\mathcal{C}_{\Xi }$, $\mathcal{C}_{%
\widehat{\Xi }}$ and $\mathcal{C}_{\Gamma (H)}$, the same result holds if
one instead uses $\mathcal{C}_{\widehat{\Xi }}$ or $\mathcal{C}_{\Gamma (H)}$%
.

Define $R_{int}\left( \Xi \right) $ to be the Borel set of all elements 
\begin{equation*}
\left( \left( \delta _{n},\Phi _{n}\right) _{n\in \omega },\left( \delta
_{n}^{\prime },\Phi _{n}^{\prime }\right) _{n\in \omega },\left( \Psi
_{n}\right) _{n\in \omega }\right) \in R_{dir}\left( \Xi \right) \times
R_{dir}\left( \Xi \right) \times \left( \left( \mathcal{U}^{\mathcal{U}%
}\right) ^{\omega }\right) ^{\omega }
\end{equation*}%
such that $\Psi _{n}$ is a code for a *-homomorphism from $\widehat{\delta }%
_{n}$ to $\widehat{\delta }_{n+1}^{\prime }$ satisfying%
\begin{equation*}
\widehat{\Psi }_{n+1}\circ \widehat{\Phi }_{n}=\widehat{\Phi }_{n}^{\prime
}\circ \widehat{\Psi }_{n}
\end{equation*}%
for every $n\in \omega $. In other words, $(\Psi _{n})_{n\in\omega}$ is a
sequence of codes for a one sided intertwining between the inductive systems
coded by $\left( \delta _{n},\Phi _{n}\right) _{n\in \omega }$ and $\left(
\delta^{\prime }_{n},\Phi^{\prime }_{n}\right) _{n\in \omega }$.

\begin{proposition}
\label{prop: one sided intertwining} There is a Borel map from $%
R_{int}\left( \Xi \right) $ to $\left( \mathcal{U}^{\mathcal{U}}\right)
^{\omega }$ assigning to an element 
\begin{equation*}
\left( \left(\delta _{n},\Phi _{n}\right) _{n\in \omega },\left( \delta
_{n}^{\prime },\Phi _{n}^{\prime }\right) _{n\in \omega },\left( \Psi
_{n}\right) _{n\in \omega }\right)
\end{equation*}
of $R_{int}\left( \Xi \right) $, a code $\Lambda $ for the corresponding
inductive limit homomorphism between the inductive limits of the systems
coded by $\left( \delta _{n},\Phi _{n}\right) _{n\in \omega }$ and $\left(
\delta^{\prime }_{n},\Phi^{\prime }_{n}\right) _{n\in \omega }$.
\end{proposition}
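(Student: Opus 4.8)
The plan is to reduce everything to the explicit model of the inductive limit constructed in the proof of Proposition~\ref{prop: inductive limits}. Recall from that proof that both limits are realized as seminorms on the free $\mathbb{Q}(i)$-$\ast$-algebra $\mathcal{U}_{\infty}$ in the variables $(X^{(n)}_{i})_{(i,n)\in\omega\times\omega}$; write $\widehat{\lambda}$ and $\widehat{\lambda}'$ for the two inductive limits coded by $(\delta_{n},\Phi_{n})_{n\in\omega}$ and $(\delta'_{n},\Phi'_{n})_{n\in\omega}$. For each $N$ let $\iota_{N}\colon\mathcal{U}_{N}\hookrightarrow\mathcal{U}_{\infty}$ be the inclusion, whose constant sequence $I_{N}$ (resp.\ $I'_{N}$) codes the canonical map $\widehat{\delta}_{N}\to\widehat{\lambda}$ (resp.\ $\widehat{\delta}'_{N}\to\widehat{\lambda}'$) and depends on the data in a Borel way by Proposition~\ref{prop: inductive limits}. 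The homomorphism $\psi=\varinjlim\psi_{n}$ is the unique $\ast$-homomorphism satisfying $\psi\circ\widehat{I}_{N}=\widehat{I}'_{N}\circ\widehat{\Psi}_{N}$ for every $N$, so it suffices to produce, in a Borel way, a code $\Lambda$ for a $\ast$-homomorphism $\widehat{\lambda}\to\widehat{\lambda}'$ realizing these identities.

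To define $\Lambda$, note that $\mathcal{U}_{\infty}=\bigcup_{N}\mathcal{V}_{N}$, where $\mathcal{V}_{N}$ is the subalgebra generated by $\bigcup_{i<N}\mathcal{U}_{i}$, so every $p\in\mathcal{U}_{\infty}$ lies in some $\mathcal{V}_{N}$; fix the Borel assignment $p\mapsto N(p)$ giving the least such $N$. For such a $p$ I would push it to level $N(p)$ using the maps $\Phi_{N(p),N(p),k}\colon\mathcal{V}_{N(p)}\to\mathcal{U}_{N(p)}$ from the proof of Proposition~\ref{prop: inductive limits} (the free extension of the $k$-th approximations to the composite connecting maps), then apply the code $\Psi_{N(p)}$ of $\psi_{N(p)}$, and finally include the result into the target copy of $\mathcal{U}_{\infty}$ via $\iota_{N(p)}$, setting
\[
\Lambda_{k}(p)=\iota_{N(p)}\big((\Psi_{N(p)})_{k}(\Phi_{N(p),N(p),k}(p))\big).
\]
Since the assignment $p\mapsto N(p)$, the maps $\Phi_{n',n,k}$, and the codes $\Psi_{n}$ all depend on the input in $R_{int}(\Xi)$ in a Borel fashion, the map sending the triple to $\Lambda$ is Borel; after identifying $\mathcal{U}_{\infty}$ with $\mathcal{U}$ through a fixed bijection of the variables, this yields the desired element of $(\mathcal{U}^{\mathcal{U}})^{\omega}$.

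It then remains to verify that $\Lambda$ is a code for a $\ast$-homomorphism and that $\widehat{\Lambda}=\psi$. On the dense subalgebra the induced map is multiplicative and $\ast$-preserving because the $\Psi_{N}$ and the connecting maps are, and the contractivity $\Vert\widehat{\Lambda}(p)\Vert_{\lambda'}\le\Vert p\Vert_{\lambda}$ follows from contractivity of each $\psi_{n}$ together with the description of the limit seminorms as iterated limits of the $\delta_{n'}$, which is what secures the uniform Cauchy condition required of a code. The crux of the argument, and the step I expect to be the main obstacle, is to check that $\widehat{\Lambda}$ is well defined independently of the level at which a given element of the limit is represented, that is, that using $N(p)$ rather than any larger $N$ produces the same element of $\widehat{\lambda}'$. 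This is exactly where the one-sided intertwining hypothesis $\widehat{\Psi}_{n+1}\circ\widehat{\Phi}_{n}=\widehat{\Phi}'_{n}\circ\widehat{\Psi}_{n}$ is used: iterating it shows that pushing $p$ one more level up on the source side and then applying $\Psi_{N+1}$ agrees in $\widehat{\lambda}'$ with applying $\Psi_{N}$ and then pushing up on the target side, so the two representatives determine the same class. Granting this compatibility, the identities $\widehat{\Lambda}\circ\widehat{I}_{N}=\widehat{I}'_{N}\circ\widehat{\Psi}_{N}$ hold by construction, and the universal property of the inductive limit forces $\widehat{\Lambda}=\varinjlim\psi_{n}$; controlling the interaction of the limits in $k$ and in $N'$ within the uniform Cauchy estimate is the only genuinely technical remaining point.
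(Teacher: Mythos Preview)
Your approach is essentially the paper's: both work in the $\mathcal{U}_\infty$ model from the proof of Proposition~\ref{prop: inductive limits} and build $\Lambda$ by pushing $p$ to a finite level, applying the relevant $\Psi$-approximant, and including the result back into $\mathcal{U}_\infty$. The only real difference is bookkeeping. You fix the level once and for all as $N(p)$ and let the approximation index $k$ run; the paper instead fixes a bijection $N\mapsto(\sigma_0(N),\sigma_1(N),\sigma_2(N))$ from $\omega$ to $\omega\times\omega\times\omega$ and sets
\[
\Lambda_N(p)=\bigl(\Psi_{\sigma_1(N)}\bigr)_{\sigma_2(N)}\!\bigl(\Phi_{\sigma_1(N),\sigma_0(N),\sigma_2(N)}(p)\bigr)\quad\text{for }p\in\mathcal V_{\sigma_0(N)},\qquad \Lambda_N(p)=0\ \text{otherwise},
\]
so that the level threshold $\sigma_0(N)$, the target level $\sigma_1(N)$, and the approximation depth $\sigma_2(N)$ all vary with the single index $N$. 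The paper then simply asserts that this sequence codes the inductive limit homomorphism and does not spell out the Cauchy estimate, so the technical point you isolate as the main obstacle is left implicit there as well; the triple-index device is precisely the paper's way of packaging that step.
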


\begin{proof}
We will use the same notation as in the proof of Proposition \ref{prop:
inductive limits}. Fix an element $\left( \left( \delta _{n},\Phi
_{n}\right) _{n\in \omega },\left( \delta _{n}^{\prime },\Phi _{n}^{\prime
}\right) _{n\in \omega },\left( \Psi _{n}\right) _{n\in \omega }\right) $ of 
$R_{int}\left( \Xi \right) $. As in the proof of Proposition \ref{prop:
inductive limits} we will assume that for every $n\in \omega $%
\begin{equation*}
\delta _{n}\colon \mathcal{U}_{n}\rightarrow \mathbb{R}\text{,}
\end{equation*}%
\begin{equation*}
\delta _{n}^{\prime }\colon \mathcal{U}_{n}\rightarrow \mathbb{R}\text{,}
\end{equation*}%
\begin{equation*}
\Phi _{n}\in \left( \mathcal{U}_{n+1}^{\mathcal{U}_{n}}\right) ^{\omega }
\end{equation*}%
and%
\begin{equation*}
\Phi _{n}^{\prime }\in \left( \mathcal{U}_{n+1}^{\mathcal{U}_{n}}\right)
^{\omega }\text{.}
\end{equation*}%
Therefore%
\begin{equation*}
\Psi _{n}\in \left( \mathcal{U}_{n}^{\mathcal{U}_{n}}\right) ^{\omega }
\end{equation*}%
for every $n\in \omega $. Similarly the codes $\lambda _{\left( \delta
_{n},\Phi _{n}\right) _{n\in \omega }}$ and $\lambda _{\left( \delta
_{n}^{\prime },\Phi _{n}^{\prime }\right) _{n\in \omega }}$ for the direct
limits of the systems coded by $\left( \delta _{n},\Phi _{n}\right) _{n\in
\omega }$ and $\left( \delta _{n}^{\prime },\Phi _{n}^{\prime }\right)
_{n\in \omega }$ will be supposed to be functions from $\mathcal{U}_{\infty
} $ to $\mathbb{R}$. We will therefore define a code $\Lambda \in \left( 
\mathcal{U}_{\infty }^{\mathcal{U}_{\infty }}\right) ^{\omega }$ for the
*-homomorphism coded by $\left( \Psi _{n}\right) _{n\in \omega }$. Recall
from the proof of Proposition \ref{prop: inductive limits} the definition of 
$\mathcal{V}_{N}$ and $\Phi _{n^{\prime },N,k}\colon \mathcal{V}%
_{N}\rightarrow \mathcal{U}_{n^{\prime }}$ for $N\in n^{\prime }\in \omega $
and $k\in \omega $. Fix functions $\sigma _{0},\sigma _{1},\sigma _{2}\colon
\omega \rightarrow \omega $ such that%
\begin{equation*}
n\mapsto \left( \sigma _{0}\left( n\right) ,\sigma _{1}\left( n\right)
,\sigma _{2}\left( n\right) \right)
\end{equation*}%
is a bijection from $\omega $ to $\omega \times \omega \times \omega $. Fix $%
N\in \omega $ and define for $p\in \mathcal{V}_{\sigma _{0}\left( N\right) }$
\begin{equation*}
\Lambda _{N}(p)=\left( \Psi _{\sigma _{1}\left( N\right) ,\sigma _{2}\left(
N\right) }\circ \Phi _{\sigma _{1}\left( N\right) ,\sigma _{0}\left(
N\right) ,\sigma _{2}\left( N\right) }\right) (p)
\end{equation*}%
and%
\begin{equation*}
\Lambda _{N}(p)=0
\end{equation*}%
for $p\notin \mathcal{V}_{\sigma _{0}\left( N\right) }$. It is not difficult
to check that the sequence $\left( \Lambda _{N}\right) _{N\in \omega }\in
\left( \mathcal{U}_{\infty }^{\mathcal{U}_{\infty }}\right) ^{\omega }$
indeed defines a code for the inductive limit homomorphism defined by the
sequence $\left( \widehat{\Psi }_{n}\right) _{n\in \omega }$.
\end{proof}

\subsection{Direct limits of groups}

We consider as \emph{standard Borel space} of infinite countable groups the
set $\mathcal{G}$ of functions $f\colon \omega \times \omega \rightarrow
\omega $ such that the identity $n\cdot _{f}m=f(n,m)$ for $n,m\in \omega $,
defines a group structure on $\omega $. We consider $\mathcal{G}$ as a Borel
space with respect to the Borel structure inherited from $\omega ^{\omega
\times \omega }$; such Borel structure is standard, since $\mathcal{G}$ is a
Borel subset of $\omega ^{\omega \times \omega }$. In the following, we will
identify a group $G$ and its code as an element of $\omega ^{\omega \times
\omega }$.

It is not difficult to check that most commonly studied classes of groups
correspond to Borel subsets of $\mathcal{G}$. In particular we will denote
by $\mathcal{AG}$ Borel set of abelian groups, and by $\mathcal{AG}_{TF}$
Borel set of torsion free abelian groups.

Let $G$ be a discrete group and let $\alpha $ be an endomorphism of $G$. We
will denote by $G_{\infty }=\varinjlim (G,\alpha )$ the inductive limit of
the inductive system 
\begin{equation*}
\xymatrix{G\ar[r]^\alpha & G\ar[r]^\alpha&\cdots\ar[r]& G_\infty}.
\end{equation*}%
For $n$ in $\omega $, denote by $\varphi _{n}\colon G\rightarrow G_{\infty }$
the canonical group homomorphism obtained by regarding $G$ as the $n$-th
stage of the inductive system above. Denote by $\alpha _{\infty }$ the
unique automorphism of $G_{\infty }$ such that $\alpha _{\infty }\circ
\varphi _{n+1}=\varphi _{n}$ for every $n\in \omega $.

Denote by $End_{\mathcal{G}}$ the set of all pairs $\left( G,\alpha \right)
\in \mathcal{G}\times \omega ^{\omega }$, such that $\alpha $ is an
injective endomorphism of $G$ with respect to the group structure on $\omega 
$ coded by $G$, and note that $End_{\mathcal{G}}$ is Borel. Similarly define 
$DLim_{\mathcal{G}}$ to be the set of pairs $\left( G,\alpha \right) \in
End_{\mathcal{G}}$ such that the direct limit $\varinjlim (G,\alpha )$ is
infinite.

\begin{proposition}
\label{Proposition: inductive limit of groups}The set $DLim_{\mathcal{G}}$
is a Borel subset of $End_{\mathcal{G}}$. Moreover there is a Borel map from 
$DLim_{\mathcal{G}}$ to $End_{\mathcal{G}}$ that assigns to $\left( G,\alpha
\right) \in DLim_{\mathcal{G}}$ the pair $\left( \varinjlim (G,\alpha
),\alpha _{\infty }\right) $.
\end{proposition}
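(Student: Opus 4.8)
The plan is to realize $\varinjlim(G,\alpha)$ concretely on the product $\omega\times\omega$ and then transport a canonically chosen copy of it onto $\omega$ in a uniformly Borel way. First I would think of a pair $(n,g)\in\omega\times\omega$ as the element $\varphi_n(g)$ of the limit, where $g$ is viewed in the $n$-th stage. Since $\alpha$ is injective, two pairs represent the same element precisely when, assuming $n\le n'$, one has $g'=\alpha^{n'-n}(g)$; call this relation $\sim$. It is immediate that $\sim$ is a Borel equivalence relation on $\omega\times\omega$, uniformly in $(G,\alpha)\in End_{\mathcal{G}}$, and that each $\varphi_n$ is injective. I would record at once that $\varphi_0$ is injective and every $G\in\mathcal{G}$ is infinite, so $\varinjlim(G,\alpha)$ always contains a copy of $G$ and is infinite; hence in the present setting $DLim_{\mathcal{G}}=End_{\mathcal{G}}$, which is Borel. (The construction below exhibits the infiniteness directly in any case.)

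Next I would select a Borel transversal for $\sim$. Declare $(n,g)$ \emph{canonical} if $n=0$, or if $n\ge 1$ and $g\notin\alpha(\omega)$. Since membership in the image $\alpha(\omega)$ is the Borel condition $\exists h\,(\alpha(h)=g)$, the set $T=T_{(G,\alpha)}$ of canonical pairs is Borel, uniformly in $(G,\alpha)$. A short computation using injectivity of $\alpha$ shows that each $\sim$-class contains exactly one canonical pair, namely the one of minimal level: given $(p,x)$, let $k^{*}$ be the largest $k\le p$ with $x\in\alpha^{k}(\omega)$ and let $y^{*}$ be the unique element with $\alpha^{k^{*}}(y^{*})=x$; then $\rho(p,x):=(p-k^{*},y^{*})$ is a Borel retraction of $\omega\times\omega$ onto $T$ (the map $(\alpha,k,g)\mapsto\alpha^{k}(g)$ is Borel, and $y^{*}$ is obtained from the Borel selection principle). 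Because $T$ contains every pair $(0,g)$ it is infinite; fixing once and for all a bijection $\pi\colon\omega\to\omega\times\omega$ and enumerating the elements of $T$ in the order they appear along $\pi$ yields a Borel bijection $e=e_{(G,\alpha)}\colon\omega\to T$ whose inverse is also Borel.

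Finally I would transport the group law and the automorphism through $e$. For $i,j\in\omega$ with $e(i)=(n,g)$ and $e(j)=(m,h)$, set $\ell=\max(n,m)$ and define $i\cdot j=e^{-1}\bigl(\rho(\ell,\ \alpha^{\ell-n}(g)\cdot_G\alpha^{\ell-m}(h))\bigr)$, which is exactly the class of $\varphi_n(g)\varphi_m(h)$ read off inside $T$. For the automorphism, the defining relation $\alpha_\infty\circ\varphi_{n+1}=\varphi_n$ gives $\alpha_\infty(\varphi_n(g))=\varphi_{n-1}(g)$ when $n\ge 1$ and $\alpha_\infty(\varphi_0(g))=\varphi_0(\alpha(g))$; accordingly I set $\alpha_\infty(i)=e^{-1}(\rho(n-1,g))$ if $n\ge 1$ and $\alpha_\infty(i)=e^{-1}(\rho(0,\alpha(g)))$ if $n=0$. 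Each of these is a composition of the Borel maps $e$, $e^{-1}$, $\rho$, finitely many applications of $\alpha$, and preimage under the injective $\alpha$, so the group structure $G_\infty$ on $\omega$ and the map $\alpha_\infty\in\omega^{\omega}$ depend Borel-measurably on $(G,\alpha)$. As $e$ is a bijection, $(G_\infty,\alpha_\infty)$ is an isomorphic copy of $(\varinjlim(G,\alpha),\alpha_\infty)$ with $\alpha_\infty$ an automorphism, hence it lands in $End_{\mathcal{G}}$, as required.

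The step I expect to be the main obstacle is the uniform Borel selection of the transversal together with the retraction $\rho$: one must invert $\alpha$ on its image boundedly many times and do so measurably in $(G,\alpha)$, and one must verify that the minimal-level representative is genuinely unique in each $\sim$-class, which is exactly where injectivity of $\alpha$ is used. Once $\rho$ and the Borel enumeration $e$ are in hand, transporting the operations and checking that the resulting assignment is Borel is routine.
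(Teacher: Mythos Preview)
Your proposal is correct and follows essentially the same strategy as the paper: realize $\varinjlim(G,\alpha)$ as the quotient of $\omega\times\omega$ by a Borel equivalence relation, produce a Borel enumeration of the classes, and transport the group law and $\alpha_\infty$ through it. The paper differs only in implementation: it enumerates the $\sim$-classes recursively by repeatedly choosing the lexicographically least pair not yet equivalent to an earlier one, whereas you select the explicit minimal-level transversal $T$ and enumerate that; both yield a uniformly Borel bijection with $\omega$. Your observation that $DLim_{\mathcal{G}}=End_{\mathcal{G}}$ (since $\alpha$ is injective, $\varphi_0$ is injective, and every $G\in\mathcal{G}$ is infinite) is a genuine simplification over the paper's appeal to \cite[Theorem 18.10]{kechris_classical_1995}.
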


\begin{proof}
Let $\left( G,\alpha \right) $ be an element in $End_{\mathcal{G}}$.
Consider the equivalence relation $\sim _{\alpha }$ on $\omega \times \omega 
$ defined by%
\begin{equation*}
\left( x,i\right) \sim _{\alpha }\left( y,j\right) \ \text{iff there exists }%
k\geq \max \left\{ i,j\right\} \text{ with }\alpha ^{k-i}(x)=\alpha ^{k-j}(y)%
\text{.}
\end{equation*}%
Observe that $\left( G,\alpha \right) \in DLim_{\mathcal{G}}$ iff $\sim
_{\alpha }$ has infinitely many classes. Therefore $DLim_{\mathcal{G}}$ is a
Borel subset of $\mathcal{G}$ by \cite[Theorem 18.10]{kechris_classical_1995}%
. Suppose now that $\left( G,\alpha \right) \in DLim_{\mathcal{G}}$.
Consider the lexicographic order $<_{lex}$ on $\omega \times \omega $, and
define the injective function $\eta _{\alpha }\colon \omega \rightarrow
\omega \times \omega $ recursively on $n$ as follows. Set $\eta _{\alpha
}(0)=(0,0)$, and for $n>0$, define $\eta _{\alpha }(n)$ to be the $<_{lex}$%
-minimum element $\left( m,i\right) $ of $\omega \times \omega $ such that
for every $k\in n$, we have $\eta _{\alpha }(k)\not\sim _{\alpha }\left(
m,i\right) $. (Observe that the set of such elements is nonempty since we
are assuming that $\sim _{\alpha }$ has infinitely many classes.) Define the
group operation on $\omega $ by 
\begin{equation*}
n_{0}\cdot _{G_{\infty }}n_{1}=n
\end{equation*}%
whenever there are $m_{0},m_{1},m,i_{0},i_{1},i,\widetilde{i}\in \omega $
satisfying:

\begin{itemize}
\item $\eta _{\alpha }( n_{0}) =( m_{0},i_{0}) $;

\item $\eta _{\alpha }( n_{1}) =( m_{1},i_{1}) $;

\item $\eta (n) =( m,i) $;

\item $\max \left\{ i_{0},i_{1}\right\} =\widetilde{i}$;

\item $\left(\alpha ^{\widetilde{i}-i_{0}}( m_{0}) \cdot _{G}\alpha ^{%
\widetilde{i}-i_{1}}( m_{1}) ,\widetilde{i} \right) \sim ( m,i) $.
\end{itemize}

Define the function $\alpha_\infty \colon \omega \rightarrow \omega $ by $%
\alpha_\infty (n) =n^{\prime }$ if and only if there are $m,i,m^{\prime
},i^{\prime }\in \omega $ such that:

\begin{itemize}
\item $\eta _{\alpha }(n) =( m,i) $;

\item $\eta _{\alpha }( n^{\prime }) =( m^{\prime },i^{\prime }) $;

\item $( \alpha (m) ,i ) \sim ( m^{\prime },i^{\prime }) $.
\end{itemize}

It is not difficult to check that $G_{\infty }$ is the direct limit $%
\varinjlim (G,\alpha )$, and $\alpha _{\infty }$ is the automorphism of $%
\varinjlim (G,\alpha )$ corresponding to the endomorphism $\alpha $ of $G$.
Moreover the function $\left( G,\alpha \right) \mapsto \left( G_{\infty
},\alpha _{\infty }\right) $ is Borel by construction.
\end{proof}

\subsection{Borel version of the Nielsen-Schreier theorem}

The celebrated nielsen-Schreier theorem \cite{nielsen_om_1921,
schreier_Untergruppen_1927} asserts that a subgroup of a countable discrete
free group is free. In this subsection we will prove a \emph{Borel version }%
of such theorem, to be used in the proof of Lemma \ref{Lemma: exact sequence}%
. This will be obtained by analyzing Schreier's proof of the theorem, as
presented in \cite[Chapter 2]{johnson_presentations_1997}.

Denote by $F$ the (countable) set of \emph{reduced }words in the
indeterminates $x_{n}$ for $n\in \omega $ ordered lexicographically. We can
identify the free group on countable many generators with $F$ with the
operation of \emph{reduced concatenation }of words. It is immediate to check
that the set $\mathcal{S}(F)$ of $H\in 2^{\omega }$ such that $H$ is a
subgroup of $F$ is Borel.

\begin{lemma}
\label{Lemma: NS}There is a Borel function $H\mapsto B_{H}$ from $\mathcal{S}%
(F)$ to $2^{F}$ such that $L_{H}$ is a set of free generators for $H$ for
every $H\in \mathcal{S}(F)$.
\end{lemma}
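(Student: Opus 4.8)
The plan is to follow Schreier's classical argument (as presented in \cite[Chapter 2]{johnson_presentations_1997}) essentially verbatim, but to replace every choice made in that proof by a Borel selection that is uniform in $H$. Identify $F$ with the set of reduced words in the letters $\{x_n^{\pm 1}:n\in\omega\}$, and write $v\mapsto v^{-1}$ and $(u,v)\mapsto uv$ for the (computable) inversion and reduced-concatenation operations on $F$. Since a subgroup $H$ is coded by its indicator in $2^{F}$, for any computable $v\in F$ the predicate ``$v\in H$'' is clopen in $H$; this clopenness is the engine that makes everything below Borel. Equip $F$ with the \emph{ShortLex} well-ordering $\prec$ (compare lengths first, breaking ties by the lexicographic order inherited from a fixed well-ordering of the alphabet); this is a genuine well-ordering of $F$ even though the alphabet is infinite, because each length class carries the lexicographic order of a finite power of a well-order.

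First I would Borel-select a Schreier transversal. For $g\in F$ let $\overline{g}^{H}$ be the $\prec$-least element of the coset $Hg$. This is well defined since $\prec$ well-orders $F\supseteq Hg$, and the graph of $(H,g)\mapsto\overline{g}^{H}$ is Borel: the condition $\overline{g}^{H}=w$ unfolds as $wg^{-1}\in H$ together with $w'g^{-1}\notin H$ for every $w'\prec w$, a clopen condition conjoined with a countable conjunction of clopen conditions. Setting $T_{H}=\{w\in F:\overline{w}^{H}=w\}$ then makes ``$w\in T_{H}$'' Borel in $H$. The one genuinely classical point I would invoke is that ShortLex-least coset representatives are closed under taking prefixes, so that $T_{H}$ is a \emph{Schreier} transversal; this is the standard verification underlying \cite[Chapter 2]{johnson_presentations_1997} (a strictly $\prec$-smaller representative of a prefix's coset would, after reattaching the remaining letters and reducing, yield a representative of $Hw$ that is either shorter than, or lexicographically below, the putatively minimal $w$).

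With a Schreier transversal in hand, I would define the candidate basis by the usual Schreier generators: for $t\in T_{H}$ and $n\in\omega$ put $\sigma_{H}(t,n)=t\,x_{n}\,(\overline{tx_{n}}^{H})^{-1}\in H$, and set
\begin{equation*}
L_{H}=\big\{\,\sigma_{H}(t,n):t\in T_{H},\ n\in\omega\,\big\}\setminus\{e\}.
\end{equation*}
The Nielsen--Schreier theorem, in its Schreier-transversal form, guarantees that $L_{H}$ is a set of free generators of $H$, so the only remaining task is Borelness of $H\mapsto L_{H}\in 2^{F}$, i.e. Borelness of ``$w\in L_{H}$'' for each fixed $w\neq e$. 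Rewriting the defining equation $w=t x_{n}(\overline{tx_{n}}^{H})^{-1}$ as $\overline{tx_{n}}^{H}=w^{-1}t x_{n}$ (whence automatically $\sigma_{H}(t,n)=w\in H$), one sees that
\begin{equation*}
w\in L_{H}\iff \exists t\in F\ \exists n\in\omega:\ \big(t\in T_{H}\ \wedge\ \overline{tx_{n}}^{H}=w^{-1}t x_{n}\big),
\end{equation*}
a countable disjunction (over $(t,n)\in F\times\omega$) of Borel conditions on $H$, hence Borel.

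The hard part is not the descriptive-set-theoretic bookkeeping, which is an entirely routine manipulation of countable quantifiers over the clopen predicate ``$v\in H$'', but rather confirming that the Borel-definable transversal $T_{H}$ really is a Schreier transversal: it is only the prefix-closure (Schreier) property that upgrades $L_{H}$ from a mere generating set to a \emph{free} generating set. Once that classical fact is in place, the uniform Borel selection comes essentially for free.
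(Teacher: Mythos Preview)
Your proof is correct and follows essentially the same approach as the paper: both select the minimal coset representative $\overline{g}^{H}$ (the paper's $\phi_H$), form the Schreier generators $t\,x_n\,(\overline{tx_n}^{H})^{-1}$, discard the trivial ones, and invoke \cite[Chapter~2]{johnson_presentations_1997} for freeness, with Borelness coming from countable Boolean combinations of the clopen predicate ``$v\in H$''. Your use of ShortLex is in fact more careful than the paper's bare ``lexicographic'' order (which is not literally a well-order on words), and your explicit discussion of the prefix-closure property of $T_H$ makes transparent exactly where the classical input is being used.
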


\begin{proof}
Suppose that $H\in \mathcal{S}(F)$. If $a\in F$ denote by $\phi _{H}\left(
a\right) $ the $<$-minimal element of the coset $Ha$, where $<$ is the
lexicographic order of $F$. Observe that $\phi _{H}\left( a\right) \leq b$
iff there is $b^{\prime }\leq b$ such that $b^{\prime }a^{-1}\in H$. This
shows that the map 
\begin{eqnarray*}
\mathcal{S}(F) &\rightarrow &F^{F} \\
H &\mapsto &\phi _{H}
\end{eqnarray*}%
is Borel. Define $B_{H}$ to be the set containing%
\begin{equation*}
\phi _{H}\left( a\right) x_{n}\phi _{H}\left( \phi _{H}\left( a\right)
x_{n}\right) ^{-1}
\end{equation*}%
for $n\in \omega $ and $a\in F$ such that $\phi _{H}\left( a\right)
x_{n}\neq \phi _{H}\left( c\right) $ for every $c\in F$. It is clear that
the map $H\mapsto B_{H}$ is Borel. Moreover it can be shown as in \cite[%
Chapter 2, Lemmas 3,4,5]{johnson_presentations_1997} that $B_{H}$ is a free
set of generators of $H$.
\end{proof}

Suppose now that $\mathbb{F}_{\omega }$ is an element of $\mathcal{G}$
representing the group of countably many generators, and $\mathcal{S}(F)$ is
the Borel set of $H\in 2^{\omega }$ such that $H$ is a subgroup of $\mathbb{F%
}_{\infty }$. Proposition can be seen as just a reformulation of Lemma

\begin{proposition}
\label{Proposition: NS}There is a Borel map $H\mapsto B_{H}$ from $\mathcal{S%
}(F)$ to $2^{\omega }$ that assigns to $H\in \mathcal{S}(F)$ a free set of
generators of $H$.
\end{proposition}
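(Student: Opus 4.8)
The plan is to read the proposition off from Lemma \ref{Lemma: NS} by transporting its Borel selector along a fixed group isomorphism between the concrete free group $F$ of reduced words and its abstract code $\mathbb{F}_\omega$ on $\omega$. The only genuine point is that the transport map is Borel and preserves both the property of being a subgroup and the property of being a free set of generators; everything else is bookkeeping, since all the real content already resides in Lemma \ref{Lemma: NS}.

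First I would fix, once and for all, a group isomorphism $\theta\colon \mathbb{F}_\omega \to F$. Such a $\theta$ exists because $\mathbb{F}_\omega \in \mathcal{G}$ is by hypothesis a code for the free group on countably many generators, and hence is abstractly isomorphic to $F$; concretely $\theta$ is a bijection from $\omega$ onto the set of reduced words intertwining the operation coded by $\mathbb{F}_\omega$ with reduced concatenation. The crucial feature is that $\theta$ does not depend on $H$, so that no Borel selection of isomorphisms is involved.

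Next I would note that a single fixed bijection of index sets induces continuous, and in particular Borel, relabeling maps $\theta_{\ast}\colon 2^\omega \to 2^F$, $H \mapsto \theta(H)$, and $(\theta^{-1})_{\ast}\colon 2^F \to 2^\omega$, each output coordinate being a single input coordinate. Since $\theta$ is a group isomorphism, a set $H \subseteq \omega$ is a subgroup of $\mathbb{F}_\omega$ if and only if $\theta(H) \subseteq F$ is a subgroup of $F$, so $\theta_{\ast}$ carries the set $\mathcal{S}(F)$ of the present statement onto the domain of the selector furnished by Lemma \ref{Lemma: NS}. I would then define the desired map by
\begin{equation*}
H \longmapsto \theta^{-1}\bigl(B_{\theta(H)}\bigr),
\end{equation*}
where $B_{(-)}$ is the Borel selector of Lemma \ref{Lemma: NS}; this is Borel as a composition of Borel maps. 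Because $\theta^{-1}$ is a group isomorphism and isomorphisms send free generating sets to free generating sets, $\theta^{-1}(B_{\theta(H)})$ freely generates $\theta^{-1}(\theta(H)) = H$, which is exactly what is claimed.

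The main obstacle is essentially nonexistent: the substance is entirely in Lemma \ref{Lemma: NS}, and the remaining verifications are that relabeling by a fixed bijection is Borel and that group isomorphisms respect freeness of generating sets. The one thing to be careful about is to use the \emph{same} isomorphism $\theta$ for every $H$, so that no uniform-choice issue arises.
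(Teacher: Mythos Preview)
Your proposal is correct and is essentially the paper's own approach: the paper does not give a separate proof but simply remarks that the proposition ``can be seen as just a reformulation of'' Lemma~\ref{Lemma: NS}, and your argument makes that reformulation explicit by transporting the selector of Lemma~\ref{Lemma: NS} along a single fixed isomorphism $\theta\colon \mathbb{F}_\omega \to F$. Your observation that the relabeling maps $2^\omega \leftrightarrow 2^F$ induced by a fixed bijection are continuous (hence Borel) and that group isomorphisms preserve free generating sets is exactly the bookkeeping the paper leaves implicit.
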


\subsection{An exact sequence}

The following lemma asserts that the construction of \cite[Proposition 3.5]%
{rordam_classification_1995} can be made in a Borel way.

\begin{lemma}
\label{Lemma: exact sequence} There is a Borel function from $\mathcal{AG}$
to $\mathcal{AG}_{TF}\times \omega ^{\omega }$ that assigns to an abelian
group $G$, a pair $(H,\alpha )$, where $H$ is a torsion free abelian group,
and $\alpha $ is an automorphism of $H$ such that 
\begin{equation*}
H\left/ (id_{H}-\alpha )\left[ H\right] \right. \cong G.
\end{equation*}
\end{lemma}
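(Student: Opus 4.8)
The plan is to render Borel the construction of \cite[Proposition 3.5]{rordam_classification_1995}, which exhibits any countable abelian group as the cokernel of an operator of the form $\mathrm{id}-\alpha$ on a free abelian group; the only genuinely new ingredient is that every choice involved can be made in a Borel way, for which the Borel Nielsen--Schreier theorem (Proposition \ref{Proposition: NS}) is the crucial tool.

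\smallskip
\noindent\emph{The algebraic construction.} Given $G\in\mathcal{AG}$ (a group structure on $\omega$), I would first fix the free abelian group $\Lambda=\mathbb{Z}^{(\omega)}$ with canonical basis $(e_n)_{n\in\omega}$ and the surjection $\pi\colon\Lambda\to G$ determined by $\pi(e_n)=n$, and set $R=\ker\pi$. Assuming for the moment a free abelian group $P=\mathbb{Z}^{(\omega)}$ on a basis $(\epsilon_k)_{k\in\omega}$ together with a homomorphism $\mu\colon P\to\Lambda$ with $\mu(P)=R$, I would put
\[
H \;=\; \Lambda \oplus \bigoplus_{m\geq 1} P_m, \qquad P_m\cong P,
\]
and define $\alpha\colon H\to H$ on the direct-sum coordinates by
\[
\alpha(f,p_1,p_2,p_3,\dots)=\bigl(f-\mu(p_1),\; p_1-p_2,\; p_2-p_3,\;\dots\bigr).
\]
Two things must be checked. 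First, $\alpha$ is an automorphism, since it is a homomorphism admitting the two-sided inverse $(f,p_1,p_2,\dots)\mapsto\bigl(f+\mu(\sum_{k\geq 1}p_k),\;\sum_{k\geq 1}p_k,\;\sum_{k\geq 2}p_k,\;\dots\bigr)$, where every sum is finite because only finitely many $p_k$ are nonzero. Second, a one-line computation gives $(\mathrm{id}_H-\alpha)(f,p_1,p_2,\dots)=(\mu(p_1),\,p_2,\,p_3,\dots)$, so that
\[
(\mathrm{id}_H-\alpha)[H]=\mu(P)\oplus\bigoplus_{m\geq 1}P_m=R\oplus\bigoplus_{m\geq 1}P_m,
\]
whence $H/(\mathrm{id}_H-\alpha)[H]\cong\Lambda/R\cong G$. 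Finally $H$, being a direct sum of free abelian groups, is free abelian and hence lies in $\mathcal{AG}_{TF}$.

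\smallskip
\noindent\emph{Making the presentation Borel.} The remaining task is to produce $(P,\mu)$ from $G$ in a Borel way, i.e.\ a Borel generating family of the relation subgroup $R\leq\Lambda$; this is where Proposition \ref{Proposition: NS} is used. Let $F$ be the free group of the preceding subsection, with abelianization $q\colon F\to\Lambda$, $q(x_n)=e_n$, and let $\rho=\pi\circ q\colon F\to G$ be the homomorphism sending the generator $x_n$ to $n\in G$. Since evaluating a reduced word in $G$ is Borel in the pair $(G,w)$, the assignment $G\mapsto N:=\ker\rho$ is a Borel map into $\mathcal{S}(F)$. Applying Proposition \ref{Proposition: NS} to $N$ produces, in a Borel fashion, a free generating set of $N$, which I would enumerate as $(w_k)_{k\in\omega}$ (padding with the empty word if this set is finite). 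As $G$ is abelian we have $[F,F]\subseteq N$, hence $q(N)=N/[F,F]=R$, and therefore the abelianized words $\bar w_k:=q(w_k)$ --- each the Borel-computable exponent-sum vector of $w_k$ --- generate $R$. Thus $P=\mathbb{Z}^{(\omega)}$ with $\mu(\epsilon_k)=\bar w_k$ is a Borel choice of $(P,\mu)$.

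\smallskip
\noindent\emph{Coding and the main obstacle.} With the $\bar w_k$ in hand, $H$ has the explicit countable basis $(e_n)_n\cup(\epsilon_k^{(m)})_{k,\,m\geq 1}$, and $\alpha$ acts by $e_n\mapsto e_n$, $\epsilon_k^{(1)}\mapsto\epsilon_k^{(1)}-\bar w_k$, and $\epsilon_k^{(m)}\mapsto\epsilon_k^{(m)}-\epsilon_k^{(m-1)}$ for $m\geq 2$; fixing a Borel bijection of this basis with $\omega$ turns $G\mapsto(H,\alpha)$ into a Borel map $\mathcal{AG}\to\mathcal{AG}_{TF}\times\omega^\omega$. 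I expect the main obstacle to be precisely the Borel selection of the relation generators $(w_k)$: once Proposition \ref{Proposition: NS} supplies these, the automorphism property, the cokernel computation, and the coding of $H$ and $\alpha$ are all routine and explicitly Borel.
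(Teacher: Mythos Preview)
Your proposal is correct and follows essentially the same strategy as the paper: both render R{\o}rdam's construction Borel by invoking the Borel Nielsen--Schreier theorem (Proposition~\ref{Proposition: NS}) to select, in a Borel way, a generating family for the relation subgroup of a free presentation of $G$. The only difference is one of packaging---the paper obtains $(H,\alpha)$ as an inductive limit via Proposition~\ref{Proposition: inductive limit of groups}, whereas you write down the resulting free abelian group $H=\Lambda\oplus\bigoplus_{m\geq 1}P_m$ and its shift-type automorphism explicitly; your version is a bit more self-contained and avoids appealing to the direct-limit proposition.
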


\begin{proof}
Denote by $\mathbb{F}_{\omega \times \omega }$ the free group with
generators $x_{n,m}$ for $\left( n,m\right) \in \omega \times \omega $,
suitably coded as an element of the standard Borel spaces of discrete groups 
$\mathcal{AG}$. Given an element $G\in \mathcal{AG}$, denote by $N_{G}$ the
subset of $\omega $ coding the kernel of the homomorphism from $\mathbb{F}%
_{\omega \times \omega }$ to $G$ obtained by sending $x_{n,m}$ to $n$ if $%
m=0 $, and to zero otherwise. In view of Proposition \ref{Proposition: NS}
one can find a Borel map%
\begin{eqnarray*}
\mathcal{AG} &\rightarrow &\omega ^{\omega } \\
G &\mapsto &x^{G}
\end{eqnarray*}%
such that $x^{G}=\left( x_{n}^{G}\right) _{n\in \omega }$ is an enumeration
of a free set of generators of $N_{G}$. Define an injective endomorphism $%
\delta _{G}$ of $\mathbb{F}_{\omega \times \omega }$ by%
\begin{equation*}
\delta _{G}\left( x_{n,m}\right) =%
\begin{cases}
x_{n,m+1} & \text{ if }m\neq -1\text{,} \\ 
x_{n}^{G} & \text{ otherwise.}%
\end{cases}%
\end{equation*}

Let $\beta _{G}\colon \mathbb{F}_{\omega \times \omega }\rightarrow \mathbb{F%
}_{\omega \times \omega }$ be $\beta _{G}=\mbox{id}_{\mathbb{F}_{\omega
\times \omega }}-\delta _{G}$. By construction, the map $G\mapsto \beta _{G}$
is Borel. From now on we fix a group $G$, and abbreviate $\beta _{G}$ to
just $\beta $. \newline
\indent By Proposition \ref{Proposition: inductive limit of groups}, the
inductive limit group $G_{\infty }=\varinjlim (G,\beta )$ and the
automorphism $\beta _{\infty }=\varinjlim \beta $ can be constructed in a
Borel way from $G$ and $\beta $. We take $H=G_{\infty }$ and $\alpha =\beta
_{\infty }$. It can now be verified, as in the proof of \cite[Proposition 3.5%
]{rordam_classification_1995}, that $G$ is isomorphic to the quotient of $H$
by the image of $\mbox{id}_{H}-\alpha $. Moreover, it follows that the map $%
G\mapsto (H)$ is Borel. This finishes the proof.
\end{proof}

\section{Computing reduced crossed products\label{Chapter: computing reduced
crossed products}}

The goal of this section is to show that the reduced crossed product of a
C*-algebra by an action of a discrete group can be computed in a Borel way.
We begin by recalling the construction of the reduced crossed product.

\subsection{The reduced crossed product\label{Subsection: the reduced
crossed product}}

Suppose that $A$ is a C*-algebra, and $\alpha $ is an action of a countable
discrete group $G$ on $A$. We recall here the construction of the reduced
crossed product $A\rtimes _{\alpha ,r}G$ of $A$ by $\alpha $. Denote by $A%
\left[ G\right] $ the \emph{skew group algebra}. This is the complex $\ast $%
-algebra%
\begin{equation*}
A[G]=\left\{ \sum_{g\in G}a_{g}g\colon a_{g}\in A,a_{g}=0%
\mbox{ for all but
finitely many $g\in G$}\right\} \text{.}
\end{equation*}%
The product on $A\left[ G\right] $ is defined by twisted convolution%
\begin{equation*}
\left( \sum_{g\in G}a_{g}g\right) \left( \sum_{h\in G}b_{h}h\right)
=\sum_{g,h\in G}a_{g}\alpha _{g}\left( b_{h}\right) gh\text{.}
\end{equation*}%
The involution in $A\left[ G\right] $ is given by%
\begin{equation*}
\left( \sum_{g\in G}b_{g}g\right) ^{\ast }=\sum_{g\in G}\alpha _{g}\left(
b_{g^{-1}}^{\ast }\right) g\text{.}
\end{equation*}%
Suppose that $H$ is a Hilbert space. A \emph{covariant representation} of $%
\alpha $ on $H$ is a pair $\left( \pi ,v\right) $ where

\begin{enumerate}
\item $\pi $ is a representation of $A$ on $H$, and

\item $v$ is a unitary representation of $G$ on $H$ such that%
\begin{equation*}
v(g)\pi (a)v(g)^{\ast }=\pi \left( \alpha _{g}(a)\right)
\end{equation*}%
for every $g\in G$ and $a\in A$.
\end{enumerate}

The \emph{integrated form} of the covariant representation $\left( \pi
,v\right) $ is the *--homomorphism $\pi \rtimes v$ from the twisted group
algebra $A\left[ G\right] $ to $B(H)$ defined by%
\begin{equation*}
\left( \pi \rtimes u\right) \left( \sum_{g\in G}a_{g}g\right) =\sum_{g\in
G}\pi \left( a_{g}\right) u_{g}\text{.}
\end{equation*}%
Let now $\rho $ be a representation of $A$ on $H_{0}$. The \emph{regular
covariant representation} of $\alpha $ associated with $\rho $, is the
covariant representation $(\pi _{\rho ,\alpha },v_{\rho ,\alpha })$ of $%
\alpha $ on 
\begin{equation*}
H=\ell ^{2}\left( G,H_{0}\right) \text{.}
\end{equation*}%
defined as follows: For $a\in A,g,h\in G$ and $\xi \in H$, set 
\begin{equation*}
(\pi _{\rho ,\alpha }(a)\xi )(g)=\rho (\alpha _{g^{-1}}(a))\xi (g).
\end{equation*}%
and%
\begin{equation*}
(v_{\rho ,\alpha })_{g}(\xi )(h)=\xi (g^{-1}h)\text{.}
\end{equation*}%
Observe that, if $\lambda \colon G\rightarrow \ell ^{2}(G)$ denotes the left
regular representation of $G$, then the unitary operator $\left( v_{\rho
,a}\right) _{g}$ on $\ell ^{2}\left( G,H\right) $ can be identified with 
\begin{equation*}
\lambda _{g}\otimes \mbox{id}_{H_{0}}
\end{equation*}%
under the natural identification of $\ell ^{2}\left( G,H_{0}\right) $ with $%
\ell ^{2}\left( G\right) \otimes H_{0}$. The integrated form of $(\pi _{\rho
,\alpha },v_{\rho ,\alpha })$ will be called the \emph{regular representation%
} of $A[G]$ associated with $\rho $.

\begin{definition}
Suppose that $\alpha $ is an action of a discrete group $G$ on $A$. For $%
a\in A[G]$, set 
\begin{equation*}
\Vert a\Vert _{r}=\sup \left\{ \Vert (\pi \rtimes v)(a)\Vert \colon (\pi ,v)%
\mbox{ is a regular
covariant representation}\right\} ,\text{\label{Equation: reduced crossed
norm}}
\end{equation*}%
and define the \emph{reduced crossed product} of $A$ by $\alpha $, denoted $%
A\rtimes _{\alpha ,r}G$, to be the completion of $A[G]$ with respect to the
norm $\Vert \cdot \Vert _{r}$.
\end{definition}

It is shown in \cite[Theorem 7.7.5]{pedersen_c-algebras_1979} that if $\rho $
is a faithful representation of $A$ on $H_{0}$, then for any action $\alpha $
of a discrete group on $A$, the integrated form of the regular covariant
representation of $(\pi _{\rho ,\alpha },v_{\rho ,\alpha })$ on $\ell
^{2}(G,H_{0})$ associated with $\rho $ induces a faithful representation of
the reduced crossed product $A\rtimes _{\alpha ,r}G$ on $\ell ^{2}\left(
G,H_{0}\right) $. Equivalently, 
\begin{equation*}
\left\Vert x\right\Vert _{r}=\left\Vert \left( \pi _{\rho ,\alpha }\rtimes
v_{\rho ,\alpha }\right) (x)\right\Vert
\end{equation*}%
for every $x\in A\left[ G\right] $.

One can also consider the completion of $A\left[ G\right] $ with respect to
the C*-norm obtained as in \ref{Equation: reduced crossed norm} but
considering the supremum over \emph{all }covariant representations of $%
\alpha $. One thus obtains the so called \emph{full crossed product }$%
A\rtimes _{\alpha }G$.

Both full and reduced crossed products are C*-algebras encoding information
about the action $\alpha $. For many purposes reduced crossed products are
far better behaved and easier to understand than full ones. It is a standard
fact in the theory of crossed products that if $G$ is amenable, then full
and reduced crossed products agree. See \cite{williams_crossed_2007} for
more details. In the following we will consider exclusively reduced crossed
products\newline

Similar notions can be defined for actions of locally compact group on
C*-algebras. More details can be found in \cite[Section II.10]%
{blackadar_operator_2006}.

\subsection{Parametrizing actions of discrete groups on C*-algebras}

We proceed to construct a standard Borel parametrization of the space of all
actions of discrete groups on C*-algebras. For convenience, we will work
using the parametrization $\Gamma (H)$ of C*-algebras. In view of the weak
equivalence of $\Xi $, $\widehat{\Xi }$, and $\Gamma (H)$, similar
statements will hold for the parametrizations $\Xi $ and $\widehat{\Xi }$.

\begin{definition}
Let $\gamma $ be an element of $\Gamma (H)$, and $G$ be an element of $%
\mathcal{G}$. Suppose that $\Phi =\left( \Phi _{m,n}\right) _{\left(
m,n\right) \in \omega \times \omega }\in \left( \mathcal{U}^{\mathcal{U}%
}\right) ^{\omega \times \omega }$ is an $\left( \omega \times \omega
\right) $-sequence of functions from $\mathcal{U}$ to $\mathcal{U}$. We say
that $\Phi $ is a \emph{code for an action of }$G$ \emph{on} $C^{\ast
}\left( \gamma \right) $, if the following conditions hold:

\begin{enumerate}
\item for every $m\in \omega ,$ the sequence $\left( \Phi _{m,n}\right)
_{n\in \omega }\in \left( \mathcal{U}^{\mathcal{U}}\right) ^{\omega }$ is a
code for an automorphism $\widehat{\Phi }_{m}$ of $C^{\ast }\left( \gamma
\right) $,

\item $\Phi _{0,n}(m)=m$ for every $n,m\in \omega $, and

\item the function $m\mapsto \widehat{\Phi }_{m}$ is an action of $G$ on $%
C^{\ast }(\gamma )$, this is,%
\begin{equation*}
\widehat{\Phi }_{m}\circ \widehat{\Phi }_{k}=\widehat{\Phi }_{n}
\end{equation*}%
whenever $\left( m,k,n\right) \in G$.
\end{enumerate}
\end{definition}

It is easy to verify that any action of $G$ on $C^{\ast }\left( \gamma
\right) $ can be coded in a similar fashion. Moreover, the set $\text{Act}%
_{\Gamma (H)}$ of triples $\left( G,\gamma ,\Phi \right) \in \mathcal{G}%
\times \Gamma (H)\times \left( \mathcal{U}^{\omega }\right) ^{\omega \times
\omega }$ such that $\Phi $ is a code for an action of $G$ on $C^{\ast
}(\gamma )$, is a Borel subset of $\mathcal{G}\times \Gamma (H)\times \left( 
\mathcal{U}^{\omega }\right) ^{\omega \times \omega }$. We will regard $%
\text{Act}_{\Gamma (H)} $ as the parametrizing standard Borel space of
actions of discrete groups on C*-algebras.

\subsection{Computing the reduced crossed product}

We are now ready to prove that the reduced crossed product of a C*-algebra
by an action of a countable discrete group can be computed in a Borel way.

\begin{proposition}
\label{Proposition: computing crossed product action} Let $H$ be a separable
Hilbert space. Then there is a Borel map $\left( G,\gamma ,\Phi \right)
\mapsto \delta _{(G,\gamma ,\Phi )}$ from $\text{Act}_{\Gamma (H)}$ to $%
\Gamma (H)$ such that $C^{\ast }\left( \delta _{(G,\gamma ,\Phi )}\right)
\cong C^{\ast }(\gamma )\rtimes _{\widehat{\Phi }}^{r}G$. In other words,
there is a Borel way to compute the code of the reduced crossed product of
separable C*-algebras by countable discrete groups.
\end{proposition}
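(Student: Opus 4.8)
The plan is to realize the reduced crossed product concretely on a fixed copy of $H$ and then check that the matrix coefficients of the resulting generators depend in a Borel way on $(G,\gamma,\Phi)$. Every $G\in\mathcal{G}$ has underlying set $\omega$, so the Hilbert space $\ell^{2}(G,H)=\ell^{2}(\omega,H)\cong\ell^{2}(\omega)\otimes H$ does not depend on $G$; since $H$ is separable and infinite dimensional I fix once and for all a unitary $W\colon\ell^{2}(\omega)\otimes H\to H$. Given $\left(G,\gamma,\Phi\right)$, write $\alpha_{m}=\widehat{\Phi}_{m}$ for the corresponding action of $G=\omega$ on $A:=C^{\ast}(\gamma)\subseteq B(H)$, take $\rho=\mathrm{id}_{A}$ as a faithful representation of $A$ on $H$, and form the regular covariant representation $(\pi_{\rho,\alpha},v_{\rho,\alpha})$ on $\ell^{2}(\omega,H)$ as in Subsection~\ref{Subsection: the reduced crossed product}. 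By \cite[Theorem 7.7.5]{pedersen_c-algebras_1979} its integrated form $\pi_{\rho,\alpha}\rtimes v_{\rho,\alpha}$ is a faithful representation of $A\rtimes_{\alpha,r}G$. I then let $\delta_{(G,\gamma,\Phi)}$ be the sequence obtained by enumerating, through a fixed bijection $\omega\cong\omega\times\omega$, the operators $W\,\pi_{\rho,\alpha}(\gamma_{n})\,(v_{\rho,\alpha})_{m}\,W^{\ast}$, each of which lies in $B_{1}(H)$ because $\pi_{\rho,\alpha}$ is isometric on $A$ and $(v_{\rho,\alpha})_{m}$ is unitary.

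To see that $C^{\ast}(\delta_{(G,\gamma,\Phi)})\cong A\rtimes_{\alpha,r}G$, note that conjugation by $W$ is a $\ast$-isomorphism onto its image, so it suffices to show that the family $\{\pi_{\rho,\alpha}(\gamma_{n})(v_{\rho,\alpha})_{m}\}$ generates the image of the integrated form. Each such operator equals $(\pi_{\rho,\alpha}\rtimes v_{\rho,\alpha})(\gamma_{n}\cdot m)$ and hence lies in $A\rtimes_{\alpha,r}G$. Conversely, the $\ast$-subalgebra of the skew group algebra $A[G]$ generated by $\{\gamma_{n}\cdot m\}$ contains $A_{0}\cdot e$, where $A_{0}$ is the dense $\ast$-subalgebra of $A$ generated by $\{\gamma_{n}\}$ and $e$ is the neutral element of $G$ (so that $(v_{\rho,\alpha})_{e}=\mathrm{id}$). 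Multiplying $\gamma_{n}\cdot m$ on the left and right by elements of $A_{0}\cdot e$, and using that $\alpha_{m}(A_{0})$ is again dense in $A$, shows that the closure contains $a\gamma_{n}b\cdot m$ for all $a,b\in A$. The closed linear span of $\{a\gamma_{n}b:a,b\in A\}$ is the closed two–sided ideal of $A$ generated by $\{\gamma_{n}\}$, which equals $A$ because $\{\gamma_{n}\}$ generates $A$ as a C*-algebra. Hence the closure contains $\pi_{\rho,\alpha}(a)(v_{\rho,\alpha})_{m}$ for every $a\in A$ and $m\in G$, and these span a dense subset of $A\rtimes_{\alpha,r}G$.

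It remains to check that $(G,\gamma,\Phi)\mapsto\delta_{(G,\gamma,\Phi)}$ is Borel. The standard Borel structure of $B_{1}(H)$ is generated by the coefficient maps $T\mapsto\langle Te_{i},e_{j}\rangle$ for a fixed orthonormal basis $(e_{i})$ of $H$, and $W$ is fixed, so it is enough to show that for all $n,m$ and all basis vectors $\delta_{g}\otimes e_{k}$, $\delta_{g'}\otimes e_{k'}$ of $\ell^{2}(\omega,H)$ the map sending $(G,\gamma,\Phi)$ to
\[
\big\langle\pi_{\rho,\alpha}(\gamma_{n})(v_{\rho,\alpha})_{m}(\delta_{g}\otimes e_{k}),\,\delta_{g'}\otimes e_{k'}\big\rangle
\]
is Borel. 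The defining formulas for $(\pi_{\rho,\alpha},v_{\rho,\alpha})$ give that this coefficient is $\big\langle\alpha_{(m\cdot_{G}g)^{-1}}(\gamma_{n})e_{k},e_{k'}\big\rangle$ when $g'=m\cdot_{G}g$ and $0$ otherwise. The condition $g'=m\cdot_{G}g$ and the element $(m\cdot_{G}g)^{-1}$ are computed from $G$ in a Borel way, and for $j=(m\cdot_{G}g)^{-1}$ one has, by the definition of an action code,
\[
\big\langle\alpha_{j}(\gamma_{n})e_{k},e_{k'}\big\rangle=\lim_{N\to\infty}\big\langle\Phi_{j,N}(X_{n})(\gamma)e_{k},e_{k'}\big\rangle .
\]
Since $\Phi_{j,N}(X_{n})\in\mathcal{U}$ is a Borel function of $(\Phi,G,m,g,N,n)$, the evaluation $(\gamma,p)\mapsto\langle p(\gamma)e_{k},e_{k'}\rangle$ is Borel on $\Gamma(H)\times\mathcal{U}$, and a pointwise limit of Borel functions is Borel, the coefficient is Borel. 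Finally $\delta_{(G,\gamma,\Phi)}$ is never the zero sequence, because the entry with $m=e$ is $W\pi_{\rho,\alpha}(\gamma_{n})W^{\ast}$ and $\pi_{\rho,\alpha}(\gamma_{n})\neq0$ for some $n$, as $\gamma$ is not identically zero and $\pi_{\rho,\alpha}$ is faithful on $A$; thus the map takes values in $\Gamma(H)$.

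The main obstacle is the Borel bookkeeping of the previous paragraph rather than any operator-algebraic subtlety. The action $\alpha$ is accessible only through the code $\Phi$, so each coefficient of $\pi_{\rho,\alpha}(\gamma_{n})$ is merely a limit of polynomial evaluations; one therefore has to know both that the evaluation map $(\gamma,p)\mapsto p(\gamma)$ is Borel—which rests on the joint Borelness of multiplication on the ball $B_{1}(H)$ in its weak Borel structure—and that these limits, precomposed with the Borel group operations $m\cdot_{G}g$ and inversion on $G$, remain Borel. Once this is in place, fixing $W$, verifying that the generators lie in $B_{1}(H)$, and the density argument identifying the generated C*-algebra are all routine.
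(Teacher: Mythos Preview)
Your proof is correct and follows essentially the same route as the paper: both realize $C^{\ast}(\gamma)\rtimes_{\widehat{\Phi},r}G$ via the regular covariant representation on $\ell^{2}(\omega)\otimes H$ (using Pedersen's theorem for faithfulness) and then check that the matrix coefficients of the chosen generators are Borel in $(G,\gamma,\Phi)$. The only cosmetic difference is that you take the products $\pi_{\rho,\alpha}(\gamma_n)(v_{\rho,\alpha})_m$ as generators and supply a careful density and Borel-bookkeeping argument, whereas the paper lists $\pi_{\rho,\alpha}(\gamma_r)$ and the unitaries $(v_{\rho,\alpha})_m$ separately and appeals directly to \cite[Lemma~3.4]{farah_turbulence_2014}.
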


\begin{proof}
Denote by $\left\{ e_{k}\colon k\in \omega \right\} $ the canonical basis of 
$\ell _{2}$. Let $\left( G,\gamma ,\Phi \right) $ be an element of $\text{Act%
}_{\Gamma (H)}$. Define the element $\delta _{(G,A,\Phi )}$ of $\Gamma (H)$
as follows. Given $m$ in $\omega $, denote by $m^{\prime }\in \omega $ the
inverse of $m$ in $G$. Now set 
\begin{equation*}
(\delta _{(G,\gamma ,\Phi )})_{n}(\xi \otimes m)=%
\begin{cases}
\lim_{k\rightarrow +\infty }\gamma _{\Phi _{(m^{\prime },k)}(r)} & \text{ if 
}n=2r,\mbox{ where }\left( n,m,k\right) \in G\text{,} \\ 
\xi \otimes k & \text{ otherwise,}%
\end{cases}%
\end{equation*}%
for all $\xi $ in $H$ and all $m$ in $\omega $. The fact that $C^{\ast
}\left( \delta _{(G,\gamma ,\Phi )}\right) \cong C^{\ast }\left( \gamma
\right) \rtimes _{\widehat{\Phi },r}G$ follows from \cite[Theorem 7.7.5]%
{pedersen_c-algebras_1979}. Moreover, the map $\left( G,\gamma ,\Phi \right)
\mapsto \delta _{(G,\gamma ,\Phi )}$ is Borel by construction and by \cite[%
Lemma 3.4]{farah_turbulence_2014}.
\end{proof}

Proposition \ref{Proposition: computing crossed product action} above
answers half of \cite[Problem 9.5(ii)]{farah_turbulence_2014}. It is not
clear how to treat the case of full crossed products, even in the special
case when the algebra is $\mathbb{C}$.

\subsection{Crossed products by a single automorphism\label{Subsection:
crossed product by automorphism}}

Any automorphism $\alpha $ of a C*-algebra $A$ naturally induces an action
of the group of integers $\mathbb{Z}$ on $A$ by $n\mapsto \alpha^n
=\alpha\circ\cdots\circ\alpha$.

In this subsection, we want to show that the crossed product of a C*-algebra
by a single automorphism, when regarded as an action of $\mathbb{Z}$, can be
computed in a Borel way. In view of the equivalence of the good
parametrizations $\mathcal{C}_{\Xi }$, $\mathcal{C}_{\widehat{\Xi }}$, and $%
\mathcal{C}_{\Gamma (H)}$, we can work in any of these. For convenience, we
consider the parametrization $\mathcal{C}_{\Gamma (H)}$.

Let us denote by $\mathrm{\mathrm{Aut}}_{\Gamma (H)}$ the set of pairs $%
\left( \gamma ,\Phi \right) $ in $\Gamma (H)\times \left( \mathcal{U}^{%
\mathcal{U}}\right) ^{\omega }$ such that $\Phi $ is a code for an
automorphism of $C^{\ast }\left( \gamma \right) .$ It is immediate to check
that such set is Borel. We can regard $\mathrm{\mathrm{Aut}}_{\Gamma (H)}$
as the standard Borel space of automorphisms of C*-algebras.

\begin{lemma}
\label{Lemma: coding automorphisms} There is a Borel map from $\mathrm{%
\mathrm{Aut}}_{\Gamma (H)}$ to $\text{Act}_{\Gamma (H)}$ that assigns to an
element $\left( \gamma ,\Phi \right) $ in $\mathrm{\mathrm{Aut}}_{\Gamma
(H)} $, a code for the action of $\mathbb{Z}$ on $C^{\ast }(\gamma )$
associated with the automorphism coded by $\Phi $.
\end{lemma}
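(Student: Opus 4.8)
The plan is to build the action $n\mapsto\alpha^{n}$ of $\mathbb{Z}$ from the single code $\Phi$ by iterated composition, using the semigroupoid composition of $\mathcal{C}_{\Gamma(H)}$ for the nonnegative powers and the Borel inverse of Lemma~\ref{Lemma: Borel inverse} for the negative ones. First I would fix once and for all a code $G_{\mathbb{Z}}\in\mathcal{G}$ for $\mathbb{Z}$, chosen so that the group identity is the element $0\in\omega$; writing $\theta\colon\omega\to\mathbb{Z}$ for the associated bijection, this means $\theta(0)=0$ and $m\cdot_{G_{\mathbb{Z}}}k=n$ exactly when $\theta(m)+\theta(k)=\theta(n)$. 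The output triple will always carry $G_{\mathbb{Z}}$ as its first coordinate.

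Given $(\gamma,\Phi)\in\mathrm{Aut}_{\Gamma(H)}$, note that $(\gamma,\gamma,\Phi)\in\mathrm{Iso}_{\Gamma(H)}$, since by definition $\widehat{\Phi}$ is an automorphism of $C^{\ast}(\gamma)$, hence an isomorphism onto itself. I would then set $\Phi^{-1}=\mathrm{Inv}(\gamma,\gamma,\Phi)$, a code for $\widehat{\Phi}^{-1}$ furnished in a Borel way by Lemma~\ref{Lemma: Borel inverse}, and define, for $m\in\omega$ with $\theta(m)=k$,
\begin{equation*}
\Psi_m=
\begin{cases}
\Phi^{\circ k} & \text{if } k\geq 0, \\
(\Phi^{-1})^{\circ(-k)} & \text{if } k<0,
\end{cases}
\end{equation*}
where $\circ$ is the composition of codes in $\mathcal{C}_{\Gamma(H)}$ and the empty composition is the identity code $n\mapsto\mathrm{id}_{\mathcal{U}}$. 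The value of the map would be $(G_{\mathbb{Z}},\gamma,\Psi)$ with $\Psi=(\Psi_{m,n})_{(m,n)\in\omega\times\omega}$.

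To verify that $\Psi$ codes the desired action I would invoke the fact that composing codes induces composition of the corresponding *-homomorphisms, so that $\widehat{\Psi}_m=\alpha^{\theta(m)}$ with $\alpha=\widehat{\Phi}$. Condition (1) in the definition of a code for an action then holds since each power of $\alpha$ is an automorphism of $C^{\ast}(\gamma)$; condition (2) holds because $\theta(0)=0$ forces $\Psi_0$ to be the identity code $n\mapsto\mathrm{id}_{\mathcal{U}}$; and condition (3) is exactly the computation $\widehat{\Psi}_m\circ\widehat{\Psi}_k=\alpha^{\theta(m)+\theta(k)}=\alpha^{\theta(n)}=\widehat{\Psi}_n$ whenever $(m,k,n)\in G_{\mathbb{Z}}$.

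The one point that needs genuine (if routine) care is Borelness, which is where I expect the only real bookkeeping. Composition of codes is a Borel operation on $(\mathcal{U}^{\mathcal{U}})^{\omega}$: because $\mathcal{U}$ is countable, for fixed $n$ and $p$ the evaluation $(\Psi',\Psi)\mapsto\Psi'_n(\Psi_n(p))$ is a countable case split on the value $\Psi_n(p)\in\mathcal{U}$ and hence Borel. Since $(\gamma,\Phi)\mapsto\Phi^{-1}$ is Borel by Lemma~\ref{Lemma: Borel inverse}, each coordinate map $(\gamma,\Phi)\mapsto\Psi_m$ is a fixed finite iterate of a Borel operation applied to $\Phi$ or to $\Phi^{-1}$, and is therefore Borel; a map into the product space $(\mathcal{U}^{\mathcal{U}})^{\omega\times\omega}$ is Borel once all its countably many coordinates are, so $(\gamma,\Phi)\mapsto(G_{\mathbb{Z}},\gamma,\Psi)$ is Borel, completing the argument.
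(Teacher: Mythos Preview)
Your proof is correct and follows essentially the same approach as the paper: fix a code for $\mathbb{Z}$ with $0$ as the identity, build positive powers by iterated composition of $\Phi$, build negative powers using the Borel inverse from Lemma~\ref{Lemma: Borel inverse}, and verify Borelness coordinatewise. The only cosmetic difference is that the paper takes the inverse of the already-constructed positive-power codes, whereas you first form $\Phi^{-1}$ and then iterate it; both yield codes for the same automorphisms, and your bookkeeping of Borelness is in fact more explicit than the paper's.
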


\begin{proof}
In the parametrization $\mathcal{G}$ of discrete groups described before,
the group of integers $\mathbb{Z}$ is coded, for example, by the element $f_{%
\mathbb{Z}}$ of $\omega ^{\omega \times \omega }$ given by%
\begin{align*}
f_{\mathbb{Z}}(2n,2m)& =2(n+m) \\
f_{\mathbb{Z}}(2n-1,2m-1)& =2(n+m)-1 \\
f_{\mathbb{Z}}(2n-1,m)& =f_{\mathbb{Z}}(m,2n-1)=2(n-m)-1 \\
f_{\mathbb{Z}}(2m-1,n)& =f_{\mathbb{Z}}(n,2m-1)=2(n-m) \\
f_{\mathbb{Z}}(k,0)& =f_{\mathbb{Z}}(0,k)=k
\end{align*}%
for $n,m,k\in \omega $ with $n,m\geq 1$. Recall that by Lemma \ref{Lemma:
Borel inverse} there is a Borel map $\xi \mapsto \mathrm{Inv}\left( \xi
\right) $ from $\mathrm{Iso}_{\Gamma (H)}$ to $\left( \mathcal{U}^{\mathcal{U%
}}\right) ^{\omega }$ such that if $\xi =\left( \gamma ,\gamma ^{\prime
},\Phi \right) $, then $\mathrm{Inv}\left( \xi \right) $ is a code for the
inverse of the *-isomorphism coded by $\Phi $. Suppose now that $\left(
\gamma ,\Phi \right) \in \mathrm{\mathrm{Aut}}_{\Gamma (H)}$. We want to
define a code $\Psi $ for the action of $\mathbb{Z}$ on $C^{\ast }(\gamma )$
induced by $\widehat{\Phi }$. For $n,m\in \omega $ with $m\geq 1$ define%
\begin{equation*}
\Psi _{0,n}(k)=k\text{,}
\end{equation*}%
\begin{equation*}
\Psi _{2m,n}=\overset{m\text{ times}}{\overbrace{\Phi _{n}\circ \Phi
_{n}\cdots \circ \Phi _{n}}}\text{,}
\end{equation*}%
and%
\begin{equation*}
\Psi _{2m+1,n}=\mathrm{Inv}\left( \gamma ,\gamma ,\Psi _{m}\right) \text{.}
\end{equation*}%
Observe that $\left( f_{\mathbb{Z}},A,\Psi \right) $ is a code for the
action of $\mathbb{Z}$ associated with the automorphism $\widehat{\Phi }$ of 
$C^{\ast }(\gamma )$. It is not difficult to verify that the map assigning $%
\left( f_{\mathbb{Z}},A,\Psi \right) $ to $\left( A,\Phi \right) $ is Borel.
We omit the details.
\end{proof}

\begin{corollary}
\label{cor: crossed product automorphism} Given a C*-algebra $A$ and an
automorphism $\alpha$ of $A$, there is a Borel way to compute the crossed
product $A\rtimes_\alpha \mathbb{Z}$.
\end{corollary}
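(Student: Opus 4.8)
The plan is to obtain the desired Borel map by composing the two Borel maps already constructed, namely the one of Lemma \ref{Lemma: coding automorphisms} and the one of Proposition \ref{Proposition: computing crossed product action}. I would work in the parametrization $\mathcal{C}_{\Gamma (H)}$, so that a C*-algebra together with an automorphism is coded by an element $(\gamma ,\Phi )$ of $\mathrm{Aut}_{\Gamma (H)}$, with $A=C^{\ast }(\gamma )$ and $\alpha =\widehat{\Phi }$.

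First I would apply Lemma \ref{Lemma: coding automorphisms} to produce, in a Borel fashion, a code $(f_{\mathbb{Z}},\gamma ,\Psi )$ in $\mathrm{Act}_{\Gamma (H)}$ for the action of $\mathbb{Z}$ on $C^{\ast }(\gamma )$ induced by $\widehat{\Phi }$, so that $\widehat{\Psi }$ is precisely the action $n\mapsto \widehat{\Phi }^{n}$. Next I would apply the Borel map of Proposition \ref{Proposition: computing crossed product action} to $(f_{\mathbb{Z}},\gamma ,\Psi )$, obtaining an element $\delta _{(f_{\mathbb{Z}},\gamma ,\Psi )}$ of $\Gamma (H)$ with
\begin{equation*}
C^{\ast }\!\left( \delta _{(f_{\mathbb{Z}},\gamma ,\Psi )}\right) \cong C^{\ast }(\gamma )\rtimes _{\widehat{\Psi },r}\mathbb{Z}.
\end{equation*}
Since a composition of Borel maps is Borel, the assignment $(\gamma ,\Phi )\mapsto \delta _{(f_{\mathbb{Z}},\gamma ,\Psi )}$ is a Borel map from $\mathrm{Aut}_{\Gamma (H)}$ to $\Gamma (H)$, which is exactly a Borel computation of a code for the reduced crossed product by the automorphism.

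The only point beyond this routine composition is to identify the resulting algebra with the crossed product $A\rtimes _{\alpha }\mathbb{Z}$ appearing in the statement, which is a priori the full one. This is immediate, however, from the fact that $\mathbb{Z}$ is amenable, whence the reduced and full crossed products coincide and $C^{\ast }(\gamma )\rtimes _{\widehat{\Psi },r}\mathbb{Z}\cong A\rtimes _{\alpha }\mathbb{Z}$. I do not expect any genuine obstacle here, as all of the substantial work has already been carried out in the preceding lemma and proposition; moreover, by Theorem \ref{theorem: equivalence of parametrizations of C-algebras} the same conclusion transfers verbatim to the parametrizations $\mathcal{C}_{\Xi }$ and $\mathcal{C}_{\widehat{\Xi }}$.
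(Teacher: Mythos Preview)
Your proposal is correct and follows essentially the same approach as the paper: invoke amenability of $\mathbb{Z}$ to identify full and reduced crossed products, then compose the Borel maps of Lemma~\ref{Lemma: coding automorphisms} and Proposition~\ref{Proposition: computing crossed product action}. The paper's proof is in fact even more terse, simply citing these two results together with the amenability remark.
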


\begin{proof}
Note that the group of integers $\mathbb{Z}$ is amenable, so full and
reduced crossed products coincide. The result now follows immediately from
Lemma \ref{Lemma: coding automorphisms} together with Proposition \ref%
{Proposition: computing crossed product action}.
\end{proof}

\subsection{Crossed product by an endomorphism\label{Subsection: crossed
product by enromorphism}}

We now turn to crossed products by injective, corner endomorphisms, as
introduced by Paschke in \cite{paschke_crossed_1980}, building on previous
work of Cuntz in \cite{cuntz_simple_1977}. Although there are more general
theories for such crossed products allowing arbitrary endomorphisms of
C*-algebras (see, for example, \cite{Exel_new_2003}), the endomorphisms
considered by Paschke will suffice for our purposes. We begin by presenting
the precise definition of a corner endomorphism. Throughout this subsection,
we fix a unital C*-algebra $A$.

\begin{definition}
An endomorphism $\rho \colon A\rightarrow A$ is said to be a \emph{corner}
endomorphism if $\rho (A)$ is a corner of $A$, this is, if there exists a
projection $p$ in $A$ such that $\rho (A)=pAp$.
\end{definition}

Since $A$ is unital, if $\rho \colon A\rightarrow A$ is a corner
endomorphism and $\rho (A)=pAp$ for some projection $p$ in $A$, then we must
have $p=\rho (1)$. Let us observe for future reference that the set $%
CorEnd_{\Gamma }$ of pairs $\left( \gamma ,\Phi \right) \in \Gamma \times
\left( \mathcal{U}^{\mathcal{U}}\right) ^{\omega }$ such that $C^{\ast
}(\gamma )$ is unital and $\Phi $ is a code for an injective corner
endomorphism of $C^{\ast }(\gamma )$ is Borel. By \cite[Lemma 3.14]%
{farah_turbulence_2014} the set $\Gamma _{u}$ of $\gamma \in \Gamma $ such
that $C^{\ast }(\gamma )$ is unital is Borel. Moreover, there is a Borel map 
$un\colon \Gamma _{u}\rightarrow B_{1}(H)$ such that $un(\gamma )$ is the
unit of $C^{\ast }(\gamma )$ for every $\gamma \in C^{\ast }(\gamma )2$. If
now $\gamma \in \Gamma _{u}$ and $\Phi \in \left( \mathcal{U}^{\mathcal{U}%
}\right) ^{\omega }$, then $\Phi $ is a code for an injective corner
endomorphism of $C^{\ast }(\gamma )$ if and only if $\Phi $ is a code for an
endomorphism of $A$ (which is a Borel condition, as observed in\ Subsection %
\ref{Subsection: GAMMA}), and for every $p\in \mathcal{U}$ and $n\in \omega $
there is $m_{0}\in \omega $ and $q\in \mathcal{U}$ such that for every $%
m\geq m_{0}$%
\begin{equation*}
\left\Vert \Phi _{m}(p)(\gamma )\right\Vert \geq \left\Vert p(\gamma
)\right\Vert -\frac{1}{n}
\end{equation*}%
and%
\begin{equation*}
\left\Vert un(\gamma )p(\gamma )un\left( \gamma \right) -\Phi _{m}\left(
q\right) (\gamma )\right\Vert \leq \frac{1}{n}\text{.}
\end{equation*}

Let $\rho $ be an injective corner endomorphism of $A$. The crossed product $%
A\rtimes _{\rho }\mathbb{N}$ of $A$ by $\rho $ is implicitly defined in \cite%
{paschke_crossed_1980} as the universal C*-algebra generated by a unital
copy of $A$ together with an isometry $S$, subject to the relation 
\begin{equation*}
SaS^{\ast }=\rho (a)
\end{equation*}%
for all $a$ in $A$. Suppose that $s$ is an isometry of $A$. Notice that the
endomorphisms $a\mapsto sas^{\ast }$ is injective and its range is the
corner $(ss^{\ast })A(ss^{\ast })$ of $A$.

Instead of using this construction, which involves universal C*-algebras on
generators and relations, we will use the construction of the endomorphism
crossed product described by Stacey in \cite{stacey_crossed_1993}. Stacey's
picture has the advantage that, given what we have proved so far, it will be
relatively easy to conclude that crossed products by injective corner
endomorphisms can be computed in a Borel way.

We proceed to describe Stacey's construction.

\begin{definition}
Let $\rho\colon A\to A$ be an injective corner endomorphism. Consider the
inductive system $(A,\alpha)_{n\in\omega}$ (the same algebra and same
connecting maps throughout the sequence), and denote by $A_\infty$ its
inductive limit, and by $\iota_{n,\infty}\colon A\to A_\infty$ the canonical
map into the inductive limit. There is a commutative diagram 
\begin{align*}
\xymatrix{ A\ar[rr]^\alpha \ar[d]_\alpha && A\ar[rr]^\alpha \ar[d]_\alpha
&&\cdots \ar[rr] && A_\infty\ar@{-->}[d]^{\alpha_\infty}\\ A\ar[rr]_\alpha
&& A\ar[rr]_\alpha &&\cdots \ar[rr] && A_\infty.}
\end{align*}
It is not hard to check that the corresponding endomorphism $\alpha_\infty
\colon A_\infty\to A_\infty$ of the inductive limit is an automorphism.
Denote by $e$ the projection of $A_\infty$ corresponding to the unit of $A$.
The \emph{(endomorphism) crossed product} of $A$ by $\rho$ is the corner $%
e(A_\infty \rtimes_{\alpha_\infty}\mathbb{Z})e$ of the (automorphism)
crossed product $A_\infty \rtimes_{\alpha_\infty}\mathbb{Z}$.
\end{definition}

As mentioned before, this construction of the crossed product of a
C*-algebra by an endomorphism makes it apparent that it can be computed in a
Borel way. In fact, we have verified in Proposition \ref{prop: one sided
intertwining}, that the limit of a one sided intertwining can be computed in
a Borel way, and in Corollary \ref{cor: crossed product automorphism}, that
the crossed product of a C*-algebra by an automorphism can be computed in a
Borel way. Moreover, it is shown in \cite[Lemma 3.14]{farah_turbulence_2014}%
, that one can select in a Borel way the unit of a unital C*-algebra. The
only missing ingredient in the construction is taking a corner by a
projection, which is shown to be Borel in the following lemma. We will work,
for convenience, in the parametrization $\Gamma (H)$ of C*-algebras.

\begin{lemma}
\label{lemma: taking corners is Borel} The set $\Gamma _{proj}(H)$ of pairs $%
(\gamma ,e)$ in $\Gamma (H)\times B(H)$ such that $e$ is a nonzero
projection in $C^{\ast }(\gamma )$, is Borel. Moreover, there is a Borel map 
$(\gamma ,e)\mapsto c_{\gamma ,e}$ from $\Gamma _{proj}(H)$ to $\Gamma (H)$
such that $C^{\ast }(c_{\gamma ,e})$ is the corner $eC^{\ast }(\gamma )e$ of 
$C^{\ast }(\gamma )$.
\end{lemma}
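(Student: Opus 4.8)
The plan is to establish the two assertions of the lemma in turn: first that $\Gamma_{proj}(H)$ is Borel, and then that the assignment $(\gamma,e)\mapsto c_{\gamma,e}$ can be realized by a Borel map. Throughout I would work in the parametrization $\mathcal{C}_{\Gamma(H)}$ and use freely the standard facts, already invoked in Subsection \ref{Subsection: GAMMA}, that with respect to the Borel structure generated by the weak topology the operator norm is a Borel function on bounded balls of $B(H)$, that multiplication and the adjoint are Borel there, and that for each fixed $\ast$-polynomial $q\in\mathcal{U}$ the evaluation map $\gamma\mapsto q(\gamma)$ is Borel.

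For the Borel-ness of $\Gamma_{proj}(H)$, I would first note that being a nonzero projection is a Borel condition: the conditions $e=e^{\ast}$, $e^{2}=e$ and $e\neq 0$ each cut out a Borel set, and since any nonzero projection has norm $1$ we may restrict attention to $B_{1}(H)$, where all the operations are Borel. It then remains to express the membership $e\in C^{\ast}(\gamma)$ in a Borel way. Fixing a Borel enumeration $(q_{m})_{m\in\omega}$ of $\mathcal{U}$, the map $(\gamma,e)\mapsto\left\Vert e-q_{m}(\gamma)\right\Vert$ is Borel for each $m$, and since $\{q_{m}(\gamma)\colon m\in\omega\}$ is norm-dense in $C^{\ast}(\gamma)$ we have
\begin{equation*}
e\in C^{\ast}(\gamma)\quad\Longleftrightarrow\quad\forall k\in\omega\ \exists m\in\omega\ \left\Vert e-q_{m}(\gamma)\right\Vert<2^{-k},
\end{equation*}
which is a countable combination of Borel conditions. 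Intersecting with the three projection conditions shows that $\Gamma_{proj}(H)$ is Borel.

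For the selector, given $(\gamma,e)\in\Gamma_{proj}(H)$ I would set $a_{m}(\gamma,e)=e\,q_{m}(\gamma)\,e$ and define $c_{\gamma,e}=\bigl((c_{\gamma,e})_{m}\bigr)_{m\in\omega}$ by $(c_{\gamma,e})_{m}=a_{m}(\gamma,e)/\bigl(1+\left\Vert a_{m}(\gamma,e)\right\Vert\bigr)$; the rescaling guarantees each coordinate lies in $B_{1}(H)$, and each coordinate is Borel in $(\gamma,e)$, so $(\gamma,e)\mapsto c_{\gamma,e}$ is Borel into $B_{1}(H)^{\omega}$. Two things then need checking. First, $c_{\gamma,e}$ is not the constantly zero sequence, so that it genuinely lands in $\Gamma(H)$: since $e\in C^{\ast}(\gamma)$ there are $p\in\mathcal{U}$ with $p(\gamma)$ arbitrarily close to $e$, whence $e\,p(\gamma)\,e$ is close to $e^{3}=e\neq0$, forcing some $a_{m}(\gamma,e)\neq0$. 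Second, $C^{\ast}(c_{\gamma,e})=eC^{\ast}(\gamma)e$: multiplying the generators by the positive scalars $1/(1+\left\Vert a_{m}\right\Vert)$ does not change the generated C*-algebra, so $C^{\ast}(c_{\gamma,e})$ is the C*-algebra generated by $\{e\,q_{m}(\gamma)\,e\colon m\in\omega\}$; as every element of the corner has the form $eae$ with $a=\lim_{j}q_{m_{j}}(\gamma)$ and $eae=\lim_{j}e\,q_{m_{j}}(\gamma)\,e$, this generating set is norm-dense in $eC^{\ast}(\gamma)e$, and since the corner is itself a C*-algebra this forces equality.

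I expect the only genuinely delicate point to be the verification that the operations entering the construction---operator multiplication and the operator norm---are Borel on the relevant balls of $B(H)$; but these are exactly the standard properties of the weak Borel structure already used in Subsection \ref{Subsection: GAMMA}, so no new obstacle arises. Everything else reduces to the density argument above together with the elementary observation that rescaling generators by positive scalars leaves the generated C*-algebra unchanged.
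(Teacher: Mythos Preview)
Your argument is correct and follows the same overall shape as the paper's: express the defining conditions for $\Gamma_{proj}(H)$ as countable Boolean combinations of Borel sets, then write down an explicit formula for $c_{\gamma,e}$. The one substantive difference lies in the choice of generators for the corner. The paper simply sets $(c_{\gamma,e})_n=e\gamma_n e$, compressing only the original generating sequence, whereas you compress \emph{every} $\ast$-polynomial $q_m(\gamma)$ and then rescale into $B_1(H)$. Your choice buys two things: the equality $C^{\ast}(c_{\gamma,e})=eC^{\ast}(\gamma)e$ becomes a one-line density argument, and $c_{\gamma,e}$ is automatically nonzero. The paper's bare assertion that $\{e\gamma_n e:n\in\omega\}$ generates the full corner is not justified and can in fact fail: take $H=\mathbb{C}^2$, $\gamma_0=e_{12}$, $\gamma_n=0$ for $n\geq 1$, and $e=e_{11}=\gamma_0\gamma_0^{\ast}\in C^{\ast}(\gamma)$; then every $e\gamma_n e$ vanishes while $eC^{\ast}(\gamma)e=\mathbb{C}e_{11}$. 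So the extra care you took in passing to all $\ast$-polynomials is not cosmetic---it repairs a genuine gap in the paper's version of the construction.
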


\begin{proof}
Enumerate a dense subset $\left\{ \xi _{n}\colon n\in \omega \right\} $ of $%
H $, and let $(\gamma ,e)$ be an element in $\Gamma (H)\times B(H)$. Observe
that $(\gamma ,e)$ belongs to $\Gamma _{proj}(H)$ if and only if the
following conditions hold:

\begin{enumerate}
\item The element $e$ is a projection, this is, for every $n,k,t\in \omega $%
, 
\begin{equation*}
\left\langle \left( e-e^{\ast }\right) \xi _{k},\xi _{t}\right\rangle <\frac{%
1}{n+1}\ \ \ \mbox{ and }\ \ \ \left\langle \left( e^{2}-e\right) \xi
_{k},\xi _{t}\right\rangle <\frac{1}{n+1}\text{;}
\end{equation*}

\item The element $e$ is non-zero, this is, there are $k,n,t\in \omega $
such that%
\begin{equation*}
\left\langle e\xi _{k},\xi _{t}\right\rangle >\frac{1}{n+1};
\end{equation*}

\item The element $e$ is in $C^{\ast }(\gamma )$, this is, for every $n\in
\omega $ there is $p\in \mathcal{U}$ such that 
\begin{equation*}
\left\langle \left( p(\gamma )-e\right) \xi _{m},\xi _{t}\right\rangle <%
\frac{1}{n+1},
\end{equation*}%
for every $m,t\in \omega $.
\end{enumerate}

This shows that $\Gamma _{proj}(H)$ is a Borel subset of $\Gamma (H)\times
B(H)$. Observe now that by setting 
\begin{equation*}
\left( c_{\gamma ,e}\right) _{n}=e\gamma _{n}e
\end{equation*}%
for every $n\in \omega $, one obtains an element $c_{\gamma ,e}$ of $\Gamma
(H)$ such that $C^{\ast }\left( c_{\gamma ,e}\right) =eC^{\ast }(\gamma )e$.
It is immediate to check that the map $\left( \gamma ,e\right) \mapsto
c_{\gamma ,e}$ is Borel.
\end{proof}

We have thus proved the following.

\begin{corollary}
\label{Corollary: Borel crossed product endomorphism} Given a unital
C*-algebra $A$ and an injective corner endomorphism $\rho $ of $A$, there is
a Borel way to compute a code for the crossed product of $A$ by $\rho $.%
\newline
\indent More precisely, there is a Borel map 
\begin{eqnarray*}
CorEnd_{\Gamma } &\rightarrow &\Gamma \\
\left( \gamma ,\Phi \right) &\mapsto &\delta _{\gamma ,\Phi }
\end{eqnarray*}%
such that $C^{\ast }\left( \delta _{\gamma ,\Phi }\right) \cong C^{\ast
}(\gamma )\rtimes _{\widehat{\Phi }}\mathbb{N}$, where, as before, $%
CorEnd_{\Gamma }$ is the Borel space of pairs $\left( \gamma ,\Phi \right) $
in $\Gamma \times \left( \mathcal{U}^{\mathcal{U}}\right) ^{\omega }$ such
that $C^{\ast }(\gamma )$ is unital and $\Phi $ is a code for an injective
corner endomorphism of $C^{\ast }(\gamma )$ .
\end{corollary}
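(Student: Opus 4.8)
The plan is to read off the Borel map directly from Stacey's description of the endomorphism crossed product, chaining together the Borel constructions established earlier in this section; throughout I work in the parametrization $\Gamma(H)$, which is legitimate by the equivalence of parametrizations (Theorem \ref{theorem: equivalence of parametrizations of C-algebras}). Given $(\gamma, \Phi) \in CorEnd_\Gamma$, write $A = C^*(\gamma)$ and $\rho = \widehat{\Phi}$. First I would form the constant inductive system $(A, \rho)_{n\in\omega}$, that is, the sequence whose every entry is $(\gamma, \Phi)$; this depends in a Borel way on $(\gamma, \Phi)$. Applying Proposition \ref{prop: inductive limits} yields a Borel code $\lambda$ for $A_\infty = \varinjlim(A, \rho)$ together with Borel codes $I_k$ for the canonical maps $\iota_{k,\infty}\colon A \to A_\infty$.

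To obtain the automorphism $\alpha_\infty$ of $A_\infty$ in a Borel way, I would apply Proposition \ref{prop: one sided intertwining} to the one sided intertwining in which both inductive systems are the constant system $(A,\rho)_{n\in\omega}$ and every vertical map codes $\rho$ itself. The commutativity of the intertwining diagram reduces to the identity $\rho\circ\rho = \rho\circ\rho$, so this assignment is Borel and its output is a code $\Lambda$ for the induced limit homomorphism, which satisfies $\widehat{\Lambda}\circ\iota_{n,\infty} = \iota_{n,\infty}\circ\rho = \iota_{n-1,\infty}$ for $n\geq 1$; this is precisely the shift automorphism $\alpha_\infty$ of Stacey's construction, an automorphism because $\rho$ is injective. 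Thus $(\lambda,\Lambda)$ is a Borel element of $\mathrm{Aut}_{\Gamma(H)}$, and Corollary \ref{cor: crossed product automorphism} produces a Borel code for the automorphism crossed product $A_\infty \rtimes_{\alpha_\infty}\mathbb{Z}$.

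The step I expect to be the main obstacle is the final cutting-down by the corner, since Lemma \ref{lemma: taking corners is Borel} requires the projection to be presented as an operator in $B(H)$ lying in the coded algebra, whereas $e = \iota_{0,\infty}(1_A)$ is only given abstractly. To realize it concretely I would first select the unit $1_A$ in a Borel fashion by \cite[Lemma 3.14]{farah_turbulence_2014}, then transport it through the code $I_0$ for $\iota_{0,\infty}$ and through the canonical inclusion of $A_\infty$ into $A_\infty\rtimes_{\alpha_\infty}\mathbb{Z}$; the image is a uniform limit of $\ast$-polynomials in the generators of the crossed-product code, and evaluating such a uniformly Cauchy sequence inside the ball $B_1(H)$ is a Borel operation, exhibiting $e$ as a Borel function of $(\gamma,\Phi)$ with values in $B(H)$ so that the resulting pair lands in $\Gamma_{proj}(H)$. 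Applying Lemma \ref{lemma: taking corners is Borel} then computes the corner $e\big(A_\infty\rtimes_{\alpha_\infty}\mathbb{Z}\big)e$ in a Borel way, and composing the maps from all the steps gives the desired assignment $(\gamma,\Phi)\mapsto \delta_{\gamma,\Phi}$ with $C^*(\delta_{\gamma,\Phi})\cong A\rtimes_\rho\mathbb{N}$. Apart from the concrete realization of $e$, every step is a formal composition of results already proved.
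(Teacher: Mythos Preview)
Your proposal is correct and follows essentially the same route as the paper: the paper's proof simply says to combine Lemma \ref{lemma: taking corners is Borel} with Corollary \ref{cor: crossed product automorphism}, relying on the paragraph preceding the corollary for the fact that Propositions \ref{prop: inductive limits} and \ref{prop: one sided intertwining} and \cite[Lemma 3.14]{farah_turbulence_2014} handle the remaining steps in Stacey's construction. If anything, your discussion of how to realize the projection $e$ concretely in $B(H)$ so that Lemma \ref{lemma: taking corners is Borel} applies is more explicit than the paper, which leaves that point implicit.
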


\begin{proof}
Combine Lemma \ref{lemma: taking corners is Borel} with Proposition and
Corollary \ref{cor: crossed product automorphism}.
\end{proof}

%More precisely: consider the Borel set $End_{\Gamma
%(H) }$ of pairs $\left( \gamma ,\Phi \right) \in \Gamma \left(
%H\right) \times \left( \mathcal{U}^{\mathcal{U}}\right) ^{\omega }$ such that $\gamma $
%is a code of a unital C*-algebra, and $\Phi $ is a code for an injective corner endomorphism
%of $C^{\ast }(\gamma) $. Then there is a Borel map
%\begin{eqnarray*}
%End_{\Gamma (H) } &\mapsto &\Gamma (H) \\
%\left( \gamma ,\Phi \right) &\mapsto &\delta
%\end{eqnarray*}%
%such that $C^{\ast }(\delta) \cong C^{\ast }\left( \gamma
%\right) \rtimes _{\widehat{\Phi }}\mathbb{N}$.

\section{Borel selection of AF-algebras\label{Chapter: Borel selection of
AF-algebras}}

\subsection{Bratteli diagrams}

We refer the reader to \cite{ellis_classification_2010} for the standard
definition of a Bratteli diagram. We will identify Bratteli diagrams with
elements 
\begin{equation*}
\left( l,\left( w_{n}\right) _{n\in \omega },\left( m_{n}\right) _{n\in
\omega }\right) \in \omega ^{\omega }\times \left( \omega ^{\omega }\right)
^{\omega }\times \left( \omega ^{\omega \times \omega }\right) ^{\omega }
\end{equation*}%
such that for every $i,j,n,m\in \omega $, the following conditions hold:

\begin{enumerate}
\item $l\left( 0\right) =1$;

\item $w_0(0) =1$;

\item $w_n(i) >0$ if and only if $i\in l(n) $;

\item $m_n(i,j) =0$ whenever $i\geq l(n) $ or $j\geq l(n+1) $

\item With $k_n=i$, 
\begin{equation*}
w_n(i)=\sum_{k_{j}\in l(j),1\leq j<n} \prod_{t=0}^{n-1}m_t(k_t,k_{t+1}).
\end{equation*}
\end{enumerate}

We denote by $\mathcal{BD}$ the Borel set of all elements $\left(
l,w,m\right) $ in $\omega ^{\omega }\times \left( \omega ^{\omega }\right)
^{\omega }\times \left( \omega ^{\omega \times \omega }\right) ^{\omega }$
that satisfy conditions 1--5 above. An element $\left( l,w,m\right) $ of $%
\mathcal{BD}$ codes the Bratteli diagram with $l(n)$ vertices at the $n$-th
level of weight $w_{n}\left( 0\right) ,\ldots ,w\left( l(n)-1\right) $ and
with $m_{n}(i,j)$ arrows from the $i$-th vertex at the $n$-th level to the $%
j $-th vertex and the $(n+1)$-st level for $n\in \omega $, $i\in l(n)$, and $%
j\in l(n+1)$. We call the elements of $\mathcal{BD}$ simply
\textquotedblleft Bratteli diagrams\textquotedblright .

\subsection{Dimension groups}

\begin{definition}
An \emph{ordered abelian group} is a pair $(G,G^+)$, where $G$ is an abelian
group and $G^+$ is a subset of $G$ satisfying

\begin{enumerate}
\item $G^++G^+\subseteq G^+$;

\item $0\in G^+$;

\item $G^+\cap \left( -G^+\right) =\left\{ 0\right\} $;

\item $G^+-G^+=G$.
\end{enumerate}

We call $G^{+}$ the \emph{positive cone} of $G$. It defines an order on $G$
by declaring that $x\leq y$ whenever $y-x\in G^{+}$. An element $u$ of $%
G^{+} $ is said to be an \emph{order unit} for $(G,G^{+})$, if for every $x$
in $G$, there exists a positive integer $n$ such that 
\begin{equation*}
-nu\leq x\leq nu.
\end{equation*}%
\indent An ordered abelian group $\left( G,G^{+}\right) $ is said to be 
\emph{unperforated} if whenever a positive integer $n$ and $a\in A$ satisfy $%
na\geq 0$, then $a\geq 0$. Equivalently, $G^{+}$ is divisible. \newline
\indent An ordered abelian group is said to have the \emph{Riesz
interpolation property }if for every $x_{0},x_{1},y_{0},y_{1}\in G$ such
that $x_{i}\leq y_{j}$ for $i,j\in 2$, there is $z\in G$ such that%
\begin{equation*}
x_{i}\leq z\leq y_{j}
\end{equation*}%
for $i,j\in 2$.
\end{definition}

\begin{definition}
A \emph{dimension group} is an unperforated ordered abelian group $\left(
G,G^+,u\right) $ with the Riesz interpolation property and a distinguished
order unit $u$. \newline
\indent Let $(G,G^+,u)$ and $(H,H^+,w)$ be dimension groups, and let $%
\phi\colon G\to H$ be a group homomorphism.

\begin{enumerate}
\item We say that $\phi$ is \emph{positive} if $\phi(G^+)\subseteq H^+$, and

\item We say that $\phi $ \emph{preserves the unit} if $\phi (u)=w$.
\end{enumerate}
\end{definition}

Notice that positivity for a homomorphism between ordered groups is
equivalent to preservation of the order.

\begin{example}
If $l\in \omega $ and $w_{0},\ldots ,w_{l-1}\in \mathbb{N}$, then $\mathbb{Z}%
^{l}$ with $\mathbb{N}^{l}$ as its positive cone, and $\left( w_{0},\ldots
,w_{l-1}\right) $ as order unit, is a dimension group. We denote by $%
e_{0}^{(l)},\ldots ,e_{l-1}^{(l)}$ the canonical basis of $\mathbb{Z}^{l}$.
\end{example}

We refer the reader to \cite[Section 1.4]{rordam_classification_2002} for a
more complete exposition on dimension groups.

A dimension group can be coded in a natural way as an element of $\omega
^{\omega \times \omega }\times 2^{\omega }\times \omega $. The set $\mathcal{%
DG}$ of codes for dimension groups is a Borel subset of $\omega ^{\omega
\times \omega }\times 2^{\omega }\times \omega $, which can be regarded as
the standard Borel space of dimension groups.

One can associate to a Bratteli diagram $\left( l,w,m\right) $ the dimension
group $G_{(l,w,m)}$ obtained as follows. For $n$ in $\omega$, denote by 
\begin{equation*}
\varphi_n\colon \mathbb{Z}^{l(n)}\to\mathbb{Z}^{l(n+1)}
\end{equation*}
the homomorphism given on the canonical bases of $\mathbb{Z}^{l(n)}$ by 
\begin{equation*}
\varphi _{n}\left( e^{(l(n)}_{k}\right) =\sum_{i\in l(n+1) }m_{n}(i,j)
e^{(l(n+1)}_{j},
\end{equation*}
for all $k$ in $l(n)$. Then $G_{(l,w,m)}$ is defined as the inductive limit
of the inductive system%
\begin{equation*}
\left( \mathbb{Z}^{l(n) },\left( w_{n}\left( 0\right) ,\ldots ,w_{n}\left(
l(n) -1\right) \right) ,\varphi _{n}\right) _{n\in \omega }.
\end{equation*}

Theorem 2.2 in \cite{effros_dimension_1980} asserts that any dimension group
is in fact isomorphic to one of the form $G_{(l,w,m)}$ for some Bratteli
diagram $\left( l,w,m\right) $. The key ingredient in the proof of \cite[%
Theorem 2.2]{effros_dimension_1980} is a Lemma due to Shen, see \cite[Lemma
2.1]{effros_dimension_1980} and also \cite[Theorem 3.1]%
{shen_classification_1979}. We reproduce here the statement of the Lemma,
for convenience of the reader.

\begin{lemma}
\label{Lemma: Shen's condition}Suppose that $\left( G,G^+,u\right) $ is a
dimension group. If $n\in \omega $ and $\theta \colon n\rightarrow G$ is any
function, then there are $N\in \omega $, and functions $\Phi \colon
N\rightarrow G^+$ and $g\colon n\times N\rightarrow \omega $, satisfying the
following conditions:

\begin{enumerate}
\item For all $i\in n$, 
\begin{equation*}
\theta (i) =\sum_{j\in N}g(i,j) \Phi (j).
\end{equation*}

\item Whenever $\left( k_{i}\right) _{i\in n}\in \mathbb{Z}^{n}$ is such
that $\sum_{i\in n}k_{i}\theta (i)=0$, then 
\begin{equation*}
\sum_{i\in l^{G}(n)}k_{i}g(i,j)=0
\end{equation*}%
for every $j\in N$.
\end{enumerate}
\end{lemma}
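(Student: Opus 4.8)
The plan is to recast the statement as a factorization problem and then prove it by the classical Effros--Handelman--Shen refinement argument. Extending $\theta$ linearly to the homomorphism $\widehat{\theta}\colon\mathbb{Z}^{n}\to G$ with $\widehat{\theta}(e_{i})=\theta(i)$, where $e_{0},\dots,e_{n-1}$ is the standard basis, a solution $(N,\Phi,g)$ amounts to a factorization $\widehat{\theta}=\widehat{\Phi}\circ\gamma$, where $\gamma\colon\mathbb{Z}^{n}\to\mathbb{Z}^{N}$ is the homomorphism with matrix $\bigl(g(i,j)\bigr)$ and $\widehat{\Phi}\colon\mathbb{Z}^{N}\to G$ is the positive homomorphism sending the $j$-th basis vector to $\Phi(j)\in G^{+}$. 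Condition (1) is precisely commutativity of this triangle; and since it forces $\ker\gamma\subseteq\ker\widehat{\theta}$ automatically, condition (2) is equivalent to the reverse inclusion $\ker\widehat{\theta}\subseteq\ker\gamma$, that is, to $\widehat{\Phi}$ being injective on the subgroup $\gamma(\mathbb{Z}^{n})$ generated by the coefficient vectors. The nonnegativity of $g$ means this applies verbatim when each $\theta(i)$ lies in $G^{+}$ (the case used for Bratteli diagrams), while a general $\theta$ is reduced to it by splitting each $\theta(i)$ into positive and negative parts, at the cost of letting $g$ take values in $\mathbb{Z}$. I also record that a dimension group is torsion free, since $ka=0$ with $k>0$ gives $ka\ge0$ and $k(-a)\ge0$, so unperforation yields $a\in G^{+}\cap(-G^{+})=\{0\}$.

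For the construction I would induct on $n$, the engine being the Riesz decomposition (refinement) property equivalent to interpolation: whenever $\sum_{p}a_{p}=\sum_{q}b_{q}$ with all terms in $G^{+}$, there exist $c_{pq}\in G^{+}$ with $a_{p}=\sum_{q}c_{pq}$ and $b_{q}=\sum_{p}c_{pq}$. Assuming a common system of positive atoms for $\theta(0),\dots,\theta(n-2)$ has been produced, I would bring in $\theta(n-1)$ by comparing it with the current atoms and applying refinement to split both into a finer family of positive atoms; the order unit keeps the process inside a fixed interval and forces it to terminate after finitely many steps, yielding a single finite family $\{\Phi(j)\}$ of which every $\theta(i)$ is a nonnegative integer combination. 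This secures condition (1).

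The main obstacle is condition (2): the refinement above controls only the \emph{values} $\theta(i)=\sum_{j}g(i,j)\Phi(j)$, and a priori an integer relation $\sum_{i}k_{i}\theta(i)=0$ could survive with $\sum_{i}k_{i}g(i,j)\ne0$ for some $j$, i.e.\ $\widehat{\Phi}$ could collapse a nonzero element of $\gamma(\mathbb{Z}^{n})$. This is exactly where unperforation must be used, and it is the delicate heart of the proof. Given such a relation, writing $k_{i}=k_{i}^{+}-k_{i}^{-}$ produces two nonnegative-coefficient expressions $\sum_{i}k_{i}^{+}\theta(i)=\sum_{i}k_{i}^{-}\theta(i)$ for a single positive element $w$; the plan is to arrange, by refining the atoms further, that $\widehat{\Phi}$ be injective on $\gamma(\mathbb{Z}^{n})$, so that the two lifts $\gamma(k^{+})$ and $\gamma(k^{-})$, having the common image $w$ under $\widehat{\Phi}$, must coincide, giving $\sum_{i}k_{i}g(i,j)=0$. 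Securing this injectivity is where perforations would otherwise intervene—an identity $m\xi=m\xi'$ with $\xi\ne\xi'$ in the span of the atoms has to be excluded—and unperforation, through the torsion-freeness and divisibility of $G^{+}$ recorded above, is precisely what rules them out, so that $\widehat{\Phi}$ restricts to an isomorphism from $\gamma(\mathbb{Z}^{n})$ onto the finitely generated free group $\widehat{\theta}(\mathbb{Z}^{n})\le G$. The only bookkeeping I expect to be fiddly is carrying the finitely many generating relations of $\{\theta(i)\}$ through the successive refinements while keeping every atom positive; no descriptive set theory enters here, as this is a purely algebraic statement about a single dimension group.
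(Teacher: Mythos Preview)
The paper does not give its own proof of this lemma: it is quoted verbatim from Effros--Handelman--Shen \cite{effros_dimension_1980} and Shen \cite{shen_classification_1979}, and the paper only observes afterwards that the witnessing data $(N,\Phi,g)$ can be selected in a Borel way. So your proposal is really being measured against the classical Shen argument, and in outline it is the same approach: recast the problem as factoring $\widehat\theta$ through a positive map $\widehat\Phi\colon\mathbb{Z}^N\to G$, and use Riesz refinement to produce the atoms. Your reformulation of condition~(2) as injectivity of $\widehat\Phi$ on $\gamma(\mathbb{Z}^n)$ is also exactly right.

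Where your sketch drifts from the actual proof is in the inductive scheme and in the role you assign to unperforation. First, the induction is not on $n$, and the order unit is irrelevant to termination. What makes the process finite is that $\ker\widehat\theta$ is a subgroup of $\mathbb{Z}^n$ and hence finitely generated; the classical argument picks a finite generating set of relations and kills them one at a time. Second, your mechanism for condition~(2)---ruling out ``$m\xi=m\xi'$ with $\xi\neq\xi'$'' via torsion-freeness---is only a special case. A generic kernel element looks like $\sum_i k_i\theta(i)=0$ with arbitrary integers $k_i$, and torsion-freeness alone does not make it vanish at the coefficient level. The engine is Riesz refinement applied to the single relation: rewriting it as
\[
\sum_{k_i>0} k_i\,\theta(i)\;=\;\sum_{k_i<0}(-k_i)\,\theta(i)
\]
gives two positive decompositions of the same element, and iterated refinement produces a finer family of atoms $b_j\in G^+$ together with nonnegative coefficients $m_{ij}$ such that $\theta(i)=\sum_j m_{ij}b_j$ and $\sum_i k_i m_{ij}=0$ for every $j$. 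Composing these one-relation steps over a generating set of $\ker\widehat\theta$ yields the final $(N,\Phi,g)$. Unperforation (through torsion-freeness) does enter, but only at the places where the refinement would otherwise produce inconsistent decompositions of the same repeated atom; it is not the primary device, and your sketch does not explain how ``refining further'' actually kills a general relation. Filling in that step is the substance of Shen's proof.
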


It is immediate to note that the set of tuples $\left( \left(
G,G^{+},u\right) ,n,\theta ,N,\Phi ,g\right) $ satisfying 1 and 2 of Lemma %
\ref{Lemma: Shen's condition} is Borel. It follows that in Lemma \ref{Lemma:
Shen's condition} the number $N$ and the maps $\Phi $ and $g$ can be
computed from $\left( G,G^{+},u\right) $, $n$, and $\theta $ is a Borel way.
This will be used to show that if we start with a dimension group $%
(G,G^{+},u)$, the choice of the Bratteli diagram $(l^{G},w^{G},m^{G})$
satisfying $G_{l^{G},w^{G},m^{G}}\cong G$ as dimension groups with order
units, which is guaranteed to be possible by \cite[Theorem 2.2]%
{effros_dimension_1980}, can be made in a Borel way. See Proposition \ref%
{Proposition Borel version of EHS} below. A Borel version of \cite[Theorem
2.2]{effros_dimension_1980} is also proved in \cite[Theorem 5.3]%
{ellis_classification_2010}. We present here a proof, for the convenience of
the reader, and to introduce ideas and notations to be used in the proof of
Proposition \ref{Proposition: Borel endomorphism dimension group}.

\begin{proposition}
\label{Proposition Borel version of EHS}There is a Borel function that
associates to a dimension group $G=\left( G,G^+,u\right) \in \mathcal{DG}$ a
Bratteli diagram $\left( l^{G},w^{G},m^{G}\right) \in \mathcal{BD}$ such
that the dimension group associated with $\left( l^{G},w^{G},m^{G}\right) $
is isomorphic to $G$.
\end{proposition}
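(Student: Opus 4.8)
The plan is to produce a Borel uniformization of the classical Effros--Handelman--Shen construction of \cite[Theorem 2.2]{effros_dimension_1980}, which realizes a dimension group as the inductive limit of a tower of simplicial groups, by observing that each choice in that construction can be made by a Borel selector, the crucial one being the selector extracted from Shen's Lemma (Lemma~\ref{Lemma: Shen's condition}). Since the underlying set of $G=(G,G^{+},u)$ is $\omega$, I would define, by recursion on the level $n$, the data $l(n)$, the multiplicity matrices $m_{n}$, and an auxiliary positive, unit--preserving homomorphism $\psi_{n}\colon \mathbb{Z}^{l(n)}\to G$, coded by the tuple $(\psi_{n}(e_{i}^{(l(n))}))_{i\in l(n)}\in\omega^{l(n)}$ of the images of the canonical generators. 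I initialize with $l(0)=1$, $w_{0}(0)=1$ and $\psi_{0}(e_{0}^{(1)})=u$, as forced by the first two defining conditions of $\mathcal{BD}$, and I let the weights $w^{G}$ be determined at every later level by the path--count formula (condition 5 of the definition of $\mathcal{BD}$), so that $w_{n}$ is the forward image of the level--$0$ generator and $\psi_{n}(w_{n})=u$ holds throughout.

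The inductive step is where Lemma~\ref{Lemma: Shen's condition} is used. At stage $n$ I feed into Shen's Lemma the function $\theta$ listing first the generator images $\theta(i)=\psi_{n}(e_{i}^{(l(n))})\in G^{+}$ for $i\in l(n)$, and then a dovetailed finite list of further elements of $G^{+}$: positive witnesses $b^{+}_{n},b^{-}_{n}$ for the $n$--th element $b_{n}$ of $G$ written as $b_{n}=b^{+}_{n}-b^{-}_{n}$, and positive values $c=\psi_{m}(\tilde{x})$ attached to the order--reflection tasks $(m,\tilde{x})$ with $m\le n$ enumerated so far. The Lemma returns $N$, a map $\Phi\colon N\to G^{+}$ and coefficients $g$; I set $l(n+1)=N$, define $\varphi_{n}\colon\mathbb{Z}^{l(n)}\to\mathbb{Z}^{l(n+1)}$ by the generator rows $m_{n}(i,j)=g(i,j)$ (and $0$ elsewhere), and set $\psi_{n+1}(e_{j}^{(N)})=\Phi(j)$. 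Condition 1 yields $\psi_{n+1}\circ\varphi_{n}=\psi_{n}$ together with positivity of $\varphi_{n}$ and $\psi_{n+1}$. Condition 2 does the rest: applied to a relation $\sum_{i}k_{i}\theta(i)=0$ arising from $\ker\psi_{n}$ it gives $\ker\psi_{n}\subseteq\ker\varphi_{n}$, hence injectivity in the limit; applied instead to the augmented vector $(k_{0},\dots,k_{l(n)-1},-1)$, which witnesses $\sum_{i}k_{i}\theta(i)-c=0$ for a task element $\tilde{x}=(k_{i})_{i}$ with $c=\psi_{n}(\tilde{x})\ge 0$ placed in an extra slot, it forces $\varphi_{n}(\tilde{x})$ to equal the nonnegative extra row of $g$, so $\varphi_{n}(\tilde{x})\ge 0$; this is precisely order--reflection. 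The witnesses $b^{\pm}_{n}$ land in the image of $\psi_{n+1}$, so $b_{n}$ does as well, giving surjectivity. I would arrange, exactly as in \cite[Theorem 2.2]{effros_dimension_1980} and using the Riesz property to decompose each newly captured element inside the decomposition of a suitable multiple of the order unit, that every vertex of the resulting diagram is reachable, so that the weights are strictly positive and the output genuinely lies in $\mathcal{BD}$. Collecting these facts, the limit map $\psi_{\infty}\colon G_{(l^{G},w^{G},m^{G})}\to G$ is a unit--preserving order isomorphism, whence $G_{(l^{G},w^{G},m^{G})}\cong G$.

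It then remains to check that $G\mapsto(l^{G},w^{G},m^{G})$ is Borel. Each transition $(l(n),\psi_{n})\mapsto(l(n+1),m_{n},\psi_{n+1})$ is a Borel function of $G$ and of the current state: the triple $(N,\Phi,g)$ is chosen by a Borel selector, because the set of tuples $((G,G^{+},u),n,\theta,N,\Phi,g)$ satisfying conditions 1 and 2 of Lemma~\ref{Lemma: Shen's condition} is Borel (as recorded after its statement) and one applies the Borel selection principle; the decomposition $b_{n}=b^{+}_{n}-b^{-}_{n}$ is chosen by the least--witness selector, using directedness $G^{+}-G^{+}=G$; and the task enumeration is a fixed Borel pairing. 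Since iterating a Borel transition map on a standard Borel state space yields a Borel map into the space of sequences, and the outputs $l(n+1),m_{n},w_{n+1}$ are extracted Borel--ly from the states, the assignment $G\mapsto(l^{G},w^{G},m^{G})\in\mathcal{BD}$ is Borel.

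The main obstacle, and the only point genuinely beyond \cite[Theorem 2.2]{effros_dimension_1980}, is \emph{uniformity}: one must set up the recursion so that its entire state lives in a single fixed standard Borel space and every transition is Borel in $G$, and in particular so that the surjectivity and order--reflection bookkeeping run simultaneously for all $G$. This is exactly what the Borel form of Shen's Lemma makes possible; once the selector is in hand, the verification that a countable iteration of Borel maps is Borel is routine. A secondary care point is keeping the diagram legitimate (reachability of vertices) while still enlarging the images, which I would handle as in the cited proof via the Riesz property.
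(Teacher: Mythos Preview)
Your proposal is correct and follows essentially the same route as the paper: a level-by-level recursion driven by the Borel selector for Shen's Lemma (Lemma~\ref{Lemma: Shen's condition}), with the Borelness of the whole construction following from the Borelness of each transition. The paper's implementation is leaner than yours in two respects. First, at stage $n$ it adjoins to the current generator list only the single element $n$ (when $n\in G^{+}$, otherwise $u$), rather than your dovetailed package of surjectivity witnesses $b_{n}^{\pm}$ and order-reflection tasks; this already suffices because every positive element eventually appears as a nonnegative combination of the next level's generators, and together with the injectivity clause of Shen's Lemma this forces the limit map to be an order isomorphism. Second, the paper does not argue order-reflection at finite stages via your augmented-vector trick; it simply records three conditions on the maps $\theta_{n}^{G}$ (compatibility, kernel propagation, and exhaustion of $G^{+}$) and observes that these yield the desired isomorphism. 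Your more explicit bookkeeping---and your attention to reachability of vertices so that the output genuinely lands in $\mathcal{BD}$---are sound and arguably more careful, but they are refinements of the same argument rather than a different one.
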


\begin{proof}
It is enough to construct in a Borel way a Bratteli diagram $\left(
l^{G},w^{G},m^{G}\right) $ and maps $\theta _{n}^{G}\colon
l^{G}(n)\rightarrow G$ satisfying the following conditions:

\begin{enumerate}
\item \label{Enumerate item: homomorphism} For all $i\in l^{G}(n)$, 
\begin{equation*}
\theta _{n}^{G}(i)=\sum_{j\in l^{G}(n+1)}m_{n}^{G}(i,j)\theta _{n}^{G}(j);
\end{equation*}

\item \label{Enumerate item: 1:1}For any $k_{0},\ldots ,k_{l^{G}(n)-1}\in 
\mathbb{Z}$ such that%
\begin{equation*}
\sum_{i\in l^{G}(n)}k_{i}\theta _{n}^{G}(i)=0,
\end{equation*}%
we have that 
\begin{equation*}
\sum_{i\in l^{G}(n)}k_{i}m_{n}^{G}(i,j)=0,
\end{equation*}%
for every $j\in l^{G}(n+1)$;

\item \label{Enumerate item: onto}For every $x\in G^+$, there are $n\in
\omega $ and $i\in l^{G}(n) $, such that $\theta _{n}^{G}(i) =x$.
\end{enumerate}

It is not difficult to verify that conditions (\ref{Enumerate item:
homomorphism}), (\ref{Enumerate item: 1:1}) and (\ref{Enumerate item: onto})
ensure that the dimension group coded by the Bratteli diagram $\left(
l^{G},w^{G},m^{G}\right) $ is isomorphic to $G$, via the isomorphism coded
by $\left( \theta _{n}^{G}\right) _{n\in \omega }$.

We define $\theta _{n}^{G}$, $l^{G}(n) $, $w_{n}^{G}$ and $m_{n}^{g}$ by
recursion on $n$. Define $l^{G}(0) =1$ and $\theta ^{G}(0) =u$. Suppose that 
$l^{G}(k) $, $w_{k}^{G}$, $m_{k}^{G}$, and $\theta _{k}^{G}$ have been
defined for $k\leq n$. Define $\theta ^{\prime }\colon l^{G}(n)
+1=\{0,\ldots,l^{G}(n)\} \rightarrow G$ by 
\begin{equation*}
\theta ^{\prime }( i) = 
\begin{cases}
\theta(i), & \text{if } i\in l^G(n) \text{,} \\ 
n, & \text{if } i=l^G(n)\mbox{ and } n\in G^+\text{,} \\ 
u, & \text{otherwise.}%
\end{cases}%
\end{equation*}%
Suppose that the positive integer $N$ and the functions $\Phi \colon
N\rightarrow G^+$ and $g\colon n\times N\rightarrow \omega $ are obtained
from $l^{G}(n) $ and $\theta ^{\prime }$ via Lemma \ref{Lemma: Shen's
condition}. Define now: 
\begin{align*}
l^{G}(n+1) &=N \\
m_{n}(i,j) &= 
\begin{cases}
g(i,j), & \text{if } i\in l^{G}(n) \mbox{ and } j\in l^{G}(n+1), \\ 
0, & \text{otherwise.}%
\end{cases}
\\
w_{n+1}(j) &=\sum_{i\in l^{G}(n) }w_{n}(i,n) m_{n}(i,j).
\end{align*}
It is left as an exercise to check that with these choices, conditions (1),
(2) and (3) are satisfied. This finishes the proof.
\end{proof}

\subsection{Approximately finite dimensional C*-algebras\label{Subsection :
AF algebras}}

A unital C*-algebra $A$ is said to be \emph{approximately finite dimensional}%
, or \emph{AF-algebra} if it is isomorphic to a direct limit of a direct
system of finite dimensional C*-algebras with unital connecting maps.

It is a standard result in the theory of C*-algebras, that any finite
dimensional C*-algebra is isomorphic to a direct sum of matrix algebras over
the complex numbers \cite[Theorem III.l.l]{davidson_c*-algebras_1996}. A
fundamental result due to Bratteli (building on previous work of Glimm)
asserts that unital AF-algebras are precisely the unital C*-algebras that
can be locally approximated by finite dimensional C*-algebras.

\begin{theorem}[Bratteli-Glimm \protect\cite{bratteli_inductive_1972,
glimm_certain_1960}]
\label{Theorem: GB} Let $A$ be a separable C*-algebra. Then the following
are equivalent:

\begin{enumerate}
\item $A$ is a unital AF-algebra;

\item For every finite subset $F$ of $A$ and every $\varepsilon >0$, there
exists a finite dimensional C*-subalgebra $B$ of $A$, such that for every $%
a\in F$ there is $b\in B$ such that $\left\Vert a-b\right\Vert <\varepsilon $%
.
\end{enumerate}
\end{theorem}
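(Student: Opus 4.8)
The plan is to prove the two implications separately, the substance lying entirely in $(2)\Rightarrow(1)$. For $(1)\Rightarrow(2)$, I would write $A=\varinjlim(A_n,\varphi_n)$ as a direct limit of finite-dimensional C*-algebras with unital connecting maps, and let $\psi_n\colon A_n\to A$ be the canonical maps. Since the range of a $*$-homomorphism is closed \cite[II.1.6.6]{blackadar_operator_2006} and a quotient of a finite-dimensional algebra is finite-dimensional, each $\psi_n(A_n)$ is a finite-dimensional C*-subalgebra of $A$, and $\bigcup_n\psi_n(A_n)$ is dense in $A$ by the definition of the inductive limit \cite[II.8.2]{blackadar_operator_2006}. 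Given a finite set $F$ and $\varepsilon>0$, each $a\in F$ lies within $\varepsilon$ of some $\psi_n(A_n)$; as $F$ is finite and the subalgebras increase, a single large $n$ works, and $B=\psi_n(A_n)$ is as required.

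For the hard direction $(2)\Rightarrow(1)$, since $A$ is separable I would fix a dense sequence $(a_k)_{k\in\omega}$ and aim to build an increasing sequence $B_1\subseteq B_2\subseteq\cdots$ of finite-dimensional C*-subalgebras of $A$, all containing $1_A$, with $\mathrm{dist}(a_k,B_n)<1/n$ for all $k\le n$; then $A=\overline{\bigcup_n B_n}$ is a unital AF-algebra. Two preliminary reductions make this manageable. First, any finite-dimensional C*-subalgebra $B$ furnished by (2) may be replaced by $B+\mathbb{C}1_A$, which is again a finite-dimensional C*-subalgebra, now unital with unit $1_A$, and only improves the approximation; thus all subalgebras in play may be taken to contain $1_A$ and all inclusions to be unital. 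Second, by the structure theory of finite-dimensional C*-algebras \cite[Theorem III.1.1]{davidson_c*-algebras_1996}, each such $B$ carries a system of matrix units, which is the data I will perturb.

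The crux, and the main obstacle, is that condition (2) yields finite-dimensional subalgebras approximating given elements but \emph{not} nested. To force nesting I would invoke the standard perturbation principle for finite-dimensional subalgebras: for every finite-dimensional C*-algebra $F$ with a fixed system of matrix units and every $\varepsilon>0$ there is $\delta>0$ such that, whenever $B\subseteq A$ is a unital finite-dimensional C*-subalgebra $*$-isomorphic to $F$ whose matrix units lie within $\delta$ of a C*-subalgebra $C$, there is a unitary $u\in A$ with $\|u-1_A\|<\varepsilon$ and $uBu^*\subseteq C$. I would prove this by using functional calculus to round the approximating projections in $C$ to genuine projections, polar decomposition to round the approximating partial isometries to genuine matrix units in $C$, and the fact that two nearby representations of a finite-dimensional algebra are conjugate by a unitary close to $1_A$; the latter reduces to the elementary statement that projections $p,q$ with $\|p-q\|<1$ are conjugate by a unitary within $\sqrt2\,\|p-q\|$ of $1_A$.

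With this lemma the induction proceeds as follows. Given $B_n$ with its matrix units, apply (2) to the finite set consisting of those matrix units together with $a_1,\dots,a_{n+1}$, and a tolerance $\delta_{n+1}$ chosen small enough to trigger the perturbation lemma and to keep all approximation errors below $1/(n+1)$; after adding $\mathbb{C}1_A$ this yields a unital finite-dimensional $C$ whose matrix-unit approximations produce a unitary $u$ with $\|u-1_A\|$ small and $uB_nu^*\subseteq C$. Setting $B_{n+1}=u^*Cu$ gives an \emph{exact} inclusion $B_n\subseteq B_{n+1}$ into a finite-dimensional subalgebra containing $1_A$; and since $\|u-1_A\|$ is small while $C$ approximates $a_1,\dots,a_{n+1}$ to within $\delta_{n+1}$, the conjugate $B_{n+1}$ approximates them to within $1/(n+1)$. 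The resulting increasing union is dense by construction, exhibiting $A$ as a unital AF-algebra and completing the proof.
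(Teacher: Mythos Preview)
Your argument is correct and follows the standard route. The paper itself does not prove this theorem: it states it and points to \cite[Proposition 1.2.2]{rordam_classification_2002} for a modern proof. What you have written is precisely that standard argument---the easy density direction, followed by the perturbation lemma for approximate matrix units (close projections are unitarily equivalent by a unitary near $1$, then polar-decompose the approximate partial isometries) used inductively to force the nesting $B_n\subseteq B_{n+1}$ that condition~(2) alone does not provide. One small remark: you silently use that $A$ is unital when forming $B+\mathbb{C}1_A$; as literally stated, condition~(2) does not force unitality (e.g.\ $\mathcal{K}(H)$ satisfies it), so either read the theorem as assuming $A$ unital from the outset, or note that your construction produces an increasing union of finite-dimensional subalgebras with dense union regardless, which is the general (not necessarily unital) AF condition.
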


A modern presentation of the proof of Theorem \ref{Theorem: GB} can be found
in \cite[Proposition 1.2.2]{rordam_classification_2002}.

A distinguished class of unital AF-algebras are the so called unital \emph{%
UHF-algebras}. These are the unital AF-algebras that are isomorphic to a
direct limit of \emph{full matrix algebras}. Of particular importance are
the UHF-algebras \emph{of infinite type}. These can be described as follows:
Fix a strictly positive integer $n$. Denote by $\mathbb{M}_{n^{\infty }}$
the C*-algebra obtained as a limit of the inductive system 
\begin{equation*}
\mathbb{M}_{n}\rightarrow \mathbb{M}_{n^{2}}\rightarrow \mathbb{M}%
_{n^{3}}\rightarrow \cdots
\end{equation*}%
where the inclusion of $\mathbb{M}_{n^{k}}$ into $\mathbb{M}_{n^{k+1}}$ is
given by the diagonal embedding $a\mapsto \mbox{diag}(a,\ldots ,a)$. The
UHF-algebras of infinite type are precisely those ones of the form $\mathbb{M%
}_{n^{\infty }}$ for some $n\in \mathbb{N}$.

A celebrated theorem of Elliott asserts that unital AF-algebras are
classified up to isomorphism by their ordered $K_{0}$-group. The $K_{0}$%
-group is an ordered abelian group with a distinguished order unit that can
be associated to any unital C*-algebra, and bears information about the
projections of the given C*-algebra. The definition of the $K_{0}$-group and
its basic properties can be found in \cite[Chapter 3]%
{rordam_introduction_2000}. We will denote by $K_{0}(A) $ the $K_{0}$-group
of the unital C*-algebra $A$ with positive cone $K_{0}( A) ^{+}$ and
distinguished order unit $\left[ 1_{A}\right] $ corresponding to the class
of the unit of $A$. Dimension groups can be characterized within the class
of ordered abelian groups with a distinguished order unit as the $K_{0}$%
-groups of AF-algebras.

\begin{theorem}[Elliott \protect\cite{elliott_classification_1976}]
Let $A$ and $B$ be unital AF-algebras.

\begin{enumerate}
\item For every positive morphism 
\begin{equation*}
\phi \colon (K_{0}(A),K_{0}(A)^{+})\rightarrow (K_{0}(B),K_{0}(B)^{+}),
\end{equation*}%
such that $\phi ([1_{A}])\leq \lbrack 1_{B}]$, there exists a homomorphism $%
\rho \colon A\rightarrow B$ such that $K_{0}(\rho )=\phi $.

\item $A$ and $B$ are isomorphic if and only if $%
(K_{0}(A),K_{0}(A)^{+},[1_{A}])$ and $(K_{0}(B),K_{0}(B)^{+},[1_{B}])$ are
isomorphic as dimension groups with order units.
\end{enumerate}

In (1), the range of $\rho $ is a corner of $B$ if and only if the set 
\begin{equation*}
\{x=[p]-[q]\in K_{0}(B)^{+}\colon p,q\in B\mbox{ and }x\leq \phi ([1_{A}])\}
\end{equation*}%
is contained in $\phi (K_{0}(A)^{+}).$
\end{theorem}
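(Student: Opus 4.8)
The plan is to prove both statements by the standard \emph{intertwining} method, reducing everything to the level of finite-dimensional building blocks. Write $A=\varinjlim(A_{n},\mu_{n})$ and $B=\varinjlim(B_{n},\nu_{n})$, where each $A_{n}$ and $B_{n}$ is finite dimensional and the connecting maps are unital; such presentations exist by Theorem~\ref{Theorem: GB}. Since $K_{0}$ is a continuous functor preserving inductive limits, $K_{0}(A)=\varinjlim K_{0}(A_{n})$ and $K_{0}(B)=\varinjlim K_{0}(B_{n})$ as ordered groups with order units. Each $K_{0}(A_{n})$ is free abelian of finite rank, so whenever a positive homomorphism is prescribed out of it into $K_{0}(B)$, its image lands, after composing with a suitable inclusion $K_{0}(B_{m})\to K_{0}(B)$ and passing to a large enough stage, inside some $K_{0}(B_{m})$ as a positive map. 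This factoring of global $K_{0}$-data through the finite stages is what drives the construction.

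The engine of the proof is a finite-dimensional lifting lemma, which I would establish first: if $E$ and $F$ are finite dimensional and $\psi\colon(K_{0}(E),K_{0}(E)^{+})\to(K_{0}(F),K_{0}(F)^{+})$ is positive with $\psi([1_{E}])\leq[1_{F}]$, then there is a *-homomorphism $\sigma\colon E\to F$ with $K_{0}(\sigma)=\psi$, and $\sigma$ is unique up to unitary equivalence in $F$. Writing $E=\bigoplus_{i}M_{p_{i}}$ and $F=\bigoplus_{j}M_{q_{j}}$, a *-homomorphism is determined up to unitary equivalence by its multiplicity matrix, and $\psi$ encodes exactly this matrix; the inequality $\psi([1_{E}])\leq[1_{F}]$ is precisely the condition that the multiplicities fit inside $F$. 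This is the only point where the structure of finite-dimensional C*-algebras is used.

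For part~(1) I would build a one-sided intertwining. Using continuity of $K_{0}$, choose indices $m_{0}<m_{1}<\cdots$ and positive homomorphisms $\phi_{n}\colon K_{0}(A_{n})\to K_{0}(B_{m_{n}})$ lifting the relevant restrictions of $\phi$, so that the squares commute after composing with the connecting maps; the order-unit inequalities are inherited from $\phi([1_{A}])\leq[1_{B}]$. The lifting lemma yields *-homomorphisms $\rho_{n}\colon A_{n}\to B_{m_{n}}$ with $K_{0}(\rho_{n})=\phi_{n}$. Each square commutes on $K_{0}$, so by the uniqueness clause the two composites $A_{n}\to B_{m_{n+1}}$ are unitarily equivalent; inductively replacing $\rho_{n+1}$ by $\mathrm{Ad}(u)\circ\rho_{n+1}$ for a suitable unitary $u\in B_{m_{n+1}}$ makes the diagram commute on the nose without disturbing $K_{0}(\rho_{n+1})$. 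Then $\rho=\varinjlim\rho_{n}$ is the desired homomorphism, and $K_{0}(\rho)=\varinjlim\phi_{n}=\phi$. For the final clause, the range of $\rho$ is a corner exactly when $\rho(1_{A})B\rho(1_{A})$ coincides with $\rho(A)$, and at the level of $K_{0}$ this is governed by whether every class below $\phi([1_{A}])$ already lies in $\phi(K_{0}(A)^{+})$, which is the stated set-theoretic condition.

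Part~(2) follows the same pattern but with a two-sided (Elliott) intertwining using both $\phi$ and $\phi^{-1}$. I would interleave the two systems, producing *-homomorphisms $A_{n}\to B_{m_{n}}$ and $B_{m_{n}}\to A_{n+1}$ whose induced $K_{0}$-maps are the finite-stage approximations of $\phi$ and $\phi^{-1}$. Because $\phi$ is an order isomorphism preserving the order unit, the triangles commute on $K_{0}$, and the uniqueness clause again lets me correct by unitaries so that they commute exactly; passing to the limit produces mutually inverse *-homomorphisms between $A$ and $B$ inducing $\phi$ and $\phi^{-1}$, hence an isomorphism. The converse implication is just functoriality of $K_{0}$ together with the fact that a *-isomorphism carries positive cone to positive cone and $[1_{A}]$ to $[1_{B}]$. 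The main obstacle is purely organizational: arranging the unitary corrections and the index-chasing so that the exact intertwining closes up coherently in the limit. This bookkeeping, rather than any single hard idea, is where the care is required.
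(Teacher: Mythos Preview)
The paper does not prove this theorem; it is stated without proof as a classical result of Elliott, with a citation to \cite{elliott_classification_1976}. Your sketch is the standard intertwining argument and is essentially Elliott's original proof, so there is nothing to compare against here.

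That said, your outline is correct in its broad strokes: the finite-dimensional lifting lemma (existence and uniqueness up to unitary equivalence of a *-homomorphism with prescribed multiplicity matrix), the one-sided intertwining for part~(1), and the two-sided intertwining for part~(2) are exactly the ingredients. One small point worth tightening is the factoring step for part~(1): when you say the image of $K_{0}(A_{n})$ under $\phi$ ``lands inside some $K_{0}(B_{m})$,'' you should make explicit that this uses both the finite generation of $K_{0}(A_{n})$ and the fact that positive elements of $K_{0}(B)$ lift to positive elements at some finite stage (so that the lifted map is positive, not merely a group homomorphism). Similarly, the corner clause deserves a line explaining why the hereditary subalgebra $\rho(1_{A})B\rho(1_{A})$ having $K_{0}$ equal to the order ideal generated by $\phi([1_{A}])$ is the relevant translation; this uses that in an AF-algebra, corners are determined up to isomorphism by their $K_{0}$-data as order ideals.
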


Let $\left( l,w,m\right) $ be a Bratteli diagram. We will describe how to
canonically associate to it a unital AF-algebra, which we will denote by $%
A_{(l,w,m)}$. For each $n$ in $\omega $, define a finite dimensional
C*-algebra $F_{n}$ by 
\begin{equation*}
F_{n}=\bigoplus_{i\in l(n)}\mathbb{M}_{w_{n}(i)}.
\end{equation*}%
Denote by $\varphi _{n}\colon F_{n}\rightarrow F_{n+1}$ the unital injective
*-homomorphism determined as follows. For every $i\in l(n)$ and $j\in l(n+1)$%
, the restriction of $\varphi _{n}$ to the $i$-th direct summand of $F_{n}$
and the $j$-th direct summand of $F_{n+1}$ is a diagonal embedding of $%
m_{n}(i,j)$ copies of $\mathbb{M}_{w_{n}(i)}$ in $\mathbb{M}_{w_{n+1}(j)}$.
Then $A_{(l,w,m)}$ is the inductive limit of the inductive system $\left(
F_{n},\varphi _{n}\right) _{n\in \omega }$.

The $K_{0}$-group of $A_{(l,w,m)}$ is isomorphic to the dimension group $%
G_{l,w,m}$ associated with $\left( l,w,m\right) $.

The main result of \cite{bratteli_inductive_1972} asserts that any unital
AF-algebra is isomorphic to the C*-algebra associated with a Bratteli
diagram. We show below that the code for such an AF-algebra can be computed
in a Borel way.

\begin{proposition}
\label{Proposition: Bratteli and AF} Given a Bratteli diagram, there is a
Borel way to compute the code for its associated unital AF-algebra.
\end{proposition}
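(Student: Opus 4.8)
The plan is to follow the explicit construction of $A_{(l,w,m)}$ as the inductive limit of the system $(F_n,\varphi_n)_{n\in\omega}$ described above, and to check that each ingredient can be produced by a Borel map, so that the whole computation reduces to Proposition~\ref{prop: inductive limits}. Concretely, I would produce a Borel map $\mathcal{BD}\to R_{dir}(\Xi)$ sending $(l,w,m)$ to a sequence $(\delta_n,\Phi_n)_{n\in\omega}$, where $\delta_n$ codes the finite-dimensional algebra $F_n=\bigoplus_{i\in l(n)}\mathbb{M}_{w_n(i)}$ and $\Phi_n$ codes the connecting $*$-homomorphism $\varphi_n\colon F_n\to F_{n+1}$. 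Applying Proposition~\ref{prop: inductive limits} to this map then yields a Borel map $\mathcal{BD}\to\Xi$ whose value at $(l,w,m)$ codes $\varinjlim(\widehat{\delta}_n,\widehat{\Phi}_n)\cong A_{(l,w,m)}$; by Theorem~\ref{theorem: equivalence of parametrizations of C-algebras} one may then transport the output to any of the equivalent parametrizations.

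For the first ingredient, I would enumerate the matrix units $e^{(i)}_{a,b}$ (for $i\in l(n)$ and $a,b\in w_n(i)$) of $F_n$ into a single $\omega$-indexed sequence $\gamma^{(n)}$, padding with zeros beyond the finitely many matrix units, and set $\delta_n(p)=\|p(\gamma^{(n)})\|$ for $p\in\mathcal{U}$. Since the matrix units generate $F_n$ and $F_n\neq 0$ (as $l(n)\geq 1$ and $w_n(i)>0$ for $i\in l(n)$), this $\delta_n$ is a genuine element of $\Xi$. The value $\delta_n(p)$ is obtained by evaluating the fixed $\mathbb{Q}(i)$-polynomial $p$ at the matrix units---an explicit matrix in each summand---and taking the maximum of the resulting operator norms; partitioning $\mathcal{BD}$ into the countably many Borel pieces on which $l(n)$ and the $w_n(i)$ take fixed values, this computation becomes an explicit, hence Borel, function of the entries, so $(l,w,m)\mapsto\delta_n$ is Borel.

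For the connecting maps, recall that $\varphi_n$ sends a matrix unit $e^{(i)}_{a,b}$ of the $i$-th summand of $F_n$ to the block-diagonal sum of its $m_n(i,j)$ images in the $j$-th summand of $F_{n+1}$, summed over $j\in l(n+1)$; by condition~(5) the block sizes match exactly, so $\varphi_n$ is a unital $*$-homomorphism. Because $\varphi_n$ is honestly defined on generators rather than merely approximated, I would take the code $\Phi_n=((\Phi_n)_k)_{k\in\omega}$ to be constant in the approximation index $k$, equal to the substitution map $\mathcal{U}\to\mathcal{U}$ that replaces each variable coding $e^{(i)}_{a,b}$ by the $*$-polynomial in the generators of $F_{n+1}$ representing $\varphi_n(e^{(i)}_{a,b})$, extended multiplicatively and $\ast$-linearly. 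The Cauchy condition is then trivial and contractivity is automatic from $\varphi_n$ being a $*$-homomorphism, so $\Phi_n$ is a code for $\varphi_n$; as a map into the countable set $\mathcal{U}$ it is determined by explicit arithmetic in $l$, $w$, and $m$, and is therefore Borel.

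The main obstacle is not conceptual---the identification of $A_{(l,w,m)}$ with the stated inductive limit is classical (Bratteli~\cite{bratteli_inductive_1972})---but lies in the bookkeeping of the coding: choosing a canonical $\omega$-enumeration of the varying families of matrix units, verifying that the operator-norm computation and the substitution maps are genuinely Borel when the dimensions $w_n(i)$ are not fixed, and checking that the resulting sequence $(\delta_n,\Phi_n)_{n\in\omega}$ really lands in $R_{dir}(\Xi)$. Once these routine verifications are in place, Proposition~\ref{prop: inductive limits} supplies the conclusion.
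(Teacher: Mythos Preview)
Your proposal is correct and follows essentially the same approach as the paper: both construct a Borel map from $\mathcal{BD}$ into the space of inductive systems by explicitly writing down matrix units for each $F_n$ and coding the connecting $*$-homomorphisms as polynomial substitution maps on generators, then invoke Proposition~\ref{prop: inductive limits}. The only cosmetic difference is that the paper works directly in the parametrization $\Gamma(H)$---realizing the matrix units as concrete rank-one operators $e_{n,m}^{(k)}$ on a fixed Hilbert space so that Borelness is immediate from the explicit operator formulas---whereas you work in $\Xi$ and verify Borelness of the norm computation by partitioning $\mathcal{BD}$ according to the finitely many dimensions involved; both routes are valid and interchangeable via Theorem~\ref{theorem: equivalence of parametrizations of C-algebras}.
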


\begin{proof}
By Proposition \ref{prop: inductive limits}, the inductive limit of an
inductive system of C*-algebras can be computed in a Borel way. It is
therefore enough to show that there is a Borel map that assigns to each
Bratteli diagram, a code for the corresponding inductive system of
C*-algebras. We will work, for convenience, with the parametrization $\Gamma
(H)$ of C*-algebras.

Let $\left\{ \xi _{n}^{k}\colon \left( k,n\right) \in \omega \times \omega
\right\} $ be an orthonormal basis of $H$. For $n,m,k\in \omega $, denote by 
$e_{n,m}^{(k)}$ the rank $1$ operator in $B(H)$ sending $\xi _{n}^{k}$ to $%
\xi _{m}^{k}$. For convenience, we will also identify $\Gamma (H)$ with the
space of nonzero functions from $\omega \times \omega \times \omega $ to the
unit ball of $B(H)$. For $n\in \omega $, define $\gamma ^{(n)}\in \Gamma (H)$
by%
\begin{equation*}
\gamma _{i,j,k}^{(n)}=%
\begin{cases}
e_{i,j}^{(k)}, & \text{if }k\in l(n)\text{ and }i,j\in w_{n}(k)\text{,} \\ 
0, & \text{otherwise.}%
\end{cases}%
\end{equation*}%
Denote by $A_{i,j,k}^{n}$ the set of triples in $\omega \times \omega \times
\omega $ of the form 
\begin{equation*}
\left( \sum_{k^{\prime }\in k}w_{n}(k^{\prime })m_{n}(k^{\prime
},t,n)+dw_{n}(k)+i,\sum_{k^{\prime }\in k}w_{n}(k^{\prime })m_{n}(k^{\prime
},t)+dw_{n}(n),t\right)
\end{equation*}%
such that $d\in m_{n}(k,t)$, $i,j\in w_{n}(k)$ and $t\in l(n+1)$. It is
clear that $C^{\ast }\left( \gamma ^{(n)}\right) $ is a finite dimensional
C*-algebra isomorphic to 
\begin{equation*}
\bigoplus_{i\in l(n)}\mathbb{M}_{w\left( i,n\right) }.
\end{equation*}%
For $n$ in $\omega $, let $\Phi ^{(n)}\colon \mathcal{U}\rightarrow \mathcal{%
U}$ be the unique morphism of $\mathbb{Q}(i)$-algebras satisfying 
\begin{equation*}
\Phi ^{(n)}(X_{ijk})=\sum_{\left( a,b,t\right) \in A_{ijk}^{n}}X_{a,b,t},
\end{equation*}%
for $i,j,k$ in $\omega $. Then $\Phi ^{(n)}$ is a code for the unital
injective *-homomorphism from $C^{\ast }\left( \gamma ^{(n)}\right) $ to $%
C^{\ast }\left( \gamma ^{(n+1)}\right) $ given by the diagonal embedding of $%
m_{n}(k,t,n)$ copies of $\mathbb{M}_{w_{n}(k)}$ in $\mathbb{M}_{w_{n}(t)}$
for every $k\in l(n)$ and $t\in l(n+1)$. By construction, the map $\mathcal{%
BD}\rightarrow R_{dir}\left( \Gamma (H)\right) $ that assigns to every
Bratteli diagram $\left( l,w,m\right) $, the code $\left( \gamma ^{(n)},\Phi
^{(n)}\right) _{n\in \omega }$, is Borel. This finishes the proof.
\end{proof}

Proposition \ref{Proposition Borel version of EHS} together with Proposition %
\ref{Proposition: Bratteli and AF} imply the following corollary.

\begin{corollary}
\label{Corollary: dimension groups and AF}There is a Borel map that assigns
to a dimension group $D$ a unital AF-algebra $A_{D}$ whose ordered $K_{0}$%
-group is isomorphic to $D$ as dimension group.
\end{corollary}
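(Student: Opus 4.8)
The plan is to obtain $A_D$ as the composition of the two Borel maps just established, together with the bookkeeping of the $K_0$-functor on Bratteli diagrams. First I would take the Borel function
\[
\mathcal{DG} \to \mathcal{BD}, \qquad D \mapsto (l^D, w^D, m^D),
\]
furnished by Proposition \ref{Proposition Borel version of EHS}, which assigns to each dimension group $D = (G, G^+, u)$ a Bratteli diagram whose associated dimension group $G_{(l^D, w^D, m^D)}$ is isomorphic to $D$ as a dimension group with order unit. Next I would feed the resulting Bratteli diagram into the Borel map of Proposition \ref{Proposition: Bratteli and AF}, which produces a code for the unital AF-algebra $A_{(l^D, w^D, m^D)}$. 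Setting $A_D = A_{(l^D, w^D, m^D)}$ and recalling that the composition of Borel maps is Borel, the assignment $D \mapsto A_D$ is Borel.

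The remaining point is to check that this $A_D$ has the correct $K_0$-invariant, and this is where the only verification lies, though it is immediate from facts already recorded. Indeed, as noted just before Proposition \ref{Proposition: Bratteli and AF}, the ordered $K_0$-group of the AF-algebra $A_{(l,w,m)}$ attached to any Bratteli diagram $(l,w,m)$ is isomorphic, as a dimension group with order unit, to the dimension group $G_{(l,w,m)}$. Applying this with $(l,w,m) = (l^D, w^D, m^D)$ and chaining with the isomorphism $G_{(l^D, w^D, m^D)} \cong D$ from Proposition \ref{Proposition Borel version of EHS} yields
\[
K_0(A_D) \;\cong\; G_{(l^D, w^D, m^D)} \;\cong\; D
\]
as dimension groups, which is exactly the assertion of the corollary.

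Since both ingredients are already proved, I do not anticipate any genuine obstacle: the content of the statement is entirely carried by Propositions \ref{Proposition Borel version of EHS} and \ref{Proposition: Bratteli and AF}, and the argument is simply their composition. If anything warrants care, it is only confirming that the isomorphism $K_0(A_{(l,w,m)}) \cong G_{(l,w,m)}$ is one of ordered groups with distinguished order unit, rather than of bare abelian groups, so that the conclusion lands in the category of dimension groups; but this is precisely the form in which the Bratteli--Elliott correspondence is invoked above.
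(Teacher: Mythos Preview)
Your proposal is correct and matches the paper's approach exactly: the corollary is stated immediately after Propositions \ref{Proposition Borel version of EHS} and \ref{Proposition: Bratteli and AF} with the one-line justification that those two propositions together imply it, i.e., the Borel map is obtained by composing them. Your only addition is the explicit remark that the $K_0$-isomorphism respects the order and unit, which is indeed how the paper uses the Bratteli--Elliott correspondence.
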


Since by \cite[Proposition 3.4]{farah_descriptive_2012} the $K_{0}$-group of
a C*-algebra can be computed in a Borel way, one can conclude that if $%
\mathcal{A}$ is any Borel set of dimension groups, then the relation of
isomorphisms restricted to $\mathcal{A}$ is Borel bireducible with the
relation of isomorphism unital AF-algebras whose $K_{0}$-group is isomorphic
to an element of $\mathcal{A}$.

Fix $n\in \mathbb{N}$. A dimension group has \emph{rank }$n$ if $n$ is the
largest size of a linearly independent subset. Let us denote by $\cong
_{n}^{+}$ the relation of isomorphisms of dimension groups of rank $n$, and
by $\cong _{n}^{AF}$ the relation of isomorphisms of AF-algebras whose
dimension group has rank. By the previous discussion, the relations $\cong
_{n}^{+}$ and $\cong _{n}^{AF}$ are Borel bireducible. Moreover \cite[%
Theorem 1.11]{ellis_classification_2010} asserts that 
\begin{equation*}
\cong _{n}^{+}<_{B}\cong _{n+1}^{+}
\end{equation*}%
for every $n\in \mathbb{N}$. This means that $\cong _{n}^{+}$ is Borel
reducible to $\cong _{n+1}^{+}$, but $\cong _{n+1}^{+}$ is \emph{not }Borel
reducible to $\cong _{n}^{+}$. It follows that the same conclusions hold for
the relations $\cong _{n}^{AF}$: For every $n\in \mathbb{N}$%
\begin{equation*}
\cong _{n}^{AF}<_{B}\cong _{n+1}^{AF}\text{.}
\end{equation*}%
This amounts at saying that it is strictly more difficult to classify
AF-algebras with $K_{0}$-group of rank $n+1$ than classifying AF-algebras
with $K_{0}$-group of rank $n$.

\subsection{Enomorphisms of Bratteli diagrams\label{Subsection: Endomorphism
of Bratteli diagrams}}

\begin{definition}
Let $T=\left( l,w,m\right) $ be a Bratteli diagram. We say that an element $%
q=\left( q_{n}\right) _{n\in \omega }\in \left( \omega ^{\omega \times
\omega }\right) ^{\omega }$ is an \emph{endomorphism} of $T$, if for every $%
n\in \omega $, $i\in l(n) $ and $t^{\prime }\in l\left( n+1\right) $, the
following identity holds%
\begin{equation}
\sum_{t\in l(n+1) }m_{n}( i,t) q_{n+1}( t,t^{\prime }) =\sum_{t\in l( n+1)
}q_{n}( i,t) m_{n+1}( t,t^{\prime }) \text{.\label{Equation: BD endomorphism}%
}
\end{equation}
\end{definition}

The set $End_{\mathcal{BD}}$ of pairs $\left( T,q\right) \in \mathcal{BD}%
\times \left( \omega ^{\omega \times \omega }\right) ^{\omega }$ such that $%
T $ is a Bratteli diagram and $q$ is an endomorphism of $T$, is Borel.

We proceed to describe how an endomorphism of a Bratteli diagram, in the
sense of the definition above, gives rise to an endomorphism of the unital
AF-algebra associated with it. Let $\left( F_{n},\varphi _{n}\right) _{n\in
\omega }$ be the inductive system of finite dimensional C*-algebras
associated with $T$, and denote by $A_{T}$ its inductive limit. By
repeatedly applying \cite[Lemma III.2.1]{davidson_c*-algebras_1996}, one can
define unital *-homomorphisms $\psi _{n}\colon F_{n}\rightarrow F_{n+1}$ for 
$n$ in $\omega $, satisfying the following conditions:

\begin{enumerate}
\item $\psi _{n}$ is unitarily equivalent to the *-homomorphism from $F_{n}$
to $F_{n+1}$ such that for every $i\in l(n)$ and $j\in l\left( n+1\right) $
the restriction of $\psi _{n}$ to the $i$-th direct summand of $F_{n}$ and
the $j$-th direct summand of $F_{n+1}$ is a diagonal embedding of $%
q_{n}(i,j) $ copies of $\mathbb{M}_{w_{n}(i)}$ in $\mathbb{M}_{w_{n+1}(j)}$;

\item $\psi _{n}\circ \varphi _{n-1}=\varphi _{n}\circ \psi _{n-1}$ whenever 
$n\geq 1$.
\end{enumerate}

(Notice in particular that $\psi _{0}$ is determined solely by condition
(1).) One thus obtains a one sided intertwining $(\psi_n)_{n\in\omega}$ from 
$(F_n,\varphi_n)_{n\in\omega}$ to itself. We denote by $\psi _{T,q}\colon
A_{T}\rightarrow A_{T}$ the corresponding inductive limit endomorphism.

\begin{proposition}
\label{Proposition: Borel endomorphism AF algebra} Given a Bratteli diagram $%
T$ and an endomorphism $q$ of $T$, there is a Borel way to compute a code
for the endomorphism $\psi _{T,q}$ of $A_{T}$ associated with $q$.
\end{proposition}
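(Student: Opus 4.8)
The plan is to work in the parametrization $\mathcal{C}_{\Gamma(H)}$ and to reduce the statement to the Borel computability of one sided intertwinings established in Proposition \ref{prop: one sided intertwining}. First I would invoke Proposition \ref{Proposition: Bratteli and AF} to obtain, in a Borel way from $T=(l,w,m)$, a code $(\gamma^{(n)},\Phi^{(n)})_{n\in\omega}$ for the inductive system $(F_n,\varphi_n)_{n\in\omega}$ built from $T$, retaining the explicit matrix units $e^{(k)}_{i,j}$ of $F_n=C^*(\gamma^{(n)})$. The whole problem then reduces to producing, in a Borel way from $(T,q)$, a sequence of codes for unital $*$-homomorphisms $\psi_n\colon F_n\to F_{n+1}$ satisfying conditions (1) and (2) of the construction preceding the statement, since $(\psi_n)_{n\in\omega}$ is then a one sided intertwining from $(F_n,\varphi_n)_{n\in\omega}$ to its shift $(F_{n+1},\varphi_{n+1})_{n\in\omega}$, whose inductive limit homomorphism is $\psi_{T,q}$.

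I would build the $\psi_n$ by a recursion on $n$. For the base case, $\psi_0$ is the \emph{standard diagonal embedding} $\psi_0^0\colon F_0\to F_1$ determined by the multiplicities $q_0(0,j)$; since it is specified by an explicit formula on matrix units, entirely analogous to the map $\Phi^{(n)}$ in the proof of Proposition \ref{Proposition: Bratteli and AF}, its code is a Borel function of $q_0$. For the recursive step, suppose a code for $\psi_{n-1}$ has been produced, and let $\psi_n^0\colon F_n\to F_{n+1}$ be the standard diagonal embedding with multiplicities $q_n$, again coded explicitly and Borel-ly. The key algebraic point is that the two $*$-homomorphisms $\psi_n^0\circ\varphi_{n-1}$ and $\varphi_n\circ\psi_{n-1}$ from $F_{n-1}$ to $F_{n+1}$ have the same multiplicity data: their multiplicities from the $i$-th summand of $F_{n-1}$ to the $t'$-th summand of $F_{n+1}$ are $\sum_t m_{n-1}(i,t)q_n(t,t')$ and $\sum_t q_{n-1}(i,t)m_n(t,t')$ respectively, and these coincide by the defining relation \eqref{Equation: BD endomorphism} of an endomorphism of $T$ (with $n$ replaced by $n-1$). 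By the uniqueness up to unitary equivalence of $*$-homomorphisms between finite dimensional C*-algebras with prescribed multiplicities (\cite[Lemma III.2.1]{davidson_c*-algebras_1996}), there is a unitary $v_n$ in $F_{n+1}$ with $\mathrm{Ad}(v_n)\circ\psi_n^0\circ\varphi_{n-1}=\varphi_n\circ\psi_{n-1}$; I then set $\psi_n=\mathrm{Ad}(v_n)\circ\psi_n^0$, which satisfies (1) and (2) by construction.

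To make this recursion Borel, I would select the unitary $v_n$ in a Borel fashion. The set of $v$ in the unitary group of $F_{n+1}$ satisfying $\mathrm{Ad}(v)\circ\psi_n^0\circ\varphi_{n-1}=\varphi_n\circ\psi_{n-1}$ is a nonempty closed, hence compact, subset of the unit ball $B_1(H)$, and its graph over the parameter space recording $T$, $q$, $n$ and the previously chosen data is Borel. A standard Borel uniformization theorem for Borel sets with nonempty compact sections (the Kuratowski--Ryll-Nardzewski/Arsenin--Kunugui selection theorem, see \cite{kechris_classical_1995}) then yields a Borel choice $v_n$. Finally, a code for $\psi_n=\mathrm{Ad}(v_n)\circ\psi_n^0$ is obtained Borel-ly by approximating $v_n$ by $\mathbb{Q}(i)$-polynomials in $\gamma^{(n+1)}$ and conjugating the explicit code of $\psi_n^0$, exactly as codes for compositions and inverses are produced in Subsection \ref{Subsection: GAMMA} and Lemma \ref{Lemma: Borel inverse}. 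Since the data at stage $n$ depends in a Borel way on the data at stage $n-1$, the assignment $(T,q)\mapsto(\psi_n)_{n\in\omega}$ is Borel.

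With $(\psi_n)_{n\in\omega}$ in hand, I would feed it into Proposition \ref{prop: one sided intertwining}, with top system the code for $(F_n,\varphi_n)_{n\in\omega}$ and bottom system the Borel-computable shifted code for $(F_{n+1},\varphi_{n+1})_{n\in\omega}$, to obtain a code for the inductive limit homomorphism, and then precompose with the canonical Borel identification, provided by the maps $I_k$ of Proposition \ref{prop: inductive limits}, of the limit of the shifted system with $A_T$. The resulting code, computed Borel-ly throughout, codes the endomorphism $\psi_{T,q}$. I expect the main obstacle to be exactly the Borel selection of the unitaries $v_n$: conditions (1) and (2) only pin down $\psi_n$ up to a continuum of unitary choices, so a genuine compact-section uniformization, rather than the elementary countable Borel selector of the introduction, is unavoidable; everything else amounts to transcribing explicit finite dimensional formulas into codes.
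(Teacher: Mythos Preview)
Your overall strategy matches the paper's: reduce to Proposition \ref{prop: one sided intertwining}, pull back the explicit inductive system $(\gamma^{(n)},\Phi^{(n)})$ from Proposition \ref{Proposition: Bratteli and AF}, define the standard diagonal embedding $\psi_n^0$ with multiplicities $q_n$, and then correct it by a unitary so that the intertwining squares commute. The algebraic reason the correction exists is exactly the one you give, namely equation \eqref{Equation: BD endomorphism}.

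The one substantive difference is in how the conjugating unitary is selected, and here your diagnosis that a compact-section uniformization is ``unavoidable'' is off. The paper never selects an exact unitary $v_n\in F_{n+1}$. Instead it works directly in the countable set $\mathcal{U}$: it chooses, for each $k$, a polynomial $p_k^{(n)}\in\mathcal{U}$ whose evaluation in $C^*(\gamma^{(n+1)})$ is $\tfrac{1}{k+1}$-close to a unitary and $\tfrac{1}{k+1}$-conjugates $\Psi_0^{(n)}\circ\Phi^{(n-1)}$ to $\Phi^{(n)}\circ\Psi^{(n-1)}$, with a built-in Cauchy condition among the $p_k^{(n)}$'s. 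Since $\mathcal{U}$ is countable and these conditions are Borel in the parameters, the elementary countable Borel selector from the introduction suffices; one then sets $\Psi_k^{(n)}(p)=p_k^{(n)}\,\Psi_0^{(n)}(p)\,(p_k^{(n)})^{*}$. Your route via Kuratowski--Ryll-Nardzewski or Arsenin--Kunugui is certainly correct, but it is heavier than needed, and the subsequent ``approximate $v_n$ by polynomials'' step you describe is precisely what the paper does \emph{directly}, bypassing the exact selection altogether.
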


\begin{proof}
By Proposition \ref{prop: one sided intertwining}, a code for the limit
homomorphism between two inductive limits of C*-algebras can be computed in
a Borel way. It is therefore enough to show that there is a Borel function
from $End_{\mathcal{BD}}$ to $R_{int}\left( \Gamma (H)\right) $ assigning to
an element $\left( T,q\right) $ of $End_{\mathcal{BD}}$, a code for the
corresponding one sided intertwining system. \newline
\indent Let $((l,w,m),q)$ be an element in $End_{\mathcal{BD}}$, and let $%
\left\{ \xi _{m}^{n}\colon \left( n,m\right) \in \omega \times \omega
\right\} $ be an orthonormal basis of $H$. Denote by $\left( \gamma
^{(n)},\Phi ^{(n)}\right) _{n\in \omega }$ the element of $R_{dir}\left(
\Gamma (H)\right) $ associated with the Bratteli diagram $\left(
l,w,m\right) $ as in the proof of Proposition \ref{Proposition: Bratteli and
AF}. For $n$ in $\omega $, define%
\begin{equation*}
u^{(n)}=\sum_{k\in l(n)}\sum_{i\in w_{n}\left( k\right) }e_{ii}^{(k)},
\end{equation*}%
which is an element of $B(H)$. Observe that $u^{(n)}$ is the unit of $%
C^{\ast }\left( \gamma ^{(n)}\right) $. We define the sequence $\left( \Psi
^{(n)}\right) _{n\in \omega }$ in $\left( \mathcal{U}^{\mathcal{U}}\right)
^{\omega }$ as follows. Let $A_{i,j,k}^{n}$ denote the set of triples%
\begin{equation*}
\left( \sum_{k^{\prime }\in k}w_{n}\left( k^{\prime }\right) q_{n}\left(
k^{\prime },t,n\right) +dw_{n}(k)+i,\sum_{k^{\prime }\in k}w_{n}\left(
k^{\prime }\right) q_{n}\left( k^{\prime },t\right) +dw_{n}(k),t\right)
\end{equation*}%
such that $d$ belongs to $q_{n}\left( k,t\right) $, $i$ and $j$ belong to $%
w_{n}(k)$, and $t$ belongs to $l(n+1)$. Let $\psi ^{(n)}\colon \mathcal{U}%
\rightarrow \mathcal{U}$ be the unique homomorphism of $\mathbb{Q}(i)$%
-algebras satisfying 
\begin{equation*}
\psi ^{(n)}\left( X_{ijk}\right) =\sum_{(a,b,t)\in A_{ijk}^{n}}X_{a,b,t}
\end{equation*}%
for $i,j$ and $k$ in $\omega $. For $p\in \mathcal{U}$, set%
\begin{equation*}
\Psi _{0}^{(n)}(p)=\psi ^{\left( n\right) }(p)\text{.}
\end{equation*}%
%
%
%
%
%
%
%
%
%
%
%
%
%
%
%
%
%
%
%
%
%
%
%
%
%
%
%
%
%
%
%
%
%
%
%
%
%
%
%
%
%
%
%
%
%
%
%
%
%
%
%
%
%
%
%
%
%
%
%
%
%
%
%
%
%and for $n=0$, set%
%\begin{equation*}
%\Psi _{N}^{\left( 0\right) }(p) =\psi^{\left(
%0\right) }(p)
%\end{equation*}%
%for every $N\in \omega $. Suppose now that $n\geq 1$. 
By construction, the elements 
\begin{equation*}
\left( \Psi _{0}^{(n)}\circ \Phi _{k}^{\left( n-1\right) }\right) (p)\left(
\gamma ^{\left( n-1\right) }\right) \ \ \ \mbox{ and }\ \ \ \left( \Phi
_{k}\circ \Psi _{0}^{\left( n-1\right) }\right) (p)\left( \gamma ^{\left(
n-1\right) }\right)
\end{equation*}%
are unitarily equivalent in $C^{\ast }\left( \gamma ^{(n+1)}\right) $ for
every $k\in \omega $. Using that $\gamma ^{(n)}$ is a unitary for all $n$ in 
$\omega $, choose elements $p_{k}^{(n)}$ in $\mathcal{U}$, for $k$ in $%
\omega $, satisfying the following conditions:

\begin{align*}
\left\Vert p_{k}^{(n) }\left( \gamma ^{(n+1) }\right) p_{k}^{(n) }\left(
\gamma ^{(n+1) }\right) ^{\ast }-1\right\Vert &<\frac{1}{k+1} \\
\left\Vert p_{k}^{(n) }\left( \gamma ^{(n+1) }\right) ^{\ast }p_{k}^{(n)
}\left( \gamma ^{(n+1) }\right) -1\right\Vert &<\frac{1}{k+1} \\
\left\Vert p_{k}^{(n) }\left( \gamma ^{(n+1) }\right) -p_{m}^{(n) }\left(
\gamma ^{(n+1) }\right) \right\Vert &<\frac{1}{\min \left\{ k,m\right\} +1}
\end{align*}
and 
\begin{align*}
& \left\Vert p_{k}^{(n) }\left( \gamma ^{(n+1) }\right) \left( \left( \Psi
_{0}^{n}\circ \Phi _{k}^{n-1}\right) (p) \left( \gamma ^{\left( n-1\right)
}\right) \right) p_{k}^{\left( n\right) }\left( \gamma ^{(n+1) }\right)
^{\ast }\right. \\
& \ \ \ \ \ \ \ \ \ \ \ \left.-\left( \Phi _{k}\circ \Psi _{0}^{\left(
n-1\right) }\right) (p) \left( \gamma ^{\left( n-1\right) }\right)
\right\Vert <\frac{1}{k+1}.
\end{align*}

Finally, define%
\begin{equation*}
\Psi _{k}^{(n)}(p)=p_{k}^{(n)}\Psi _{0}^{(n)}(p)\left( p_{k}^{(n)}\right)
^{\ast }\text{{}}
\end{equation*}%
for all $p$ in $\mathcal{U}$. It is clear that for fixed $n$ in $\omega $,
the sequence $\Psi ^{(n)}=(\Psi _{k}^{(n)})_{k\in \omega }$ is a code for a
*-homomorphism $\widehat{\psi ^{(n)}}\colon \widehat{\gamma }%
^{(n)}\rightarrow \widehat{\gamma }^{(n+1)}$ that moreover satisfies 
\begin{equation*}
\widehat{\Psi }^{(n)}\circ \widehat{\Phi }^{\left( n-1\right) }=\widehat{%
\Phi }^{(n)}\circ \widehat{\Psi }^{\left( n-1\right) }\text{.}
\end{equation*}%
Thus, 
\begin{equation*}
\left( \left( \gamma ^{(n)},\Phi ^{(n)}\right) _{n\in \omega },\left( \gamma
^{(n)},\Phi ^{(n)}\right) _{n\in \omega },\left( \Psi ^{(n)}\right) _{n\in
\omega }\right)
\end{equation*}%
is an element in $R_{int}\left( \Gamma (H)\right) $. It is clear that this
is a code for the one sided intertwining system associated with $((l,m,w),q)$%
, and that it can be computed in a Borel fashion.
\end{proof}

\subsection{Endomorphisms of dimension groups\label{Subsection: Endomorphism
dimension groups}}

Let $\left( G,G^+,u\right) $ be a dimension group. Let us denote by $End_{%
\mathcal{DG}}$ the set of pairs $\left( G,\phi \right) \in \mathcal{DG}%
\times \omega ^{\omega }$ such that $G$ is a dimension group and $\phi $ is
an endomorphism of $G$.

Let $\left( l,w,m\right) $ be a Bratteli diagram, and let

\begin{equation*}
\left( \mathbb{Z}^{l(n) },\left( w_{n}\left( 0\right) ,\ldots ,w_{n}\left(
l(n) -1\right) \right) ,\varphi _{n}\right) _{n\in \omega }
\end{equation*}%
be the inductive system of dimension groups whose inductive limit is the
dimension group $G_{l,w,m}$ associated with $\left( l,w,m\right) $. Fix an
endomorphism $q$ of $\left( l,w,m\right) $, and for $n\in \omega $, define a
positive homomorphism $\psi _{n}\colon \mathbb{Z}^{l(n) }\rightarrow \mathbb{%
Z}^{l\left( n+1\right) }$ by 
\begin{equation*}
\psi _{n}\left( e_{i}^{(l(n))}\right) =\sum_{j\in l(n+1) }q_{n}(i,j)
e^{(l(n+1))}_{j}.
\end{equation*}%
Observe that the sequence $\left( \psi _{n}\right) _{n\in \omega }$ induces
an inductive limit endomorphism $\phi _{((l,w,m),q)}$ of $G_{(l,w,m)}$.

\begin{proposition}
\label{Proposition: Borel endomorphism dimension group}There is a Borel map%
\begin{align*}
End_{\mathcal{DG}} &\rightarrow End_{\mathcal{BD}} \\
\left( G,\phi \right) &\mapsto \left( T^{G},q^{G,\phi }\right),
\end{align*}%
such that the dimension group associated with $T^{G}$ is isomorphic to $G$,
and the endomorphism of the dimension group associated with $T^{G}$
corresponding to $q^{G,\phi }$, is conjugate to $\phi $.
\end{proposition}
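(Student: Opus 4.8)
The plan is to re-run the recursion in the proof of Proposition~\ref{Proposition Borel version of EHS}, augmenting it so as to build, alongside the Bratteli diagram $T^{G}=(l^{G},w^{G},m^{G})$ and the maps $\theta_{n}^{G}\colon l^{G}(n)\to G$, an auxiliary family of integer matrices encoding the endomorphism $\phi$. Concretely, at the step passing from level $n$ to level $n+1$ I would apply Shen's Lemma~\ref{Lemma: Shen's condition} not to $\theta'$ alone but to the function whose list of values consists of the $\theta_{n}^{G}(i)$ for $i\in l^{G}(n)$, the elements $\phi(\theta_{n}^{G}(i))$ for $i\in l^{G}(n)$ (these lie in $G^{+}$, since $\phi$ is positive and each $\theta_{n}^{G}(i)\in G^{+}$), and the auxiliary element guaranteeing surjectivity. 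Reading off the resulting map $g$ on the three blocks of rows produces simultaneously the connecting matrix $m_{n}^{G}$ (from the $\theta_{n}^{G}$ rows), the new generators $\theta_{n+1}^{G}:=\Phi$, and a matrix defining $\psi_{n}\colon\mathbb{Z}^{l^{G}(n)}\to\mathbb{Z}^{l^{G}(n+1)}$ (from the $\phi(\theta_{n}^{G})$ rows). Conditions (1), (2), (3) of Proposition~\ref{Proposition Borel version of EHS} hold verbatim, and in addition $\theta_{n+1}^{G}\circ\psi_{n}=\phi\circ\theta_{n}^{G}$. By the Borel selection afforded by Shen's Lemma (the remark following Lemma~\ref{Lemma: Shen's condition}), this construction is Borel in $(G,\phi)$.

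Writing $\varphi_{n}$ for the connecting homomorphism with matrix $m_{n}^{G}$, the family $(\psi_{n})$ is only a one-sided intertwining \emph{in the limit}: applying $\theta_{n+2}^{G}$ to $D_{n}:=\varphi_{n+1}\circ\psi_{n}-\psi_{n+1}\circ\varphi_{n}$ and using condition (1) together with the relations $\theta_{m+1}^{G}\circ\psi_{m}=\phi\circ\theta_{m}^{G}$ at $m=n,n+1$ gives $\theta_{n+2}^{G}\circ D_{n}=0$; since condition (2) says exactly that $\ker\theta_{n+2}^{G}\subseteq\ker\varphi_{n+2}$, we obtain $\varphi_{n+2}\circ D_{n}=0$, although $D_{n}$ itself need not vanish. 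The main obstacle is therefore to promote this to the \emph{exact} commuting-square identity \eqref{Equation: BD endomorphism} required of an endomorphism of a Bratteli diagram. I would resolve it by telescoping in pairs: set $\widehat{l}(k)=l^{G}(2k)$ (so $\widehat T^{G}$ is again a Bratteli diagram, with weights $\widehat w(k)=w^{G}(2k)$), take connecting maps $\widehat\varphi_{k}=\varphi_{2k+1}\circ\varphi_{2k}$, and define $\widehat q_{k}$ to be the matrix of $\varphi_{2k+1}\circ\psi_{2k}$. A direct computation, pushing the identities $\varphi_{m+2}\circ D_{m}=0$ at $m=2k$ and $m=2k+1$ through the composites, shows $\widehat m_{k}\widehat q_{k+1}=\widehat q_{k}\widehat m_{k+1}$, i.e. $\widehat q$ satisfies \eqref{Equation: BD endomorphism}; the entries of $\widehat q_{k}$ are non-negative integers, being products of such. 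Hence $(\widehat T^{G},\widehat q)\in End_{\mathcal{BD}}$.

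Finally I would verify correctness of the output. Telescoping passes to a cofinal subsystem, so the dimension group of $\widehat T^{G}$ is $\varinjlim(\mathbb{Z}^{l^{G}(2k)},\widehat\varphi_{k})\cong G_{T^{G}}\cong G$. Writing $\widehat\theta_{k}=\theta_{2k}^{G}$, conditions (1) and $\theta_{n+1}^{G}\circ\psi_{n}=\phi\circ\theta_{n}^{G}$ give $\widehat\theta_{k+1}\circ\widehat q_{k}=\phi\circ\widehat\theta_{k}$, so under the isomorphism $\Theta\colon G_{\widehat T^{G}}\to G$ coded by $(\widehat\theta_{k})$ the induced endomorphism of the dimension group of $\widehat T^{G}$ equals $\Theta^{-1}\circ\phi\circ\Theta$, and is thus conjugate to $\phi$. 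Setting $T^{G}:=\widehat T^{G}$ and $q^{G,\phi}:=\widehat q$, and observing that each ingredient---the augmented Shen recursion, the finite matrix products, and the reindexing---is computed by a Borel map, yields the required Borel assignment. The only genuinely delicate point is the passage from the limit-level intertwining to the exact identity \eqref{Equation: BD endomorphism}, which is precisely what the pairwise telescoping achieves.
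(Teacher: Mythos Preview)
Your argument is correct. The paper's proof follows the same outline---augment the Shen recursion with the list $(\theta_n(i),\,\phi(\theta_n(i)),\,\text{extra})$---but resolves the passage to the exact commuting square differently. Instead of applying Shen's Lemma once per step and then telescoping, the paper applies it \emph{twice} at each step: first to the enlarged list $\theta'$ to obtain an intermediate $(N,\Phi,g)$, and then a second time to $(N,\Phi)$ to obtain $(N',\Phi',g')$; one sets $l^{G,\phi}(n+1)=N'$, $\theta_{n+1}=\Phi'$, and both $m_n$ and $q_n$ are defined as products of the form $g\cdot g'$ factoring through the intermediate $N$. A short computation using condition~(2) of Shen's Lemma at step $n+1$ (applied to the relation $\sum_j q_n(i,j)\theta_{n+1}(j)=\phi(\theta_n(i))=\sum_j m_n(i,j)\phi(\theta_{n+1}(j))$) then gives $\varphi_{n+1}\circ\psi_n=\psi_{n+1}\circ\varphi_n$ on the nose, so no telescoping is required. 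Your single-Shen-plus-telescope achieves the same end by a different bookkeeping device: the intermediate level $N$ in the paper's construction plays essentially the role of the odd-indexed levels you discard. Both routes are equally Borel; the paper's is marginally more direct, while yours makes transparent that the obstruction to exact commutation dies after one further application of $\varphi$.
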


\begin{proof}
It is enough to construct, in a Borel way,

\begin{itemize}
\item a Bratteli diagram $\left( l^{G,\phi },w^{G,\phi },m^{G,\phi }\right) $%
,

\item an endomorphism $q^{G,\phi }$ of $\left( l^{G,\phi },w^{G,\phi
},m^{G,\phi }\right) $, and

\item functions $\theta _{n}^{G,\phi }\colon l^{G,\phi }(n)\rightarrow G^{+}$
for $n$ in $\omega $,
\end{itemize}

such that the following conditions hold:

\begin{enumerate}
\item \label{Enumerate item: homomorphism 2}For every $i\in l^{G,\phi }(n)$, 
\begin{equation*}
\theta _{n}^{G,\phi }(i)=\sum_{j\in l^{G}(n+1)}m_{n}^{G,\phi }(i,j)\theta
_{n}^{G,\phi }(j);
\end{equation*}

\item \label{Enumerate item: 1:1 2}For any $k_{0},\ldots ,k_{l(n)-1}\in 
\mathbb{Z}$ such that%
\begin{equation*}
\sum_{i\in l^{G}(n)}k_{i}\theta _{n}^{G,\phi }(i)=0,
\end{equation*}%
we have that 
\begin{equation*}
\sum_{i\in l^{G}(n)}k_{i}m_{n}^{G,\phi }(i,j)=0\text{,}
\end{equation*}%
for every $j\in l^{G,\phi }(n+1)$;

\item \label{Enumerate item: onto 2}For every $x\in G^{+}$, there are $n\in
\omega $ and $i\in l^{G,\phi }(n)$ such that $\theta _{n}^{G,\phi }(i)=x$;

\item \label{Enumerate item: intertwining}$\phi \left( \theta _{n}^{G,\phi
}(i)\right) =q^{G}(i)$ for $n\in \omega $ and $i\in l^{G,\phi }(n)$.
\end{enumerate}

In fact, it is not difficult to see that Conditions (\ref{Enumerate item:
homomorphism 2}), (\ref{Enumerate item: 1:1 2}), (\ref{Enumerate item: onto
2}), and (\ref{Enumerate item: intertwining}) ensure that $\left( \theta
_{n}^{G,\phi }\right) _{n\in \omega }$ defines an isomorphism from the
dimension group associated with $\left( l^{G,\phi },w^{G,\phi },m^{G,\phi
}\right) $ to $G$ that conjugates the endomorphism associated with $%
q^{G,\phi }$ and $\phi $.

We define $l_{n}^{G,\phi },w_{n}^{G,\phi },m_{n}^{G,\phi },q_{n}^{G,\phi }$
and $\theta _{n}^{G,\phi }$ satisfying conditions 1--4 by recursion on $n$.
Define $l^{G,\phi }\left( 0\right) =1$ and $\theta _{0}^{G,\phi }\left(
0\right) =u$. Suppose that $l^{G,\phi }(k)$, $w_{k}^{G,\phi }$, $%
m_{k-1}^{G,\phi }$, and $\theta _{k}^{G,\phi }$ have been defined for $k\leq
n$. Define 
\begin{equation*}
\theta ^{\prime }\colon 2l^{G,\phi }(n)+1=\{0,\ldots ,2l^{G,\phi
}(n)\}\rightarrow \omega
\end{equation*}
by%
\begin{equation*}
\theta ^{\prime }(i)=%
\begin{cases}
\theta ^{G,\phi }(i) & \text{ if }0\leq i<l^{G,\phi }(n)\text{,} \\ 
\phi \left( \theta ^{G,\phi }(i-l^{G,\phi }(n))\right) & \text{ if }%
l^{G,\phi }(n)\leq i<2l^{G,\phi }(n)\text{,} \\ 
n & \text{ if }i=2l^{G,\phi }(n)\mbox{ and }n\in G^{+}, \\ 
u & \text{ otherwise.}%
\end{cases}%
\end{equation*}%
Suppose that the positive integer $N$, and the functions $\Phi \colon
N\rightarrow G^{+}$ and $g\colon (2l^{G,\phi }(n)+1)\times N\rightarrow
\omega $ are obtained via Lemma \ref{Lemma: Shen's condition} from $%
2l^{G,\phi }(n)+1$ and $\theta ^{\prime }$, and let $N^{\prime }\in \omega $%
, $\Phi ^{\prime }\colon N^{\prime }\rightarrow \omega $, and $g^{\prime
}\colon N\times N^{\prime }\rightarrow \omega $ satisfy the conclusion of
Lemma \ref{Lemma: Shen's condition} for the choices $N$ and $\Phi $. Define
now:

\begin{align*}
l^{G,\phi }(n+1)& =N^{\prime }; \\
w_{n}^{G,\phi }(j)& =\sum_{i\in l^{G}(n)}w_{n}^{G,\phi }(i)m_{n}^{G,\phi
}(i,j) \\
m_{n}^{G,\phi }(i,j)& =%
\begin{cases}
\sum_{t\in N}g(i,t)g^{\prime }(t,j) & \text{ if }i\in l^{G,\phi }(n)%
\mbox{
and }j\in l^{G,\phi }(n+1)\text{,} \\ 
0 & \text{ otherwise.}%
\end{cases}
\\
q_{n}^{G,\phi }(i,j)& =\sum_{t\in N}g(2i,t)g^{\prime }(t,j) \\
\theta _{n+1}^{G,\phi }& =\Phi .
\end{align*}%
It is not difficult to check that this recursive construction gives maps
satisfying conditions 1--4.
\end{proof}

\begin{corollary}
\label{Corollary: Borel map from DG to AF} There is a Borel map that assigns
to a dimension group $G$ with a distinguished endomorphism $\phi$, a code
for a unital AF-algebra $A$ and a code for an endomorphism $\rho$ of $A$,
such that the $K_{0}$-group of $A$ is isomorphic to $G$ as dimension groups
with order units, and the endomorphism of the $K_{0}$-group of $A$
corresponding to $\rho$ is conjugate to $\phi$.
\end{corollary}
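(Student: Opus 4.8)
The plan is to compose the three Borel maps already established and then verify that the $K_0$ functor transports the data correctly. Given $(G,\phi) \in End_{\mathcal{DG}}$, I would first apply Proposition \ref{Proposition: Borel endomorphism dimension group} to produce, in a Borel manner, a pair $(T^G, q^{G,\phi}) \in End_{\mathcal{BD}}$ such that the dimension group $G_{T^G}$ associated with $T^G$ is isomorphic to $G$ as a dimension group with order unit, and the endomorphism $\phi_{(T^G, q^{G,\phi})}$ of $G_{T^G}$ induced by $q^{G,\phi}$ is conjugate, under this isomorphism, to $\phi$. Next I would feed $T^G$ into Proposition \ref{Proposition: Bratteli and AF} to obtain a Borel code for the unital AF-algebra $A := A_{T^G}$, and feed $(T^G, q^{G,\phi})$ into Proposition \ref{Proposition: Borel endomorphism AF algebra} to obtain a Borel code for the endomorphism $\rho := \psi_{T^G, q^{G,\phi}}$ of $A$. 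Since a composition of Borel maps is Borel, the resulting assignment $(G,\phi) \mapsto (A, \rho)$ is Borel.

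It then remains to confirm that $(A,\rho)$ has the stated properties. The isomorphism $K_0(A_{T^G}) \cong G_{T^G} \cong G$ of dimension groups with order units is immediate from the identification of the $K_0$-group of the AF-algebra attached to a Bratteli diagram with the dimension group attached to that diagram (recorded just before Proposition \ref{Proposition: Bratteli and AF}) together with the conclusion of Proposition \ref{Proposition: Borel endomorphism dimension group}. The remaining point, and the only one requiring genuine argument, is that under this identification the endomorphism of $K_0(A)$ induced by $\rho$ is exactly the dimension-group endomorphism $\phi_{(T^G, q^{G,\phi})}$.

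The main obstacle is therefore this functoriality of $K_0$. I would establish it as follows. By construction $\rho = \varinjlim \psi_n$, where each $\psi_n \colon F_n \to F_{n+1}$ is unitarily equivalent to a diagonal embedding whose multiplicity matrix is $q_n^{G,\phi}$. Since $K_0$ is invariant under unitary equivalence of $*$-homomorphisms, and since on such a diagonal embedding it yields precisely the positive homomorphism $\mathbb{Z}^{l(n)} \to \mathbb{Z}^{l(n+1)}$ defined in Subsection \ref{Subsection: Endomorphism dimension groups}, continuity of $K_0$ (that is, its commutation with inductive limits) gives $K_0(\rho) = K_0(\varinjlim \psi_n) = \varinjlim K_0(\psi_n) = \phi_{(T^G, q^{G,\phi})}$. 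By Proposition \ref{Proposition: Borel endomorphism dimension group} this endomorphism is conjugate to $\phi$, which completes the verification and hence the proof.
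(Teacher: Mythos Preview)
Your proof is correct and follows essentially the same approach as the paper: both compose the Borel maps from Propositions \ref{Proposition: Borel endomorphism dimension group}, \ref{Proposition: Bratteli and AF}, and \ref{Proposition: Borel endomorphism AF algebra} in the same order, and then read off the $K_0$-conclusion. The paper's proof simply asserts that the $K_0$-endomorphism induced by $\rho$ is conjugate to $\phi$ ``from the construction,'' whereas you explicitly justify this via unitary-equivalence invariance and continuity of $K_0$ under inductive limits; this is a welcome elaboration, not a different argument.
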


\begin{proof}
Let $G$ be a dimension group, and let $\phi $ be an endomorphism of $G$.
Using Proposition \ref{Proposition: Borel endomorphism dimension group},
choose in a Borel way a Bratteli diagram $\left( l,m,w\right) $ and an
endomorphism $q$ of $\left( l,m,w\right) $ such that $G$ is isomorphic to
the dimension group associated with $\left( l,m,w\right) $, and $\rho $ is
conjugate to the endomorphism associated with $q$. Use Proposition \ref%
{Proposition: Bratteli and AF} to choose in a Borel way, a unital AF-algebra 
$A$ whose Bratteli diagram is $(l,m,w)$. Apply Proposition \ref{Proposition:
Borel endomorphism AF algebra} to choose in a Borel way an endomorphism $%
\rho $ of $A$ whose induced endomorphism of the Bratteli diagram is $q$. It
is clear from the construction that the $K_{0}$-group of $A$ is isomorphic
to $G$. Moreover, $\phi $ is conjugate to the endomorphism of the $K_{0}$%
-group of $A$ corresponding to $\rho $. Therefore, the result follows from
Proposition \ref{Proposition: Borel endomorphism dimension group} and
Proposition \ref{Proposition: Borel endomorphism AF algebra}.
\end{proof}

\section{Cocycle conjugacy of automorphisms of \texorpdfstring{$%
\mathcal{O}_{2}$}{O2} \label{Chapter: cocycle conjugacy of automorphisms}}

\subsection{Strongly self-absorbing C*-algebras}

Upon studying the literature around Elliott's classification program, it is
clear that certain C*-algebras play a central role in major stages of the
program: UHF-algebras (particularly those of infinite type), the Cuntz
algebras $\mathcal{O}_{2}$ and $\mathcal{O}_{\infty }$ \cite%
{cuntz_simple_1977}, and, more recently, the Jiang-Su algebra $\mathcal{Z}$ 
\cite{jiang_simple_1999}. In \cite{toms_strongly_2007}, Toms and Winter were
able to abstract the property which singles these algebras out among other
similar C*-algebras. The relevant notion is that of strongly self-absorbing
C*-algebras, which we define below; see also \cite[Definition 1.3]%
{toms_strongly_2007}.

\begin{definition}
\label{Definition: ssa}Let $\mathcal{D}$ be a separable, unital, infinite
dimensional C*-algebra. Denote by $\mathcal{D}\otimes \mathcal{D}$ the
completion of the algebraic tensor product $\mathcal{D}\odot \mathcal{D}$
with respect to any compatible C*-norm on $\mathcal{D}\odot \mathcal{D}$. We
say that $\mathcal{D}$ is \emph{strongly self-absorbing} if there exists an
isomorphism $\varphi \colon \mathcal{D}\rightarrow \mathcal{D}\otimes 
\mathcal{D}$ which is approximately unitarily equivalent to the map $%
a\mapsto a\otimes 1_{\mathcal{D}}$.
\end{definition}

It is shown in \cite{toms_strongly_2007} that a C*-algebra $\mathcal{D}$
satisfying Definition \ref{Definition: ssa} is automatically nuclear. In
particular the choice of the tensor product norm on $\mathcal{D}\odot 
\mathcal{D}$ is irrelevant. By \cite[Examples 1.14]{toms_strongly_2007} the
following C*-algebras are strongly-self-absorbing: UHF-algebras of infinite
type, the Cuntz algebras $\mathcal{O}_{2}$ and $\mathcal{O}_{\infty }$, the
tensor product of a UHF-algebra of infinite type and $\mathcal{O}_{\infty }$%
, and the Jiang-Su algebra. No other strongly self-absorbing C*-algebra is
currently known.

\begin{definition}
\label{Definition: absorbing}Suppose that $D$ is a nuclear C*-algebra. A
C*-algebra $A$ absorbs $D$ tensorially --or is $D$-\emph{absorbing}-- if the
tensor product $A\otimes D$ is isomorphic to $A$.
\end{definition}

The particular case of Theorem \ref{Theorem: ssa-absorbing} when $\mathcal{D}
$ is the Jiang-Su algebra $\mathcal{Z}$ has been proved in \cite[Theorem A.1]%
{farah_descriptive_2012}.

\begin{theorem}
\label{Theorem: ssa-absorbing} Suppose that $\mathcal{D}$ is a strongly
self-absorbing C*-algebra. The set of $\gamma \in \Gamma (H)$ such that $%
C^{\ast }(\gamma )$ is a $\mathcal{D}$-absorbing unital C*-algebra is Borel.
\end{theorem}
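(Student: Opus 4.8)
The plan is to reduce $\mathcal{D}$-absorption to a local, countably-quantified condition that can be read off directly from the norms of $\ast$-polynomials in $\gamma$. The starting point is the characterization of strongly self-absorbing absorption due to Toms and Winter \cite{toms_strongly_2007}: for a separable unital C*-algebra $A$, one has $A\otimes\mathcal{D}\cong A$ if and only if there is a unital embedding of $\mathcal{D}$ into the central sequence algebra $A_{\infty}\cap A'$, where $A_{\infty}=\ell^{\infty}(A)/c_{0}(A)$. Since a strongly self-absorbing algebra is simple and nuclear, every unital $\ast$-homomorphism out of $\mathcal{D}$ is automatically injective, hence isometric, and this lets me unfold the embedding condition into an approximate, finitary statement. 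First I would fix once and for all a sequence $(d_{k})_{k\in\omega}$ dense in $\mathcal{D}$ with $d_{0}=1_{\mathcal{D}}$, together with the corresponding code $\delta_{\mathcal{D}}\in\Xi$ for $\mathcal{D}$; in particular the real numbers $\delta_{\mathcal{D}}(q)=\|q(\underline{d})\|$ are fixed constants as $q$ ranges over $\mathcal{U}$ (with $X_{k}$ evaluated at $d_{k}$).

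With this notation, I claim that for $\gamma\in\Gamma(H)$ with $C^{\ast}(\gamma)$ unital, the algebra $C^{\ast}(\gamma)$ is $\mathcal{D}$-absorbing if and only if for every $n,N\in\omega$ there exist $\ast$-polynomials $p_{0},\dots,p_{N}\in\mathcal{U}$ such that, writing $un(\gamma)$ for the Borel-selected unit of $C^{\ast}(\gamma)$ from \cite[Lemma 3.14]{farah_turbulence_2014}, the following hold:
\begin{itemize}
\item $\|p_{0}(\gamma)-un(\gamma)\|<\tfrac{1}{n}$;
\item $\bigl|\,\|q(p_{0}(\gamma),\dots,p_{N}(\gamma))\|-\delta_{\mathcal{D}}(q)\,\bigr|<\tfrac{1}{n}$ for every $q\in\mathcal{U}$ of ``complexity at most $N$''; and
\item $\|p_{i}(\gamma)\gamma_{j}-\gamma_{j}p_{i}(\gamma)\|<\tfrac{1}{n}$ for all $i,j\le N$.
\end{itemize}
The first condition makes the assignment $d_{k}\mapsto p_{k}(\gamma)$ approximately unital, the second makes it approximately isometric on the $\ast$-algebra generated by finitely many $d_{k}$, and the third makes it approximately central.

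Let me describe why this equivalence holds, as this is where the real work lies. For the forward direction I would start from a unital embedding $\Psi\colon\mathcal{D}\to C^{\ast}(\gamma)_{\infty}\cap C^{\ast}(\gamma)'$, lift each $\Psi(d_{k})$ to a bounded sequence in $C^{\ast}(\gamma)$, and note that, for any prescribed precision, the terms of that sequence eventually satisfy the three displayed conditions with genuine elements of $C^{\ast}(\gamma)$ in place of the $p_{i}(\gamma)$ (using that $\Psi$ is isometric and unital); since $\{p(\gamma):p\in\mathcal{U}\}$ is dense and the conditions are open, these genuine elements may be replaced by $\ast$-polynomials. Conversely, given polynomials realizing the conditions for every $n$ and $N$, I would let $N\to\infty$ (exhausting the generators and all of $\mathcal{U}$) and $n\to\infty$, and assemble the chosen tuples into elements of $C^{\ast}(\gamma)_{\infty}$: the norm conditions guarantee that $d_{k}\mapsto[(p_{k}(\gamma))]$ is isometric and multiplicative on the dense $\ast$-subalgebra generated by the $d_{k}$, hence extends to an isometric unital $\ast$-homomorphism $\mathcal{D}\to C^{\ast}(\gamma)_{\infty}$, while the commutator conditions place its image in the relative commutant. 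The main obstacle is precisely this assembling step: one must organize the bookkeeping of precisions, of the growing finite families of polynomials $q$, and of the growing generating sets so that the limiting map is defined and isometric on \emph{all} of $\mathcal{D}$ and not merely on a finitely generated subalgebra, for which imposing the norm conditions for every $q\in\mathcal{U}$ is essential.

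Granting the equivalence, Borelness is immediate. The set $\Gamma_{u}(H)$ of $\gamma$ with $C^{\ast}(\gamma)$ unital is Borel and $un$ is a Borel map by \cite[Lemma 3.14]{farah_turbulence_2014}. For each fixed $r\in\mathcal{U}$ the map $\gamma\mapsto\|r(\gamma)\|=\delta_{\gamma}(r)$ is Borel, since $\gamma\mapsto\delta_{\gamma}$ is the Borel map of \cite[Proposition 2.7]{farah_turbulence_2014} and $\eta\mapsto\eta(r)$ is continuous on $\Xi$; the composite $q(p_{0}(\gamma),\dots,p_{N}(\gamma))$ is again of the form $\tilde{q}(\gamma)$ for a fixed $\tilde{q}\in\mathcal{U}$, and the commutator $p_{i}(\gamma)\gamma_{j}-\gamma_{j}p_{i}(\gamma)$ and the difference $p_{0}(\gamma)-un(\gamma)$ are fixed Borel $B(H)$-valued expressions in $\gamma$, whose operator norms are Borel. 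Hence each displayed inequality defines a Borel subset of $\Gamma_{u}(H)$; the inner existential quantifier ranges over the countable set $\mathcal{U}^{N+1}$ and so yields a countable union, while the outer $\forall n\,\forall N$ yields a countable intersection. Intersecting with the Borel set $\Gamma_{u}(H)$ gives a Borel set, completing the proof.
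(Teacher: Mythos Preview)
Your approach is correct and essentially the same as the paper's: both invoke the Toms--Winter characterization of $\mathcal{D}$-absorption via approximately central, approximately isometric finite partial maps from $\mathcal{D}$ into $A$, rewrite it as a countable $\forall\exists$ statement over $\ast$-polynomials with Borel atomic predicates, and conclude Borelness from the countable quantifier structure together with \cite[Lemma 3.14]{farah_turbulence_2014} for the unit. The only cosmetic differences are that the paper quantifies over generators $\gamma_{k_1},\dots,\gamma_{k_n}$ rather than general polynomials $p_0(\gamma),\dots,p_N(\gamma)$, and cites the local characterization as a black box rather than sketching (as you do) the passage through the central sequence algebra.
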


\begin{proof}
By \cite[Lemma 3.14]{farah_turbulence_2014}, the set $\Gamma _{u}(H)$ of $%
\gamma \in \Gamma (H)$ such that $C^{\ast }(\gamma )$ is unital, is Borel.
Moreover, there is a Borel function $Un\colon \Gamma _{u}(H)\rightarrow B(H)$
such that $Un(\gamma )$ is the unit of $C^{\ast }\left( \gamma \right) $ for
every $\gamma \in \Gamma _{u}(H)$. Let $\left\{ d_{n}\colon n\in \omega
\right\} $ be an enumeration of a dense subset of $\mathcal{D}$ such that $%
d_{0}=1$, and let $\left\{ p_{n}\colon n\in \omega \right\} $ be an
enumeration of $\mathcal{U}$. By \cite[Theorem 2.2]{toms_strongly_2007}, or 
\cite[Theorem 7.2.2]{rordam_classification_2002}, a unital C*-algebra $A$ is 
$\mathcal{D}$-absorbing if and only if for every $n,m\in \mathbb{N}$ and
every finite subset $F$ of $A$, there are $a_{0},a_{1},\ldots ,a_{n}\in A$
such that

\begin{itemize}
\item $a_{0}$ is the unit of $A$,

\item $\left\Vert xa_i-a_ix \right\Vert <\frac{1}{m}$ for every $i\in n$ and 
$x\in F$, and

\item $\left\Vert p_{i}\left( a_{0},\ldots ,a_{n}\right) -p_{i}\left(
d_{0},\ldots ,d_{n}\right) \right\Vert <\frac{1}{m} $ for every $i\in m$.
\end{itemize}

Let $\gamma \in \Gamma (H)$ be such that $C^{\ast }(\gamma )$ is unital.
Then $C^{\ast }(\gamma )$ is $\mathcal{D}$-absorbing if and only if for
every $n,m\in \mathbb{N}$ there are $k_{1},\ldots ,k_{n}\in \omega $ such
that

\begin{itemize}
\item $\left\Vert \gamma _{i}\gamma _{k_{j}}-\gamma _{k_{j}}\gamma _{i}
\right\Vert <\frac{1}{m}$ for $i\in m$ and $1\leq j\leq n$,

\item $\left\Vert p_{i}\left( Un(\gamma ),\gamma _{k_{1}},\ldots ,\gamma
_{k_{n}}\right) -p_{i}\left( d_{0},\ldots ,d_{n}\right) \right\Vert <\frac{1%
}{m}$ for every $i\in m$.
\end{itemize}

This shows that the set of $\gamma \in \Gamma (H)$ such that $C^{\ast
}(\gamma )$ is unital and $\mathcal{D}$-absorbing, is Borel.
\end{proof}

\subsection{Background on Kirchberg algebras and the UCT}

A simple C*-algebra $A$ is called \emph{purely infinite}, if for every $a$
and $b$ in $A$ with $a\neq 0$, there are $x$ and $y$ in $A$ such that $xay=b$
\cite[V.2.3.3]{blackadar_operator_2006}. For a unital C*-algebra $A$ this
equivalent to the assertion that for every $a$ in $A$ with $a\neq 0$, there
are $x$ and $y$ in $A$ such that $xay=1$.

A \emph{Kirchberg algebra }is a simple, separable, nuclear, purely infinite
C*-algebra \cite[Chapter 4]{rordam_classification_2002}.

The following result of Kirchberg, which is \cite[Theorem 3.15]%
{kirchberg_embedding_2000}, is crucial in the study of purely infinite
simple C*-algebras.

\begin{theorem}[Kirchberg, \protect\cite{kirchberg_embedding_2000}]
\label{thm: Kirch OI isom thm}If $A$ is a nuclear simple C*-algebra, then $A$
is purely infinite if and only if $A$ is $\mathcal{O}_{\infty }$-absorbing.
\end{theorem}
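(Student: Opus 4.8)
The plan is to prove the two implications separately, with the reverse implication being elementary and the forward implication carrying the full weight of the argument. Throughout I assume $A$ is separable, as is standing in this paper; I will treat the unital case in detail and indicate the reduction of the general case to it via the central-sequence algebra $F(A)=(A_{\omega}\cap A')/\mathrm{Ann}(A)$.

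For the direction ``$\mathcal{O}_{\infty}$-absorbing $\Rightarrow$ purely infinite'', I would first note that the minimal tensor product of the simple C*-algebra $A$ with the simple \emph{nuclear} C*-algebra $\mathcal{O}_{\infty}$ is again simple, so $A\otimes\mathcal{O}_{\infty}\cong A$ is simple. It then suffices to show that every nonzero positive element of $A\otimes\mathcal{O}_{\infty}$ is properly infinite, since a simple C*-algebra in which every nonzero positive element is properly infinite is purely infinite. The proper infiniteness is forced by the two isometries $s_{1},s_{2}\in\mathcal{O}_{\infty}$ with $s_{i}^{*}s_{j}=\delta_{ij}1$ and orthogonal ranges: conjugating by $1\otimes s_{1}$ and $1\otimes s_{2}$ exhibits, for elements of product form $b\otimes 1$, two orthogonal copies of the element dominated by itself, so that $(b\otimes1)\oplus(b\otimes1)\precsim b\otimes1$, and a standard approximation together with the strong self-absorption $\mathcal{O}_{\infty}\cong\mathcal{O}_{\infty}\otimes\mathcal{O}_{\infty}$ promotes this to arbitrary nonzero positive elements of $A\otimes\mathcal{O}_{\infty}\cong A\otimes\mathcal{O}_{\infty}\otimes\mathcal{O}_{\infty}$.

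For the substantial direction ``purely infinite $\Rightarrow$ $\mathcal{O}_{\infty}$-absorbing'', I would reduce the problem, via the Toms--Winter characterization of absorption of a strongly self-absorbing algebra (the same criterion recorded as \cite[Theorem 2.2]{toms_strongly_2007} and already invoked elsewhere in this paper), to the construction of a \emph{unital} $*$-homomorphism $\mathcal{O}_{\infty}\to A_{\omega}\cap A'$. Since $\mathcal{O}_{\infty}$ is the universal unital C*-algebra generated by isometries with pairwise orthogonal ranges, and since a unital \emph{properly infinite} C*-algebra $B$ always contains $\mathcal{O}_{\infty}$ unitally --- given isometries $v_{1},v_{2}\in B$ with $v_{1}v_{1}^{*}+v_{2}v_{2}^{*}\le 1$, the elements $t_{n}=v_{2}^{\,n-1}v_{1}$ are isometries with orthogonal ranges --- it suffices to prove that the unit of $A_{\omega}\cap A'$ is properly infinite, i.e. that $A$ admits two approximately central sequences of isometries with approximately orthogonal ranges.

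The crux, and the step I expect to be the main obstacle, is precisely this production of approximately central isometries, which is where nuclearity is indispensable. The pure infiniteness of $A$ supplies an abundance of isometries with orthogonal ranges inside $A$ itself, but there is no reason a priori for them to be approximately central. To remedy this I would invoke Kirchberg's theorem that for a separable, simple, purely infinite, \emph{nuclear} C*-algebra the relative commutant sequence algebra $A_{\omega}\cap A'$ is again simple and purely infinite, and in particular its unit is properly infinite; the proof of that statement couples the completely positive approximation property coming from nuclearity with the comparison theory of the infinite algebra $A$ in order to ``average'' the orthogonal isometries into approximately central ones. Granting this, $A_{\omega}\cap A'$ is unital and properly infinite, hence receives a unital copy of $\mathcal{O}_{\infty}$ by the elementary remark above, and the Toms--Winter criterion yields $A\cong A\otimes\mathcal{O}_{\infty}$. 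For non-unital $A$ one runs the same argument with $F(A)$ in place of $A_{\omega}\cap A'$ together with the corresponding non-unital form of the absorption criterion.
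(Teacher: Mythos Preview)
The paper does not supply its own proof of this statement: Theorem~\ref{thm: Kirch OI isom thm} is simply quoted from \cite{kirchberg_embedding_2000} as an input to the arguments of Section~\ref{Subsection: Borel spaces of Kirchberg algebras}, with no argument given. So there is nothing in the paper to compare your proposal against.

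That said, a brief comment on the proposal itself. Your reverse implication is fine and is indeed the elementary direction. For the forward implication, your outline is logically coherent but is not really a proof: you reduce via the Toms--Winter absorption criterion to showing that $A_{\omega}\cap A'$ (or $F(A)$) is properly infinite, and then you \emph{invoke} Kirchberg's theorem that this central sequence algebra is simple and purely infinite for separable simple nuclear purely infinite $A$. That last black box is precisely where all of the work lives --- it is part of the same circle of results in \cite{kirchberg_embedding_2000} as the theorem you are trying to prove, and its proof is what actually uses nuclearity in an essential way. So your argument is a correct reformulation of the problem rather than a proof, and you have identified the right obstacle (producing approximately central isometries with orthogonal ranges), but you have not surmounted it. Also note a minor anachronism: the Toms--Winter framework \cite{toms_strongly_2007} postdates Kirchberg's original argument, so while using it is legitimate, it is not how the result was first obtained.
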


We proceed to define what it means for a C*-algebra to satisfy the Universal
Coefficient Theorem, or UCT for short. In addition to the $K_0$ group, one
can associate to a C*-algebra another abelian group, namely the $K_1$-group.
Its definition and its basic properties can be found in \cite[Chapter 8]%
{rordam_introduction_2000}. (We point out that unlike the $K_0$-group, the $%
K_1$-group of a C*-algebra does not carry any natural ordering or positive
cone.) \newline
\indent If $A$ is a C*-algebra, we denote by $K_\ast(A)$ the $\mathbb{Z}_2$%
-graded abelian group $K_\ast(A)=K_0(A)\oplus K_1(A)$. If $A$ and $B$ are
C*-algebras, a group homomorphism $\nu\colon K_\ast(A)\to K_\ast(B)$ is said
to have \emph{degree zero} if $\nu(K_j(A))\subseteq K_j(B)$ for $j=0,1$, and
it is said to have \emph{degree one} if $\nu(K_j(A))\subseteq K_{1-j}(B)$
for $j=0,1$.

\begin{definition}
\label{df: UCT} Let $A$ and $B$ be separable C*-algebras. We say that the
pair $(A,B)$ \emph{satisfies the UCT} if the following conditions are
satisfied:

\begin{enumerate}
\item The natural degree zero map $\tau _{A,B}\colon KK(A,B)\rightarrow 
\mathrm{Hom}(K_{\ast }(A),K_{\ast }(B))$ defined in \cite%
{kasparov_operator_1980}, is surjective.

\item The natural degree one map $\mu _{A,B}\colon \mathrm{ker}(\tau
_{A,B})\rightarrow \mathrm{Ext}(K_{\ast }(A),K_{\ast +1}(B))$ defined in 
\cite{kasparov_operator_1980}, is an isomorphism.
\end{enumerate}

If this is the case, by setting $\varepsilon _{A,B}=\mu _{A,B}^{-1}\colon 
\mathrm{Ext}(K_{\ast }(A),K_{\ast +1}(B))\rightarrow KK(A,B)$, we obtain a
short exact sequence 
\begin{equation*}
\xymatrix@=1.35em{0\ar[r] & \mathrm{Ext}(K_\ast(A),K_{\ast+1}(B))\ar[r]^-{%
\varepsilon_{A,B}}& KK(A,B)\ar[r]^-{\tau_{A,B}}&\mathrm{Hom}%
(K_\ast(A),K_\ast(B))\ar[r]&0,}
\end{equation*}%
which is natural on both variables because so are $\tau _{A,B}$ and $\mu
_{A,B}$.\newline
\indent We further say that $A$ \emph{satisfies the UCT}, if $(A,B)$
satisfies the UCT for every separable C*-algebra $B$.
\end{definition}

We say that two separable nuclear C*-algebras $A$ and $B$ are \emph{$KK$%
-equivalent} if there exist $x\in KK(A,B)$ and $y\in KK(B,A)$ such that $%
xy=1_B$ and $yx=1_A$.\newline
\indent The class of all separable nuclear C*-algebras that satisfy the UCT,
usually called the \emph{bootstrap} class $\mathcal{N}$, can be
characterized as the smallest class of separable nuclear C*-algebras
containing the complex numbers $\mathbb{C}$, and closed under the following
operations:

\begin{itemize}
\item Countable direct sums,

\item Crossed products by $\mathbb{Z}$ and by $\mathbb{R}$,

\item Two out of three in extensions,

\item $KK$-equivalence.
\end{itemize}

It can be shown that a C*-algebra belongs to $\mathcal{N}$ if and only it is 
$KK$-equivalent to a commutative C*-algebra. It is a long standing open
problem whether all separable nuclear C*-algebras belong to $\mathcal{N}$.

The UCT plays a key role in the classification of C*-algebras, and it is a
standard assumption in most classification theorems thus far available. (In
many situations, as is the case for AF-algebras, the UCT is automatically
satisfied.) We recall here Kirchberg-Phillips classification theorem for
(unital) Kirchberg algebras satisfying the UCT.

\begin{theorem}[Kirchberg,Phillips \protect\cite{kirchberg_exact_1995,
phillips_classification_2000}]
\label{Theorem: KP classification} Let $A$ and $B$ be unital Kirchberg
algebras satisfying the UCT. Then $A$ and $B$ are isomorphic if and only if
there is a degree zero group isomorphism 
\begin{equation*}
\varphi_\ast\colon K_\ast(A)\to K_\ast(B)
\end{equation*}
such that $\varphi_0([1_A])=[1_B]$.
\end{theorem}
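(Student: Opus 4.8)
The plan is to prove the nontrivial ``if'' direction; the ``only if'' direction is immediate, since an isomorphism $\psi\colon A\to B$ induces a degree zero isomorphism $K_\ast(\psi)\colon K_\ast(A)\to K_\ast(B)$ carrying $[1_A]$ to $[1_B]$. So assume we are given a degree zero isomorphism $\varphi_\ast\colon K_\ast(A)\to K_\ast(B)$ with $\varphi_0([1_A])=[1_B]$.

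First I would lift $\varphi_\ast$ to an invertible element of $KK$. By the UCT (Definition \ref{df: UCT}), the map $\tau_{A,B}$ is surjective, so I choose $x\in KK(A,B)$ with $\tau_{A,B}(x)=\varphi_\ast$, and similarly $y\in KK(B,A)$ with $\tau_{B,A}(y)=\varphi_\ast^{-1}$. A diagram chase comparing the natural UCT sequences for the pairs $(A,A)$, $(A,B)$, and $(B,B)$ --- using that $\tau$ is natural in both variables and that $\varphi_\ast$ is an isomorphism --- shows that $xy=1_A$ and $yx=1_B$ in $KK$, so that $x$ is a $KK$-equivalence and $A$, $B$ are $KK$-equivalent. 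This is the standard fact that, under the UCT, a degree zero isomorphism of $K_\ast$ lifts to a $KK$-equivalence.

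Next comes the analytic heart of the argument: realizing the $KK$-equivalence $x$ by a genuine $\ast$-isomorphism. Here I would invoke Kirchberg's existence and uniqueness theorems for Kirchberg algebras, whose proofs rest on the fact that every Kirchberg algebra is $\mathcal{O}_{\infty}$-absorbing (Theorem \ref{thm: Kirch OI isom thm}). The existence theorem asserts that any class in $KK(A,B)$ is realized by a $\ast$-homomorphism $A\to B$, while the uniqueness theorem asserts that two full $\ast$-homomorphisms inducing the same $KK$-class are approximately unitarily equivalent. Applying these to $x$ and $y$ produces $\ast$-homomorphisms $\phi\colon A\to B$ and $\theta\colon B\to A$ with $[\phi]=x$ and $[\theta]=y$; then $[\theta\circ\phi]=yx=1_A$ and $[\phi\circ\theta]=xy=1_B$, so by uniqueness $\theta\circ\phi$ and $\phi\circ\theta$ are approximately unitarily equivalent to the respective identity maps. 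An Elliott approximate-intertwining argument then assembles $\phi$ and $\theta$ into mutually inverse isomorphisms, giving $A\cong B$.

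Finally I would promote this to a \emph{unital} isomorphism using the hypothesis $\varphi_0([1_A])=[1_B]$. In a unital Kirchberg algebra pure infiniteness guarantees that any two full projections with equal $K_0$-class are Murray--von Neumann equivalent; since $[\phi(1_A)]=\varphi_0([1_A])=[1_B]$ in $K_0(B)$, the projection $\phi(1_A)$ is equivalent to $1_B$, and conjugating $\phi$ by a partial isometry implementing this equivalence --- and re-running the intertwining with unital maps throughout --- yields a unital isomorphism, which completes the proof. The main obstacle is unquestionably the existence and uniqueness theorems invoked in the third step: these are the deep results of Kirchberg and Phillips, and their proofs require the full strength of $\mathcal{O}_{\infty}$-absorption together with central-sequence and nuclear-embedding techniques that go well beyond the formal $KK$-theoretic bookkeeping of the remaining steps.
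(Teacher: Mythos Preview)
The paper does not prove this theorem at all: it is stated with attribution to Kirchberg and Phillips \cite{kirchberg_exact_1995, phillips_classification_2000} and then used as a black box, so there is no ``paper's own proof'' to compare against. Your outline is in fact a faithful sketch of the standard proof from the literature --- lift the $K_\ast$-isomorphism to a $KK$-equivalence via the UCT, invoke Kirchberg's existence and uniqueness theorems for $\ast$-homomorphisms between Kirchberg algebras, run an Elliott intertwining, and adjust to make the isomorphism unital --- and you correctly flag the existence/uniqueness package as the deep analytic input.

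Two small points. First, your indexing in the $KK$-equivalence step is off: with $x\in KK(A,B)$ and $y\in KK(B,A)$ one has $x\otimes_B y\in KK(A,A)$ and $y\otimes_A x\in KK(B,B)$, so the identities should read $x\cdot y=1_A$ and $y\cdot x=1_B$ (you have the subscripts reversed). Second, the ``diagram chase'' does not directly give these equalities: one only gets that $y\cdot x-1_B$ lies in $\ker\tau_{B,B}\cong \mathrm{Ext}(K_\ast(B),K_{\ast+1}(B))$, and one must then correct $y$ by an element coming from the Ext term to arrange $y\cdot x=1_B$ exactly, after which a short argument (or a five-lemma application to the UCT sequences) forces $x\cdot y=1_A$. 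This is routine and well known (it is the content of the Rosenberg--Schochet argument), but it is not a pure diagram chase.
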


We point out that if $A$ is a Kirchberg algebra, then the order structure on 
$K_0(A)$ is trivial, this is, $K_0(A)^+=K_0(A)$. Thus, every element in $%
K_0(A)$ is positive.

\subsection{Borel spaces of Kirchberg algebras\label{Subsection: Borel
spaces of Kirchberg algebras}}

We will denote by $\Gamma _{uKir}(H)$ the set of $\gamma \in \Gamma (H)$
such that $C^{\ast }(\gamma )$ is a \emph{unital} Kirchberg algebra.

\begin{proposition}
\label{Proposition: Kirchberg Borel} The set $\Gamma _{uKir}(H)$ is Borel.
\end{proposition}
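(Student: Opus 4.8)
The plan is to realise $\Gamma_{uKir}(H)$ as a finite intersection of Borel subsets of $\Gamma(H)$, by trading the defining properties of a unital Kirchberg algebra (unital, separable, simple, nuclear, purely infinite) for an equivalent list of conditions each of which is Borel. Separability is automatic in $\Gamma(H)$, and the crucial C*-algebraic input is the observation that, for a separable unital C*-algebra $A$, being a unital Kirchberg algebra is equivalent to being \emph{unital, simple, nuclear, and $\mathcal{O}_{\infty}$-absorbing}. Indeed, if $A$ is a unital Kirchberg algebra then it is unital, simple, nuclear and purely infinite, so Theorem \ref{thm: Kirch OI isom thm} yields $A\cong A\otimes\mathcal{O}_{\infty}$; conversely, if $A$ is unital, simple, nuclear and $\mathcal{O}_{\infty}$-absorbing, then the hypotheses of Theorem \ref{thm: Kirch OI isom thm} (nuclearity and simplicity) are met, so pure infiniteness is equivalent to $\mathcal{O}_{\infty}$-absorption and hence holds, while the isomorphism $A\cong A\otimes\mathcal{O}_{\infty}$ forces $A$ to be infinite dimensional (a finite dimensional algebra cannot be isomorphic to its tensor product with the infinite dimensional algebra $\mathcal{O}_{\infty}$). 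Thus $A$ is a unital Kirchberg algebra. This replacement of ``purely infinite'' by the more tractable ``$\mathcal{O}_{\infty}$-absorbing'' is exactly what makes Theorem \ref{Theorem: ssa-absorbing} applicable.

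Granting this equivalence, I would write $\Gamma_{uKir}(H)=S\cap N\cap O$, where $S$, $N$, and $O$ are the sets of $\gamma$ such that $C^{\ast}(\gamma)$ is, respectively, simple, nuclear, and unital-and-$\mathcal{O}_{\infty}$-absorbing. The set $O$ is Borel by Theorem \ref{Theorem: ssa-absorbing} applied to $\mathcal{D}=\mathcal{O}_{\infty}$, which is strongly self-absorbing by \cite[Examples 1.14]{toms_strongly_2007}; note that $O$ already incorporates unitality, so no separate factor is needed. Since a finite intersection of Borel sets is Borel, it remains only to verify that $S$ and $N$ are Borel. For $N$ I would invoke that the set of $\gamma$ with $C^{\ast}(\gamma)$ nuclear is Borel \cite{farah_turbulence_2014}, nuclearity being detected through the completely positive approximation property, which admits a countable, Borel formulation in the parametrization $\Gamma(H)$. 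For $S$ one invokes that simplicity of separable C*-algebras is a Borel condition \cite{farah_descriptive_2012, farah_turbulence_2014}.

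The step I expect to be the main obstacle is the Borelness of $S$. The naive attempt---asserting that ``$C^{\ast}(\gamma)$ is simple if and only if every nonzero element $p(\gamma)$ of the canonical countable dense $\mathbb{Q}(i)$-$\ast$-subalgebra is full''---is \emph{invalid}: a proper closed two-sided ideal can meet that dense subalgebra only in $0$, so quantifying fullness over the dense set fails to capture simplicity (and a local perturbation argument does not repair this, since the norms of the witnesses $x,y$ in $xay=1$ are not controlled uniformly as the element varies). One must therefore appeal to the genuine, more delicate argument from the descriptive set theory of C*-algebras that simplicity is Borel, rather than hope to derive it from the parametrization by hand. Once $S$ and $N$ are available, the proof is completed simply by intersecting the three Borel sets $S$, $N$, and $O$.
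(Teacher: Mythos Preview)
Your proposal is correct and follows essentially the same route as the paper: both reduce ``unital Kirchberg'' to ``unital, simple, nuclear, and $\mathcal{O}_{\infty}$-absorbing'' via Theorem~\ref{thm: Kirch OI isom thm}, invoke Theorem~\ref{Theorem: ssa-absorbing} for the $\mathcal{O}_{\infty}$-absorption factor, and appeal to \cite{farah_turbulence_2014} for the remaining conditions. The only cosmetic difference is that the paper cites a single packaged result (\cite[Corollary~7.5]{farah_turbulence_2014}) asserting that the set of $\gamma$ with $C^{\ast}(\gamma)$ unital, nuclear, and simple is Borel, whereas you separate $S$, $N$, and unitality and cite them individually; your additional remark on why simplicity cannot be checked na\"ively on the dense $\ast$-subalgebra is accurate and worth keeping.
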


\begin{proof}
Corollary 7.5 of \cite{farah_turbulence_2014} asserts that the set $\Gamma
_{uns}(H)$ of $\gamma \in \Gamma (H)$ such that $C^{\ast }(\gamma )$ is
unital, nuclear, and simple is Borel. The result then follows from this fact
together with Theorem \ref{Theorem: ssa-absorbing}.
\end{proof}

\begin{definition}
\label{definition: Oinfty standard} Fix a projection $p$ in $\mathcal{O}%
_{\infty }$ such that $[p]=0$ in $K_0(\mathcal{O}_\infty)\cong\mathbb{Z}$.
Define the \emph{standard Cuntz algebra} $\mathcal{O}_\infty^{st}$ to be the
corner $p\mathcal{O}_\infty p$.
\end{definition}

\begin{remark}
The C*-algebra $\mathcal{O}_{\infty }^{st}$ is a unital Kirchberg algebra
that satisfies the UCT, with $K$-theory given by 
\begin{equation*}
(K_{0}(\mathcal{O}_{\infty }^{st}),\left[ 1_{\mathcal{O}_{\infty }^{st}}%
\right] ,K_{1}(\mathcal{O}_{\infty }^{st}))\cong (\mathbb{Z},0,0).
\end{equation*}%
Moreover, it is the unique, up to isomorphism, unital Kirchberg algebra
satisfying the UCT with said $K$-theory. In particular, a different choice
of the projection $p$ in Definition \ref{definition: Oinfty standard} (as
long as its class on $K$-theory is $0$), would yield an isomorphic
C*-algebra.
\end{remark}

We point out that, even though there is an isomorphism $\mathcal{O}_{\infty
}^{st}\otimes \mathcal{O}_{\infty }^{st}\cong \mathcal{O}_{\infty }^{st}$
(see comments on page 262 of \cite{izumi_finite_2004}), the C*-algebra $%
\mathcal{O}_{\infty }^{st}$ is not strongly self-absorbing. Indeed, if $%
\mathcal{D}$ is a strongly self-absorbing C*-algebra, then the infinite
tensor product $\bigotimes_{n=1}^{\infty }\mathcal{D}$ of $\mathcal{D}$ with
itself, is isomorphic to $\mathcal{D}$. However, $\bigotimes_{n=1}^{\infty }%
\mathcal{O}_{\infty }^{st}$ is isomorphic to $\mathcal{O}_{2}$, and thus $%
\mathcal{O}_{\infty }^{st}$ is not strongly self-absorbing.\newline
\indent We proceed to give a $K$-theoretic characterization of those unital
Kirchberg algebras that absorb $\mathcal{O}_{\infty }^{st}$. Our
characterization will be used to show that the set of all $\mathcal{O}%
_{\infty }^{st}$-absorbing unital Kirchberg algebras is Borel.\newline
\indent For use in the proof of the following lemma, we recall here that if $%
A$ and $B$ are nuclear separable C*-algebras, and at least one of them
satisfies the UCT, then the $K$-groups of their tensor product $A\otimes B$
are ``essentially'' determined by the $K$-groups of $A$ and $B$, up to an
extension problem. This is the content of the Künneth formula, which will be
needed in the next proof. We refer the reader to \cite%
{schochet_topological_1982} for the precise statement and its proof; see
also Remark 7.11 in \cite{rosenberg_kunneth_1987}.

\begin{lemma}
\label{Lemma: Ointy-standard} Let $A$ be a unital Kirchberg algebra. Then
the following are equivalent:

\begin{enumerate}
\item $A$ is $\mathcal{O}_{\infty }^{st}$-absorbing.

\item The class $[1_{A}]$ of the unit of $A$ in $K_{0}(A)$ is zero.
\end{enumerate}
\end{lemma}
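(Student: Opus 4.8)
The plan is to prove both implications by computing the K-theory and the class of the unit of $A\otimes\mathcal{O}_{\infty}^{st}$ via the K\"unneth formula, and then invoking the Kirchberg--Phillips classification theorem (Theorem \ref{Theorem: KP classification}). First I would record the relevant K-theoretic input: by the Remark following Definition \ref{definition: Oinfty standard}, $\mathcal{O}_{\infty}^{st}$ is a unital Kirchberg algebra satisfying the UCT with $(K_{0},[1],K_{1})\cong(\mathbb{Z},0,0)$. Let $g$ denote a generator of $K_{0}(\mathcal{O}_{\infty}^{st})\cong\mathbb{Z}$, so that $[1_{\mathcal{O}_{\infty}^{st}}]=0$ while $g\neq 0$. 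Since $\mathcal{O}_{\infty}^{st}$ satisfies the UCT, the K\"unneth formula applies to $A\otimes\mathcal{O}_{\infty}^{st}$; as $K_{\ast}(\mathcal{O}_{\infty}^{st})$ is free and concentrated in degree $0$, the $\mathrm{Tor}$ term vanishes, and the external product gives a degree-zero isomorphism $K_{\ast}(A)\to K_{\ast}(A\otimes\mathcal{O}_{\infty}^{st})$, $x\mapsto x\times g$. The key computation is that the class of the unit satisfies $[1_{A\otimes\mathcal{O}_{\infty}^{st}}]=[1_{A}]\times[1_{\mathcal{O}_{\infty}^{st}}]=[1_{A}]\times 0=0$, independently of $[1_A]$.

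The implication (1)$\Rightarrow$(2) is then immediate: if $A\cong A\otimes\mathcal{O}_{\infty}^{st}$, a unital isomorphism carries $[1_{A}]$ to $[1_{A\otimes\mathcal{O}_{\infty}^{st}}]=0$, forcing $[1_{A}]=0$ in $K_{0}(A)$. For (2)$\Rightarrow$(1), assume $[1_{A}]=0$ and set $B:=A\otimes\mathcal{O}_{\infty}^{st}$. I would first check that $B$ is again a unital Kirchberg algebra: separability, nuclearity and unitality are clear, simplicity holds because the minimal tensor product of the nuclear simple algebra $A$ with the simple algebra $\mathcal{O}_{\infty}^{st}$ is simple, and pure infiniteness follows from Theorem \ref{thm: Kirch OI isom thm} once one notes that $B$ is $\mathcal{O}_{\infty}$-absorbing, since $A\cong A\otimes\mathcal{O}_{\infty}$ yields $B\otimes\mathcal{O}_{\infty}\cong(A\otimes\mathcal{O}_{\infty})\otimes\mathcal{O}_{\infty}^{st}\cong B$. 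The degree-zero isomorphism $x\mapsto x\times g$ from $K_{\ast}(A)$ to $K_{\ast}(B)$ sends $[1_{A}]$ to $[1_{A}]\times g=0=[1_{B}]$, the final equalities being the hypothesis $[1_A]=0$ together with the computation above. Hence this isomorphism preserves the class of the unit, and Theorem \ref{Theorem: KP classification} produces an isomorphism $A\cong B=A\otimes\mathcal{O}_{\infty}^{st}$, so that $A$ is $\mathcal{O}_{\infty}^{st}$-absorbing in the sense of Definition \ref{Definition: absorbing}.

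The main obstacle is the bookkeeping with the class of the unit in the K\"unneth isomorphism, and in particular keeping the generator $g$ of $K_{0}(\mathcal{O}_{\infty}^{st})$ carefully separated from the vanishing unit class $[1_{\mathcal{O}_{\infty}^{st}}]=0$, since the whole equivalence turns on the fact that $[1_{A}]\times g$ is zero exactly when $[1_{A}]=0$. A secondary subtlety is that Theorem \ref{Theorem: KP classification} is stated for Kirchberg algebras satisfying the UCT, so its use requires $A$ (equivalently $B$, the two being $KK$-equivalent) to satisfy the UCT. Because $\mathcal{O}_{\infty}^{st}$ is $KK$-equivalent to $\mathbb{C}$, one may alternatively carry the argument out purely in $KK$-theory and appeal to the general (non-UCT) Kirchberg--Phillips theorem; in every application of this lemma the algebras involved satisfy the UCT, so the version stated in the paper suffices.
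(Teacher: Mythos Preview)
Your proof of (1)$\Rightarrow$(2) is essentially identical to the paper's. For (2)$\Rightarrow$(1), however, you take a genuinely different route. The paper does not invoke classification at all: instead it observes that under the hypothesis $[1_A]=0$ the projections $1_A\otimes 1_{\mathcal{O}_\infty}$ and $1_A\otimes p$ both have class zero in $K_0(A\otimes\mathcal{O}_\infty)$, hence are Murray--von Neumann equivalent by Cuntz's theorem on projections in purely infinite simple C*-algebras; an implementing isometry then yields an explicit isomorphism $A\otimes\mathcal{O}_\infty\cong (1_A\otimes p)(A\otimes\mathcal{O}_\infty)(1_A\otimes p)=A\otimes\mathcal{O}_\infty^{st}$, and one finishes with Kirchberg's $\mathcal{O}_\infty$-absorption theorem (Theorem~\ref{thm: Kirch OI isom thm}). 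Your approach computes $K_\ast$ of both sides via K\"unneth and then appeals to Kirchberg--Phillips. Both arguments are correct, but they buy different things: the paper's argument is more elementary (it avoids the full classification machinery) and, crucially, proves the lemma for \emph{all} unital Kirchberg algebras as stated, with no UCT hypothesis on $A$. Your argument, as written using Theorem~\ref{Theorem: KP classification}, only establishes the lemma for $A$ satisfying the UCT; you correctly flag this and note that the $KK$-theoretic form of Kirchberg--Phillips removes the restriction, but that version is not stated in the paper, so strictly speaking your proof of the lemma as stated is incomplete without importing it.
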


\begin{proof}
We first show that (1) implies (2). Since $\mathcal{O}_{\infty }^{st}$
satisfies the UCT, the Künneth formula applied to $A\otimes \mathcal{O}%
_{\infty }^{st}$ gives 
\begin{equation*}
K_{0}(A\otimes \mathcal{O}_{\infty }^{st})\cong K_{0}(A)\ \ \mbox{ and }\ \
K_{1}(A\otimes \mathcal{O}_{\infty }^{st})\cong K_{1}(A),
\end{equation*}%
with $\left[ 1_{A\otimes \mathcal{O}_{\infty }^{st}}\right] =0$ as an
element in $K_{0}(A)$. The claim follows since any isomorphism $A\otimes 
\mathcal{O}_{\infty }^{st}\cong A$ must map the unit of $A\otimes \mathcal{O}%
_{\infty }^{st}$ to the unit of $A$.\newline
\indent Let us now show that (2) implies (1). Fix a non-zero projection $p$
in $\mathcal{O}_{\infty }$ such that $[p]=0$ as an element of $K_{0}(%
\mathcal{O}_{\infty })$. Then $1_{A}\otimes 1_{\mathcal{O}_{\infty }}$,
which is an element of $A\otimes \mathcal{O}_{\infty }$, represents the zero
group element in $K_{0}(A\otimes \mathcal{O}_{\infty })$. Likewise, $%
1_{A}\otimes p$ also represents the zero group element in $K_{0}(A\otimes 
\mathcal{O}_{\infty })$. Since any two non-zero projections in a Kirchberg
algebra are Murray-von Neumann equivalent if and only if they determine the
same class in $K$-theory (see \cite{cuntz_k-theory_1981-1}), it follows that
there is an isometry $v$ in $A\otimes \mathcal{O}_{\infty }$ such that 
\begin{equation*}
vv^{\ast }=1_{A}\otimes p.
\end{equation*}%
Define a map 
\begin{equation*}
\varphi _{0}\colon A\otimes _{alg}\mathcal{O}_{\infty }\rightarrow
(1_{A}\otimes p)(A\otimes \mathcal{O}_{\infty })(1_{A}\otimes p)\cong
A\otimes \mathcal{O}_{\infty }^{st}
\end{equation*}%
by $\varphi _{0}(a\otimes b)=v(a\otimes b)v^{\ast }$, for $a$ in $A$ and $b$
in $\mathcal{O}_{\infty }$, and extended linearly. It is straightforward to
check that $\varphi _{0}$ extends to a *-homomorphism $\varphi \colon
A\otimes \mathcal{O}_{\infty }\rightarrow A\otimes \mathcal{O}_{\infty
}^{st} $. We claim that $\varphi $ is an isomorphism. For this, it is enough
to check that the *-homomorphism 
\begin{equation*}
\psi \colon (1_{A}\otimes p)(A\otimes \mathcal{O}_{\infty })(1_{A}\otimes
p)\rightarrow A\otimes \mathcal{O}_{\infty }
\end{equation*}%
given by $\psi (x)=v^{\ast }xv$ for all $x$ in $(1_{A}\otimes p)(A\otimes 
\mathcal{O}_{\infty })(1_{A}\otimes p)$, is an inverse for $\varphi $. This
is immediate since $(1_{A}\otimes p)x(1_{A}\otimes p)=x$ for all $x$ in $%
(1_{A}\otimes p)(A\otimes \mathcal{O}_{\infty })(1_{A}\otimes p)$.\newline
\indent Once we have $A\otimes \mathcal{O}_{\infty }^{st}\cong A\otimes 
\mathcal{O}_{\infty }$, the result follows from the fact that there is an
isomorphism $A\cong A\otimes \mathcal{O}_{\infty }$ by Kirchberg's $\mathcal{%
O}_{\infty }$-isomorphism Theorem (here reproduced as Theorem \ref{thm:
Kirch OI isom thm}). This finishes the proof of the lemma.
\end{proof}

\begin{corollary}
\label{cor: abs of OIst is Borel} The set of all $\gamma \in \Gamma (H)$
such that $C^{\ast }(\gamma )$ is a $\mathcal{O}_{\infty }^{st}$-absorbing
unital Kirchberg algebra, is Borel.
\end{corollary}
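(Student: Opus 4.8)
The plan is to intersect the Borel set of unital Kirchberg algebras with the Borel set cut out by the $K$-theoretic criterion of Lemma \ref{Lemma: Ointy-standard}. By Proposition \ref{Proposition: Kirchberg Borel} the set $\Gamma_{uKir}(H)$ of $\gamma \in \Gamma(H)$ with $C^*(\gamma)$ a unital Kirchberg algebra is Borel, and by Lemma \ref{Lemma: Ointy-standard} such a $C^*(\gamma)$ is $\mathcal{O}_\infty^{st}$-absorbing precisely when the class $[1_{C^*(\gamma)}]$ of its unit vanishes in $K_0(C^*(\gamma))$. Thus it suffices to show that $\{\gamma \in \Gamma_{uKir}(H) : [1_{C^*(\gamma)}] = 0\}$ is Borel.

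To see this, I would first recall from \cite[Lemma 3.14]{farah_turbulence_2014} that the set $\Gamma_u(H)$ of $\gamma$ with $C^*(\gamma)$ unital is Borel, and that there is a Borel map $\gamma \mapsto \mathrm{Un}(\gamma)$ selecting the unit of $C^*(\gamma)$. By \cite[Proposition 3.4]{farah_descriptive_2012}, there is a Borel map assigning to each $\gamma$ a code for the abelian group $K_0(C^*(\gamma))$ in the standard Borel space of countable abelian groups, under which distinguished projections of $C^*(\gamma)$ are sent to distinguished elements of the computed group code. Feeding the selected unit $\mathrm{Un}(\gamma)$ into this computation yields a Borel map $\gamma \mapsto [1_{C^*(\gamma)}]$ valued in the code for $K_0(C^*(\gamma))$. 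Since deciding whether a Borel-computed element equals the identity of a Borel-computed group code is a Borel condition, the set $\{\gamma : [1_{C^*(\gamma)}] = 0\}$ is Borel, and intersecting it with $\Gamma_{uKir}(H)$ gives the desired conclusion.

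The one point that requires care is the bookkeeping in the second paragraph: one must check that the Borel selection of the unit provided by \cite[Lemma 3.14]{farah_turbulence_2014} is compatible with the parametrization of projection classes underlying the Borel $K_0$ computation of \cite[Proposition 3.4]{farah_descriptive_2012}, so that the pointed invariant $\gamma \mapsto (K_0(C^*(\gamma)), [1_{C^*(\gamma)}])$ is genuinely assembled by a single Borel map rather than merely the group $K_0(C^*(\gamma))$ in isolation. I expect this to be the main obstacle; granting it, the remaining verifications are routine, and the corollary follows by combining Proposition \ref{Proposition: Kirchberg Borel} and Lemma \ref{Lemma: Ointy-standard} as above.
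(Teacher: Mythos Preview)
Your proposal is correct and follows essentially the same approach as the paper: both combine Lemma \ref{Lemma: Ointy-standard} with the Borel computability of the pointed $K_0$-invariant $(K_0(C^*(\gamma)),[1_{C^*(\gamma)}])$. The paper simply cites \cite[Section 3.3]{farah_descriptive_2012} for the latter fact (which already handles the compatibility issue you flag in your final paragraph), whereas you spell out the intermediate steps via \cite[Lemma 3.14]{farah_turbulence_2014} and \cite[Proposition 3.4]{farah_descriptive_2012}.
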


\begin{proof}
This follows from Lemma \ref{Lemma: Ointy-standard}, together with the fact
that the $K$-theory of a C*-algebra and the class of its unit in $K_{0}$ can
be computed in a Borel fashion; see \cite[Section 3.3]%
{farah_descriptive_2012}.
\end{proof}

It follows from Theorem \ref{Theorem: ssa-absorbing}, together with the fact
that a the UHF-algebra of infinite type $\mathbb{M}_{n^{\infty }}$ is
strongly self-absorbing, that the set of all $\gamma \in \Gamma (H)$ such
that $C^{\ast }(\gamma )$ is a $\mathbb{M}_{n^{\infty }}$-absorbing unital
Kirchberg algebra is Borel.

\subsection{Isomorphism of $p$-divisible torsion free abelian groups\label%
{Section: Isomorphism of p-divisible}}

\begin{definition}
Let $G$ be an abelian group and let $n$ be a positive integer.

\begin{enumerate}
\item We say that $G$ is \emph{$n$-divisible}, if for every $x$ in $G$ there
exists $y$ in $G$ such that $x=ny$.

\item We say that $G$ is \emph{uniquely $n$-divisible}, if for every $x$ in $%
G$ there exists a unique $y$ in $G$ such that $x=ny$.
\end{enumerate}

Given a set $S$ of positive integers, we say that $G$ is (uniquely) $S $%
-divisible, if $G$ is (uniquely) $n$-divisible for every $n$ in $S$.
\end{definition}

It is clear that if $n$ is a positive integer, then any $n$-divisible
torsion free abelian group is uniquely $n$-divisible.

It is easily checked that the following classes of abelian groups are Borel
subsets of the standard Borel space of groups $\mathcal{G}$:

\begin{itemize}
\item Torsion free groups;

\item $n$-divisible groups, for any positive integer $n$;

\item Uniquely $n$-divisible groups.
\end{itemize}

The main result of \cite{hjorth_isomorphism_2002} asserts that if $\mathcal{C%
}$ is any class of countable structures such that the relation $\cong _{%
\mathcal{C}}$ of isomorphisms of elements of $\mathcal{C}$ is Borel, then $%
\cong _{\mathcal{C}}$ is Borel reducible to the relation $\cong _{TFA}$ of
isomorphism of torsion free abelian groups. Moreover, \cite[Theorem 1.1]%
{downey_isomorphism_2008} asserts that $\cong _{TFA}$ is a complete analytic
set and, in particular, not Borel.

\begin{proposition}
\label{Proposition: complete analytic p-divisible} Suppose that $\mathcal{P}$
is a set of prime numbers which is coinfinite in the set of all primes. If $%
\mathcal{C}$ is any class of countable structures such that the relation $%
\cong _{\mathcal{C}}$ of isomorphism of elements of $\mathcal{C}$ is Borel,
then $\mathcal{C}$ is Borel reducible to the relation of isomorphism of
torsion free $\mathcal{P}$-divisible groups. Moreover, the latter
equivalence relation is a complete analytic set and, in particular, not
Borel.
\end{proposition}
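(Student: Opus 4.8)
The plan is to derive both assertions from a single Borel reduction: from the relation $\cong_{TFA}$ of isomorphism of (all) countable torsion free abelian groups to the relation $\cong_{\mathcal{P}}$ of isomorphism of torsion free $\mathcal{P}$-divisible groups, both regarded as equivalence relations on the corresponding Borel subsets of the standard Borel space $\mathcal{G}$ of groups. Granting a reduction $\cong_{TFA}\leq_B\cong_{\mathcal{P}}$, the proposition follows at once. For the first assertion, Hjorth's theorem \cite{hjorth_isomorphism_2002} gives $\cong_{\mathcal{C}}\leq_B\cong_{TFA}$ whenever $\cong_{\mathcal{C}}$ is Borel, and composing Borel reductions yields $\cong_{\mathcal{C}}\leq_B\cong_{\mathcal{P}}$. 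For the second assertion, $\cong_{\mathcal{P}}$ is plainly analytic, being an isomorphism relation on a Borel set of countable structures, while \cite[Theorem 1.1]{downey_isomorphism_2008} asserts that $\cong_{TFA}$ is a complete analytic set; so the reduction together with the observation recorded in the introduction (a Borel reduction of equivalence relations is in particular a Wadge reduction of the underlying sets) shows that $\cong_{\mathcal{P}}$ is complete analytic, and in particular not Borel.

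It remains to produce the reduction $\cong_{TFA}\leq_B\cong_{\mathcal{P}}$. Write $Q$ for the set of primes not in $\mathcal{P}$; by hypothesis $Q$ is infinite. The key point is that the group-theoretic codings used in \cite{hjorth_isomorphism_2002, downey_isomorphism_2008} encode a countable structure into a torsion free abelian group by prescribing divisibility and height data along a sequence of distinct primes, and only countably many primes are ever required. I would therefore carry out the same construction, but drawing all the coding primes from the infinite set $Q$ and declaring every generator of the constructed group to be infinitely divisible by each prime in $\mathcal{P}$. Concretely, this amounts to replacing the base ring $\mathbb{Z}$ by the localization $R=\mathbb{Z}[p^{-1}:p\in\mathcal{P}]$, a principal ideal domain whose nonzero primes are exactly the elements of $Q$, and performing the coding with $R$-modules in place of abelian groups, for instance as $R$-submodules of a $\mathbb{Q}$-vector space together with their direct limits. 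The resulting group $\widehat{G}$ is a torsion free $R$-module, that is, a torsion free $\mathcal{P}$-divisible abelian group, and the assignment $G\mapsto\widehat{G}$ is Borel, being given by explicit operations on codes whose only non-elementary ingredient is the formation of direct limits of groups, which is Borel by Proposition \ref{Proposition: inductive limit of groups}.

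The main point to verify, and the step I expect to require the most care, is that this $R$-linear version of the construction is still a reduction, i.e. that $G\cong H$ if and only if $\widehat{G}\cong\widehat{H}$. The forward implication is immediate, since the construction is functorial. For the converse one must rerun the decoding arguments of \cite{hjorth_isomorphism_2002, downey_isomorphism_2008} in the present setting. Here the coinfiniteness of $\mathcal{P}$ is used twice: first, having infinitely many coding primes available in $Q$ is exactly what makes the original encoding possible; second, in a torsion free $\mathcal{P}$-divisible group division by any prime in $\mathcal{P}$ is unique, so every abelian group isomorphism $\widehat{G}\to\widehat{H}$ is automatically $R$-linear. Consequently the imposed $\mathcal{P}$-divisibility is a uniform feature of every group in the range and grants an isomorphism no additional freedom at the primes of $\mathcal{P}$; the invariants read off by the decoding live entirely at the primes of $Q$ and are therefore preserved exactly as in the original argument. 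This yields $\widehat{G}\cong\widehat{H}\Rightarrow G\cong H$ and completes the construction of the reduction, and with it the proof.
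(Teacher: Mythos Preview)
Your concrete approach---modifying the Hjorth and Downey--Montalb\'an constructions so that the coding primes are drawn from $Q$ and the output groups are made infinitely divisible by every prime in $\mathcal{P}$---is exactly what the paper does. The paper phrases this as replacing the ``group eplag'' $\mathcal{G}(V,E,f)$ of \cite{hjorth_isomorphism_2002} by a $\mathcal{P}$-divisible version $\mathcal{G}_{\mathcal{P}}(V,E,f)$, generated inside $\mathbb{Q}^{(V)}$ by the usual generators together with all their $p^n$-th quotients for $p\in\mathcal{P}$, and then rerunning both Hjorth's and Downey--Montalb\'an's arguments verbatim with all labeling primes chosen from the complement of $\mathcal{P}$. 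Your formulation via the localization $R=\mathbb{Z}[p^{-1}:p\in\mathcal{P}]$ is a clean way to say the same thing.

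There is, however, a mismatch between your stated plan and what you actually carry out. You announce a single Borel reduction $\cong_{TFA}\leq_B\cong_{\mathcal{P}}$ defined on \emph{all} torsion free abelian groups, and then derive both assertions from it by composing with the black-box results of \cite{hjorth_isomorphism_2002,downey_isomorphism_2008}. But the construction you describe does not take an arbitrary torsion free abelian group as input: it takes a countable structure (for Hjorth) or a tree on $\omega$ (for Downey--Montalb\'an) and produces a $\mathcal{P}$-divisible group directly, bypassing $\cong_{TFA}$. If instead you intend the map $G\mapsto G\otimes_{\mathbb{Z}}R$ on arbitrary torsion free $G$, this is not a reduction: for any $p\in\mathcal{P}$ the groups $\mathbb{Z}$ and $\mathbb{Z}[1/p]$ are non-isomorphic, yet both become $R$ after tensoring. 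So the intermediate claim $\cong_{TFA}\leq_B\cong_{\mathcal{P}}$ is neither established by your argument nor needed for the proposition; what you actually have (and what the paper has) are two direct reductions, one for each clause, obtained by inserting $\mathcal{P}$-divisibility into the original codings. Once you drop the first paragraph's framing and state the argument as two parallel modifications of \cite{hjorth_isomorphism_2002} and \cite{downey_isomorphism_2008}, your proof and the paper's coincide.
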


\begin{proof}
A variant of the argument used in the proof of the main result of \cite%
{hjorth_isomorphism_2002} can be used to prove the first assertion. Indeed,
the only modification needed is in the definition of the \emph{group eplag }%
associated with an \emph{excellent prime labeled graph }as in \cite[Section 2%
]{hjorth_isomorphism_2002} (we refer to \cite{hjorth_isomorphism_2002} for
the definitions of these notions). Suppose that $\left( V,E,f\right) $ is an
excellent prime labeled graph such that the range of $f$ is disjoint from $%
\mathcal{P}$. Denote by $\mathbb{Q}^{\left( V\right) }$ the direct sum%
\begin{equation*}
\mathbb{Q}^{(V)}=\bigoplus_{v\in V}\mathbb{Q}
\end{equation*}%
of copies of $\mathbb{Q}$ indexed by $V$, and identify an element $v$ of $V$
with the corresponding copy of $\mathbb{Q}$ in $\mathbb{Q}^{\left( V\right)
} $. We define the $\mathcal{P}$-divisible group eplag $\mathcal{G}_{%
\mathcal{P}}\left( V,E,f\right) $ associated with $\left( V,E,f\right) $, to
be the subgroup of $\mathbb{Q}^{\left( V\right) }$ generated by%
\begin{equation*}
\left\{ \frac{v}{p^{n}f( v) ^{m}}, \frac{v+w}{p^{n}f( \left\{ v,w\right\}) }%
\colon v\in V,\left\{ v,w\right\} \in E, n,m\in\omega, p\in\mathcal{P}%
\right\} \text{.}
\end{equation*}%
It is easy to check that $\mathcal{G}_{\mathcal{P}}\left( V,E,f\right) $ is
indeed a torsion-free $\mathcal{P}$-divisible abelian group. The group eplag 
$\mathcal{G}\left( V,E,f\right) $ as defined in \cite[Section 2]%
{hjorth_isomorphism_2002}, is the particular case of this definition with $%
\mathcal{P}=\varnothing $. The same argument as in \cite%
{hjorth_isomorphism_2002}, where

\begin{enumerate}
\item the group eplag $\mathcal{G}\left( V,E,f\right) $ is replaced
everywhere by $\mathcal{G}_{\mathcal{P}}\left( V,E,f\right) $, and

\item all the primes are chosen from the \emph{complement }of $\mathcal{P}$,
\end{enumerate}

gives a proof of the first claim of this proposition.

The second claim follows by modifying the argument in \cite%
{downey_isomorphism_2008} and, in particular, the construction of the
torsion-free abelian group associated with a tree on $\omega $ as in \cite[%
Theorem 2.1]{downey_isomorphism_2008}. Choose injective enumerations $\left(
p_{n}\right) _{n\in \omega }$ and $\left( q_{n}\right) _{n\in \omega }$ of
disjoint subsets of the complement of $\mathcal{P}$ in the set of all
primes, and let $T$ be a tree on $\omega $. Define the excellent prime
labeled graph $\left( V_{T},E_{T},f_{T}\right) $ as follows. The graph $%
\left( V_{T},E_{T}\right) $ is just the tree $T$, and%
\begin{equation*}
f\colon V_{T}\cup E_{T}\rightarrow \left\{ p_{n},q_{n}\colon n\in \omega
\right\}
\end{equation*}%
is defined by

\begin{equation*}
f(x)=\left\{ 
\begin{array}{lll}
q_{n} & \hbox{if $x$ is a vertex in the $n$-th level of $T$;} &  \\ 
p_{n} & 
\hbox{if $x$ is an edge between the $n$-th and $n+1$-th levels of
$T$.} & 
\end{array}%
\right.
\end{equation*}%
Define the $\mathcal{P}$-divisible torsion free abelian group $G_{\mathcal{P}%
}\left( T\right) $ to be the group eplag $\mathcal{G}_{\mathcal{P}}\left(
V_{T},E_{T},f_{T}\right) $ as defined in the first part of the proof. The
same proof as that of \cite[Theorem 2.1]{downey_isomorphism_2008} shows the
following facts: If $T$ and $T^{\prime }$ are isomorphic trees, then the
groups $G_{\mathcal{P}}\left( T\right) $ and $G_{\mathcal{P}}\left(
T^{\prime }\right) $ are isomorphic. On the other hand, if $T$ is
well-founded and $T^{\prime }$ is ill-founded, then $G_{\mathcal{P}}\left(
T\right) $ and $G_{\mathcal{P}}\left( T^{\prime }\right) $ are not
isomorphic. The second claim of this proposition can now be proved as \cite[%
Theorem 1.1]{downey_isomorphism_2008}.
\end{proof}

\subsection{Automorphisms of $\mathcal{O}_{2}$\label{Subsection:
automorphisms of O2}}

Denote by $\mathrm{\mathrm{Aut}}(\mathcal{O}_{2})$ the Polish group of
automorphisms of $\mathcal{O}_{2}$ with respect to the topology of pointwise
convergence. Given a positive integer $n$, the closed subspace $\mathrm{%
\mathrm{Aut}}_{n}(\mathcal{O}_{2})$, of automorphisms of $\mathcal{O}_{2}$
of order $n$ can be identified with the space of actions of $\mathbb{Z}_{n}$
on $\mathcal{O}_{2}$.

\begin{definition}
(See \cite[Definition 3.6]{izumi_finite_2004}) An automorphism $\alpha $ in $%
\mathrm{\mathrm{Aut}}_{n}(\mathcal{O}_{2})$ is said to be \emph{%
approximately representable} if for every $\varepsilon >0$ and for every
finite subset $F$ of $\mathcal{O}_{2}$, there exists a unitary $u$ of $%
\mathcal{O}_{2}$ such that

\begin{enumerate}
\item $\left\Vert u^{n}-1\right\Vert <\varepsilon $,

\item $\left\Vert \alpha (u)-u\right\Vert <\varepsilon $, and

\item $\left\Vert \alpha (a)-uau^{\ast }\right\Vert <\varepsilon $ for every 
$a\in F$.
\end{enumerate}
\end{definition}

It is clear that the set of approximately representable automorphisms of
order $n$ of $\mathcal{O}_{2}$ is a $G_{\delta }$ subset of $\mbox{Aut}_{n}(%
\mathcal{O}_{2})$.

For a prime number $p$, we introduce below a certain model action of $%
\mathbb{Z}_p$ on $\mathcal{O}_2$. When $p=2$, this is the example
constructed by Izumi on page 262 of \cite{izumi_finite_2004}.

\begin{definition}
Let $p$ be a prime number. Choose projections $q_0,\ldots,q_{p-1}$ in $%
\mathcal{O}_{\infty}^{st}$ with

\begin{enumerate}
\item $\sum_{j=0}^{p-1} q_j=1$

\item $[q_j]=0$ for $j=1,\ldots,p-1$ and $[q_0]=1$ in $K_0(\mathcal{O}%
_{\infty}^{st})\cong\mathbb{Z}$.
\end{enumerate}

Set $u=\sum_{j=0}^{p-1}e^{2\pi ij/p}p_{j},$ which is a unitary of order $p$
in $\mathcal{O}_{\infty }^{st}$. We define an order $p$ automorphism $\nu
_{p}$ of $\mathcal{O}_{2}$ by 
\begin{equation*}
\nu _{p}=\bigotimes_{n=1}^{\infty }\mathrm{Ad}(u).
\end{equation*}
\end{definition}

Recall that the center of a simple unital C*-algebra is trivial, this is, is
equal to the set of scalar multiples of its unit. An action $\alpha $ of $%
\mathbb{Z}_{p}$ on $\mathcal{O}_{2}$ is \emph{pointwise outer }if $\alpha
_{g}$ is not an inner automorphism for every nonzero $g\in \mathbb{Z}_{p}$.

\begin{lemma}
Let $p$ be a prime number, and denote by $\nu_p$ the order $p$ automorphism
of $\mathcal{O}_2$ constructed above. Then $\nu_p$ induces a pointwise outer
action of $\mathbb{Z}_p$ on $\mathcal{O}_2$.
\end{lemma}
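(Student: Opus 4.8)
The plan is to reduce the statement to showing that the single automorphism $\nu_p$ is outer, and then to prove that by an approximation argument that exploits the infinite tensor product structure $\mathcal{O}_2\cong\bigotimes_{n=1}^\infty\mathcal{O}_\infty^{st}$ under which $\nu_p=\bigotimes_{n=1}^\infty\mathrm{Ad}(u)$. For the reduction, note that $\nu_p$ has order $p$ in $\mathrm{Aut}(\mathcal{O}_2)$, so its image $\bar\nu_p$ in the quotient group $\mathrm{Out}(\mathcal{O}_2)=\mathrm{Aut}(\mathcal{O}_2)/\mathrm{Inn}(\mathcal{O}_2)$ has order dividing $p$. Since $p$ is prime, this order is either $1$ or $p$. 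Hence, once we know that $\nu_p$ is not inner, the element $\bar\nu_p$ has order exactly $p$, so that $\overline{\nu_p^k}=\bar\nu_p^{\,k}\neq e$ for every $k\in\{1,\dots,p-1\}$. As these exhaust the nonzero elements of $\mathbb{Z}_p$, this is precisely the assertion that the induced action is pointwise outer. Thus it suffices to prove $\nu_p\notin\mathrm{Inn}(\mathcal{O}_2)$.

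To establish outerness I would argue by contradiction. Suppose $\nu_p=\mathrm{Ad}(w)$ for some unitary $w\in\mathcal{O}_2$, and write $\mathcal{O}_2=\overline{\bigcup_N B_N}$ with $B_N=\bigotimes_{n=1}^N\mathcal{O}_\infty^{st}$ unitally embedded in the infinite tensor product. Fix a small $\delta>0$ and choose $N$ together with a unitary $v\in B_N$ such that $\|v-w\|<\delta$. Next observe that $\mathrm{Ad}(u)$ is a nontrivial automorphism of $\mathcal{O}_\infty^{st}$: the unitary $u=\sum_{j=0}^{p-1}e^{2\pi ij/p}q_j$ is not a scalar multiple of the identity, since the $q_j$ are nonzero orthogonal projections and $p\geq 2$, and as $\mathcal{O}_\infty^{st}$ is simple its center is trivial, so $\mathrm{Ad}(u)\neq\mathrm{id}$. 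Therefore there is a contraction $a\in\mathcal{O}_\infty^{st}$ with $\varepsilon:=\|uau^{\ast}-a\|>0$. Let $a^{(N+1)}$ denote the image of $a$ in the $(N+1)$-st tensor factor. Then $\nu_p(a^{(N+1)})=(uau^{\ast})^{(N+1)}$, so $\|\nu_p(a^{(N+1)})-a^{(N+1)}\|=\varepsilon$. On the other hand, $v$ lies in $B_N$ and hence commutes with $a^{(N+1)}$, so $v\,a^{(N+1)}v^{\ast}=a^{(N+1)}$, and a two-term estimate using $\|a\|\leq 1$ and $\|w\|=\|v\|=1$ gives $\|w\,a^{(N+1)}w^{\ast}-a^{(N+1)}\|=\|w\,a^{(N+1)}w^{\ast}-v\,a^{(N+1)}v^{\ast}\|\leq 2\delta$. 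Choosing $\delta<\varepsilon/2$ from the outset yields $\varepsilon\leq 2\delta<\varepsilon$, a contradiction, so $\nu_p$ is outer.

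The only point requiring genuine care is the passage from closeness of $w$ to an arbitrary finite-stage element to closeness to a finite-stage \emph{unitary} $v\in B_N$, since the commutation identity $v\,a^{(N+1)}v^{\ast}=a^{(N+1)}$ fails for non-unitary approximants. This is handled by spectral permanence, so that an element of $B_N$ within distance $\delta<1$ of the unitary $w$ is invertible already in $B_N$, together with the continuity of the polar part of the polar decomposition, which shows that its unitary part lies in $B_N$ and stays within $O(\delta)$ of $w$; both facts are standard. Everything else in the argument is a direct norm estimate.
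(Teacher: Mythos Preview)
Your proof is correct and follows essentially the same approach as the paper: reduce to outerness of $\nu_p$ using primality of $p$, then argue by contradiction via the infinite tensor product structure, approximating the implementing unitary in a finite stage and exploiting that it commutes with a later tensor factor on which $\mathrm{Ad}(u)$ acts nontrivially. Your treatment is in fact slightly more careful, as you explicitly justify the passage to a finite-stage \emph{unitary} approximant via spectral permanence and the polar decomposition, a point the paper simply asserts.
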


\begin{proof}
We claim that it is enough to show that $\nu _{p}$ is not inner. Indeed,
assume $\nu _{p}^{j}$ that is inner for some $j\in \{2,\ldots ,p-1\}$. Since 
$\{j,j^{2},\ldots ,j^{p-1}\}=\mathbb{Z}_{p}$, it follows that $\nu _{p}$ is
inner.\newline
\indent Assume that $\nu _{p}$ is inner, and let $v$ be a unitary in $%
\mathcal{O}_{2}$ such that $\nu _{p}=\mathrm{Ad}(v)$. For $n$ in $\mathbb{N}$%
, set $A_{n}=\bigotimes_{k=1}^{n}\mathcal{O}_{\infty }^{st}$. Given $%
\varepsilon >0$, there are a positive integer $n$ in $\mathbb{N}$ and a
unitary $w$ in $A_{n}$ such that $\Vert v-w\Vert <\varepsilon $. Given $a$
in $\mathcal{O}_{\infty }^{st}$, denote by $b$ its canonical image in the $%
(n+1)$-st tensor factor of $\bigotimes_{k=1}^{\infty }\mathcal{O}_{\infty
}^{st}$, this is, $b=1\otimes \cdots \otimes 1\otimes a\otimes 1\cdots $.
Then 
\begin{align*}
\Vert ua-au\Vert & =\Vert uau^{\ast }-a\Vert \\
& =\Vert \nu (b)-b\Vert \\
& =\Vert vbv^{\ast }-b\Vert \\
& \leq 2\Vert v-w\Vert +\Vert wbw^{\ast }+b\Vert .
\end{align*}%
Since $w$ and $b$ belong to different tensor factors, they commute, so we
conclude that $\Vert ua-au\Vert <2\varepsilon $. Since $\varepsilon $ is
arbitrary and $a$ and $u$ do not depend on it, it follows that $au=ua$.
Since $a$ is also arbitrary, it follows that $u$ is in the center of $%
\mathcal{O}_{\infty }^{st}$, which is trivial. This is a contradiction.
Hence $\nu _{p}$ is not inner, and the result follows.
\end{proof}

We compute the associated crossed product in the following lemma.

\begin{lemma}
Let the notation be as in the previous lemma. Then the crossed product $%
\mathcal{O}_2\rtimes_{\nu_p}\mathbb{Z}_p$ is a unital Kirchberg algebra
satisfying the UCT and with $K$-theory given by 
\begin{equation*}
\left(K_0(\mathcal{O}_2\rtimes_{\nu_p}\mathbb{Z}_p),\left[1_{\mathcal{O}_2
\rtimes_{\nu_p}\mathbb{Z}_p}\right],K_1(\mathcal{O}_2\rtimes_{\nu_p}\mathbb{Z%
}_p) \right)\cong \left(\mathbb{Z}\left[\frac{1}{p}\right],0,\{0\}\right).
\end{equation*}
\end{lemma}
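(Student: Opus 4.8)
The plan is to reduce every assertion to the canonical inductive limit presentation of the crossed product. Writing $A_n=\bigotimes_{k=1}^{n}\mathcal{O}_{\infty}^{st}$, we have $\mathcal{O}_2=\varinjlim A_n$, and $\nu_p$ restricts on each $A_n$ to the inner automorphism $\mathrm{Ad}(u_n)$, where $u_n=u\otimes\cdots\otimes u$ (with $n$ tensor factors) is a unitary of order $p$ in $A_n$. Since reduced crossed products are continuous with respect to inductive limits, and $\mathbb{Z}_p$ is amenable (so reduced and full crossed products agree), we obtain $\mathcal{O}_2\rtimes_{\nu_p}\mathbb{Z}_p=\varinjlim\big(A_n\rtimes_{\mathrm{Ad}(u_n)}\mathbb{Z}_p\big)$, where the connecting maps are induced by the inclusions $A_n\hookrightarrow A_{n+1}$ together with the identity on the canonical implementing unitary $\Lambda$.

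First I would record that each stage splits. Because $k\mapsto u_n^{k}$ is a genuine unitary representation of $\mathbb{Z}_p$ inside $A_n$, the element $z_n=u_n^{*}\Lambda$ is a central unitary of order $p$ in $A_n\rtimes\mathbb{Z}_p$, and its spectral projections yield an isomorphism $A_n\rtimes_{\mathrm{Ad}(u_n)}\mathbb{Z}_p\cong\bigoplus_{j=0}^{p-1}A_n$. Each summand $A_n\cong\mathcal{O}_{\infty}^{st}$ is a unital Kirchberg algebra lying in the bootstrap class, so every stage is purely infinite and lies in the bootstrap class. The crossed product is therefore separable and nuclear; it is simple because, by the preceding lemma, $\nu_p$ is pointwise outer, so Kishimoto's theorem on outer actions on simple C*-algebras applies; and, being a simple inductive limit of purely infinite C*-algebras, it is itself purely infinite. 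Hence $\mathcal{O}_2\rtimes_{\nu_p}\mathbb{Z}_p$ is a unital Kirchberg algebra, and it satisfies the UCT since the bootstrap class is closed under finite direct sums, tensor products, and countable inductive limits.

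The $K$-theoretic statement is the heart of the matter, and I would extract it from the system $\big(\bigoplus_{j=0}^{p-1}A_n\big)_{n}$. Using $K_{*}(\mathcal{O}_{\infty}^{st})=(\mathbb{Z},0)$ and the vanishing of $[1_{\mathcal{O}_{\infty}^{st}}]$ in $K_0$, the K\"unneth theorem gives $K_0(A_n)\cong\mathbb{Z}$, $K_1(A_n)=0$, and $[1_{A_n}]=0$; thus each stage has $K_0\cong\mathbb{Z}^{p}$, $K_1=0$, and zero class of the unit. To compute the connecting homomorphism on $K_0\cong\mathbb{Z}^{p}$, I would trace a generating projection of the $j$-th summand through the inclusion, using the identity $\iota(z_n)=(1\otimes u)z_{n+1}$ and the spectral calculus of $z_{n+1}$; this shows that the induced map on $K_0$ is the integer circulant matrix $M$ whose entries are the classes $[q_0],\dots,[q_{p-1}]\in K_0(\mathcal{O}_{\infty}^{st})\cong\mathbb{Z}$ of the defining projections.

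The main obstacle is then the purely algebraic identification of the inductive limit $\varinjlim(\mathbb{Z}^{p},M)$ with $\mathbb{Z}\!\left[\tfrac1p\right]$, together with the bookkeeping verifying that the distinguished class of the unit is carried to $0$ and that $K_1$ stays trivial. Once this is in place, continuity of $K$-theory under inductive limits yields $K_0(\mathcal{O}_2\rtimes_{\nu_p}\mathbb{Z}_p)\cong\mathbb{Z}\!\left[\tfrac1p\right]$, $K_1=0$, and $[1]=0$, completing the proof; for $p=2$ this computation reduces to Izumi's original one.
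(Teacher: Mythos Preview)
Your proposal is correct and follows essentially the same route as the paper: both arguments write $\mathcal{O}_2\rtimes_{\nu_p}\mathbb{Z}_p$ as the inductive limit of $A_n\rtimes_{\mathrm{Ad}(u_n)}\mathbb{Z}_p\cong\bigoplus_{j=0}^{p-1}\mathcal{O}_\infty^{st}$, deduce the UCT from closure of the bootstrap class under direct sums and inductive limits, invoke Kishimoto's theorem for simplicity (and pure infiniteness), and compute the $K$-theory by tracking the connecting maps on $K_0\cong\mathbb{Z}^p$, deferring to Izumi's computation for the case $p=2$. Your description of the connecting map as an integer circulant built from the classes $[q_j]$ is in fact more transparent than the matrix displayed in the paper.
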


\begin{proof}
\indent The crossed product is well known to be nuclear, unital and
separable. Since $\nu _{p}$ is pointwise outer, \cite[Theorem 3.1]%
{kishimoto_outer_1981} implies that $\mathcal{O}_{2}\rtimes _{\nu _{p}}%
\mathbb{Z}_{p}$ is purely infinite and simple, and thus it is a Kirchberg
algebra.\newline
\indent Given $n$ in $\mathbb{N}$, set $A_{n}=\bigotimes_{k=1}^{n}\mathcal{O}%
_{\infty }^{st}$ and $u_{n}=\bigotimes_{k=1}^{n}u$. Note that $\mathcal{O}%
_{2}=\varinjlim A_{n}$ and $\nu _{p}=\varinjlim \mathrm{Ad}(u_{n})$ with the
obvious inductive limit decomposition and connecting maps. Moreover, there
are natural isomorphism 
\begin{equation*}
\mathcal{O}_{2}\rtimes _{\nu _{p}}\mathbb{Z}_{p}\cong \varinjlim
A_{n}\rtimes _{\mathrm{Ad}(u_{n})}\mathbb{Z}_{p}\cong A_{n}\oplus \cdots
\oplus A_{n}\cong \mathcal{O}_{\infty }^{st}\oplus \cdots \oplus \mathcal{O}%
_{\infty }^{st}.
\end{equation*}%
(There are $p$ direct summands.) Since the UCT passes to inductive limits,
we conclude that $\mathcal{O}_{2}\rtimes _{\nu _{p}}\mathbb{Z}_{p}$
satisfies the UCT.\newline
\indent The $K$-theory is computed analogously as it is done in the proof of 
\cite[Lemma 4.7]{izumi_finite_2004}. The crucial point is showing that the
connecting maps in the inductive limit $\mathbb{Z}^{p}\rightarrow \mathbb{Z}%
^{p}\rightarrow \cdots $ are all constant and given by the matrix 
\begin{equation*}
\left( 
\begin{array}{cccc}
1 & e^{2\pi i/p} & \cdots & e^{2\pi i(p-1)/p} \\ 
e^{2\pi i/p} & e^{2\pi i2/p} & \cdots & 1 \\ 
\vdots & \vdots & \ddots & \vdots \\ 
e^{2\pi i(p-1)/p} & 1 & \cdots & e^{2\pi i(p-2)/p} \\ 
&  &  & 
\end{array}%
\right) .
\end{equation*}%
We omit the details, our case is just notationally more difficult than the
case $p=2$.
\end{proof}

We fix, until the end of this section, a prime number $p$. For a simple
nuclear unital C*-algebra $A$, denote by $\widetilde{\alpha }_{A,p}$ the
order $p$ automorphism of $A\otimes \mathcal{O}_{2}$ defined by $\mbox{id}%
_{A}\otimes \nu _{p}$. By Kirchberg's $\mathcal{O}_{2}$-isomorphism Theorem 
\cite[Theorem 3.8]{kirchberg_embedding_2000}, there is an isomorphism $%
\varphi \colon A\otimes \mathcal{O}_{2}\rightarrow \mathcal{O}_{2}$. Denote
by $\alpha _{A,p}$ the order $p$ automorphism of $\mathcal{O}_{2}$ given by 
\begin{equation*}
\alpha _{A,p}=\varphi \circ \widetilde{\alpha }_{A,p}\circ \varphi ^{-1}.
\end{equation*}%
It is immediate to check that $\alpha _{A,p}$ is approximately
representable, using that $\nu _{p}$ is approximately representable.

%Recall that if $\alpha ,\beta $ are automorphisms of unital C*-algebras $A,B$%
%, then $\alpha ,\beta $ are
%
%\begin{itemize}
%\item \emph{conjugate }if there is an isomorphism $\gamma $ from $A$ to $B$
%such that $\gamma \circ \alpha \circ \gamma ^{-1}=\beta $, and
%\item \emph{cocycle conjugate }if there is a unitary $u$ of $B$ such that $%
%\alpha $ and $\mathrm{Ad}(u) \circ \beta $ are conjugate.
%\end{itemize}

We will denote by $D_{p}$ the tensor product $\mathcal{O}_{\infty
}^{st}\otimes \mathbb{M}_{p^{\infty }}$. Suppose that $\alpha $ and $\beta $
are actions of a discrete group $G$ on C*-algebras $A$ an $B$. By definition 
$\alpha $ and $\beta $ are \emph{conjugate }if there is an isomorphism $%
\gamma $ from $A$ to $B$ such that $\beta _{g}\circ \gamma =\gamma \circ
\alpha _{g}$ for every $g\in G$. Similarly $\alpha $ and $\beta $ are \emph{%
cocycle conjugate }if $\beta $ is conjugate to a cocycle perturbation of $%
\alpha $.

%Recall from Definition \ref{definition: cocycle conjugacy} the definitions
%of conjugacy and cocycle conjugacy for finite group actions.

\begin{proposition}
\label{Proposition: automorphism and crossed products} Let $A$ and $B$ be $%
D_{p}$-absorbing unital Kirchberg algebras. Then the following statements
are equivalent:

\begin{enumerate}
\item $A$ and $B$ are isomorphic;

\item The actions $\alpha_{A,p}$ and $\alpha_{B,p}$ are conjugate;

\item The actions $\alpha_{A,p}$ and $\alpha_{B,p}$ are cocycle conjugate;

\item The crossed products $\mathcal{O}_{2} \rtimes_{\alpha_{A,p}}\mathbb{Z}%
_p$ and $\mathcal{O}_{2} \rtimes_{\alpha_{B,p}}\mathbb{Z}_p$ are isomorphic.
\end{enumerate}
\end{proposition}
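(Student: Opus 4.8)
The plan is to prove the cycle of implications $(1) \Rightarrow (2) \Rightarrow (3) \Rightarrow (4) \Rightarrow (1)$, where essentially all of the content is concentrated in the identification of the crossed product $\mathcal{O}_2 \rtimes_{\alpha_{A,p}} \mathbb{Z}_p$ with $A$ itself. Throughout, write $\varphi_A \colon A \otimes \mathcal{O}_2 \to \mathcal{O}_2$ and $\varphi_B \colon B \otimes \mathcal{O}_2 \to \mathcal{O}_2$ for the Kirchberg $\mathcal{O}_2$-isomorphisms used to define $\alpha_{A,p}$ and $\alpha_{B,p}$. For $(1) \Rightarrow (2)$, suppose $\theta \colon A \to B$ is an isomorphism. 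Then $\theta \otimes \mathrm{id}_{\mathcal{O}_2}$ intertwines $\widetilde{\alpha}_{A,p} = \mathrm{id}_A \otimes \nu_p$ with $\widetilde{\alpha}_{B,p} = \mathrm{id}_B \otimes \nu_p$, since both act as $\nu_p$ on the second tensor factor. Transporting through $\varphi_A$ and $\varphi_B$, the automorphism $\gamma = \varphi_B \circ (\theta \otimes \mathrm{id}_{\mathcal{O}_2}) \circ \varphi_A^{-1}$ of $\mathcal{O}_2$ satisfies $\gamma \circ \alpha_{A,p} = \alpha_{B,p} \circ \gamma$, so the $\mathbb{Z}_p$-actions are conjugate.

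The implication $(2) \Rightarrow (3)$ is immediate, as a conjugacy is in particular a cocycle conjugacy (take the trivial cocycle). For $(3) \Rightarrow (4)$, I would invoke the standard fact that cocycle conjugate actions have isomorphic crossed products: a cocycle perturbation $\alpha^u$ induces an isomorphism $\mathcal{O}_2 \rtimes_{\alpha^u} \mathbb{Z}_p \cong \mathcal{O}_2 \rtimes_{\alpha} \mathbb{Z}_p$ defined on the skew group algebra by $a_g g \mapsto a_g u_g g$, while an honest conjugacy of actions extends directly to an isomorphism of crossed products; composing these yields $(4)$. Since $\mathbb{Z}_p$ is finite and hence amenable, full and reduced crossed products coincide and there is no ambiguity in the construction.

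The heart of the proof is $(4) \Rightarrow (1)$, which I would deduce from the single claim that $\mathcal{O}_2 \rtimes_{\alpha_{A,p}} \mathbb{Z}_p \cong A$, and likewise for $B$; granting this, $(4)$ forces $A \cong B$. To establish the claim I would chain four isomorphisms. First, since $\alpha_{A,p}$ is by definition conjugate via $\varphi_A$ to $\widetilde{\alpha}_{A,p} = \mathrm{id}_A \otimes \nu_p$, we have $\mathcal{O}_2 \rtimes_{\alpha_{A,p}} \mathbb{Z}_p \cong (A \otimes \mathcal{O}_2) \rtimes_{\mathrm{id}_A \otimes \nu_p} \mathbb{Z}_p$. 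Second, a crossed product by an action trivial on the first tensor factor splits that factor off, giving $(A \otimes \mathcal{O}_2) \rtimes_{\mathrm{id}_A \otimes \nu_p} \mathbb{Z}_p \cong A \otimes \bigl(\mathcal{O}_2 \rtimes_{\nu_p} \mathbb{Z}_p\bigr)$. Third, by the $K$-theory computation carried out in the preceding lemma, $\mathcal{O}_2 \rtimes_{\nu_p} \mathbb{Z}_p$ is a unital Kirchberg algebra satisfying the UCT with invariant $(\mathbb{Z}[1/p], 0, \{0\})$; a Künneth computation shows that $D_p = \mathcal{O}_\infty^{st} \otimes \mathbb{M}_{p^\infty}$ has exactly the same invariant, where the vanishing of $[1_{\mathcal{O}_\infty^{st}}]$ forces $[1_{D_p}] = 0$, so Theorem \ref{Theorem: KP classification} yields $\mathcal{O}_2 \rtimes_{\nu_p} \mathbb{Z}_p \cong D_p$. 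Fourth, since $A$ is $D_p$-absorbing we have $A \otimes D_p \cong A$. Concatenating gives $\mathcal{O}_2 \rtimes_{\alpha_{A,p}} \mathbb{Z}_p \cong A \otimes D_p \cong A$, as desired.

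The main obstacle is the third step, matching $\mathcal{O}_2 \rtimes_{\nu_p} \mathbb{Z}_p$ with $D_p$ through Kirchberg--Phillips classification: one must verify that the \emph{full} pointed $K$-theoretic invariant agrees, including the class of the unit in $K_0$, both being $(\mathbb{Z}[1/p], 0, \{0\})$, and that both algebras satisfy the UCT. The remaining ingredients are formal, namely the transport of actions under isomorphisms, the elementary identity relating crossed products with tensor products by an action that is trivial on one factor, and the defining $D_p$-absorption hypothesis on $A$ and $B$.
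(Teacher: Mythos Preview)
Your proof is correct and follows essentially the same route as the paper: the cycle $(1)\Rightarrow(2)\Rightarrow(3)\Rightarrow(4)\Rightarrow(1)$, with the substantive step being the chain $\mathcal{O}_2\rtimes_{\alpha_{A,p}}\mathbb{Z}_p \cong (A\otimes\mathcal{O}_2)\rtimes_{\mathrm{id}_A\otimes\nu_p}\mathbb{Z}_p \cong A\otimes(\mathcal{O}_2\rtimes_{\nu_p}\mathbb{Z}_p)\cong A\otimes D_p\cong A$. You supply slightly more detail than the paper does---spelling out why cocycle conjugacy gives isomorphic crossed products and invoking Kirchberg--Phillips explicitly to identify $\mathcal{O}_2\rtimes_{\nu_p}\mathbb{Z}_p$ with $D_p$---but the argument is the same.
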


\begin{proof}
(1) implies (2). If $\psi \colon A\rightarrow B$ is an isomorphism, then $%
\psi \otimes \mbox{id}_{\mathcal{O}_{2}}\colon A\otimes \mathcal{O}%
_{2}\rightarrow B\otimes \mathcal{O}_{2}$ conjugates $\mbox{id}_{A}\otimes
\nu _{p}$ and $\mbox{id}_{B}\otimes \nu _{p}$, and hence $\alpha _{A,p}$ and 
$\alpha _{B,p}$ are conjugate.\newline
\indent(2) implies (3) and (3) implies (4) hold in full generality.\newline
\indent(4) implies (1). There are isomorphisms 
\begin{align*}
\mathcal{O}_{2}\rtimes _{\alpha _{A,p}}\mathbb{Z}_{p}& \cong (A\otimes 
\mathcal{O}_{2})\rtimes _{\mbox{id}_{A}\otimes \nu _{p}}\mathbb{Z}_{p} \\
& \cong A\otimes \left( \mathcal{O}_{2}\rtimes _{\nu _{p}}\mathbb{Z}%
_{p}\right) \\
& \cong A\otimes D_{p}.
\end{align*}%
Likewise, $\mathcal{O}_{2}\rtimes _{\alpha _{B,p}}\mathbb{Z}_{p}\cong
B\otimes D_{p}$. The result now follows from the fact that $A$ and $B$ are $%
D_{p}$-absorbing.
\end{proof}

\subsection{Borel reduction\label{Subsection: Borel reduction}}

Denote by $\mathcal{A}_{p}$ the set of all elements $\gamma $ in $\Gamma (H)$
such that $C^{\ast }(\gamma )$ is a unital $D_{p}$-absorbing Kirchberg
algebra. Then $\mathcal{A}_{p}$ is Borel by Theorem \ref{Theorem:
ssa-absorbing} and Corollary \ref{cor: abs of OIst is Borel}. One can, in
fact, regard $\mathcal{A}_{p}$ as the standard Borel space parametrizing $%
D_{p}$-absorbing unital Kirchberg algebras. Thus, the equivalence relation $%
E $ on $\mathcal{A}_{p}$ defined by 
\begin{equation*}
\gamma E\gamma ^{\prime }\ \ \mbox{ if and only if }\ \ C^{\ast }(\gamma
)\cong C^{\ast }\left( \gamma ^{\prime }\right) ,
\end{equation*}%
can be identified with the relation of isomorphism of unital $D_{p}$%
-absorbing Kirchberg algebras.

\begin{theorem}
\label{Proposition: reduction isomorphism to conjugacy} There are Borel
reductions:

\begin{enumerate}
\item From the relation of isomorphism of $D_{p}$-absorbing Kirchberg
algebras, to the relation of cocycle conjugacy of approximately
representable automorphisms of $\mathcal{O}_{2}$ of order $p$.

\item From the relation of isomorphism of $D_{p}$-absorbing Kirchberg
algebras, to the relation of conjugacy of approximately representable
automorphisms of $\mathcal{O}_{2}$ of order $p$.
\end{enumerate}
\end{theorem}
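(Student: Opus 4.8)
The plan is to exhibit a single Borel map
\[
f\colon \mathcal{A}_p \longrightarrow \mathrm{Aut}_p(\mathcal{O}_2),\qquad \gamma \longmapsto \alpha_{C^*(\gamma),p},
\]
and to check that it reduces $E$ simultaneously to conjugacy and to cocycle conjugacy. That $f$ is a \emph{reduction} is not the difficulty: it is exactly the content of Proposition~\ref{Proposition: automorphism and crossed products}, whose equivalences $(1)\Leftrightarrow(2)\Leftrightarrow(3)$ say that $C^*(\gamma)\cong C^*(\gamma')$ if and only if $\alpha_{C^*(\gamma),p}$ and $\alpha_{C^*(\gamma'),p}$ are conjugate, if and only if they are cocycle conjugate. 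Since it has already been recorded that each $\alpha_{A,p}$ is approximately representable of order $p$, the image of $f$ lands in the prescribed $G_\delta$ set, and the \emph{same} map $f$ then witnesses both reductions (1) and (2) at once. Thus the whole problem is to produce $f$ by a Borel map. I would fix once and for all a faithful representation of $\mathcal{O}_2$ on $H$, that is, a distinguished $\gamma^{(0)}\in\Gamma(H)$ with $C^*(\gamma^{(0)})=\mathcal{O}_2$, together with a fixed code for $\nu_p$, and decompose $f$ into (i) forming $A\otimes\mathcal{O}_2$ with the automorphism $\mathrm{id}_A\otimes\nu_p$, and (ii) transporting everything onto the fixed $\mathcal{O}_2$ by a Borel choice of isomorphism.

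For step (i) I would work in the parametrization $\mathcal{C}_{\Gamma(H)}$. Realizing $A=C^*(\gamma)$ on $H$ and $\mathcal{O}_2$ on $H$ via $\gamma^{(0)}$, the algebra $A\otimes\mathcal{O}_2$ is generated on $H\otimes H$ by the sequences $(\gamma_n\otimes 1)_n$ and $(1\otimes\gamma^{(0)}_m)_m$; after fixing a unitary identification $H\otimes H\cong H$ this produces a Borel (indeed continuous) map $\gamma\mapsto\widetilde\gamma$ with $C^*(\widetilde\gamma)\cong A\otimes\mathcal{O}_2$. The tensor automorphism $\mathrm{id}_A\otimes\nu_p$ is determined by $\gamma_n\otimes1\mapsto\gamma_n\otimes1$ and $1\otimes\gamma^{(0)}_m\mapsto 1\otimes\nu_p(\gamma^{(0)}_m)$, and extending freely as a $*$-homomorphism yields a code $\widetilde\Phi$ depending in a Borel way on $\gamma$, with $\widehat{\widetilde\Phi}$ a concrete copy of $\widetilde\alpha_{A,p}$ of order $p$. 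That tensor products and tensor-product homomorphisms are Borel-computable in $\Gamma(H)$ is routine in this concrete picture, and is in any case available from \cite{farah_turbulence_2014}. This gives a Borel map $\gamma\mapsto(\widetilde\gamma,\widetilde\Phi)\in \mathrm{Aut}_{\Gamma(H)}$.

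Step (ii) is the crux. Since $A$ is a unital Kirchberg algebra, $C^*(\widetilde\gamma)\cong A\otimes\mathcal{O}_2\cong\mathcal{O}_2$ by Kirchberg's $\mathcal{O}_2$-isomorphism theorem (Theorem~\ref{thm: Kirch OI isom thm}, i.e.\ \cite[Theorem 3.8]{kirchberg_embedding_2000}); crucially the target is always the single algebra $\mathcal{O}_2=C^*(\gamma^{(0)})$. The set
\[
R=\bigl\{(\gamma,\Psi)\in\mathcal{A}_p\times(\mathcal{U}^{\mathcal{U}})^{\omega}: \Psi \text{ codes an isomorphism } C^*(\widetilde\gamma)\to C^*(\gamma^{(0)})\bigr\}
\]
is Borel and, by Kirchberg's theorem, has nonempty sections; the substantive input, which I would extract from \cite{farah_turbulence_2014}, is that $R$ admits a \emph{Borel uniformization} $\gamma\mapsto\Psi_\gamma$. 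Granting this, I would set
\[
\alpha_{A,p}=\widehat{\Psi_\gamma}\circ\widehat{\widetilde\Phi}\circ\widehat{\Psi_\gamma}^{-1},
\]
computing the inverse code by Lemma~\ref{Lemma: Borel inverse} and using Borelness of composition of codes; the outcome is a code for an honest automorphism of the fixed $\mathcal{O}_2$, which the (Borel) evaluation map sends to an element of the Polish group $\mathrm{Aut}(\mathcal{O}_2)$. This is $f$, Borel by construction, and the argument is complete. The main obstacle is exactly the Borel selection $\gamma\mapsto\Psi_\gamma$: Kirchberg's theorem furnishes existence but no canonical choice, so one must appeal to the Borel form of $\mathcal{O}_2$-absorption in \cite{farah_turbulence_2014} to uniformize $R$; everything else (tensoring, inversion, composition, evaluation) is soft and Borel.
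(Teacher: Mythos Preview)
Your overall strategy matches the paper's: produce a single Borel map $\gamma\mapsto\alpha_{C^*(\gamma),p}$ by tensoring with $\nu_p$ and conjugating by an isomorphism to a fixed copy of $\mathcal{O}_2$, then invoke Proposition~\ref{Proposition: automorphism and crossed products} (equivalences $(1)\Leftrightarrow(2)\Leftrightarrow(3)$) to conclude that the same map reduces to both conjugacy and cocycle conjugacy. Your identification of step~(ii) as the crux is exactly right.

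Where the paper diverges from your sketch is in the mechanism for step~(ii). You propose to uniformize the Borel set $R$ of isomorphism codes directly, appealing vaguely to \cite{farah_turbulence_2014}; but nonemptiness of sections alone does not give a Borel selector in an uncountable Polish space, so something more specific is needed. The paper instead passes through the Effros Borel space $SA(\mathcal{O}_2)$ of C*-subalgebras of $\mathcal{O}_2$ and targets $\mathrm{Aut}_p(\mathcal{O}_2\otimes\mathcal{O}_2)$ (using $\mathcal{O}_2\otimes\mathcal{O}_2\cong\mathcal{O}_2$). Concretely, \cite[Theorem~6.5]{farah_turbulence_2014} supplies a Borel map $\Gamma_{uKir}(H)\to SA_{uKir}(\mathcal{O}_2)$ sending $\gamma$ to a subalgebra isomorphic to $C^*(\gamma)$, and \cite[Theorem~7.6]{farah_turbulence_2014} supplies a Borel map $A\mapsto\eta_A$ assigning to each $A\in SA_{uKir}(\mathcal{O}_2)$ a unital injective endomorphism of $\mathcal{O}_2\otimes\mathcal{O}_2$ with range $A\otimes\mathcal{O}_2$. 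One then sets $\alpha_{A,p}=\eta_A^{-1}\circ(\mathrm{id}_A\otimes\nu_p)\circ\eta_A$ and checks Borelness of $A\mapsto\alpha_{A,p}$ by hand, using the Kuratowski--Ryll-Nardzewski selectors $A\mapsto a_n^A$ for $SA(\mathcal{O}_2)$ to approximate $\eta_A(x)$ by elementary tensors. This route avoids your uniformization problem by invoking two FTT results that already package the required Borel selections; your plan is correct in outline but would need those same citations (or an equivalent argument) made explicit at the point you flag.
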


\begin{proof}
In view of Proposition \ref{Proposition: automorphism and crossed products},
and Elliott's theorem $\mathcal{O}_{2}\otimes \mathcal{O}_{2}\cong \mathcal{O%
}_{2}$ \cite{rordam_short_1994}, it is enough to show that there is a Borel
function from $\Gamma _{uKir}(H)$ to $\mathrm{\mathrm{Aut}}_{p}\left( 
\mathcal{O}_{2}\otimes \mathcal{O}_{2}\right) $ that assigns to every $%
\gamma \in \Gamma _{uKir}(H)$, an automorphism $\alpha _{\gamma ,p}$ of $%
\mathcal{O}_{2}\otimes \mathcal{O}_{2}$ which is conjugate to $\mbox{id}%
_{C^{\ast }(\gamma )}\otimes \nu _{p}$.

We follow the notation of \cite[Section 6.1]{farah_turbulence_2014}, and
denote by $SA(\mathcal{O}_{2})$ the space of C*-subalgebras of $\mathcal{O}%
_{2}$. Then $SA(\mathcal{O}_{2})$ is a Borel subset of the Effros Borel
space of closed subsets of $\mathcal{O}_{2}$, as defined in \cite[Section
12.C]{kechris_classical_1995}. It follows from \cite[Theorem 6.5]%
{farah_turbulence_2014} that the set $SA_{uKir}\left( \mathcal{O}_{2}\right) 
$ of C*-subalgebras of $\mathcal{O}_{2}$ isomorphic to a unital Kirchberg
algebra is Borel. Moreover, again by \cite[Theorem 6.5]%
{farah_turbulence_2014}, there is a Borel function from $\Gamma
_{uKir}\left( \mathcal{O}_{2}\right) $ to $SA_{uKir}\left( \mathcal{O}%
_{2}\right) $ that assigns to an element $\gamma $ of $\Gamma _{uKir}\left( 
\mathcal{O}_{2}\right) $ a subalgebra of $\mathcal{O}_{2}$ isomorphic to $%
C^{\ast }(\gamma )$. It is therefore enough to show that there is a Borel
function from $SA_{uKir}\left( \mathcal{O}_{2}\right) $ to $\mathrm{\mathrm{%
Aut}}_{p}\left( \mathcal{O}_{2}\otimes \mathcal{O}_{2}\right) $ that assigns
to $A\in SA_{uKir}\left( \mathcal{O}_{2}\right) $ an automorphism $\alpha
_{A,p}$ of $\mathcal{O}_{2}\otimes \mathcal{O}_{2}$ conjugate to $\mbox{id}%
_{A}\otimes \nu _{p}$.

Denote by $\mathrm{End}\left( \mathcal{O}_{2}\otimes \mathcal{O}_{2}\right) $
the space of endomorphism of $\mathcal{O}_{2}\otimes \mathcal{O}_{2}$. By 
\cite[Theorem 7.6]{farah_turbulence_2014}, there is a Borel map from $%
SA_{uKir}\left( \mathcal{O}_{2}\right) $ to $\mathrm{End}\left( \mathcal{O}%
_{2}\otimes \mathcal{O}_{2}\right) $ that assigns to an element $A$ in $%
SA_{uKir}\left( \mathcal{O}_{2}\right) $ a unital injective endomorphism $%
\eta _{A}$ of $\mathcal{O}_{2}\otimes \mathcal{O}_{2}$ with range $A\otimes 
\mathcal{O}_{2}$. In particular, $\eta _{A}$ is an isomorphism between $%
\mathcal{O}_{2}\otimes \mathcal{O}_{2}$ and $A\otimes \mathcal{O}_{2}$. For $%
A$ in $SA_{uKir}\left( \mathcal{O}_{2}\right) $, define 
\begin{equation*}
\alpha _{A,p}=\eta _{A}^{-1}\circ \left( \mbox{id}_{A}\otimes \nu
_{p}\right) \circ \eta _{A},
\end{equation*}%
and note that the map $A\mapsto \alpha _{A,p}$ is Borel.

It is enough to show that for every $x,y\in \mathcal{O}_{2}$ and every $%
\varepsilon >0$, the set of all C*-algebras $A$ in $SA_{uKir}\left( \mathcal{%
O}_{2}\right) $ such that%
\begin{equation*}
\left\Vert \alpha _{A,p}(x)-y\right\Vert <\varepsilon ,
\end{equation*}%
is Borel. Fix $x$ and $y$ in $\mathcal{O}_{2}$.

By \cite[Theorem 12.13]{kechris_classical_1995}, there is a sequence of
Borel functions from $SA_{uKir}\left( \mathcal{O}_{2}\right) $ to $\mathcal{O%
}_{2}$, which we will denote by $A\mapsto a_{n}^{A}$ for $n$ in $\omega $,
such that for $A$ in $SA_{uKir}\left( \mathcal{O}_{2}\right) $, the set $%
\{a_{n}^{A}\colon n\in \omega \}$ is an enumeration of a dense subset of $A$.%
\newline
\indent Fix a countable dense subset $\left\{ b_{n}\colon n\in \omega
\right\} $ of $\mathcal{O}_{2}$. Then%
\begin{equation*}
\left\Vert \alpha _{A,p}(x)-y\right\Vert =\left\Vert \left( \mbox{id}%
_{A}\otimes \nu \right) (\eta _{A}(x))-\eta _{A}(y)\right\Vert ,
\end{equation*}%
and thus $\left\Vert \alpha _{A,p}(x)-y\right\Vert <\varepsilon $ if and
only if there are positive integers $k\in \omega $ and $n_{0},\ldots
,n_{k-1},m_{0},\ldots ,m_{k-1}\in \omega $, and scalars $\lambda _{0},\ldots
,\lambda _{k-1}\in \mathbb{Q}(i)$, such that%
\begin{equation*}
\left\Vert \eta _{A}(x)-\sum_{i\in k}\lambda _{i}a_{n_{i}}^{G}\otimes
b_{m_{i}}\right\Vert <\frac{\varepsilon }{2}\ \ \ \mbox{ and }\ \ \
\left\Vert \sum_{i\in k}\lambda _{i}a_{n_{i}}^{G}\otimes \nu \left(
b_{m_{i}}\right) -\eta _{A}(y)\right\Vert <\frac{\varepsilon }{2}\text{.}
\end{equation*}%
Since the map $A\mapsto \eta _{A}$ is Borel, it follows that the set of all
C*-algebras $A$ in $SA_{uKir}\left( \mathcal{O}_{2}\right) $ such that $%
\left\Vert \alpha _{A,p}(x)-y\right\Vert <\varepsilon $ is Borel. The result
follows.
\end{proof}

\subsection{Constructing Kirchberg algebras with assigned $K_{0}$-group}

The following is the main result of this section.

\begin{theorem}
\label{Theorem: Kirchberg algebras from groups}There is a Borel map from the
Borel space $\mathcal{G}$ of discrete groups to the Borel space $\Gamma
_{uKir}(H)$ parametrizing Kirchberg algebras, which assigns to every
countable discrete abelian group $G$, a code $\gamma $ for a unital
Kirchberg algebra $C^{\ast }(\gamma )$ that satisfies the UCT, and with $K$%
-theory given by 
\begin{equation*}
(K_{0}(C^{\ast }(\gamma )),\left[ 1_{C^{\ast }(\gamma )}\right]
,K_{1}(C^{\ast }(\gamma )))\cong (G,0,\{0\}).
\end{equation*}%
In particular, $C^{\ast }(\gamma )$ is $\mathcal{O}_{\infty }^{st}$%
-absorbing. Moreover, if $p$ is a prime number, then $C^{\ast }(\gamma )$ is 
$D_{p}$-absorbing if and only if $G$ is uniquely $p$-divisible.
\end{theorem}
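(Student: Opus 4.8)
The plan is to separate the construction into a $K$-theoretic realization followed by a normalization of the unit. First I would produce, in a Borel way, a unital, simple, separable, nuclear C*-algebra $C$ satisfying the UCT with $K_0(C)\cong G$ and $K_1(C)=0$, without worrying about the position of the unit or about pure infiniteness. Granting this, I take $\gamma$ to be a code for $A_G:=C\otimes\mathcal{O}_\infty^{st}$. Since $\mathcal{O}_\infty^{st}$ is a unital Kirchberg algebra satisfying the UCT with $(K_0,[1],K_1)\cong(\mathbb{Z},0,0)$, the K\"unneth formula \cite{schochet_topological_1982} gives $(K_0(A_G),[1_{A_G}],K_1(A_G))\cong(G,0,\{0\})$; moreover $A_G$ is simple and nuclear and absorbs $\mathcal{O}_\infty$ (because $\mathcal{O}_\infty^{st}$ does), hence purely infinite by Theorem \ref{thm: Kirch OI isom thm}, so it is a unital Kirchberg algebra satisfying the UCT. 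As $[1_{A_G}]=0$, Lemma \ref{Lemma: Ointy-standard} shows $A_G$ is $\mathcal{O}_\infty^{st}$-absorbing. The tensoring $C\mapsto C\otimes\mathcal{O}_\infty^{st}$ is Borel by the methods of Section \ref{Section: parametrising} (cf. \cite{farah_turbulence_2014}), and membership of $\gamma$ in $\Gamma_{uKir}(H)$ is automatic, though it could also be certified by Proposition \ref{Proposition: Kirchberg Borel} and Corollary \ref{cor: abs of OIst is Borel}.

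For the algebra $C$ I would follow R\o rdam's construction in \cite[Proposition 3.5]{rordam_classification_1995}, each step of which has been made Borel in Sections \ref{Section: parametrising}--\ref{Chapter: Borel selection of AF-algebras}. Lemma \ref{Lemma: exact sequence} assigns to $G$, in a Borel fashion, a torsion free abelian group $H$ and an automorphism $\alpha$ of $H$ with $H/(\mathrm{id}_H-\alpha)[H]\cong G$; since that construction realizes $H$ as $\varinjlim(\mathbb{F},\beta)$ with $\alpha$ the induced shift, the map $\mathrm{id}_H-\alpha$ is injective. Equipping $H$ with the dimension group structure arising from this presentation, Corollary \ref{Corollary: Borel map from DG to AF} then selects, in a Borel way, a unital AF-algebra $B$ with $K_0(B)\cong H$ together with an endomorphism $\rho$ of $B$ whose induced map on $K_0$ is $\alpha$. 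Taking $C$ to be the crossed product of $B$ by $\rho$, computable in a Borel way by Corollary \ref{Corollary: Borel crossed product endomorphism} (or by Corollary \ref{cor: crossed product automorphism} in the automorphism picture), a Pimsner--Voiculescu-type six term exact sequence yields
\[
K_0(C)\cong\operatorname{coker}(\mathrm{id}_H-\alpha)\cong G,\qquad K_1(C)\cong\ker(\mathrm{id}_H-\alpha)=0 .
\]
As $B$ is AF it satisfies the UCT, and the UCT passes to $C$; nuclearity, unitality and separability are clear.

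The hard part will be to guarantee that $C$ is \emph{simple}, and to arrange the order data so that the selection of $B$ and $\rho$ is legitimate. Simplicity is exactly where the specific shape of $\alpha$ enters: because $\alpha$ is the shift on $\varinjlim(\mathbb{F},\beta)$, it admits no proper nonzero invariant order ideal, so the induced action on the ideal lattice of $B$ is minimal and properly outer, whence $C$ is simple; this is the content of R\o rdam's verification in \cite[Proposition 3.5]{rordam_classification_1995} and is insensitive to the Borel parameters, so it does not disturb Borelness. The accompanying subtlety --- that $\alpha$ need not preserve the chosen order on $H$ --- is handled as in that construction, where the order and the endomorphism are chosen compatibly so that Corollary \ref{Corollary: Borel map from DG to AF} applies; alternatively, one may absorb a copy of $\mathcal{O}_\infty$ before realizing $\alpha$, passing to the trivial order $K_0^+=K_0$ for which every endomorphism is automatically positive.

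Finally, for the divisibility statement, write $D_p=\mathcal{O}_\infty^{st}\otimes\mathbb{M}_{p^\infty}$, so that $K_0(\mathbb{M}_{p^\infty})\cong\mathbb{Z}[\tfrac1p]$ and $K_1(\mathbb{M}_{p^\infty})=0$. Using $A_G\otimes\mathcal{O}_\infty^{st}\cong A_G$ and the K\"unneth formula \cite{schochet_topological_1982} together with the flatness of $\mathbb{Z}[\tfrac1p]$, one computes $(K_0(A_G\otimes D_p),[1],K_1(A_G\otimes D_p))\cong(G\otimes\mathbb{Z}[\tfrac1p],0,\{0\})$. Since $A_G\otimes D_p$ is again a unital Kirchberg algebra satisfying the UCT with trivial class of the unit, the Kirchberg--Phillips theorem (Theorem \ref{Theorem: KP classification}) shows that $A_G$ absorbs $D_p$ if and only if $G\cong G\otimes\mathbb{Z}[\tfrac1p]$ as abelian groups. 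One checks directly that this holds precisely when multiplication by $p$ is a bijection of $G$ --- equivalently, when $G$ is uniquely $p$-divisible --- since $G\otimes\mathbb{Z}[\tfrac1p]$ is always $p$-divisible and $p$-torsion free. This yields the asserted equivalence.
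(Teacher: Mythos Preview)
Your overall plan---build the algebra via R\o rdam's crossed-product construction, with each step rendered Borel by the lemmas of Sections~\ref{Section: parametrising}--\ref{Chapter: Borel selection of AF-algebras}---is exactly the paper's. The difference is that you try to run Corollary~\ref{Corollary: Borel map from DG to AF} on $H$ itself, and then repair the unit and pure infiniteness by tensoring with $\mathcal{O}_\infty^{st}$. The paper instead follows R\o rdam's Theorem~3.6 literally: it forms the dimension group $D=\mathbb{Z}[\tfrac12]\oplus H$ with positive cone $\{(t,h):t>0\}\cup\{(0,0)\}$ and order unit $(1,0)$, and the endomorphism $\beta(t,h)=(t/2,\alpha(h))$; the crossed product $B\rtimes_\rho\mathbb{N}$ is then already a UCT Kirchberg algebra with $[1]=0$ (since $(1,0)\in(\mathrm{id}-\beta)D$), so no post-hoc tensoring is needed.

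The gap in your version is precisely the sentence ``equipping $H$ with the dimension group structure arising from this presentation.'' No such structure is produced by Lemma~\ref{Lemma: exact sequence}: the connecting maps there are not positive for the standard order on the free summands, so you get neither a positive cone satisfying the Riesz property nor an order unit, and Corollary~\ref{Corollary: Borel map from DG to AF} cannot be invoked. Your suggested fix of ``passing to the trivial order $K_0^+=K_0$'' does not help, since that is not an ordered group in the sense required (it fails $G^+\cap(-G^+)=\{0\}$), and the Borel machinery of Section~\ref{Chapter: Borel selection of AF-algebras} only manufactures \emph{AF}-algebras, whose $K_0$ is genuinely ordered. Even granting an order, $\alpha$ is an automorphism, so the induced map on $B$ would not be a proper corner endomorphism and Corollary~\ref{Corollary: Borel crossed product endomorphism} would not apply as written; you would be forced into a $\mathbb{Z}$-crossed product and would then owe a separate argument for simplicity. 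The $\mathbb{Z}[\tfrac12]$ summand is exactly the device that resolves all three issues at once: it supplies a simple dimension group with an obvious order unit, makes $\beta$ positive with $\beta(1,0)<(1,0)$ (hence a corner endomorphism on the AF side), and puts you squarely inside R\o rdam's simplicity and pure-infiniteness results.

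Your treatment of the $D_p$-absorption clause, via the K\"unneth formula and Kirchberg--Phillips to reduce to $G\cong G\otimes\mathbb{Z}[\tfrac1p]$, is correct and in fact more explicit than what the paper writes; the paper's proof leaves that clause to the reader.
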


\begin{proof}
Use Lemma \ref{Lemma: exact sequence} to choose, in a Borel way from $G$, a
torsion free abelian group $H$ and an automorphism $\alpha $ of $H$ such
that 
\begin{equation*}
H/\mbox{Im}(\mbox{id}_{H}-\alpha )\cong G.
\end{equation*}%
Denote by $D$ the dimension group given by%
\begin{equation*}
D=\mathbb{Z}\left[ \frac{1}{2}\right] \oplus H
\end{equation*}%
with positive cone%
\begin{equation*}
D^{+}=\left\{ \left( t,h\right) \in D\colon t>0\right\} \cup \left\{ \left(
0,0\right) \right\} ,
\end{equation*}%
and order unit $\left( 1,0\right) $. Consider the endomorphism $\rho $ of $D$
defined by%
\begin{equation*}
\beta \left( t,h\right) =\left( \frac{t}{2},\alpha \left( h\right) \right) 
\text{{}}
\end{equation*}%
for $(t,h)$ in $D$. It is clear that $D$ and $\beta $ can be computed in a
Borel way from $H$ and $\alpha $. By Corollary \ref{Corollary: Borel map
from DG to AF}, one can obtain in a Borel way from $H$ and $\beta $, a code
for a unital AF-algebra $B$ and a code for an injective corner endomorphism $%
\rho $ of $B$ such that the $K_{0}$-group of $B$ is isomorphic to $D$, and
the endomorphism of the $K_{0}$-group of $B$ induced by $\rho $ is conjugate
to $\beta $. By Corollary \ref{Corollary: Borel crossed product endomorphism}%
, one can obtain in a Borel way a code $\gamma _{G}\in \Gamma (H)$ for the
crossed product $B\rtimes _{\rho }\mathbb{N}$ of $B$ by the endomorphism $%
\rho $. It can be shown, as in the proof of \cite[Theorem 3.6]%
{rordam_classification_1995}, that $C^{\ast }(\gamma _{G})$ is a unital
Kirchberg algebra satisfying the UCT, with trivial $K_{1}$-group, $K_{0}$%
-group isomorphic to $G$, and $\left[ 1_{C^{\ast }(\gamma _{G})}\right] =0$
in $K_{0}\left( C^{\ast }\left( \gamma \right) \right) $. An easy
application of the Pimsner-Voiculescu exact sequence \cite[Theorem 2.4]%
{pimsner_exact_1980} gives the computation of the $K$-theory; see \cite[%
Corollary 2.2]{rordam_classification_1995}. Pure infiniteness of $C^{\ast
}(\gamma _{G}) $ is proved in \cite[Theorem 3.1]{rordam_classification_1995}%
. The map $\mathcal{G}\rightarrow \Gamma _{uKir}(H)$ given by $G\mapsto
\gamma _{G}$ is Borel by construction.
\end{proof}

\begin{corollary}
\label{Corollary: reduction of iso of groups to iso of kirchberg} Let $p$ be
a prime number. There are Borel reductions:

\begin{enumerate}
\item From the relation of isomorphism of infinite countable discrete
abelian groups, to the relation of isomorphism of $\mathcal{O}_{\infty
}^{st} $-absorbing unital Kirchberg algebras (satisfying the UCT and with
trivial $K_{1}$-group).

\item From the relation of isomorphism of uniquely $p$-divisible infinite
countable discrete abelian groups, to the relation of isomorphism of $D_{p}$%
-absorbing unital Kirchberg algebras (satisfying the UCT and with trivial $%
K_{1}$-group).
\end{enumerate}
\end{corollary}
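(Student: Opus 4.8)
The plan is to take the Borel assignment $G \mapsto \gamma_{G}$ produced in Theorem \ref{Theorem: Kirchberg algebras from groups} and verify directly that it is the desired Borel reduction in each case. Recall that this map is Borel, and that for every infinite countable abelian group $G$ the algebra $C^{\ast}(\gamma_{G})$ is a unital Kirchberg algebra satisfying the UCT, with trivial $K_{1}$-group, with $K_{0}(C^{\ast}(\gamma_{G})) \cong G$, and with $[1_{C^{\ast}(\gamma_{G})}] = 0$ in $K_{0}$. In particular $C^{\ast}(\gamma_{G})$ is always $\mathcal{O}_{\infty}^{st}$-absorbing, so the restriction of $G \mapsto \gamma_{G}$ to the Borel set $\mathcal{AG}$ of infinite countable abelian groups takes values in the Borel space of $\mathcal{O}_{\infty}^{st}$-absorbing unital Kirchberg algebras; this settles the target space for part (1).

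The heart of the argument is to check the biconditional that $G \cong G'$ if and only if $C^{\ast}(\gamma_{G}) \cong C^{\ast}(\gamma_{G'})$. First I would observe that any group isomorphism $G \to G'$ induces, via the identifications $K_{0}(C^{\ast}(\gamma_{G})) \cong G$ and $K_{0}(C^{\ast}(\gamma_{G'})) \cong G'$, a degree-zero isomorphism of the graded $K$-groups that automatically carries $[1_{C^{\ast}(\gamma_{G})}] = 0$ to $[1_{C^{\ast}(\gamma_{G'})}] = 0$; by the Kirchberg--Phillips classification theorem (Theorem \ref{Theorem: KP classification}) the two algebras are then isomorphic. Conversely, because both $K_{1}$-groups vanish, a degree-zero $K$-theory isomorphism is nothing but an isomorphism of the $K_{0}$-groups, hence of $G$ and $G'$, while the unit-preservation requirement is vacuous since both unit classes equal $0$. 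Thus every isomorphism $C^{\ast}(\gamma_{G}) \cong C^{\ast}(\gamma_{G'})$ yields, through the functor $K_{0}$, an isomorphism $G \cong G'$. This shows that $G \mapsto \gamma_{G}$ is a Borel reduction and proves (1).

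For part (2) I would restrict the same map to the Borel set of uniquely $p$-divisible infinite countable abelian groups (which is Borel, as noted earlier). By the ``moreover'' clause of Theorem \ref{Theorem: Kirchberg algebras from groups}, for such $G$ the algebra $C^{\ast}(\gamma_{G})$ is $D_{p}$-absorbing, so the restricted map takes values in the Borel space of $D_{p}$-absorbing unital Kirchberg algebras. The reduction property is exactly the biconditional established above, restricted to this subclass, so the same Kirchberg--Phillips argument applies verbatim.

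Since essentially all of the substantive content --- the Borel construction of $C^{\ast}(\gamma_{G})$ with prescribed $K$-theory and the classification theorem --- is already in hand, there is no genuine obstacle here; the only points requiring care are the bookkeeping that the unit-class hypothesis in Kirchberg--Phillips is automatically met (both unit classes are $0$) and that the vanishing of $K_{1}$ collapses degree-zero $K$-theory isomorphisms to plain $K_{0}$-isomorphisms, so that the complete invariant reduces precisely to the group $G$ itself.
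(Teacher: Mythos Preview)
Your proposal is correct and follows exactly the route the paper takes: both parts are deduced from Theorem~\ref{Theorem: Kirchberg algebras from groups} together with the Kirchberg--Phillips classification theorem, with your write-up simply making explicit the bookkeeping (vanishing of $K_{1}$, zero unit class, the $D_{p}$-absorption clause for part (2)) that the paper leaves implicit.
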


\begin{proof}
Both results follow from Theorem \ref{Theorem: Kirchberg algebras from
groups} above, together with the Kirchberg-Phillips classification theorem
(see \cite{kirchberg_exact_1995} and \cite{phillips_classification_2000}).
\end{proof}

\begin{corollary}
\label{Corollary: reduction of iso of p-divisible}Let $p$ be a prime number,
and $\mathcal{C}$ be any class of countable structures such that the
relation $\cong _{\mathcal{C}}$ of elements of $\mathcal{C}$ is Borel.
Assume that $F$ be any of the following equivalence relations:

\begin{itemize}
\item isomorphism of $D_{p}$-absorbing unital Kirchberg algebras with
trivial $K_{1}$-group,

\item conjugacy of approximately representable automorphisms of $\mathcal{O}%
_{2}$ of order $p$,

\item cocycle conjugacy of approximately representable automorphisms of $%
\mathcal{O}_{2}$ of order $p$.
\end{itemize}

Then $\cong _{\mathcal{C}}$ is Borel reducible to $F$, and moreover $F$ is a
complete analytic set.
\end{corollary}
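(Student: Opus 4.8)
The plan is to obtain the corollary purely by composing Borel reductions established earlier in the paper, using the transitivity of $\leq_{B}$ together with the fact (recorded in the introduction) that a Borel reduction is in particular a Wadge reduction on the associated product spaces.

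First I would apply Proposition \ref{Proposition: complete analytic p-divisible} with the singleton $\mathcal{P}=\{p\}$, which is trivially coinfinite in the set of all primes. This produces a Borel reduction from $\cong_{\mathcal{C}}$ to the relation of isomorphism of torsion free $p$-divisible abelian groups, and at the same time asserts that this latter relation is a complete analytic set. I would then record the elementary observation, noted right after the definition of divisibility, that every torsion free $p$-divisible group is uniquely $p$-divisible; hence the torsion free $p$-divisible groups form an isomorphism-invariant Borel subset of the uniquely $p$-divisible groups.

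Next I would feed this into Corollary \ref{Corollary: reduction of iso of groups to iso of kirchberg}(2), which gives a Borel reduction from isomorphism of uniquely $p$-divisible countable abelian groups to isomorphism of $D_{p}$-absorbing unital Kirchberg algebras (with the UCT and trivial $K_{1}$-group). Restricting that reduction to the invariant Borel subclass of torsion free $p$-divisible groups and composing with the reduction from the previous step yields a Borel reduction from $\cong_{\mathcal{C}}$ to isomorphism of $D_{p}$-absorbing Kirchberg algebras; this already settles the case in which $F$ is that isomorphism relation. When instead $F$ is conjugacy or cocycle conjugacy of approximately representable order $p$ automorphisms of $\mathcal{O}_{2}$, I would post-compose with the appropriate reduction from Theorem \ref{Proposition: reduction isomorphism to conjugacy}. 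In all three cases transitivity of $\leq_{B}$ delivers $\cong_{\mathcal{C}}\leq_{B}F$.

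For the completeness statement, I would observe that each of the three relations $F$ is itself analytic: conjugacy and cocycle conjugacy are defined by existentially quantifying an automorphism of $\mathcal{O}_{2}$ (and, for cocycle conjugacy, a unitary), and isomorphism of Kirchberg algebras is likewise a projection of a Borel set. Since the complete analytic relation of isomorphism of torsion free $p$-divisible groups Borel-reduces to $F$, it Wadge-reduces to $F$ on the product space, so maximality under Wadge reducibility transfers and $F$ is complete analytic. The only point demanding any care---and the closest thing to an obstacle here, since all the substantive constructions live in the cited results---is to keep the two bookkeeping matters straight: aligning the domain of Corollary \ref{Corollary: reduction of iso of groups to iso of kirchberg}(2) (uniquely $p$-divisible) with the output of Proposition \ref{Proposition: complete analytic p-divisible} (torsion free $p$-divisible), and making the analyticity of each $F$ explicit before invoking the Wadge-maximality transfer.
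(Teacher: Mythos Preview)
Your proposal is correct and follows exactly the intended route: the corollary in the paper is stated without an explicit proof precisely because it is obtained by composing Proposition~\ref{Proposition: complete analytic p-divisible} (with $\mathcal{P}=\{p\}$), Corollary~\ref{Corollary: reduction of iso of groups to iso of kirchberg}(2), and Theorem~\ref{Proposition: reduction isomorphism to conjugacy}, together with the observation that torsion-free $p$-divisible groups are uniquely $p$-divisible. Your added remark that each $F$ is analytic (so that ``complete analytic'' is meaningful and the Wadge-maximality transfer applies) is a point the paper leaves implicit but which is indeed needed.
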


\begin{corollary}
\label{Corollary: not Borel}The relations of isomorphism of unital Kirchberg
algebras, and conjugacy and cocycle conjugacy of automorphisms of $\mathcal{O%
}_{2}$, are complete analytic sets, and in particular, not Borel.
\end{corollary}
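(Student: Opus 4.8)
The plan is to derive all three statements from Corollary \ref{Corollary: reduction of iso of p-divisible}, which, upon fixing any prime $p$ (say $p=2$), already asserts that each of the relations of isomorphism of $D_p$-absorbing unital Kirchberg algebras with trivial $K_1$-group, of conjugacy of approximately representable automorphisms of $\mathcal{O}_2$ of order $p$, and of cocycle conjugacy of such automorphisms, is a complete analytic set. The only remaining task is to transfer completeness from these ``restricted'' relations to the three global relations in the statement. The tool is the observation, recorded in the introduction, that Borel reducibility implies Wadge reducibility, so that any analytic equivalence relation to which a complete analytic set is Borel reducible is itself complete analytic. I will combine this with the elementary fact that if $X$ is a Borel subset of a standard Borel space $Y$ and $F$ is an equivalence relation on $Y$, then the inclusion $X\hookrightarrow Y$ is a Borel reduction from $F\restriction X$ to $F$, since $F\restriction X$ agrees with $F$ on $X\times X$; note that for this direction no invariance of $X$ under $F$ is needed.

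For isomorphism of unital Kirchberg algebras, I will work in the parametrizing space $\Gamma_{uKir}(H)$, which is Borel by Proposition \ref{Proposition: Kirchberg Borel}, and whose isomorphism relation is analytic (it is witnessed by the existence of a code for an isomorphism, a $\Sigma^1_1$ condition). The $D_p$-absorbing unital Kirchberg algebras with trivial $K_1$-group form a Borel subset of $\Gamma_{uKir}(H)$: $D_p$-absorption is Borel by Theorem \ref{Theorem: ssa-absorbing} together with Corollary \ref{cor: abs of OIst is Borel}, and triviality of $K_1$ is Borel because $K$-theory is computable in a Borel way. The inclusion of this Borel subset is therefore a Borel reduction from the (complete analytic) restricted isomorphism relation of Corollary \ref{Corollary: reduction of iso of p-divisible} to the full isomorphism relation on $\Gamma_{uKir}(H)$; as the latter is analytic, it is complete analytic, hence not Borel.

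For conjugacy and cocycle conjugacy of automorphisms of $\mathcal{O}_2$, I will argue on the Polish group $\mathrm{Aut}(\mathcal{O}_2)$. Both relations are analytic: conjugacy is $\{(\alpha,\beta):\exists\gamma\in\mathrm{Aut}(\mathcal{O}_2),\ \gamma\alpha\gamma^{-1}=\beta\}$, and by Lemma \ref{Lemma: whaaat?}(1) cocycle conjugacy is $\{(\alpha,\beta):\exists\gamma\in\mathrm{Aut}(\mathcal{O}_2),\exists u\in U(\mathcal{O}_2),\ \mathrm{Ad}(u)\circ\alpha=\gamma\circ\beta\circ\gamma^{-1}\}$, each a $\Sigma^1_1$ condition. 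The set of approximately representable automorphisms of order $p$ is a Borel subset of $\mathrm{Aut}(\mathcal{O}_2)$, being a $G_\delta$ subset of the closed subspace $\mathrm{Aut}_p(\mathcal{O}_2)$. Moreover, since $\mathcal{O}_2$ is simple and hence has trivial center, Lemma \ref{Lemma: whaaat?} identifies the cocycle conjugacy of order-$p$ automorphisms used in Corollary \ref{Corollary: reduction of iso of p-divisible} with the restriction of the global cocycle conjugacy relation to this Borel set, and likewise conjugacy of the order-$p$ automorphisms is the restriction of the global conjugacy relation. The inclusion of the Borel set is then a Borel reduction from each complete analytic restricted relation of Corollary \ref{Corollary: reduction of iso of p-divisible} into the corresponding global relation, which, being analytic, is therefore complete analytic and not Borel.

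The conceptual work is already carried out in the earlier sections, so the main delicate point here is bookkeeping: verifying that the relevant subsets ($D_p$-absorbing, trivial $K_1$; approximately representable of order $p$) are genuinely Borel, and that the restricted relations coincide with the restrictions of the ambient relations. I expect the one genuinely non-automatic step to be the cocycle-conjugacy case, where the order-$p$ subset is \emph{not} invariant under the global cocycle conjugacy relation (a cocycle perturbation of an order-$p$ automorphism need not have order $p$); this is precisely why the argument relies on the inclusion reducing a restriction to the whole relation without any invariance hypothesis, together with Lemma \ref{Lemma: whaaat?} to reconcile the $\mathbb{Z}_p$-action and single-automorphism notions of cocycle conjugacy.
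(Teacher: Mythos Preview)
Your proposal is correct and matches the paper's approach. The paper does not write out an explicit proof of this corollary, treating it as an immediate consequence of Corollary \ref{Corollary: reduction of iso of p-divisible} together with Lemma \ref{Lemma: whaaat?} (as flagged in the introduction); your argument spells out precisely these steps, and your careful handling of the non-invariance issue for the cocycle conjugacy case---observing that the inclusion of a Borel subset is a Borel reduction from the restricted relation to the ambient one regardless of invariance---is exactly the right point to make explicit.
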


\subsection{Actions of countable discrete groups on $\mathcal{O}_{2}$}

Suppose that $G$ is a countable discrete group. Denote by $\mathrm{Act}(G,A)$
the space of actions of $G$ on $A$ endowed with the topology of pointwise
norm convergence. It is clear that $\mathrm{Act}(G,A)$ is homeomorphic to a $%
G_{\delta }$ subspace of the product of countably many copies of $A$ and, in
particular, is a Polish space.

Let $G$ and $H$ be countable discrete groups, and let $\pi \colon
G\rightarrow H$ be a surjective homomorphism from $G$ to $H$. Define the
Borel map $\pi^*\colon \mathrm{Act}(H,A) \to \mathrm{Act}(G,A)$ by $%
\pi^*(\alpha)=\alpha\circ\pi$ for $\alpha$ in $\mathrm{Act}(H,A)$. It is
easy to check that $\pi^*$ is a Borel reduction from the relation of
conjugacy of actions of $H$ to the relation of conjugacy of actions of $G$.
The following proposition is then an immediate consequence of this
observation together with Corollary \ref{Corollary: reduction of iso of
p-divisible}.

\begin{proposition}
Let $G$ be a countable discrete group with a nontrivial cyclic quotient. If $%
\mathcal{C}$ is any class of countable structures such that the relation $%
\cong _{\mathcal{C}}$ of isomorphism of elements of $\mathcal{C}$ is Borel,
then $\cong _{\mathcal{C}}$ is Borel reducible to the relation of conjugacy
of actions of $G$ on $\mathcal{O}_{2}$. \newline
\indent Moreover, the latter equivalence relation is a complete analytic set
as a subset of $\mathrm{Act}( G,A) \times \mathrm{Act}(G,A) $ and, in
particular, is not Borel.
\end{proposition}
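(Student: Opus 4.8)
The plan is to combine Corollary \ref{Corollary: reduction of iso of p-divisible} with the reduction $\pi^{*}$ introduced just above, after first replacing the given cyclic quotient by one of prime order. First I would note that every nontrivial cyclic group surjects onto $\mathbb{Z}_{p}$ for some prime $p$: a quotient isomorphic to $\mathbb{Z}$ surjects onto every $\mathbb{Z}_{p}$, while a finite nontrivial cyclic quotient $\mathbb{Z}_{n}$ surjects onto $\mathbb{Z}_{p}$ for any prime $p$ dividing $n$. Composing with the given surjection of $G$ onto its cyclic quotient produces a surjective homomorphism $\pi\colon G\to\mathbb{Z}_{p}$; I fix such a $\pi$ and prime $p$.

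Next I would chain three Borel reductions. By Corollary \ref{Corollary: reduction of iso of p-divisible}, the relation $\cong_{\mathcal{C}}$ is Borel reducible to conjugacy of approximately representable automorphisms of $\mathcal{O}_{2}$ of order $p$. Under the identification of $\mathrm{Aut}_{p}(\mathcal{O}_{2})$ with $\mathrm{Act}(\mathbb{Z}_{p},\mathcal{O}_{2})$ recalled in Subsection \ref{Subsection: automorphisms of O2} (sending an order-$p$ automorphism $\alpha$ to the action $n\mapsto\alpha^{n}$), conjugacy of such automorphisms corresponds exactly to conjugacy of the associated $\mathbb{Z}_{p}$-actions, in the manner of Lemma \ref{Lemma: whaaat?}; the identification is continuous and hence Borel. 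Thus the reduction of Corollary \ref{Corollary: reduction of iso of p-divisible}, post-composed with the Borel inclusion of the approximately representable order-$p$ automorphisms into $\mathrm{Act}(\mathbb{Z}_{p},\mathcal{O}_{2})$, is a Borel reduction of $\cong_{\mathcal{C}}$ to conjugacy of actions of $\mathbb{Z}_{p}$ on $\mathcal{O}_{2}$ (the conjugacy relation on the subset being the restriction of that on the whole space). Finally, $\pi^{*}\colon\mathrm{Act}(\mathbb{Z}_{p},\mathcal{O}_{2})\to\mathrm{Act}(G,\mathcal{O}_{2})$ is a Borel reduction of conjugacy of $\mathbb{Z}_{p}$-actions to conjugacy of $G$-actions, precisely because $\pi$ is surjective: the condition $\gamma\circ(\alpha\circ\pi)_{g}\circ\gamma^{-1}=(\beta\circ\pi)_{g}$ for all $g\in G$ is equivalent to $\gamma\circ\alpha_{h}\circ\gamma^{-1}=\beta_{h}$ for all $h\in\mathbb{Z}_{p}$. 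Composing the three reductions establishes the first assertion.

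For the second assertion, I would first check that conjugacy of $G$-actions on $\mathcal{O}_{2}$ is analytic, being the projection to $\mathrm{Act}(G,\mathcal{O}_{2})\times\mathrm{Act}(G,\mathcal{O}_{2})$ of the Borel set of triples $(\alpha,\beta,\gamma)$ in $\mathrm{Act}(G,\mathcal{O}_{2})\times\mathrm{Act}(G,\mathcal{O}_{2})\times\mathrm{Aut}(\mathcal{O}_{2})$ satisfying $\gamma\circ\alpha_{g}\circ\gamma^{-1}=\beta_{g}$ for every $g\in G$. Since Corollary \ref{Corollary: reduction of iso of p-divisible} asserts that conjugacy of approximately representable order-$p$ automorphisms of $\mathcal{O}_{2}$ is a complete analytic set, and the chain of reductions above is in particular a Wadge reduction of this complete analytic set into conjugacy of $G$-actions, the latter analytic relation is itself complete analytic, as recalled in the introduction, and hence not Borel. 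The argument is essentially a bookkeeping assembly of results already established, so I do not expect a genuine obstacle; the only points demanding care are the passage from an arbitrary nontrivial cyclic quotient to one of prime order, and the verification that $\alpha\mapsto(n\mapsto\alpha^{n})$ is a conjugacy-preserving Borel isomorphism $\mathrm{Aut}_{p}(\mathcal{O}_{2})\to\mathrm{Act}(\mathbb{Z}_{p},\mathcal{O}_{2})$, so that the composite is a legitimate Borel reduction.
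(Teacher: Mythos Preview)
Your proposal is correct and follows essentially the same route as the paper: the paper derives the proposition as an immediate consequence of Corollary \ref{Corollary: reduction of iso of p-divisible} together with the observation that $\pi^{*}$ is a Borel reduction of conjugacy, and you have simply made explicit the passage from an arbitrary nontrivial cyclic quotient to one of prime order and the identification of $\mathrm{Aut}_{p}(\mathcal{O}_{2})$ with $\mathrm{Act}(\mathbb{Z}_{p},\mathcal{O}_{2})$, both of which the paper leaves implicit.
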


The situation for cocycle conjugacy is not as clear. It is not hard to
verify that if $G=H\times N$ and $\pi \colon G\rightarrow H$ is the
canonical projection, then $\pi ^{\ast }$, as defined before, is a Borel
reduction from the relation of cocycle conjugacy in $\mathrm{Act}(H,A)$ to
the relation of cocycle conjugacy in $\mathrm{Act}(G,A)$. Using this
observation and the structure theorem for finitely generated abelian groups,
one obtains as a consequence of Corollary \ref{Corollary: reduction of iso
of p-divisible} the following fact:

\begin{proposition}
Let $G$ be any finitely generated abelian group. If $\mathcal{C}$ is any
class of countable structures such that the relation $\cong _{\mathcal{C}}$
of isomorphism of elements of $\mathcal{C}$ is Borel, then $\cong _{\mathcal{%
C}}$ is Borel reducible to the relation of conjugacy of actions of $G$ on $%
\mathcal{O}_{2}$. \newline
\indent Moreover, the latter equivalence relation is a complete analytic set
as a subset of $\mathrm{Act}(G,A) \times \mathrm{Act}(G,A) $ and, in
particular, not Borel.
\end{proposition}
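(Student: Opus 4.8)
The plan is to reduce the entire statement to the order-$p$ case already established in Corollary~\ref{Corollary: reduction of iso of p-divisible}, by pulling back actions along a suitable prime cyclic quotient of $G$. (Here I take $G$ nontrivial, which is needed for the conclusion and is implicit in the statement.) The mechanism is the observation recorded before the preceding proposition: for any \emph{surjective} homomorphism $\pi\colon G\to H$ of countable discrete groups, the Borel map $\pi^{*}(\alpha)=\alpha\circ\pi$ is a Borel reduction from conjugacy of $H$-actions on $\mathcal{O}_{2}$ to conjugacy of $G$-actions on $\mathcal{O}_{2}$. Indeed, a conjugating automorphism for $\alpha,\beta$ is literally one for $\pi^{*}(\alpha),\pi^{*}(\beta)$, while conversely surjectivity of $\pi$ forces any conjugacy between $\pi^{*}(\alpha)$ and $\pi^{*}(\beta)$ to conjugate $\alpha,\beta$ on all of $H=\{\pi(g):g\in G\}$.

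First I would extract from the structure theorem for finitely generated abelian groups a prime $p$ together with a surjection $\sigma\colon G\to\mathbb{Z}_{p}$. Writing $G\cong\mathbb{Z}^{r}\oplus\bigoplus_{i}\mathbb{Z}_{d_{i}}$, nontriviality gives either $r\geq 1$, so that $G$ surjects onto $\mathbb{Z}$ and hence onto $\mathbb{Z}_{p}$ for every prime $p$, or some $d_{i}\geq 2$, so that $G$ surjects onto $\mathbb{Z}_{d_{i}}$ and hence onto $\mathbb{Z}_{p}$ for any prime $p\mid d_{i}$. (Equivalently, a maximal proper subgroup of the finitely generated group $G$ has simple abelian quotient, necessarily $\mathbb{Z}_{p}$.) By the observation above, $\sigma^{*}$ is then a Borel reduction from conjugacy of $\mathbb{Z}_{p}$-actions on $\mathcal{O}_{2}$ to conjugacy of $G$-actions on $\mathcal{O}_{2}$.

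It remains to connect this to Corollary~\ref{Corollary: reduction of iso of p-divisible}, which for the fixed prime $p$ provides a Borel reduction of $\cong_{\mathcal{C}}$ into the relation $F_{p}$ of conjugacy of approximately representable order-$p$ automorphisms of $\mathcal{O}_{2}$, and states that $F_{p}$ is complete analytic. Under the identification of order-$p$ automorphisms with $\mathbb{Z}_{p}$-actions via the generator, the approximately representable ones form a conjugation-invariant subset (approximate representability is preserved under conjugation, transporting the witnessing unitaries by the conjugating automorphism) which is Borel, indeed $G_{\delta}$. Hence, by the invariant-Borel-subset principle from the introduction, $F_{p}$ Borel reduces to conjugacy $C_{\mathbb{Z}_{p}}$ on all of $\mathrm{Act}(\mathbb{Z}_{p},\mathcal{O}_{2})$. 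Composing $\cong_{\mathcal{C}}\leq_{B}F_{p}\leq_{B}C_{\mathbb{Z}_{p}}\leq_{B}C_{G}$, where the last reduction is $\sigma^{*}$ and $C_{G}$ denotes conjugacy of $G$-actions, yields the first assertion. For the second, $C_{G}$ is the orbit equivalence relation of the continuous conjugation action of the Polish group $\mathrm{Aut}(\mathcal{O}_{2})$ on the Polish space $\mathrm{Act}(G,\mathcal{O}_{2})$, hence analytic; since the complete analytic set $F_{p}$ Borel reduces into it, $C_{G}$ is complete analytic, and in particular not Borel, by the propagation principle from the introduction.

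The argument carries no serious obstacle; it is the conjugacy analogue of the preceding proposition, and the genuinely delicate cocycle-conjugacy version is deliberately set aside, since there $\pi^{*}$ is known to reduce only along direct-factor projections rather than arbitrary surjections. The two points demanding care are that I must produce a \emph{prime} cyclic quotient $\mathbb{Z}_{p}$ (not merely a nontrivial cyclic quotient) in order to invoke Corollary~\ref{Corollary: reduction of iso of p-divisible}, and that the step $F_{p}\leq_{B}C_{\mathbb{Z}_{p}}$ really relies on the approximately representable automorphisms forming a conjugation-invariant Borel subset of $\mathrm{Act}(\mathbb{Z}_{p},\mathcal{O}_{2})$ rather than exhausting it.
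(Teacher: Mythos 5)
Read literally, your argument is correct, and each step checks out: pulling back along a surjection $\sigma \colon G\rightarrow \mathbb{Z}_{p}$ does reduce \emph{conjugacy} of $\mathbb{Z}_{p}$-actions to conjugacy of $G$-actions; every nontrivial finitely generated abelian group admits such a surjection; the approximately representable order-$p$ automorphisms form a conjugation-invariant $G_{\delta }$ subset of $\mathrm{Act}(\mathbb{Z}_{p},\mathcal{O}_{2})$, so the relation furnished by Corollary \ref{Corollary: reduction of iso of p-divisible} Borel reduces to conjugacy of $\mathbb{Z}_{p}$-actions; and analyticity of the orbit equivalence relation together with propagation of completeness along Borel reductions gives the final claim. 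The trouble is that, read this way, the proposition is a strict special case of the one immediately preceding it (a nontrivial finitely generated abelian group certainly has a nontrivial cyclic quotient), and your proof is, in substance, the paper's proof of that preceding proposition.

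What this proposition is actually meant to assert --- and what the paper's own proof establishes --- is the \emph{cocycle conjugacy} statement; the word ``conjugacy'' in the printed statement is a slip. This is clear from the paragraph introducing it, which begins ``The situation for cocycle conjugacy is not as clear'', records that $\pi ^{\ast }$ is a reduction for cocycle conjugacy only along direct-factor projections $G=H\times N\rightarrow H$ (a homomorphic section of $\pi $ is what makes the backward implication work: one restricts a $G$-cocycle to the complemented copy of $H$), and then states that the proposition follows from this observation, the structure theorem, and Corollary \ref{Corollary: reduction of iso of p-divisible}. Concretely, the paper's route is: write $G=\mathbb{Z}\times N$ or $G=\mathbb{Z}_{p^{e}}\times N$; reduce cocycle conjugacy of $\mathbb{Z}$-actions (resp.\ $\mathbb{Z}_{p^{e}}$-actions) to cocycle conjugacy of $G$-actions via the direct-factor projection; and use Lemma \ref{Lemma: whaaat?} to identify cocycle conjugacy of approximately representable order-$p$ automorphisms --- the relation appearing in Corollary \ref{Corollary: reduction of iso of p-divisible} --- with cocycle conjugacy of the induced $\mathbb{Z}$-actions (part (1)), resp.\ $\mathbb{Z}_{p^{e}}$-actions (part (2), whose proof needs only that the orders of the automorphisms divide $p^{e}$). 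Your prime-cyclic-quotient device cannot be substituted here: for instance $G=\mathbb{Z}_{4}$ has no $\mathbb{Z}_{2}$ direct factor, the surjection $\mathbb{Z}_{4}\rightarrow \mathbb{Z}_{2}$ is not split, and it is precisely Lemma \ref{Lemma: whaaat?} (using that $\mathcal{O}_{2}$ has trivial center) that rescues the backward implication for cocycle conjugacy in that case. Since you explicitly set the cocycle-conjugacy version aside, the proposal, while internally sound, proves only a statement already contained in the previous proposition and misses the new content of this one.
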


\section{Final comments and remarks}

Recall that an automorphism of a C*-algebra $A$ is said to be \emph{%
pointwise outer }(or \emph{aperiodic}) if none of its nonzero powers is
inner. By \cite[Theorem 1]{nakamura_aperiodic_2000}, an automorphism of a
Kirchberg algebra is pointwise outer if and only if it has the so called
Rokhlin property. Moreover, it follows from this fact together with \cite[%
Corollary 5.14]{phillips_tracial_2012} that the set $\mathrm{Rok}(A)$ of
pointwise outer automorphisms of a Kirchberg algebra $A$ is a dense $%
G_{\delta }$ subset of $\mathrm{\mathrm{Aut}}(A)$, which is moreover easily
seen to be invariant by cocycle conjugacy.

It is an immediate consequence of \cite[Theorem 9]{nakamura_aperiodic_2000},
see also \cite[Theorem 5.2]{matui_classification_2008}, that aperiodic
automorphisms of $\mathcal{O}_{2}$ form a single cocycle conjugacy class. In
particular, and despite the fact that the relation of cocycle conjugacy of
automorphisms of $\mathcal{O}_{2}$ is not Borel, its restriction to the
comeager subset $\mathrm{Rok}(\mathcal{O}_{2})$ of $\mathrm{\mathrm{Aut}}(%
\mathcal{O}_{2})$ has only one class and, in particular, is Borel. This can
be compared with the analogous situation for the group of ergodic measure
preserving transformations of the Lebesgue space: The main result of \cite%
{foreman_conjugacy_2011} asserts that the relation of conjugacy of ergodic
measure preserving transformations of the Lebesgue space is a complete
analytic set. On the other hand, the restriction of such relation to the
comeager set of ergodic \emph{rank one }measure preserving transformations
is Borel.

It is conceivable that similar conclusions might hold for the relation of
conjugacy of automorphisms of $\mathcal{O}_{2}$. We therefore suggest the
following problem:

\begin{question}
Consider the relation of conjugacy of automorphisms of $\mathcal{O}_{2}$,
and restrict it to the invariant dense $G_{\delta }$ set of aperiodic
automorphisms. Is this equivalence relation Borel?
\end{question}

By \cite[Theorem 4.5]{kerr_Borel_2014}, the automorphisms of $\mathcal{O}
_{2} $ are not classifiable up to conjugacy by countable structures. This
means that if $\mathcal{C}$ is any class of countable structures, then the
relation of conjugacy of automorphisms of $\mathcal{O}_{2}$ is \emph{not}
Borel reducible to the relation of isomorphisms of structures from $\mathcal{%
C}$. It would be interesting to know if one can draw similar conclusions for
the relation of cocycle conjugacy.

\begin{question}
Is the relation of cocycle conjugacy of automorphisms of $\mathcal{O}_{2}$
classifiable by countable structures?
\end{question}

\cite[Theorem 4.5]{kerr_Borel_2014} in fact shows that the relation of
conjugacy of automorphisms is not classifiable for a large class of
C*-algebras, including all C*-algebras that are classifiable according to
the Elliott classification program \cite[Section 2.2]%
{rordam_classification_2002}. It would be interesting to know if the same
holds for the relation of cocycle conjugacy. More generally, it would be
interesting to have some information about the complexity of the relation of
cocycle conjugacy for automorphisms of other simple C*-algebras. This
problem seems to be currently wide open.

\begin{problem}
\label{Problem: other algebras} Find an example of a simple unital nuclear
separable C*-algebra for which the relation of cocycle conjugacy of
automorphisms is not classifiable by countable structures.
\end{problem}

Recall that an equivalence relation on a standard Borel space is said to be 
\emph{smooth}, or \emph{concretely classifiable}, if it is Borel reducible
to the relation of equality in some standard Borel space. A smooth
equivalence relation is in particular Borel and classifiable by countable
structures. Therefore Corollary \ref{Corollary: not Borel} in particular
shows that the relation of cocycle conjugacy of automorphisms of $\mathcal{O}%
_{2}$ is not smooth.

If $X$ is a compact Hausdorff space, we denote by $C(X)$ the unital
commutative C*-algebra of complex-valued continuous functions on $X$. It is
a classical result of Gelfand and Naimark that any unital commutative
C*-algebra is of this form; see \cite[Theorem II.2.2.4]%
{blackadar_operator_2006}. Moreover, by \cite[II.2.2.5]%
{blackadar_operator_2006}, the group $\mathrm{\mathrm{Aut}}(C(X))$ of
automorphisms of $C(X)$ is isomorphic to the group $\mathrm{Homeo}(X)$ of
homeomorphisms of $X$. It is clear that in this case the relations of
conjugacy and cocycle conjugacy of automorphisms coincide. By \cite[Theorem 5%
]{camerlo_completeness_2001}, if $X$ is the Cantor set, then the relation of
(cocycle) conjugacy of automorphisms of $C(X)$ is not smooth (but
classifiable by countable structures). On the other hand, when $X$ is the
unit square $[0,1]^{2}$, then the relation of cocycle conjugacy of
automorphisms of $C(X)$ is not classifiable by countable structures in view
of \cite[Theorem 4.17]{hjorth_classification_2000}. This addresses Problem %
\ref{Problem: other algebras} in the case of abelian unital C*-algebras. No
similar examples are currently known for simple unital C*-algebras.

It is worth mentioning here that if one considers instead the relation of 
\emph{unitary conjugacy} of automorphisms, then there is a \emph{strong
dichotomy} in the complexity. Recall that two automorphisms $\alpha ,\beta $
of a unital C*-algebra are unitarily conjugate if $\alpha \circ \beta ^{-1}$
is an inner automorphism, this is, implemented by a unitary element of $A$. 
%(See \cite[Section II.7.3]{blackadar_operator_2006} for the definition of the multiplier algebra of a C*-algebra.) 
Theorem 1.2 in \cite{lupini_unitary_2013} shows that whenever this relation
is not smooth, then it is even not classifiable by countable structures. The
same phenomenon is shown to hold for unitary conjugacy of \emph{irreducible
representations} in \cite[Theorem 2.8.]{kerr_turbulence_2010}; see also \cite%
[Section 6.8]{pedersen_c-algebras_1979}. It is possible that similar
conclusions might hold for the relation of conjugacy or cocycle conjugacy of
automorphisms of simple C*-algebras.

\begin{question}
Is it true that, whenever the relation of (cocycle) conjugacy of
automorphisms of a simple unital C*-algebra $A$ is not smooth, then it is
not even classifiable by countable structures?
\end{question}

Kirchberg-Phillips classification theorem (Theorem \ref{Theorem: KP
classification}) asserts that Kirchberg algebras satisfying the UCT are
classified up to isomorphism by their $K$-groups. By \cite[Section 3.3]%
{farah_descriptive_2012} the $K$-theory of a C*-algebra can be computed in a
Borel way. It follows that Kirchberg algebras satisfying the UCT are
classifiable up to isomorphism by countable structures. Conversely, by
Corollary \ref{Corollary: reduction of iso of p-divisible} if $\mathcal{C}$
is any class of countable structure \emph{with Borel isomorphism relation},
then the relation of isomorphism of elements of $\mathcal{C}$ is Borel
reducible to the relation of isomorphism of Kirchberg algebras satisfying
the UCT. It is natural to ask whether the same conclusion holds for\emph{\
any }class of countable structures $\mathcal{C}$.

\begin{question}
\label{Question: Kirchberg algebras}Suppose that $\mathcal{C}$ is a class of
countable structures. Is the relation of isomorphism of elements of $%
\mathcal{C}$ Borel reducible to the relation of isomorphism of Kirchberg
algebras with the UCT?
\end{question}

A class $\mathcal{D}$ of countable structures is \emph{Borel complete }if
the following holds:\ For any class of countable structures $\mathcal{C}$
the relation of isomorphism of elements of $\mathcal{C}$ is Borel reducible
to the relation of isomorphism of elements of $\mathcal{D}$. Theorem 1,
Theorem 3, and Theorem 10 of \cite{friedman_borel_1989} assert that the
classes of countable trees, countable linear orders, and countable fields of
any fixed characteristic are Borel complete; Theorem 7 of \cite%
{friedman_borel_1989} shows, using results of Mekler from \cite%
{mekler_stability_1981}, that the relation of isomorphism of countable
groups is Borel complete. A long standing open problem --first suggested in 
\cite{friedman_borel_1989}-- asks whether the class of (torsion-free)
abelian groups is Borel complete. In view of Corollary \ref{Corollary:
reduction of iso of groups to iso of kirchberg}, a positive answer to such
problem would settle Question \ref{Question: Kirchberg algebras}
affirmatively.

%For example, it would be interesting to determine for other ``interesting'' C*-algebras (maybe one should start with strongly self-absorbing C*-algebras, or C*-algebras in a class that is well understood), whether the relation of cocycle conjugacy of their automorphisms is or not complete analytic, Borel, countable, etc. \newline \indent Getting partial results for Kirchberg algebras should not be hard, at least when they satisfy the UCT. Indeed, cocycle conjugacy classes of aperiodic automorphisms of a Kirchberg algebra $A$ are in bijection with certain elements of $KK(A,A)$. Now, $KK(A,A)$ is a countable group. Thus, if one is able to show that the assignment $A\mapsto KK(A,A)$ is Borel, then if would follow that the restriction of the map $\mathrm{Aut}(A)\to KK(A,A)$ given by $\varphi\mapsto KK(\varphi)$ to the $G_\delta$ subset of aperiodic automorphisms is a Borel reduction, and hence said class of automorphisms would be classifiable by countable structures up to cocycle conjugacy.
\providecommand{\bysame}{\leavevmode\hbox to3em{\hrulefill}\thinspace} %
\providecommand{\MR}{\relax\ifhmode\unskip\space\fi MR } 
% \MRhref is called by the amsart/book/proc definition of \MR.
\providecommand{\MRhref}[2]{  \href{http://www.ams.org/mathscinet-getitem?mr=#1}{#2}
} \providecommand{\href}[2]{#2}


\begin{thebibliography}{99}
\bibitem{blackadar_operator_2006} Bruce Blackadar, \emph{Operator {A}lgebras}%
, Encyclopaedia of Mathematical Sciences, vol. 122, Springer-Verlag, Berlin,
2006.

\bibitem{boyd_faithful_1993} Sarah Boyd, Navin Keswani, and Iain Raeburn, 
\emph{Faithful representations of crossed products by endomorphisms},
Proceedings of the American Mathematical Society \textbf{118} (1993), no.~2,
427--436.

\bibitem{bratteli_inductive_1972} Ola Bratteli, \emph{Inductive limits of
finite dimensional {C}*-algebras}, Transactions of the American Mathematical
Society \textbf{171} (1972), 195--234.

\bibitem{camerlo_completeness_2001} Riccardo Camerlo and Su~Gao, \emph{The
completeness of the isomorphism relation for countable boolean algebras},
Transactions of the American Mathematical Society \textbf{353} (2001),
no.~2, 491--518.

\bibitem{conti_weyl_2012} Roberto Conti, Jeong~Hee Hong, and Wojciech Szyma{%
\'n}ski, \emph{The {W}eyl group of the {C}untz algebra}, Advances in
Mathematics \textbf{231} (2012), no.~6, 3147--3161.

\bibitem{conti_conjugacy_2013} \bysame, \emph{On conjugacy of {MASAs} and
the outer automorphism group of the {C}untz algebra}, {arXiv:1308.3840}
[math] (2013).

\bibitem{conti_endomorphisms_2010} Roberto Conti, Mikael R{ø}rdam, and
Wojciech Szyma{\'n}ski, \emph{Endomorphisms of {$\mathcal{O}_n$} which
preserve the canonical {UHF}-subalgebra}, Journal of Functional Analysis 
\textbf{259} (2010), no.~3, 602--617.

\bibitem{conti_automorphisms_2011} Roberto Conti and Wojciech Szyma{\'n}ski, 
\emph{Automorphisms of the {C}untz algebras}, {arXiv:1108.0860} [math]
(2011).

\bibitem{conti_labeled_2011} \bysame, \emph{Labeled trees and localized
automorphisms of the {C}untz algebras}, Transactions of the American
Mathematical Society \textbf{363} (2011), no.~11, 5847--5870.

\bibitem{cuntz_simple_1977} Joachim Cuntz, \emph{Simple {C}*-algebras
generated by isometries}, Communications in Mathematical Physics \textbf{57}
(1977), no.~2, 173{\textendash}185.

\bibitem{cuntz_k-theory_1981-1} \bysame, \emph{K-theory for certain {C}%
*-algebras}, Annals of Mathematics. Second Series \textbf{113} (1981),
no.~1, 181{\textendash}197.

\bibitem{davidson_c*-algebras_1996} Kenneth~R. Davidson, \emph{{C}*-algebras
by {E}xample}, Fields Institute Monographs, vol.~6, American Mathematical
Society, Providence, {RI}, 1996.

\bibitem{downey_isomorphism_2008} Rod Downey and Antonio Montalb{\'a}n, 
\emph{The isomorphism problem for torsion-free abelian groups is analytic
complete}, Journal of Algebra \textbf{320} (2008), no.~6, 2291--2300.

\bibitem{effros_dimension_1980} Edward~G. Effros, David~E. Handelman, and
Chao-Liang Shen, \emph{Dimension groups and their affine representations},
American Journal of Mathematics \textbf{102} (1980), no.~2, 385--407.

\bibitem{elliott_classification_1976} George~A Elliott, \emph{On the
classification of inductive limits of sequences of semisimple
finite-dimensional algebras}, Journal of Algebra \textbf{38} (1976), no.~1,
29--44.

\bibitem{ellis_classification_2010} Paul Ellis, \emph{The classification
problem for finite rank dimension groups}, Ph.D. thesis, Rutgers University,
2010.

\bibitem{Exel_new_2003} Ruy Exel, \emph{A new look at the crossed-product of
a {C}*-algebra by an endomorphism}, Ergodic Theory and Dynamical Systems 
\textbf{23} (2003), no.~06, 1733--1750.

\bibitem{farah_descriptive_2012} Ilijas Farah, Andrew Toms, and Asger T{\"o}%
rnquist, \emph{The descriptive set theory of {C}*-algebra invariants},
International Mathematics Research Notices (2012).

\bibitem{farah_turbulence_2014} Ilijas Farah, Andrew~S. Toms, and Asger T{\"o%
}rnquist, \emph{Turbulence, orbit equivalence, and the classification of
nuclear {C}*-algebras}, Journal f{\"u}r die reine und angewandte Mathematik
(2014), no.~688, 101--146.

\bibitem{foreman_descriptive_2000} M.~Foreman, A.~S. Kechris, A.~Louveau,
and B.~Weiss (eds.), \emph{Descriptive {S}et {T}heory and {D}ynamical {S}%
ystems}, London Mathematical Society Lecture Note Series, vol. 277,
Cambridge University Press, Cambridge, 2000.

\bibitem{foreman_conjugacy_2011} Matthew Foreman, Daniel Rudolph, and
Benjamin Weiss, \emph{The conjugacy problem in ergodic theory}, Annals of
Mathematics \textbf{173} (2011), no.~3, 1529--1586.

\bibitem{friedman_borel_1989} Harvey Friedman and Lee Stanley, \emph{A {B}%
orel reductibility theory for classes of countable structures}, Journal of
Symbolic Logic \textbf{54} (1989), no.~03, 894--914.

\bibitem{glimm_certain_1960} James~G. Glimm, \emph{On a certain class of
operator algebras}, Transactions of the American Mathematical Society 
\textbf{95} (1960), no.~2, 318--340.

\bibitem{hjorth_classification_2000} Greg Hjorth, \emph{Classification and {O%
}rbit {E}quivalence {R}elations}, Mathematical Surveys and Monographs,
vol.~75, American Mathematical Society, Providence, {RI}, 2000.

\bibitem{hjorth_isomorphism_2002} \bysame, \emph{The isomorphism relation on
countable torsion free abelian groups}, Fundamenta Mathematicae \textbf{175}
(2002), no.~3, 241{\textendash}257.

\bibitem{izumi_finite_2004} Masaki Izumi, \emph{Finite group actions on {C}%
*-algebras with the {R}ohlin property, {I}}, Duke Mathematical Journal 
\textbf{122} (2004), no.~2, 233--280.

\bibitem{izumi_finite_2004-1} \bysame, \emph{Finite group actions on {C}%
*-algebras with the {R}ohlin property, {II}}, Advances in Mathematics 
\textbf{184} (2004), no.~1, 119--160.

\bibitem{jiang_simple_1999} Xinhui Jiang and Hongbing Su, \emph{On a simple
unital projectionless {C}*-algebra}, American Journal of Mathematics \textbf{%
121} (1999), no.~2, 359--413.

\bibitem{johnson_presentations_1997} David~L. Johnson, \emph{Presentations
of groups}, second ed., London Mathematical Society Student Texts, vol.~15,
Cambridge University Press, Cambridge, 1997.

\bibitem{kasparov_operator_1980} Gennadi~G. Kasparov, \emph{The operator {$K$%
}-functor and extensions of {C}*-algebras}, Izvestiya Akademii Nauk {SSSR.}
Seriya Matematicheskaya \textbf{44} (1980), no.~3, 571{\textendash}636.

\bibitem{kechris_classical_1995} Alexander~S. Kechris, \emph{Classical {D}%
escriptive {S}et {T}heory}, Graduate Texts in Mathematics, vol. 156,
Springer-Verlag, New York, 1995.

\bibitem{kerr_turbulence_2010} David Kerr, Hanfeng Li, and Mika{\"e}l
Pichot, \emph{Turbulence, representations, and trace-preserving actions},
Proceedings of the London Mathematical Society \textbf{100} (2010), no.~2,
459--484.

\bibitem{kerr_Borel_2014} David Kerr, Martino Lupini, and Christopher~N.
Phillips, \emph{{B}orel complexity and automorphisms of {C}*-algebras}, in
preparation.

\bibitem{kirchberg_exact_1995} Eberhard Kirchberg, \emph{Exact {C}%
*-algebras, tensor products, and the classification of purely infinite
algebras}, Proceedings of the International Congress of Mathematicians,
Vol.\ 1, 2 (Z{\"u}rich, 1994), Birkh{\"a}user, Basel, 1995, p.~943{%
\textendash}954.

\bibitem{kirchberg_embedding_2000} Eberhard Kirchberg and N.~Christopher
Phillips, \emph{Embedding of exact {C}*-algebras in the {C}untz algebra {$%
\mathcal{O}_2$}}, Journal f{\"u}r die reine und angewandte Mathematik
(2000), no.~525, 17--53.

\bibitem{kishimoto_outer_1981} Akitaka Kishimoto, \emph{Outer automorphisms
and reduced crossed products of simple {C}*-algebras}, Communications in
Mathematical Physics \textbf{81} (1981), no.~3, 429--435.

\bibitem{lupini_unitary_2013} Martino Lupini, \emph{Unitary equivalence of
automorphisms of separable {C}*-algebras}, {arXiv:1304.3502} [math] (2013).

\bibitem{matsumoto_outer_1993} Kengo Matsumoto and Jun Tomiyama, \emph{Outer
automorphisms on {C}untz algebras}, Bulletin of the London Mathematical
Society \textbf{25} (1993), no.~1, 64--66.

\bibitem{matui_classification_2008} Hiroki Matui, \emph{Classification of
outer actions of {$\mathbb{Z}^N$} on {$\mathcal{O}_2$}}, Advances in
Mathematics \textbf{217} (2008), no.~6, 2872{\textendash}2896.

\bibitem{mekler_stability_1981} Alan~H. Mekler, \emph{Stability of nilpotent
groups of class 2 and prime exponent}, The Journal of Symbolic Logic \textbf{%
46} (1981), no.~4, 781--788.

\bibitem{nakamura_aperiodic_2000} Hideki Nakamura, \emph{Aperiodic
automorphisms of nuclear purely infinite simple {C}*-algebra}, Ergodic
Theory and Dynamical Systems \textbf{20} (2000), no.~06, 1749--1765.

\bibitem{nielsen_om_1921} Jacob Nielsen, \emph{Om regning med ikke
kommutative {F}aktoren og deus {A}nvendelse i {G}ruppeteoriens}, Matematisk
Tidsskrift \textbf{44} (1921), no.~3, 77{\textendash}94.

\bibitem{paschke_crossed_1980} William~L. Paschke, \emph{The crossed product
of a {C}*-algebra by an endomorphism}, Proceedings of the American
Mathematical Society \textbf{80} (1980), no.~1, 113--118.

\bibitem{pedersen_c-algebras_1979} Gert~K. Pedersen, \emph{{C}*-algebras and
their automorphism groups}, London Mathematical Society Monographs, vol.~14,
Academic Press Inc. [Harcourt Brace Jovanovich Publishers], London, 1979.

\bibitem{phillips_classification_2000} N.~Christopher Phillips, \emph{A
classification theorem for nuclear purely infinite simple {C}*-algebras},
Doc. Math. \textbf{5} (2000), 49{\textendash}114.

\bibitem{phillips_tracial_2012} \bysame, \emph{The tracial {R}okhlin
property is generic}, {arXiv:1209.3859} [math] (2012).

\bibitem{pimsner_exact_1980} M.~Pimsner and D.~Voiculescu, \emph{Exact
sequences for {$K$}-groups and {$\emph{{E}xt}$}-groups of certain
cross-product {C}*-algebras}, Journal of Operator Theory \textbf{4} (1980),
no.~1, 93{\textendash}118.

\bibitem{rordam_short_1994} Mikael R{ø}rdam, \emph{A short proof of {E}%
lliott's theorem: {$\mathcal{O}_2 \otimes \mathcal{O}_2 \cong \mathcal{O}_2$}%
}, La Soci{\'e}t{\'e} Royale du Canada. {L'Acad{\'e}mie} des Sciences.
Comptes Rendus Math{\'e}matiques (Mathematical Reports) \textbf{16} (1994),
no.~1, 31{\textendash}36.

\bibitem{rordam_classification_1995} \bysame, \emph{Classification of
certain infinite simple {C}*-algebras}, Journal of Functional Analysis 
\textbf{131} (1995), no.~2, 415{\textendash}458.

\bibitem{rordam_introduction_2000} Mikael R{ø}rdam, Flemming Larsen, and
Niels~Jakob Laustsen, \emph{An introduction to {$K$}-theory for {C}*-algebras%
}, London Mathematical Society Student Texts, vol.~49, Cambridge University
Press, Cambridge, 2000.

\bibitem{rordam_classification_2002} Mikael R{ø}rdam and Erling St{ø}rmer, 
\emph{Classification of {N}uclear {C}*-algebras. {E}ntropy in {O}perator {A}%
lgebras}, Encyclopaedia of Mathematical Sciences, vol. 126, Springer-Verlag,
Berlin, 2002, Operator Algebras and Non-commutative Geometry, 7.

\bibitem{rosenberg_kunneth_1987} Jonathan Rosenberg and Claude Schochet, 
\emph{The {K\"u}nneth theorem and the {U}niversal {C}oefficient {T}heorem
for kasparov's generalized {$K$}-functor}, Duke Mathematical Journal \textbf{%
55} (1987), no.~2, 431--474.

\bibitem{schochet_topological_1982} Claude Schochet, \emph{Topological
methods for {C}*-algebras. {II.} geometry resolutions and the {K\"u}nneth
formula.}, Pacific Journal of Mathematics \textbf{98} (1982), no.~2,
443--458.

\bibitem{schreier_Untergruppen_1927} Otto Schreier, \emph{Die {U}ntergruppen
der freien {G}ruppen}, Abhandlungen aus dem Mathematischen Seminar der
Universit{\"a}t Hamburg \textbf{5} (1927), no.~1, 161--183.

\bibitem{shen_classification_1979} Chao~Liang Shen, \emph{On the
classification of the ordered groups associated with the approximately
finite-dimensional {C}*-algebras}, Duke Mathematical Journal \textbf{46}
(1979), no.~3, 613{\textendash}633.

\bibitem{souslin_definition_1917} Mikhail Souslin, \emph{Sur une definition
des ensembles b sans nombres transfinis}, Comptes rendus de l'Académie des
sciences \textbf{19} (1917), 88--91.

\bibitem{stacey_crossed_1993} Peter~J. Stacey, \emph{Crossed products of {C}%
*-algebras by *-endomorphisms}, Australian Mathematical Society. Journal.
Series A. Pure Mathematics and Statistics \textbf{54} (1993), no.~2,
204--212.

\bibitem{toms_strongly_2007} Andrew~S. Toms and Wilhelm Winter, \emph{%
Strongly self-absorbing {C}*-algebras}, Transactions of the American
Mathematical Society \textbf{359} (2007), no.~8, 3999{\textendash}4029.

\bibitem{tsui_weakly_1995} Sze-Kai Tsui, \emph{Some weakly inner
automorphisms of the {C}untz algebras}, Proceedings of the American
Mathematical Society \textbf{123} (1995), no.~6, 1719{\textendash}1725.

\bibitem{williams_crossed_2007} Dana~P. Williams, \emph{Crossed {P}roducts
of {C}*-algebras}, Mathematical Surveys and Monographs, vol. 134, American
Mathematical Society, Providence, {RI}, 2007.
\end{thebibliography}
\end{document}